\newtheorem{theorem}{Theorem}[section]
\newtheorem{definition}[theorem]{Definition}
\newtheorem{lemma}[theorem]{Lemma}
\newtheorem{proposition}[theorem]{Proposition}
\newtheorem{corollary}[theorem]{Corollary}
\theoremstyle{remark}
\newtheorem{remark}[theorem]{Remark}
\newtheorem{example}[theorem]{Example}
\numberwithin{equation}{section}
\numberwithin{figure}{section}
\newtheorem{examples}[theorem]{Examples}
\newcommand{\R}{\mathbb{R}}
\newcommand{\bra}[1]{\left(#1\right)}
\newcommand{\cur}[1]{\left\{#1\right\}}
\newcommand{\sqa}[1]{\left[#1\right]}
\newcommand{\ang}[1]{\left<#1\right>}
\newcommand{\abs}[1]{\left\lvert#1\right\rvert}
\newcommand{\norm}[1]{\left\lVert#1\right\rVert}
\let\d\undefined
\newcommand{\d}{\mathop{}\!\mathrm{d}}
\renewcommand{\v}{\mathsf{v}}
\renewcommand{\i}{\mathfrak{i}}
\renewcommand{\u}{\mathsf{u}}
\renewcommand{\v}{\mathsf{v}}
\newcommand{\w}{\mathsf{w}}
\newcommand{\simp}{\operatorname{Simp}}
\newcommand{\chain}{\operatorname{Chain}}
\newcommand{\vol}{\operatorname{vol}}
\newcommand{\cochain}{\operatorname{Cochain}}
\newcommand{\germ}{\operatorname{Germ}}
\newcommand{\sew}{\operatorname{sew}}
\newcommand{\dya}{\operatorname{dya}}
\renewcommand{\d}{\mathsf{d}}
\renewcommand{\c}{\mathsf{c}}
\newcommand{\flip}{\operatorname{flip}}
\newcommand{\cut}{\operatorname{cut}}
\renewcommand{\O}{D}
\newcommand{\conv}{\operatorname{conv}}
\newcommand{\diam}{\operatorname{diam}}
\newcommand{\Id}{\operatorname{Id}}
\newcommand{\cwedge}{\cup}
\newcommand{\drawchain}[9]{
  	\coordinate (p_0) at ({#1},{#2});
  	\coordinate (p_1) at ({#3},{#4});
  	 \coordinate (p_2) at ({#5},{#6});
      \coordinate (a) at ({#1*4/6+(#3+#5)/6},{#2*4/6+(#4+#6)/6});
      \coordinate (c) at ({(#1+#3+#5)/3+#7*cos(#9*#8)/3},{(#2+#4+#6)/3+#7*sin(#9*#8)/3});
     \draw[thick, -](p_0)  -- (p_1) -- (p_2) -- (p_0);
     \draw[gray, thick, dashed] (p_0)  -- (a);
\draw[thick, gray, ->] (c) arc ({#9*#8}:{(270+#9*#8)}:{#7/3});			
}
\newcommand{\drawchaininv}[9]{
  	\coordinate (p_0) at (#1,#2);
  	\coordinate (p_1) at (#3,#4);
  	 \coordinate (p_2) at (#5,#6);
      \coordinate (a) at ({#1*4/6+(#3+#5)/6},{#2*4/6+(#4+#6)/6});
      \coordinate (c) at ({(#1+#3+#5)/3+#7*cos(#9*#8)/3},{(#2+#4+#6)/3+#7*sin(#9*#8)/3});
     \draw[thick, -](p_0)  -- (p_1) -- (p_2) -- (p_0);
     \draw[gray, thick, dashed] (p_0)  -- (a);
\draw[thick, gray, ->] (c) arc ({#9*#8}:{(-270+#9*#8)}:{#7/3});	

     }
\begin{document}


\author{Eugene Stepanov}
\address{St.Petersburg Branch of the Steklov Mathematical Institute of the Russian Academy of Sciences,
Fontanka 27,
191023 St.Petersburg, Russia
\and
Faculty of Mathematics, Higher School of Economics, Usacheva 6, 119048 Moscow, Russia
}

\email{stepanov.eugene@gmail.com}

\author{Dario Trevisan}
\address{Dario Trevisan, Dipartimento di Matematica, Universit\`a di Pisa \\
Largo Bruno Pontecorvo 5 \\ I-56127, Pisa}
\email{dario.trevisan@unipi.it}

\thanks{This work has been partially supported by the GNAMPA projects 2017 ``Campi vettoriali, superfici e perimetri in geometrie singolari'' and 2019 ``Propriet\`a analitiche e geometriche di campi aleatori'' and by the University of Pisa, Project PRA 2018-49.
The work of the first author has been also partially financed by the
RFBR grant \#20-01-00630 A
}

\subjclass[2010]{Primary 53C65. Secondary 49Q15, 60H05.}
\keywords{Exterior differential calculus; Young integral; Stokes Theorem.}
\date{\today}

\title{Towards geometric integration of rough differential forms}

\begin{abstract}
We provide a draft of a theory of geometric integration of
``rough differential forms'' which are generalizations of classical
(smooth) differential forms to similar objects with very low regularity,
for instance,
involving H\"{o}lder continuous functions that may be nowhere
differentiable. Borrowing ideas from the theory of rough paths, we show
that such a geometric integration can be constructed substituting
appropriately differentials with more general asymptotic expansions. This
can be seen as the basis of geometric integration similar to that used in
geometric measure theory, but without any underlying differentiable
structure, thus allowing Lipschitz functions and rectifiable sets to be
substituted by far less regular objects (e.g.\ H\"{o}lder functions
and their images which may be  purely unrectifiable). Our
construction includes both the one-dimensional Young integral and
multidimensional integrals introduced recently by R.\ Z\"ust, and provides also an alternative
(and more geometric) view on the standard construction of rough paths. To simplify the
exposition, we limit ourselves to integration of rough $k$-forms with $k\leq 2$.
\end{abstract}
\maketitle

\setcounter{tocdepth}{1}
\tableofcontents
%

\section{Introduction}

Integrating a differential $k$-form $\eta:=v\d x^{j_1}\wedge \ldots\wedge \d x^{j_k}$ in $\R^n$, where $x^j$ stands for the $j$-th cordinate, $j=1,\ldots, n$, over
a surface $S$ parameterized, say, over $[0,1]^k$ by the map 
$h=(h^1,\ldots h^n)\colon [0,1]^k\to \R^n$, amounts to integrating over $[0,1]^k$
the pull-back $k$-form 
$g^*\eta= f\d g^1\wedge \ldots\wedge \d g^k$, where $f:= v\circ h$,
$g^i:= h^{j_i}$. This fits within the classical integration theory of differential forms when all the functions above are smooth, i.e.\ so are both the form
$\eta$ and the surface $S$. But if, for instance, $S$ is nonsmooth, e.g.\
just H\"{o}lder continuous, then even with smooth $\eta$ one has to deal 
formally with integration of the ``differential $k$-form'' of the type
$f\d g^1\wedge \ldots\wedge \d g^k$ involving nondifferentiable functions $g^i$, so that even the ``differential'' notation $\d g^i$ has no classical meaning. When additionally $\eta$ (i.e.\ the function $v$) is only H\"{o}lder continuous, then the situation even worsens because then also $f$ may 
be nondifferentiable. It is tempting therefore to construct a theory of 
integration of such $k$-forms (called further ``rough'' to stress inherent
connections of the theory developed with \emph{rough paths} in the 
one-dimensional case $k=1$), 
which can be considered a natural extension of  classical integration 
of smooth differential forms. 

\subsection*{Known constructions}
\subsubsection*{Integration of H\"older one-forms} In~\cite{young_inequality_1936} and independently in~\cite{kondurar_1937} it has been shown that
for H\"{o}lder continuous functions $f\in C^\alpha([0,1])$, $g\in C^\beta([0,1])$ with $\alpha+\beta>1$
the Riemann-type integral
$\int_0^1  f(x)\d g(x)$, which we may view as an integral of  a H\"{o}lder one form $f\d g$,  may be defined similarly to
the classical case of smooth $f$ and $g$,
as a suitable limit of finite sums
$\sum_i f(x_i) (g(x_{i+1})- g(x_{i}))$ over a refining family of partitions $0=x_0\leq x_1\leq \ldots \leq x_m=1$
of the interval of integration. Much later, in the late 1980s, this construction which nowadays is commonly known as the \emph{Young integral}, has been understood as a particular case of \emph{rough path} construction~\cite{lyons_differential_1998} via so-called \emph{sewing lemma}~\cite{gubinelli_controlling_2004, feyel_curvilinear_2006, friz_course_2014}. 

\subsubsection*{Integration of H\"older $k$-forms}
The natural extension of the Young integral from one-dimensional to multidimensional case is even more recent. In fact,
R.\ Z\"ust showed in~\cite{zust_integration_2011} that a $k$-dimensional Young-type integral (which we therefore will further call
\emph{Z\"ust integral})
\begin{equation}\label{eq:zust-intro} \int_{[0,1]^k }f\, \d g^1 \wedge \ldots \wedge\d g^k\end{equation}
 can be defined provided that
$f \in C^\alpha$, $g^i \in C^{\beta_i}$, and $\alpha + \sum_{i=1}^k \beta_i >k$.
The definition provided in~\cite{zust_integration_2011} is quite ``manual'' and uses dimension reduction via the integration by parts type procedure. Namely, 
the Z\"ust integral is defined inductively over $k$ as a limit over 
the refining sequence of dyadic partitions of the original cube $[0,1]^k$ 
into subcubes $Q_i$ of the sums
\[
\sum_i f(a_i) \int_{\partial Q_i}g^1 \d g^2 \wedge \ldots \wedge\d g^k,
\]
where $a_i$ is the center of $Q_i$, 
the integrals of the $(k-1)$-dimensional form $g^1 \d g^2 \wedge \ldots \wedge\d g^k$ over $(k-1)$-dimensional sides of $Q_i$ involved in the above formula
being defined at the previous step of induction. 

Let us point out that other approaches to integration of higher dimensional irregular objects (beyond measures) appear both in the stochastic and geometric literature. In the former setting, we mention~\cite{towghi_multidimensional_2002, friz_multidimensional_2010, chouk_2014} for integration of Young and rough ``sheets'',~\cite{gubinelli_paracontrolled_2015} for paracontrolled calculus based on Fourier decomposition,~\cite{FlandGubGiaqTort05-currents} for stochastic currents, and, finally, the reconstruction theorem in the celebrated theory of Regularity Structures~\cite{hairer_introduction_2015}. To the authors' knowledge, even the definition of Z\"ust integrals~\eqref{eq:zust-intro} for $k>1$ falls outside the scopes of these approaches, essentially because they miss a key ``dimension reduction'' feature which clearly appears when one uses the geometric language of simplices and their boundaries. On the geometric side, besides~\cite{zust_integration_2011}, we mention also possible connections with the chainlet approach in~\cite{harrison-1, harrison_differential_2006, harrison-3}, motivated by variational problems (Plateau's problem \cite{harrison-2}) and numerical approximations, with possibly  very irregular integration domains:  however, also in this case, already Z\"ust integrals~\eqref{eq:zust-intro} seem to be outside the scope of the theory. It has to be noted that similar theories can be developed for more general germs, for instance $(x,y) \mapsto \bra{\fint_{[x,y]} f}  (g(y)- g(x))$, $\fint$ standing for the average. One dimensional integrals arising from such germs have been studied in~\cite{matsaev_1972} (see also~\cite{kats_1999}); here for simplicity we do not pursue this direction. 

\subsection*{Towards a general machinery: multidimensional sewing lemma}
In this paper we provide a  sketch of a possible general theory of integration of \emph{``rough'' differential forms}
which is in a sense halfway between classical calculus of differential forms and its purely discrete analogue: in fact, the differentials will be substituted by appropriate asymptotic expansions of finite differences.

\subsubsection*{Recovering Young and Z\"ust integrals}
 Our aim is to show that all the above-mentioned constructions, i.e.\ Young integrals, integrals of one-dimensional rough paths (of H\"older regularity larger than $1/3$) and multidimensional Z\"ust integrals are particular cases of this general theory. For instance, we see the Young integral as an integral over an interval of a one-dimensional
 form $f\,\d g$, and Z\"ust integral as an integral over a $k$-dimensional cube of a
$k$-dimensional  rough form
$f \d g^1 \wedge \ldots \wedge \d g^k$. Both are constructed from their ``discrete versions'', which can be viewed as ``germs of rough forms''. In particular,
the one-dimensional
form $f\,\d g$ and its integral are built from the one-dimensional germ, which is a function of two variables (or, equivalently, of a one-dimensional simplex) $(x,y)\mapsto f(x) (g(y)-g(x))$, while $k$-dimensional forms and their integrals are built from
$k$-dimensional germs. The latter are just functions of $k+1$ variables
(or, equivalently, of an ordered $(k+1$)-uple of points which can be
identified with a $k$-dimensional oriented simplex 
with a chosen ``initial'' vertex). 

\subsubsection*{The sewing procedure} 
The integral of a rough $k$-form over a $k$-dimensional oriented 
simplex can be considered also a $k$-dimensional germ, but satisfying certain ``regularity'' properties. Namely it is \emph{nonatomic} in the sense that it vanishes over ``degenerate'' simplices (those having zero $k$-dimensional volume), 
and its restriction to each $k$-dimensional plane is additive 
(a property further better expressed in a more common geometric language as being \emph{closed} with respect to the natural coboundary operator $\delta$). 
Such germs will be further called \emph{regular}. 
The construction of rough forms and their integrals from germs is done by
the ``sewing'' procedure, which can be therefore 
seen as a natural regularization of germs. 
It is provided by a multidimensional \emph{sewing lemma}, which is a natural extension of
the respective one dimensional result in rough paths theory. 
Technically, for $k=1$ and $k=2$ (the only cases studied in this paper) 
it relies on a dyadic decomposition of the domain (one or two-dimensional simplex) into self-similar pieces, and summing the respective discrete germs over the resulting partitions. 

The sewing lemma provides a natural map  sending some $k$-germs
into regular ones that can be viewed as integrals of rough $k$-forms.
The germs in its domain of definition  will be called \emph{sewable}, and can be considered as discrete prototypes of the respective integrals. 

\subsubsection*{Stokes-Cartan theorem} 
Note that the natural boundary operator $\partial$ mapping any $k$-dimensional simplex $S$ into a $(k-1)$-dimensional 
simplicial chain that is a sum of boundary simplices of $S$ 
with alternating signs induces by duality the coboundary operator $\delta$ on
$k$-dimensional germs. The latter will be shown to preserve regularity of germs,
and will be denoted by $\d$ over regular germs.  
We will show that for integrals of rough $k$-forms 
the Stokes-Cartan theorem holds, namely,
\[
\int_S \d \eta = \int_{\partial S} \eta
\]
for every $k$-dimensional simplex $S$ and rough $k$-form $\eta$. 
The proof is trivial for one-dimensional germs, but actually requires a slight modification of the dyadic summation in the two-dimensional case.

\subsubsection*{Problems with higher dimensional forms}
To keep the exposition simple, we limit ourselves to $0$, $1$ and $2$-dimensional forms, i.e.\ $k\leq 2$. The case of general $k$ requires 
much more technical effort due to the fact that one and 
two-dimensional simplices (i.e.\ line segments and triangles) 
can be easily divided into a finite number of self-similar simplices
with the same similarity ratio, thus allowing for easy 
``dyadic divisions'', but this is not the case for generic $k$-simplices with 
$k>2$. Of course, dyadic division is easier for cubes  than for simplices, but
on the other hand the notion of a germ is natural 
as a function of a simplex rather than of a cube, at least because simplices are preserved by pointwise transformations. One of the possible strategies for general $k$ could be defining $k$-germs as functions of simplices, as it is done here, then extending them to cubes and making a sewing procedure using the dyadic decomposition of cubes, and finally return to integrals over simplices.
 
 \subsubsection*{Future perspectives}

As a far reaching perspective of the theory presented, one can think of the development of an intrinsic approach to  ``rough'' exterior differential systems, e.g.\ the study of highly irregular ``integral manifolds'' of non differentiable, say, purely H\"{o}lder
vector fields (such problems may arise in many contexts of weak geometric structures, for instance in sub-Riemannian geometry~\cite{MagSteTrev16}); this however will be studied in the forthcoming paper~\cite{frobenius-zust}. Other interesting directions could be integration of nonsmooth forms over fractal-type or irregular random domains.

\subsection*{Structure of the paper}
In Section~\ref{sec:simp-germ} we introduce notation and basic facts about simplicial chains and ``germs of forms''. In Section~\ref{sec:sewable} we state the main results, in particular the multidimensional \emph{sewing lemma}  (Theorem~\ref{thm:sew-existence}), the uniqueness and continuity of integrals as well as the ``rough'' version of Stokes-Cartan theorem. In Section~\ref{sec:young-zust} we show how Young and Z\"ust integrals fit into our framework, and in Section~\ref{sec:rough} we point out extensions to more irregular functions, borrowing ideas from rough paths theory. Most of the technical proofs are collected in the appendices.

\subsection*{Acknowledgments}
Both authors thank V.\ Magnani for many useful suggestions in a preliminary version of this work.
%

\section{Simplices and germs}\label{sec:simp-germ}

We 
will use some basic notions related to simplices and polyhedral chains,
extending the point of view of~\cite[Section 2]{gubinelli_controlling_2004}. Let throughout the paper $d$, $\tilde{d}$, $h$, $k$ stand for nonnegative integers and $\O \subseteq \R^d$, $\tilde{\O} \subseteq \R^{\tilde{d}}$ be sets.  For a set $S \subseteq \R^d$, let $\conv(S)$ stand for its convex envelope.

\subsection{Polyhedral chains}\label{sec:pol-chains}

The basic object of our study are real polyhedral $k$-chains (which provide a discrete counterpart of $k$-currents). As already mentioned, for the sake of simplicity we provide in this paper the
results which use these notions only for small values of $k$ ($k\le 3$), but at this point general definitions are more convenient.  

\begin{definition}[simplices and chains] Any $S=[p_0 p_1 \ldots p_k] \in D^{k+1}$ such that $\conv(S) \subseteq \O$ is called a $k$-simplex (in $\O$). A (real polyhedral) $k$-chain (in $\O$) is any element 
of the real vector space generated by $k$-simplices in $\O$.
\end{definition}

We write $\simp^k(\O)$ for the set of $k$-simplices in $\O$ and $\chain^k(\O)$ for the linear space of $k$-chains in $\O$. It is useful to define $\chain^{-1}(D)$ as the set containing the singleton $[]=\emptyset$.
We use throughout the paper the property that any map defined on $\simp^k(\O)$ with values in a vector space naturally extends to a linear map on $\chain^k(\O)$. In particular, this holds for maps taking values in $\chain^{h}(\tilde {\O})$. 

\begin{remark}[representation of simplices and chains]
A $k$-simplex can be thought as the ``geometric'' simplex $\conv(S)$ with a base point  $p_0$ and the  ``orientation'' given by the ordering of the points. Hence, $0$-simplices are points, $1$-simplices are oriented segments from the point $p_0$ to $p_1$ and  $2$-simplices are pointed oriented triangles, see Figure~\ref{fig:representation-simplices}. Similarly, chains are weighted families of these objects (for simplicity we omit to specify unit weights in the drawings). 
\end{remark}

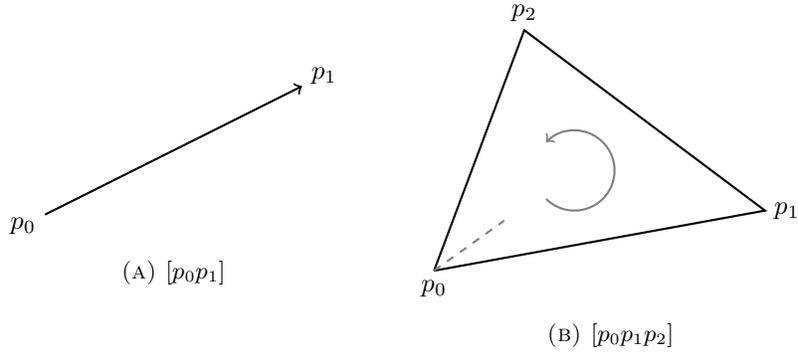
\begin{figure}[ht]
\begin{minipage}{0.45\textwidth}
\begin{center}
	\begin{tikzpicture}[scale=4]
	\node (0) at (-0.5, 0) {$p_0$};
	\node (1) at (0.5, 0.5) {$p_1$};
	\draw[thick, ->]   (0) edge (1);
		    \end{tikzpicture} \subcaption{ $[p_0 p_1]$}
    \end{center}
\end{minipage}
 \begin{minipage}{0.45\textwidth}
\begin{center}
     \begin{tikzpicture}[scale=4]
        \drawchain{0}{0}{0.3}{0.8}{1.1}{0.2}{0.4}{-135}{1}
    \draw (p_0) node[below] {$p_0$};
    \draw (p_1) node[above] {$p_2$};
    \draw (p_2) node[right] {$p_1$};
    \end{tikzpicture}\subcaption{$[p_0p_1p_2]$}
    \end{center}
\end{minipage}
\caption{Representations of $1$-simplices  and  $2$-simplices .}\label{fig:representation-simplices}
\end{figure}

\subsubsection*{Push-forward} Any $\varphi: \O \subseteq \R^{d} \to \R^{\tilde{d}}$ induces a \emph{push-forward} operator (denoted with $\varphi_\natural$) mapping $k$-simplices in $\O$ to $k$-simplices in $\tilde{\O} := \conv( \varphi(\O))$,
\[  \varphi_\natural: \simp^{k}(\O)\to \simp^{k}(\tilde{\O}), \quad \quad S=[p_0 p_1 \ldots p_k] \mapsto \varphi_\natural S := [ \varphi(p_0) \varphi(p_1) \ldots \varphi(p_k)], \]
and extended by linearity to $\chain^k(\O)$. If $\varphi(x) = Ax+ q$ is affine, we  write $\varphi_\natural S = A_\natural S + q$ , and if $\varphi(x) = \lambda x$, for $\lambda \in \R$, we write $\varphi_\natural S =\lambda_\natural S$.

\begin{definition}[isometric simplices]
Given $S^i = [p_0^ip_1^i\ldots p_k^i] \in \simp^k$, $i \in \cur{1,2}$, we say that $S^1$ and $S^2$ are \emph{isometric} if there exists an isometry  $\i\colon\R^d\to \R^d$ such that $\i_\natural S^1 = S^2$, i.e.\ $\i (p_j^1) =  p_j^2$, for every $j \in \cur{0, 1, \ldots, k}$.
\end{definition}

Notice however that two $k$-simplices $S^i = [p_0^ip_1^i\ldots p_k^i]$, $i \in \cur{1,2}$, coincide  if and only if $p_j^1 = p_j^2$ for $j \in \cur{0,1, \ldots, k}$.

\subsubsection*{Boundary} The boundary operator is defined on $k$-simplices as
\[  \partial: \simp^k(\O) \to \chain^{k-1}(\O),  \quad \quad \partial [p_0 p_1 \ldots, p_k]  := \sum_{i=0}^k (-1)^i [p_0 \ldots \hat{p}_i \ldots p_k],\]
the notation $\hat{p}_i$ meaning that the element is removed from the list (Figure~\ref{fig:representation-simplices}), and extended by linearity over $k$-chains.

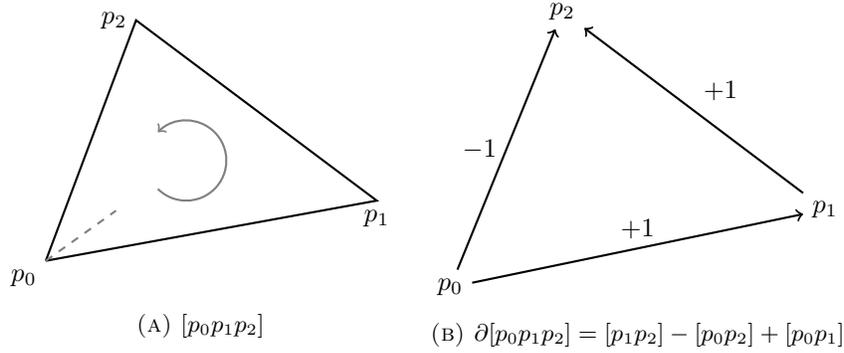
\begin{figure}[ht]
 \begin{minipage}{0.45\textwidth}
\begin{center}
     \begin{tikzpicture}[scale=4]
        \drawchain{0}{0}{0.3}{0.8}{1.1}{0.2}{0.4}{-135}{1}

	\draw   (0,0) node[below left]{$p_0$} (1.1,0.2) node[below]{$p_1$} (0.3,0.8) node[left]{$p_2$};
    \end{tikzpicture} \subcaption{$[p_0p_1p_2]$}
    \end{center}
\end{minipage}
 \begin{minipage}{0.45\textwidth}
\begin{center}
     \begin{tikzpicture}[scale=4]
	\node (0) at (0, 0)  [below left]{$p_0$};
	\node (2) at (0.3, 0.8) [above] {$p_2$};
		\node (1) at (1.1, 0.2) [right] {$p_1$};
	\draw[thick, ->]   (0) edge node[above]{$+1$}(1)
						(1) edge node[above right]{$+1$}(2)
						(0) edge node[left]{$-1$} (2);	
    \end{tikzpicture} \subcaption{$\partial [p_0p_1p_2] = [p_1p_2]- [p_0p_2] + [p_0p_1]$}
    \end{center}
\end{minipage}
\caption{A $2$-simplex and its boundary.}\label{fig:representation-boundary}
\end{figure}

Clearly, $\partial \varphi_\natural   = \varphi_\natural \partial$, for any $\varphi: \O \to \R^{\tilde{d}}$.  A simple argument gives $\partial \partial =0$, for in the double summation that one obtains, for any $i < j$ the $(k-1)$-simplex obtained removing the elements $\cur{p_i, p_j}$ appears once with sign $(-1)^{i+j}$ (when we remove first $j$ and then $i$) and once with the opposite sign  (when we remove first $i$ and then $j$).

\subsubsection*{Permutations}
Any permutation $\sigma$ of $\cur{0,1, \ldots, k}$ induces an operator on germs (Figure~\ref{fig:representation-permutation})
for which we use the same notation
\[ \begin{split} \sigma: \simp^k(\O)&\to \simp^k(\O)\\
 S = [p_0 p_1\ldots p_k] & \mapsto \sigma S  := [p_{\sigma^{-1}(0)}  p_{\sigma^{-1}(1)} \ldots  p_{\sigma^{-1}(k)}].\end{split}\]
One has  $(\sigma \tau) S = \sigma  (\tau S) $,  for permutations $\sigma$, $\tau$,  and $\varphi_\natural \sigma   = \sigma \varphi_\natural$, for  $\varphi: \O \to  \R^{\tilde{d}}$.

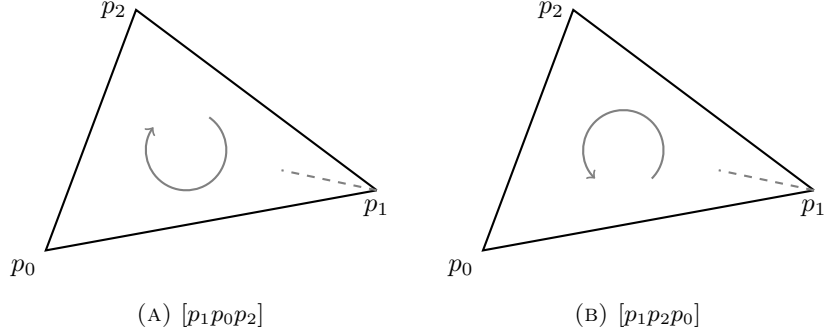
\begin{figure}[ht]
 \begin{minipage}{0.45\textwidth}
\begin{center}
     \begin{tikzpicture}[scale=4]
        \drawchaininv{1.1}{0.2}{0}{0}{0.3}{0.8}{0.4}{-305}{1}
	\draw   (0,0) node[below left]{$p_0$} (1.1,0.2) node[below]{$p_1$} (0.3,0.8) node[left]{$p_2$};
    \end{tikzpicture} \subcaption{$[p_1 p_0 p_2 ]$}
    \end{center}
\end{minipage}
 \begin{minipage}{0.45\textwidth}
\begin{center}
     \begin{tikzpicture}[scale=4]
        \drawchain{1.1}{0.2}{0.3}{0.8}{0}{0}{0.4}{-45}{1}
	\draw   (0,0) node[below left]{$p_0$} (1.1,0.2) node[below]{$p_1$} (0.3,0.8) node[left]{$p_2$};
    \end{tikzpicture} \subcaption{$[p_1 p_2 p_0]$}
    \end{center}
\end{minipage}

\caption{Permutations of the simplex $[p_0 p_1 p_2]$.}\label{fig:representation-permutation}
\end{figure}

\subsection{Germs of rough forms} Functions on simplices, called further \emph{germs of rough differential forms} or simply germs,  are discrete counterparts of differential forms.

\begin{definition}[germs and cochains] A \emph{$k$-germ} (of a rough differential $k$-form in $D$) is a function 
\[ 
\omega\colon \simp^k(D) \to \R \qquad\qquad S\mapsto \omega_{S}.
\]
We use also the notation $\omega_{p_0p_1\ldots p_k}$ for $\omega_S$ when $S=[p_0 p_1 \ldots p_k]$.  
A \emph{$k$-cochain} (in $D$) is a linear functional 
\[
\omega\colon \chain^k(D) \to \R,
 \qquad\qquad C\mapsto  \ang{C, \omega}.
\]
\end{definition}

We write $\germ^k(\O)$ for the set of $k$-germs and $\cochain^k(\O)$ for the linear space of $k$-cochains in $\O$. There is a natural correspondence between germs and cochains, and therefore we conveniently switch between the notations $\omega_S$ and $\ang{S, \omega}$, for $S \in \simp^k(\O)$, $\omega \in \germ^k(\O)$. For simplicity, we deal mainly with germs taking real values, but most of the notions and results extend verbatim to germs with values in a Banach space. 

\begin{examples}
Since $0$-simplices are points, then $0$-germs are functions $f(p_0) = f_{p_0} = \ang{[p_0], f}$. $1$-germs are functions on oriented segments of $\O$ and $2$-germs are functions on pointed oriented triangles.
\end{examples}

We use further the following notions.
\begin{itemize}
\item[(i)] \emph{[composition]}
The composition of $k$-germs $(\omega^i)_{i=1}^n$, $n>1$,  with a function $F\colon \R^n \to \R$ defines a $k$-germ $F\circ (\omega^i)_{i=1}^n : S \mapsto F\bra{\bra{\omega_S^i}_{i=1}^n}$. If $F$ is not linear one should be careful when dealing with the corresponding $k$-cochain, because it is not given by the composition of $F$ with the corresponding $k$-cochains.
 An easy example of the composition is the definition of the germ $\abs{\omega}$ by setting $\abs{\omega}_S := \abs{ \omega_S}$. 
\item[(ii)]\emph{[pointwise product]}
As another example of composition, we  define the pointwise product of two  $k$-germs as $(\omega^1 \omega^2)_S := \omega^1_S \omega^2_S$, for $S \in \simp^k(\O)$.
\item[(iii)] \emph{[inequalities]} Given $k$-germs $\omega$, $\tilde{\omega}$, we write $\omega \le \tilde{\omega}$ if, for every $S \in \simp^k(\O)$ one has $\omega_S \le \tilde{\omega}_S$.
\end{itemize}

Maps acting on simplices (or linear operators in the space of chains) induce by duality maps on germs (or linear operators on cochains): in general, given $\tau\colon\simp^{k}(\O) \to \chain^{h}(\tilde{\O})$, let $\tau^\prime\colon \cochain^{h}(\tilde{\O}) \to \cochain^k(\O)$ be the linear operator defined by
\[ (\tau ^\prime \omega)_{S} = \ang{S, \tau ^\prime\omega} := \ang{\tau S, \omega} \quad \text{for $S \in \simp^{k}(\O)$.}\]

\subsubsection*{(Algebraic) pull-back}
For $\varphi: \O \to \R^{\tilde{d}}$, $\tilde{\O} =\conv(\tilde{\O})$, the dual of $\varphi_\natural$ is the (algebraic) pull-back $\varphi^\natural := \varphi_\natural'$,
\[ \varphi^\natural: \germ^k(\tilde{\O}) \to \germ^k(\O), \quad  \quad  \ang{S, \varphi^\natural \omega} := \ang{ \varphi_\natural S, \omega}.\]


\subsubsection*{Exterior derivative} The operator $\delta := \partial^\prime$ dual to the boundary operator $\partial$,
\[ \delta: \germ^k(\O) \to \cochain^{k+1}(\O), \quad  \quad \ang{S, \delta \omega} := \ang{ \partial S, \omega},\]  provides  a  discrete counterpart to the exterior derivative of differential forms.

\begin{examples} If $\omega \in \germ^0(\O)$, then for $[p_0 p_1] \in \simp^1(\O)$,
\[ \ang{[p_0 p_1], \delta f } =\ang{ [p_1] - [p_0], f}, \quad \text{or in other words} \quad \delta f_{p_0 p_1} = f_{p_1} - f_{p_0}.\]
If  $\omega \in \germ^1(\O)$, then for $[p_0 p_1 p_2] \in \simp^2(\O)$,
\[ \ang{[p_0 p_1 p_2], \delta \omega } = \ang{[p_1 p_2] - [p_0p_2]+[p_0 p_1], \omega}\]
 or, in a different notation,
$\delta \omega_{p_0p_1 p_2} =  \omega_{p_1 p_2} -  \omega_{p_0 p_2}+ \omega_{p_0 p_1}$.
\end{examples}

\begin{definition}[exact and closed germs]
We say that a $k$-germ $\omega$ is \emph{exact}, if $\omega = \delta \eta$ for some $(k-1)$-germ $\eta$, and is \emph{closed} if $\delta\omega = 0$.
\end{definition}

The identity $\partial \partial = 0$ gives $\delta \delta = 0$, i.e. every exact germ is closed.

 \begin{remark}[discrete Poincar\'{e} lemma]\label{rem:poincare}
If $D$ is convex, then all closed germs are exact. In fact, fix a $\bar{p} \in D$, and for $\omega \in \germ^{k}(\O)$ closed, define $\eta \in \germ^{k-1}(\O)$ via
\[ \ang{ [p_0 p_1 \ldots p_{k-1}], \eta} := \ang{[\bar p p_0 p_1 \ldots p_{k-1}], \omega},\]
for $[p_0p_1 \ldots p_{k-1}] \in \simp^{k-1}(\O)$.
Given $S = [p_0 p_1 \ldots p_k] \in \simp^k(\O)$,  we have
\[ \begin{split} \ang{ S, \delta \eta} &= \sum_{i=0}^k (-1)^i \ang{ [p_0 p_1 \ldots \hat p _i \ldots p_k], \eta} = \sum_{i=0}^k (-1)^i \ang{ [\bar p p_0 p_1 \ldots \hat p _i \ldots p_k], \omega} \\
& = \ang{ [p_0 p_1  \ldots p_k], \omega} - \ang{ [\bar p  p_0 \ldots p_k], \delta \omega} =  \ang{S, \omega},
\end{split}\]
i.e., $\omega=\delta\eta$.
\end{remark}

\begin{definition}\label{def:alternating} We say that a $k$-germ is \emph{alternating}, if $\sigma^\prime \omega = (-1)^{\sigma} \omega$, i.e.,
\[  \ang{ \sigma S, \omega} = (-1)^{\sigma} \ang{ S, \omega} \quad \text{for every $S \in \simp^k(\O)$,}\]
for every permutation $\sigma$ of  $\cur{0,1, \ldots, k}$, where $(-1)^\sigma$ denotes its sign.
\end{definition}

It is not difficult to check that $\delta \omega$ is alternating when so is $\omega$. In particular,  $\delta f$ is alternating for every $f\in \germ^0(\O)$. 

\subsection{Cup products}

\label{sec:ext-product} We define the \emph{cup product} between a $k$-germ $\omega$ and an $h$-germ $\tilde{\omega}$ as the $(k+h)$-germ $\omega \cwedge \tilde{\omega}$ acting on $[p_0 p_1 \ldots p_k p_{k+1} \ldots p_{k+h}] \in \simp^{k+h}(\O)$ as
\[ \ang{[p_0 p_1\ldots p_k p_{k+1} \ldots p_{k+h} ], \omega \cwedge \tilde{\omega}} :=  \ang{[p_0 p_1\ldots p_k], \omega} \ang{ [p_k p_{k+1} \ldots p_{k+h} ], \tilde{\omega}}.\]
In~\cite{gubinelli_controlling_2004} this operation is called external product. In general, the cup product is not commutative, although when $f \in \germ^0(\O)$ we sometimes write with a slight abuse of notation $f \omega :=f\cwedge \omega$. For a $1$-form $\omega$, one has
\begin{equation}
\label{eq:identity-commutator}
\omega \cwedge f - f \cwedge \omega = (\delta f)\omega, \quad \text{i.e.} \quad  \omega_{p_0 p_1} f_{p_1} - f_{p_0}\omega_{p_0 p_1} = (\delta f_{p_0 p_1})\omega_{p_0 p_1}.
\end{equation}

The cup product is associative, and the following  Leibniz rule holds: for $\omega \in \germ^k(\O)$, $\tilde\omega \in \germ^{h}(\O)$ one has
\begin{equation}
\label{eq:identity-leibniz}
 \delta ( \omega \cwedge \tilde\omega) = (\delta \omega) \cwedge \tilde{\omega} + (-1)^{k} \omega \cwedge (\delta \tilde{\omega}). \end{equation}
Indeed, given $S= [p_0 p_1 \ldots p_{k} p_{k+1} \ldots p_{k+h}p_{k+h+1}]$, one has
\begin{align*} 
\langle S,  & \delta(\omega \cwedge \tilde{\omega})\rangle =
\langle\partial S,   \omega \cwedge \tilde{\omega}\rangle 
\\
& =  \sum_{i=0}^{k} (-1)^i \ang{ [p_0\ldots \hat p_i \ldots p_k p_{k+1} \ldots p_{k+h+1}], \omega \cwedge \tilde{\omega}} \\
& \quad + (-1)^k \sum_{i=1}^{h+1} (-1)^i \ang{ [p_0\ldots \ldots p_k p_{k+1} \ldots \hat p_{k+i} \ldots p_{k+h+1}], \omega \cwedge \tilde{\omega}}\\
& =  \sum_{i=0}^{k+1} (-1)^i \ang{ [p_0\ldots \hat p_i \ldots p_{k+1}], \omega} \ang{ [p_{k+1} \ldots p_{k+h+1}], \tilde{\omega}}  \\
&  \quad + (-1)^k \sum_{i=0}^{h+1} (-1)^i \ang{ [p_0\ldots \ldots p_k], \omega} \ang{ [p_k  \ldots \hat p_{k+i} \ldots p_{k+h+1}],  \tilde{\omega}}\\
& = \ang{S, (\delta \omega) \cwedge \tilde{\omega} +  (-1)^k \omega \cwedge (\delta \tilde{\omega})}.
\end{align*}

\subsection{Continuity and convergence} \label{sec:gauges} We say that
\begin{enumerate}[(i)]
\item  $\omega \in  \germ^k(\O)$ is \emph{continuous}, if $S \mapsto \omega_S$ is a continuous function of $S \in \simp^k(\O) = \O^{k+1}$, i.e.\ given any sequence $S^n=[p^n_0 p^n_1 \ldots p^n_k]$ 
such that  that $p^n_i \to p_i$, as $n \to \infty$,  $i=0,  \ldots, k$,
 then $\omega_{S^n} \to \omega_S$, where  $S:=[p_0 p_1 \ldots p_k]$, 
\item a sequence $(\omega^n)_{n \ge 1} \subseteq \germ^k(\O)$ converge \emph{pointwise} to $\omega \in  \germ^k(\O)$, if $\lim_{n \to +\infty} \omega^n_S = \omega_S$ for every $S \in \simp^k(\O)$.
\end{enumerate}

 Quantitative aspects of continuity and convergence follow by introducing suitable \emph{gauges}, such as volumes.

\subsubsection*{Gauges} A $k$-gauge in $\O$ is a nonnegative $k$-germ $\v\in \simp^k(\O)$. We will call a $k$-gause \emph{uniform} if
\[ \ang{S, \u} = \ang{S', \u}, \quad \text{ whenever $S$ and $S'$ are isometric $k$-simplices in $\O$.}\]

Given a $k$-germ $\omega$ and a $k$-gauge $\v$,  we write $[\omega]_{\v} \in [0,\infty]$ for the infimum over all constants $\c$ such that $|\omega|\leq \c \v$, i.e.\
\[
\abs{ \omega _S } \le \c \ang{S, \v} \quad \text{for every $S \in \simp^k(\O)$.}
\]
In particular, we think of $[\omega]_{\v}<\infty$ as a condition on a ``size'' of $\omega$, compared to $\v$. Notice that, if $\v \le \tilde{\v}$, then $[\cdot]_{\tilde{\v}} \le [\cdot]_{\v}$ and that the triangle inequality holds:
\[
[\omega + \tilde{\omega}]_{\v} \le [\omega]_\v + [\tilde{\omega}]_\v.
\]
In fact, $\sqa{\cdot}_\v$ defines a norm over the space of all $\omega \in \germ^k(D)$ such that $\sqa{\omega} _\v<\infty$. 
For $\{\omega,\tilde{\omega}\} \subset \germ^k(\O)$, and a $k$-gauge $\v$, we write
\[ \omega \approx_\v \tilde{\omega} \quad \text{if and only if} \quad [\omega - \tilde{\omega}]_\v < \infty.\]
The relation $\approx_\v$ is an equivalence relation. The properties of gauges imply that, if $\omega \approx_{\v} \tilde{\omega}$ and $\eta \approx_{\w} \eta'$, then $\omega+ \eta \approx_{\max\cur{\v, \w}} \tilde{\omega} + \eta'$.

\begin{remark}[gauges and pull-back]\label{rem:moduli-pullback}
Given $\omega \in \germ^{k}(\tilde{\O})$ and $\varphi: \O \to \tilde{\O}$, we have
\[
[\varphi^\natural \omega]_{\varphi^\natural \v} \le [\omega]_{\v}
\]
for every $k$-gauge $\v$ in $\tilde{\O}$.
Indeed, for every $S \in \simp^k(\O)$ we have
\[ \abs{\ang{ S, \varphi^\natural \omega} } = \abs{\ang{ \varphi_\natural S, \omega} } \le  [\omega]_{\v} \ang{  \varphi_\natural S, \v} =  [\omega]_{\v} \ang{ S, \varphi^\natural \v}.\]
\end{remark}

\subsubsection*{Volumes}
Given $S=[p_0 \ldots p_k] \in \simp^k(\O)$, its $k$-volume is defined by
\[ \vol_k(S) :=  \frac {1}{k!}\sqrt{ \det( P P^T)}\]
where $P_{ij} := (p_i -p_0)^j$, $i, j =1,  \ldots, k$, $P^T$ stand for the transposition of the matrix $P$. 
More generally, for $h\le k$, we define the $h$-volume of $S$ as  
\[ \vol_h(S) := \max_{ S' \in \simp^h(\O), S'\subset S } \vol_h(S'),\]
the maximum in the above formula being taken among all possible choices of $(h+1)$-uples $S'$ from the $(k+1)$-uple $S$. We are particularly interested in the cases $h=1,2$. For $h=1$, we have that
\[ \vol_1(S) = \max_{0\le i,j \le k} \abs{p_i -p_j } =: \diam(S)\]
is just the diameter of $S$.
Notice that $\vol_h$ (on $k$-simplices) defines a uniform $k$-gauge for every $h\leq k$, so in what follows we write expressions such as $\omega \le \diam$ or $\omega \le \vol_h$ and $\varphi^\natural \diam$, $\varphi^\natural \vol_h$.

\begin{example} Using inequalities between germs, we can restate $\alpha$-H\"older continuity for a $f \in \germ^0(\O)$ as $\abs{\delta f} \le \c \, \diam^{\alpha}$, that is
\[ \abs{(\delta f)_{p_0 p_1}} =  \abs{ f_{p_1} - f_{p_0} } \le \c \abs{p_1 -p_0}^{\alpha}\quad \text{for $[p_0 p_1] \in \simp^1(\O)$,}\]
where  $\c \ge 0$ is some constant. The smallest such constant is denoted $\sqa{\delta f}_{\alpha}$.
\end{example}

Motivated by the previous example, when $\v= \diam^\gamma = \vol_1^\gamma$, for some $\gamma \in \R$, we simply write $\sqa{\cdot}_\gamma := [\cdot]_\v$ and $\approx_\gamma$ instead of $\approx_\v$.  Since, for some constant $\c > 0$, one has the isodiametric inequality (on triangles)
\[ \vol_2 \le \c \diam^{2},\]
then for some (different) constant $\c =\c(\gamma_2) \ge 0$, one has \[[\cdot]_{\diam^{\gamma_1+2\gamma_2} } \le \c [\cdot]_{\diam^{\gamma_1} \vol_2^{\gamma_2}},\] for any $\gamma_2 \ge 0$.


\subsubsection*{Dini gauges} We introduce the following generalization of the ``control functions'' from~\cite{feyel_curvilinear_2006}.  

\begin{definition}
For an $r \ge 0$,  a $k$-gauge $\u \in \germ^k(\O)$ is called \emph{$r$-Dini} if it is uniform and for every $S \in \simp^k(\O)$
one has
\[ \lim_{n \to \infty} 2^{n r}\ang{ \bra{2^{-n}}_\natural S, \u}  = 0.\]
If, in addition, $\u$ is such that
\begin{equation}\label{eq:v-dini} \ang{S, \tilde{\u}} := \sum_{n=0}^{+\infty}2^{n r} \ang{ \bra{2^{-n}}_\natural S, \u} < \infty \quad \text{for every $S \in \simp^k(\O)$},  \end{equation}
then $\u$ is called \emph{strong $r$-Dini}.
\end{definition}

Notice that, in general, $(2^{-n})_\natural S$ may not belong to $\simp^{k}(\O)$, but since $\u$ is the same on isometric simplices (because $\u$ is uniform), it is sufficient to consider any translated copy of it which  belongs to $\simp^{k}(\O)$.  The terminology ``Dini'' is reminiscent of the one, closely related, on moduli of continuity of functions.

\begin{remark}\label{rem_Dini_gauge1}
The following immediate observations are worth being made.
\begin{enumerate}[(i)]
\item Every $r$-Dini gauge is strong $h$-Dini for every $h< r$.
\item For a $k$-gauge $\u \in \germ^k(\O)$ to be $r$-Dini (resp.\ strong $r$-Dini), it is sufficient that
$\u(S)=o(\diam (S)^r)$ (resp.\ $\u(S)=o(\diam (S)^{r+\alpha})$ for some $\alpha>0$) as
$\diam S\to 0$, $S\in \germ^k(\O)$. For instance, if $h\in \{2,\ldots,  k\}$, then
the $k$-gauge $\vol_h$ is strong  $r$-Dini for every $r\in (0,h)$.
\item
Sums and pointwise maxima of a finite number of (strong) $r$-Dini gauges are (strong) $r$-Dini.
\item
If $\u$ is strong $r$-Dini, then $\tilde{\u}$ in~\eqref{eq:v-dini} is $r$-Dini, because
\[\begin{split} \lim_{n \to \infty}  2^{nr} \ang{ (2^{-n})_\natural S, \tilde{\u}}  & = \lim_{n \to \infty}  \sum_{m=0}^{+\infty}  2^{mr} 2^{nr} \ang{ \bra{2^{-m}}_{\natural} \bra{2^{-n}}_\natural S , \u}
 \\& = \lim_{n \to \infty} \sum_{m=n}^{+\infty}  2^{mr} \ang{ \bra{2^{-m}}_\natural S, \u} = 0.
\end{split}
\]
\item Moreover, if $\u$ is strong $h$-Dini for some $h>r$, then the $k$-gauge $\tilde{\u}_r$ defined as in~\eqref{eq:v-dini}, that is 
\[ \ang{S, \tilde{\u}_r} := \sum_{n=0}^{+\infty}2^{n r} \ang{ \bra{2^{-n}}_\natural S, \u} < \infty \quad \text{for every $S \in \simp^k(\O)$,}\]
is $h$-Dini as well, since it is smaller than the $k$-gauge $\tilde{\u}_h$ defined by
\[ \ang{S, \tilde{\u}_h} := \sum_{n=0}^{+\infty}2^{n h} \ang{ \bra{2^{-n}}_\natural S, \u} < \infty \quad \text{for every $S \in \simp^k(\O)$.}\]
\end{enumerate}
\end{remark}

\begin{example}\label{ex:gauge-diam} The $k$-gauge $\u := \diam^{\gamma_1} \vol_2^{\gamma_2}$ is strong $r$-Dini for every $r < \gamma_1+ 2 \gamma_2$ with
\begin{align*}
\ang{S, \tilde{\u}}& =  \sum_{n=0}^{+\infty} 2^{nr} \ang{ (2^{-n})_\natural S, \u} = \sum_{n=0}^{+\infty} 2^{nr} 2^{-n(\gamma_1 + 2 \gamma_2)} \ang{ S, \u}\\
& = (1-2^{r-(\gamma_1 + 2 \gamma_2)} )^{-1} \ang{S, \u},
\end{align*}
for every $S \in \simp^k(\O)$.
\end{example}

\section{Sewable germs}\label{sec:sewable}

In this section, we investigate $k$-germs that can be locally ``integrated'' along all affine $k$-planes, providing in particular a version of the so-called sewing lemma in case of $2$-germs ( Theorem~\ref{thm:sew-existence}). Its proof, together with that of the other stated theorems, is postponed  in Appendix~\ref{appendix-sewing}. 

\subsection{Nonatomic and regular germs}

We further need the following notions.

\begin{definition}\label{def_reg1}
We call a $k$-germ $\omega \in \germ^{k}(\O)$
\begin{itemize}
\item[(i)] nonatomic, if $\omega_S= 0$  for every  $S \in \simp^k(\O)$ with $\vol_k{S}= 0$,
\item[(ii)] closed on $k$-planes, if for every $\varphi\colon I \subseteq \R^k \to \O$ affine, the germ $\varphi^\natural \omega \in \germ^k(I)$ is closed, i.e., $\delta (\varphi^\natural \omega) = 0$.
\item[(iii)] regular, if it is nonatomic and closed on $k$-planes.
\end{itemize}
\end{definition}

Clearly, each of the above conditions remain preserved by linear combinations of germs, and hence
define the linear subspaces of germs.
Notice also that the nonatomicity condition is void for $k=0$, while for $k=1$ it reduces just to $\omega_{pp} = 0$, for every $p \in \O$.  We also mention that by Remark~\ref{rem:poincare} one could replace closedness on $k$-planes with exactness on  convex subsets of $k$-planes.

\begin{remark}\label{prop_additive1}
Every regular $k$-germ $\omega \in \germ^{k}(\O)$ is \emph{additive} in the sense that
if $S\in \simp^k(\O)$ and $\{S_i\}\subset \simp^k(\O)$ is a finite family such that
$ S=\sum_i S_i + N + \partial T$, where $T$ is a $(k+1)$-chain with $\vol_{k+1}(T) = 0$, i.e., $\conv (T) \subseteq \pi$ for some  $k$-plane $\pi$, and $N = \sum_{j} a_j N_j \in \chain^k(\O)$ with $\vol_k (N_j)= 0$ for every $j$, then
then
\[
\ang{S, \omega}=\sum_i\ang{S_i, \omega},
\]
since $\ang{N, \omega} = 0$ and $\ang{\partial T, \omega}=\ang{T, \delta\omega} =0$.
\end{remark}

\begin{proposition}\label{prop_alternating1}
Every regular  $\omega \in \germ^{k}(\O)$ is alternating in the sense of Definition~\ref{def:alternating}. 
\end{proposition}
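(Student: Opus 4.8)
The plan is to reduce the claim to the case of adjacent transpositions and then to realise each such transposition, up to degenerate terms, as part of the boundary of a suitable flat $(k+1)$-simplex.

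\emph{Reduction to adjacent transpositions.} The symmetric group of $\cur{0,1,\ldots,k}$ is generated by the transpositions $\tau_i := (i,\,i+1)$, $0 \le i \le k-1$, the sign is a homomorphism, and $\sigma \mapsto \sigma S$ is a group action, i.e.\ $(\sigma\tau)S = \sigma(\tau S)$. Hence, writing a given permutation $\sigma$ as a product of $\ell$ adjacent transpositions (so that $(-1)^\sigma = (-1)^\ell$) and peeling them off one at a time, it is enough to prove that
\[ \ang{\tau_i S, \omega} = -\ang{S, \omega} \qquad \text{for every } 0 \le i \le k-1 \text{ and every } S \in \simp^k(\O). \]

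\emph{The flat auxiliary simplex.} Fix $S = [p_0 p_1 \ldots p_k]$ and $i$, and form the $(k+1)$-simplex obtained by inserting a second copy of $p_i$ right after $p_{i+1}$,
\[ T := [\, p_0 \ldots p_{i-1}\; p_i\; p_{i+1}\; p_i\; p_{i+2} \ldots p_k \,]. \]
Since $\conv(T) = \conv(S) \subseteq \O$ we have $T \in \simp^{k+1}(\O)$, and since two of its vertices coincide, $\vol_{k+1}(T) = 0$; thus $T$ lies in a $k$-plane, and closedness of $\omega$ on $k$-planes gives $\ang{\partial T, \omega} = \ang{T, \delta\omega} = 0$ (this is exactly the computation recorded in Remark~\ref{prop_additive1}). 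Now expand $\partial T$ via the definition of the boundary: deleting any vertex other than one of the two copies of $p_i$ leaves a face still containing both of them, hence a $k$-simplex of zero $k$-volume, on which $\omega$ vanishes by nonatomicity; deleting the first copy of $p_i$ (in position $i$) produces $\tau_i S$ with sign $(-1)^i$; deleting the second copy of $p_i$ (in position $i+2$) produces $S$ with sign $(-1)^{i+2} = (-1)^i$. Therefore $0 = (-1)^i\bra{\ang{\tau_i S, \omega} + \ang{S, \omega}}$, which is the desired identity, and the proposition follows.

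\emph{Expected difficulty.} I do not anticipate a genuine obstacle. The two points requiring attention are: first, that the weaker hypothesis ``closed on $k$-planes'' (rather than ``$\delta\omega = 0$'') is precisely what is needed to annihilate $\ang{\partial T, \omega}$, which is what Remark~\ref{prop_additive1} makes explicit; and second, the sign bookkeeping in the boundary expansion — in particular verifying that the only two non-degenerate faces of $T$ are exactly $\tau_i S$ and $S$, and that they enter $\partial T$ with the \emph{same} sign $(-1)^i$, so that they add rather than cancel.
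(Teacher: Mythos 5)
Your proof is correct and is essentially the paper's argument unpacked: the flat auxiliary $(k+1)$-simplex $T$ you introduce is exactly the chain $\varrho S$ appearing in the proof of Lemma~\ref{lem:perm} (its degenerate faces are the chain $\nu S$ there), and annihilating $\ang{\partial T,\omega}$ via closedness on $k$-planes while killing the repeated-vertex faces by nonatomicity is precisely how the paper combines Lemma~\ref{lem:perm} with Remark~\ref{rem:equivalence-closed} applied to the restriction of $\omega$ to a $k$-plane containing $S$. Your reduction to adjacent transpositions and the sign bookkeeping (both non-degenerate faces entering with the same sign $(-1)^i$) are correct.
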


\begin{proof}
This follows from Lemma~\ref{lem:perm} and  Remark~\ref{rem:equivalence-closed} applied on the restriction of $\omega$ to a $k$-plane containing any given $S \in \simp^k(\O)$.
\end{proof}

\begin{proposition}\label{prop_reg1}
A $k$-germ $\omega \in \germ^{k}(\O)$ is regular, if and only if both $\omega$ and $\delta\omega$ are nonatomic.
\end{proposition}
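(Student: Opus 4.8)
The plan is to reduce both implications to a single elementary computation with the duality calculus, since ``regular'' means ``nonatomic and closed on $k$-planes'' and nonatomicity of $\omega$ appears on both sides of the equivalence. Concretely, for every affine map $\varphi\colon I\subseteq\R^k\to D$ with $I$ convex and every $(k+1)$-simplex $\tilde S\in\simp^{k+1}(I)$, I will use $\partial\varphi_\natural=\varphi_\natural\partial$ together with the definitions $\ang{C,\delta\omega}=\ang{\partial C,\omega}$ and $\ang{\varphi_\natural C,\omega}=\ang{C,\varphi^\natural\omega}$ to obtain
\[
\ang{\tilde S,\delta(\varphi^\natural\omega)}=\ang{\partial\tilde S,\varphi^\natural\omega}=\ang{\varphi_\natural\partial\tilde S,\omega}=\ang{\partial\varphi_\natural\tilde S,\omega}=\ang{\varphi_\natural\tilde S,\delta\omega}.
\]
Thus closedness of $\varphi^\natural\omega$ on $I$ is equivalent to $\delta\omega$ vanishing on all affine push-forwards $\varphi_\natural\tilde S$, and the proposition comes down to matching these push-forwards with the degenerate $(k+1)$-simplices.

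For the implication ``regular $\Rightarrow$ both nonatomic'', only nonatomicity of $\delta\omega$ needs an argument. Given $S=[p_0\ldots p_{k+1}]\in\simp^{k+1}(D)$ with $\vol_{k+1}(S)=0$, I note that vanishing of the Gram determinant in the definition of $\vol_{k+1}$ forces $p_0,\ldots,p_{k+1}$ to lie in a common affine subspace of $\R^d$ of dimension at most $k$, so there is an affine $\varphi\colon\R^k\to\R^d$ and points $q_0,\ldots,q_{k+1}\in\R^k$ with $\varphi(q_i)=p_i$. Restricting $\varphi$ to $I:=\conv(\{q_0,\ldots,q_{k+1}\})$ makes it an admissible map into $D$ (because $\varphi(I)=\conv(S)\subseteq D$) with $\varphi_\natural\tilde S=S$ for $\tilde S:=[q_0\ldots q_{k+1}]$. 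Since $\omega$ is closed on $k$-planes, $\delta(\varphi^\natural\omega)=0$, and the displayed chain gives $\ang{S,\delta\omega}=0$; as $S$ was an arbitrary degenerate $(k+1)$-simplex, $\delta\omega$ is nonatomic.

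For the converse, assume $\omega$ and $\delta\omega$ are nonatomic. Then $\omega$ is nonatomic, so only closedness on $k$-planes remains. For any affine $\varphi\colon I\subseteq\R^k\to D$ — tested on one $(k+1)$-simplex $\tilde S$ at a time, replacing $I$ by $\conv(\tilde S)$ so the convexity assumption holds — the push-forward $\varphi_\natural\tilde S$ has $\conv(\varphi_\natural\tilde S)\subseteq\varphi(I)$, an affine set of dimension at most $k$; hence $\vol_{k+1}(\varphi_\natural\tilde S)=0$ and nonatomicity of $\delta\omega$ yields $\ang{\varphi_\natural\tilde S,\delta\omega}=0$. By the displayed chain $\ang{\tilde S,\delta(\varphi^\natural\omega)}=0$ for every $\tilde S$, i.e.\ $\delta(\varphi^\natural\omega)=0$, so $\omega$ is closed on $k$-planes and therefore regular.

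The calculation itself is automatic once the duality identities of Section~\ref{sec:simp-germ} are in place; the only point requiring a little care is the domain bookkeeping in the first implication — choosing the affine parametrization $\varphi$ and, crucially, its domain $I$ so that $\varphi$ genuinely is one of the maps ``$\varphi\colon I\subseteq\R^k\to D$'' appearing in the definition of closedness on $k$-planes (in particular $\varphi(I)\subseteq D$), rather than an affine map defined on all of $\R^k$. I expect this to be the main, and essentially only, subtlety.
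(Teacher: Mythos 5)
Your proof is correct and follows essentially the same route as the paper's: both directions reduce to the observation that a $(k+1)$-simplex has zero $(k+1)$-volume exactly when it lies in a $k$-plane, so that nonatomicity of $\delta\omega$ is equivalent to the vanishing of $\ang{\varphi_\natural\tilde S,\delta\omega}=\ang{\tilde S,\delta(\varphi^\natural\omega)}$ over all affine parametrizations $\varphi$. The only difference is that you make explicit the duality chain and the choice of $\varphi$ and its domain $I$, which the paper's shorter proof leaves implicit.
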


\begin{proof}
If $\omega$ is regular, it suffices to show that $\delta\omega$ is nonatomic (because $\delta\omega$ is automatically closed). The latter is true because if
$T\in \simp^{k+1}(\O)$ with $\vol_{k+1} (T)=0$, then $T$ is supported over a $k$-plane, hence $\ang{T,\delta\omega}=0$, because
$\delta\omega$ is closed over $k$-planes.

On the contrary, if both $\omega$ and $\delta\omega$ are nonatomic, and $T\in \simp^{k+1}(\O)$ is supported over a $k$-plane,
then
$\ang{T,\delta\omega}=0$ because $\vol_{k+1}( T)=0$, so that $\omega$ is closed over $k$-planes and hence regular as claimed.
\end{proof}

Proposition~\ref{prop_reg1} easily implies the following corollary, since nonatomicity is stable by pointwise convergence.

\begin{corollary}\label{cor:pointwise-limit-regluar}
A limit of a pointwise converging sequence $(\omega^n)_{n \ge 1}$ of regular germs is regular.
\end{corollary}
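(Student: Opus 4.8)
The plan is to invoke Proposition~\ref{prop_reg1}, which reduces regularity of a germ to the nonatomicity of the germ itself and of its coboundary, and then to verify that both nonatomicity conditions pass to pointwise limits. So the first step is to observe that if $(\omega^n)_{n\ge 1}$ is a pointwise converging sequence of regular germs with limit $\omega$, then by Proposition~\ref{prop_reg1} each $\omega^n$ is nonatomic and each $\delta\omega^n$ is nonatomic; it suffices to show $\omega$ and $\delta\omega$ are nonatomic as well.

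For $\omega$: fix $S\in\simp^k(D)$ with $\vol_k(S)=0$. Then $\omega^n_S=0$ for every $n$ by nonatomicity of $\omega^n$, and since $\omega^n_S\to\omega_S$ pointwise we get $\omega_S=0$. Hence $\omega$ is nonatomic. For $\delta\omega$: fix $T\in\simp^{k+1}(D)$ with $\vol_{k+1}(T)=0$. Here I would use that the coboundary is evaluated through a \emph{finite} linear combination, namely $\ang{T,\delta\omega}=\ang{\partial T,\omega}=\sum_{i=0}^{k+1}(-1)^i\ang{[p_0\ldots\hat p_i\ldots p_{k+1}],\omega}$, and likewise for each $\omega^n$. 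Since $\ang{T,\delta\omega^n}=0$ for all $n$ (as $\delta\omega^n$ is nonatomic and $\vol_{k+1}(T)=0$), and each of the finitely many terms $\ang{[p_0\ldots\hat p_i\ldots p_{k+1}],\omega^n}$ converges to $\ang{[p_0\ldots\hat p_i\ldots p_{k+1}],\omega}$, taking the limit in the finite sum yields $\ang{T,\delta\omega}=0$. Thus $\delta\omega$ is nonatomic, and Proposition~\ref{prop_reg1} gives that $\omega$ is regular.

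There is really no main obstacle here: the only mildly delicate point is that pointwise convergence of germs does \emph{not} by itself control $\delta\omega$ in any quantitative sense, but because $\delta$ is a finite-sum operator on each individual simplex, pointwise convergence is preserved under $\delta$, and that is all that is needed. I would phrase the argument so as to make explicit that it is precisely the finiteness of $\partial T$ (a $(k+1)$-simplex has $k+2$ faces) that allows the interchange of limit and summation, in contrast to the situation with, say, infinite dyadic decompositions where uniform gauge bounds would be required. This is exactly why the statement is recorded as an immediate corollary of Proposition~\ref{prop_reg1} rather than proved from the definition of regularity directly, since the ``closed on $k$-planes'' condition involves the behaviour of $\varphi^\natural\omega$ over all affine charts and is less transparently stable under pointwise limits than the two nonatomicity conditions.
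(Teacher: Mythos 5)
Your proposal is correct and follows exactly the paper's route: the paper derives the corollary from Proposition~\ref{prop_reg1} by noting that nonatomicity is stable under pointwise convergence, with the (implicit) observation you make explicit — that $\delta\omega^n\to\delta\omega$ pointwise because $\delta$ acts through the finite sum over the faces of a simplex.
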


\begin{remark}\label{rem_dom_reg1}
Note that $\delta\omega$ is nonatomic if and only if it is regular (since it is automatically closed). Thus
$\delta$ sends regular germs to regular ones, in particular, $\delta f$ for $f \in \germ^0(D)$ is always regular. Its restriction to regular germs will further be denoted by $\d$, namely,
for a regular $\omega\in \germ^k(\O)$ we write $\d\omega:=\delta\omega$.
\end{remark}

\begin{example}\label{ex:xidxj}
Let $x^j\colon D \subseteq \R^d \to \R$, $j=1,\ldots, d$, denote coordinate functions. Then the 
$1$-germ $\omega^{ij} = \frac 1 2 \bra{ x^i \cup (\delta x^j) + (\delta x^j ) \cup x^i}$ is regular. In fact it is nonatomic, and using~\eqref{eq:identity-leibniz} we get
\[  \delta \omega^{ij} = \frac{1}{2} \bra{ \delta x^i \cup \delta x^j - \delta x^j \cup \delta x^i},\]
which acts on $S=[pqr] \in \simp^2(D)$ as the (oriented) area of the projection of $S$ on the $(x^i, x^j)$-plane:
\begin{equation}\label{eq:signed-area} \ang{ [pqr],  \delta( \omega^{ij}) } = \frac 1 2 \det \bra{ \begin{array}{cc} q^i -p^i & r^i - q^i \\
q^j - p^j & r^j - q^j \end{array}}.\end{equation}
Hence if $\vol_2([pqr]) = 0$, i.e., the three vertices are collinear, then $\delta \omega^{ij}_S = 0$ or in other words  $\delta \omega^{ij}$ is also nonatomic. Therefore, the regularity of $\omega^{ij}$ follows from Proposition~\ref{prop_reg1}.
\end{example}

For regular germs $\omega \in \germ^k(\O)$, we use the integral notation, i.e.\ write
\[
\int_S \omega := \ang{S, \omega} \quad\mbox{for $S \in \simp^k(\O)$.}
\]
This is justified by the following example.

\begin{example}\label{ex:leb-germ} For any continuous $f$, $g \in \germ^0(D)$ with  $\sqa{ \delta g}_1 < \infty$, i.e., $g$ is Lipschitz continuous, consider the $1$-germ $f \d g$, defined by
\begin{equation}\label{eq:integral-leb} \ang{ [pq], f \d g } = \int_{[pq]} f \d  g  := \int_{0}^1 f(p+ t(q-p)) \frac{\d}{\d t} g(p+t(q-p)) \d t,\end{equation}
where the integral is intended in the Lebesgue sense (recall that $t\in [0,1] \mapsto g(p+ t(q-p))$ is Lipschitz continuous). This germ is clearly nonatomic and $\delta (f \d g)$ restricted to every line is zero, which is shown by a simple reparametrization and additivity of Lebesgue integral. Note that the germ $\omega^{ij}$ in the above Example~\ref{ex:xidxj} coincides with $x^i \d x^j$: in fact, by a direct computation of the integral in~\eqref{eq:integral-leb}
one gets
\begin{align*}
\ang{[pq],x^1\d x^j} & = \int_{0}^1 \left(p^i+ t\left(q^i-p^i\right)\right) \left(q^j-p^j\right) \d t \\
 & = p^i\left(q^j-p^j\right) +\frac 1 2  \left(q^i-p^i\right)\left(q^j-p^j\right)
 = \ang{[pq],\omega^{ij}}.
\end{align*}
\end{example}

\subsection{Sewing as regularization}

\begin{definition}[sewable germs]\label{def_sewable1}
 We say that $\omega \in \germ^{k}(\O)$ is $\v$-sewable for some  $k$-Dini gauge $\v  \in \germ^{k}(\O)$,
 if there exists
a regular germ $\sew \omega \in \germ^k(\O)$  such that
\[  \omega \approx_\v \sew \omega.\]
We will say that $\omega \in \germ^{k}(\O)$ is sewable, if it is $\v$-sewable
for some  $k$-Dini gauge $\v  \in \germ^{k}(\O)$. 
\end{definition}

When, $\v = \diam^\alpha$, we say $\alpha$-sewable instead of $\v$-sewable. If $\omega \in \germ^{k}(\O)$ is sewable, then
  $\sew\omega$ is alternating (by Proposition~\ref{prop_additive1}) and additive (in the sense of Remark~\ref{prop_alternating1}).

Sewable germs can be thought of as the linear subspace of germs ``infinitesimally'' near to regular germs. 
 Any regular $\omega$ is $\v$-sewable for any $\v$ with $\sew \omega := \omega$.

\begin{example}\label{ex:sew-leb}
For any $f$, $g \in \germ^0(D)$ with $f$ (uniformly) continuous and $\sqa{ \delta g}_1 < \infty$, the $1$-germ $f \cup \delta g$ is sewable with $\sew (f \cup \delta g) = f \d g$ introduced in Example~\ref{ex:leb-germ}. Indeed, letting $\eta_f$ stand for the modulus of continuity of $f$, we have
\[    \begin{split} \abs{ \int_{[pq]} f \d g - f_p \cup \delta g_{pq}   } & = \abs{ \int_{0}^1 \bra{f(p+ t(q-p)) -f(p)} \frac{\d}{\d t} g(p+t(q-p)) \d t} \\
& \le \eta_f(|q-p|) [\delta g]_{\diam} |q-p| =: \v([pq]),  \end{split}\]
and clearly $\v$ is a $1$-Dini gauge.
\end{example}

\begin{example}\label{ex_abs_dt1}
The germ $\omega\in \germ^1(\R)$ defined by $\omega_{st}:=|t-s|$ is not sewable. In fact, otherwise for
some regular $\eta\in \germ^1(\R)$ one would have with $s_n:= 2^{-n}$ the relationships
\begin{align*}
s_n & =\omega_{s_n 0} = \eta_{s_n 0}+o(s_n),\\
s_n & =\omega_{0 s_n} = \eta_{0 s_n}+o(s_n)=-\eta_{s_n 0}+o(s_n),
\end{align*}
so that $2s_n= o(s_n)$, a contradiction.
\end{example}


Note that the use of only $k$-Dini gauges in the Definition~\ref{def_sewable1} of sewable $k$-germs is not
casual. In fact, the germ $\omega\in \germ^1(\R)$ from the above Example~\ref{ex_abs_dt1} is $\v$-near both the regular $1$-germ $\eta\equiv 0$ and, say, the regular $1$-germ $\xi$ defined by $\xi_{st}:= t-s$
with $\v:=\diam^\alpha$ with every $\alpha\in [0,1)$.
On the contrary, with Definition~\ref{def_sewable1},
the sewing map $\omega \mapsto \sew \omega$ is well-defined, 
i.e.,  $\sew \omega$ is uniquely determined, if it exists (i.e. when $\omega$ is sewable), as a corollary of the following result.

\begin{theorem}[sewing, uniqueness]\label{thm:sew-unique}
Let $k \in \cur{1,2}$. If $\omega \in \germ^k(\O)$ is regular and $\omega \approx_\v 0$ for some  $k$-Dini gauge $\v \in \germ^k(\O)$, then $\omega =0$. 
\end{theorem}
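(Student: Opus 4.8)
The plan is to show that a regular $k$-germ $\omega$ which is $\v$-near zero for a $k$-Dini gauge must vanish identically, by exploiting the additivity of regular germs together with the self-similar dyadic subdivision of a $k$-simplex. Fix $S\in\simp^k(\O)$ and recall that, since $\omega$ is regular, Remark~\ref{prop_additive1} gives additivity: whenever $S=\sum_i S_i+N+\partial T$ with $\vol_{k+1}(T)=0$ and each $N_j$ degenerate, then $\ang{S,\omega}=\sum_i\ang{S_i,\omega}$. The key geometric input is that both a segment ($k=1$) and a triangle ($k=2$) can be subdivided into $2^k$ simplices each isometric (up to translation) to $(2^{-1})_\natural S$, with the ``extra'' pieces produced by the subdivision lying in lower-dimensional planes. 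Iterating $n$ times, one writes $S$ as a sum of $2^{nk}$ translated copies of $(2^{-n})_\natural S$ modulo a chain that is invisible to $\omega$.

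The main steps, in order, are as follows. First I would set up the dyadic subdivision: for $k=1$, the segment $[p_0p_1]$ is cut at its midpoint into two halves, each a translate of $(2^{-1})_\natural[p_0p_1]$, with no lower-dimensional correction needed at all; for $k=2$, the triangle $[p_0p_1p_2]$ is cut by the three edge-midpoints into four sub-triangles, the three ``corner'' ones being translates of $(2^{-1})_\natural[p_0p_1p_2]$ and the central one being congruent to it up to an orientation-reversing isometry, while the bookkeeping of how this central triangle relates to $(2^{-1})_\natural S$ is handled using that $\omega$ is alternating (Proposition~\ref{prop_alternating1}) — so all four contributions equal $\ang{(2^{-n+1})_\natural S',\omega}$ for appropriate translated copies. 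Second, by induction this yields
\[
\ang{S,\omega}=\sum_{j=1}^{2^{nk}}\ang{S_j^{(n)},\omega},
\]
where each $S_j^{(n)}$ is a translated (and for $k=2$ possibly reflected, with compensating sign) copy of $(2^{-n})_\natural S$ lying in $\O$. Third, I apply the hypothesis $\abs{\ang{S_j^{(n)},\omega}}\le[\omega]_\v\,\ang{S_j^{(n)},\v}$, and since $\v$ is a uniform gauge, $\ang{S_j^{(n)},\v}=\ang{(2^{-n})_\natural S,\v}$ for every $j$. Summing the $2^{nk}$ terms gives
\[
\abs{\ang{S,\omega}}\le [\omega]_\v\;2^{nk}\,\ang{(2^{-n})_\natural S,\v}.
\]
Finally, letting $n\to\infty$, the right-hand side tends to $0$ precisely by the defining property of a $k$-Dini gauge (Definition of Dini gauge with $r=k$: $\lim_n 2^{nk}\ang{(2^{-n})_\natural S,\v}=0$). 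Hence $\ang{S,\omega}=0$, and since $S$ was arbitrary, $\omega=0$.

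The step I expect to be the main obstacle is the $k=2$ dyadic subdivision bookkeeping: verifying that the central sub-triangle, which is a rotated/reflected copy of the scaled original rather than a pure translate, still contributes exactly $\ang{(2^{-n})_\natural S',\omega}$ for a translated copy $S'$. This requires invoking both that $\omega$ is closed on $2$-planes (so that the subdivision identity $S=\sum_i S_i+\partial T$ with $T$ planar holds up to $\omega$-negligible chains — here $T$ is a degenerate $3$-chain since everything lies in the plane of $S$) and that $\omega$ is alternating, to convert the orientation of the central triangle into the standard one at the cost of a sign that cancels correctly. One must also check that translating the scaled copies to sit inside $\O$ is legitimate: since $\v$ is uniform the estimate is insensitive to this, and for the additivity identity one stays within the convex hull $\conv(S)\subseteq\O$, so all simplices involved are genuinely in $\simp^k(\O)$. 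The $k=1$ case is, as the paper notes, essentially immediate and serves as a clean warm-up for the inductive summation.
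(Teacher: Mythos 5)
Your proposal is correct and follows essentially the paper's own route: the paper formalizes your subdivision identity as $\dya \equiv \Id$ (Lemma~\ref{lem:dyadic-decomposition}), uses regularity to get $\omega = \dya^\prime\omega$, and iterates via Lemma~\ref{lem:uniqueness-general} to obtain $\abs{\ang{S,\omega}} \le [\omega]_\v\, 2^{kn}\ang{(2^{-n})_\natural S,\v} \to 0$ by the $k$-Dini property, exactly as you outline. The only cosmetic point is your worry about the central sub-triangle: with the paper's vertex ordering it is isometric to $(2^{-1})_\natural S$ via a point reflection (which is orientation-preserving within the plane), so it enters the decomposition with coefficient $+1$ and no sign bookkeeping via the alternating property is needed, and in any case the uniform gauge is insensitive to which ambient isometry realizes the congruence.
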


\begin{corollary}
Let $\omega \in \germ^k(\O)$ be both $\v$-sewable and $\tilde{\v}$-sewable, with associated regular $k$-germs $\sew \omega$ and $\widetilde{\sew} \omega$. Then $\sew \omega = \widetilde\sew \omega$.
\end{corollary}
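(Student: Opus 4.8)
The plan is to deduce the statement directly from Theorem~\ref{thm:sew-unique} by applying it to the difference $\sew\omega - \widetilde{\sew}\omega$. So the first step I would carry out is to check that this difference is a \emph{regular} $k$-germ: regularity (nonatomicity together with closedness on $k$-planes) is preserved under linear combinations of germs, and both $\sew\omega$ and $\widetilde{\sew}\omega$ are regular by the very definition of $\v$-sewability (Definition~\ref{def_sewable1}). Hence $\sew\omega - \widetilde{\sew}\omega \in \germ^k(\O)$ is regular.

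The second step is to exhibit a single $k$-Dini gauge with respect to which this difference is $\approx 0$. I would take the gauge $\max\cur{\v,\tilde{\v}}$, which is again a $k$-Dini gauge by Remark~\ref{rem_Dini_gauge1}(iii) (a pointwise maximum of finitely many $k$-Dini gauges is $k$-Dini). Writing $\sew\omega - \widetilde{\sew}\omega = (\sew\omega - \omega) + (\omega - \widetilde{\sew}\omega)$ and using that $\v \le \max\cur{\v,\tilde{\v}}$ and $\tilde{\v} \le \max\cur{\v,\tilde{\v}}$ imply $[\cdot]_{\max\cur{\v,\tilde{\v}}} \le [\cdot]_{\v}$ and $[\cdot]_{\max\cur{\v,\tilde{\v}}} \le [\cdot]_{\tilde{\v}}$, together with the triangle inequality for $[\cdot]_{\max\cur{\v,\tilde{\v}}}$, I obtain
\[
[\sew\omega - \widetilde{\sew}\omega]_{\max\cur{\v,\tilde{\v}}} \le [\sew\omega - \omega]_{\v} + [\omega - \widetilde{\sew}\omega]_{\tilde{\v}} < \infty,
\]
where finiteness of the two right-hand terms is exactly the hypothesis $\omega \approx_\v \sew\omega$ and $\omega \approx_{\tilde{\v}} \widetilde{\sew}\omega$. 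Thus $\sew\omega - \widetilde{\sew}\omega \approx_{\max\cur{\v,\tilde{\v}}} 0$.

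The third and final step is to invoke Theorem~\ref{thm:sew-unique} for $k \in \cur{1,2}$: a regular $k$-germ that is $\v$-near $0$ for some $k$-Dini gauge $\v$ must vanish identically. Applying this to $\sew\omega - \widetilde{\sew}\omega$ with the $k$-Dini gauge $\max\cur{\v,\tilde{\v}}$ yields $\sew\omega - \widetilde{\sew}\omega = 0$, i.e.\ $\sew\omega = \widetilde{\sew}\omega$.

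There is essentially no genuine obstacle in this corollary: it is pure bookkeeping on top of Theorem~\ref{thm:sew-unique}, the only points requiring a moment's thought being that the difference of the two candidate regular germs is again regular and that passing to the larger gauge $\max\cur{\v,\tilde{\v}}$ preserves the $k$-Dini property. The one caveat worth stating explicitly is that, via its reliance on Theorem~\ref{thm:sew-unique}, the argument is restricted to $k \in \cur{1,2}$; all the real difficulty (the dyadic summation estimate) lives in the proof of that theorem, not here.
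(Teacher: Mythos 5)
Your proposal is correct and follows exactly the paper's own argument: the difference $\sew\omega-\widetilde\sew\omega$ is regular, it is $\approx_{\w}0$ for the $k$-Dini gauge $\w=\max\cur{\v,\tilde\v}$ (Remark~\ref{rem_Dini_gauge1}(iii)), and Theorem~\ref{thm:sew-unique} then forces it to vanish. Nothing is missing, and your caveat about the restriction to $k\in\cur{1,2}$ inherited from the uniqueness theorem is accurate.
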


\begin{proof}
By definition, $\sew \omega \approx_{\v} \omega \approx_{\tilde{\v}}\widetilde \sew \omega$. Then 
$\sew \omega \approx_{\w} \widetilde\sew  \omega$, i.e., 
\[\sew \omega - \widetilde \sew \omega \approx_\w 0 \quad\text{with $\w:= \max\cur{\v, \tilde{\v}}$},
\] and $\w$ is a $k$-Dini gauge in view of Remark~\ref{rem_Dini_gauge1}(iii). Since the difference of regular germs is regular, by Theorem~\ref{thm:sew-unique} we obtain the thesis.
\end{proof}

Another simple consequence of Theorem~\ref{thm:sew-unique} is the following result.

\begin{corollary}\label{coro:close-to-sewable} Let $\omega, \tilde{\omega} \in \germ^{k}(\O)$, where $\omega$ is $\v$-sewable and $\omega \approx_\v \tilde{\omega}$. Then  $\tilde{\omega}$ is $\v$-sewable with $\sew\tilde{\omega}= \sew \omega$.
\end{corollary}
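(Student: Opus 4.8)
The plan is to deduce this directly from the definition of sewability together with Theorem~\ref{thm:sew-unique}, exactly in the spirit of the previous corollary. First I would recall that by hypothesis $\omega$ is $\v$-sewable, so there exists a regular germ $\sew\omega \in \germ^k(\O)$ with $\omega \approx_\v \sew\omega$, i.e.\ $[\omega - \sew\omega]_\v < \infty$. Since also $\omega \approx_\v \tilde{\omega}$ by assumption, the triangle inequality for $[\cdot]_\v$ gives
\[
[\tilde{\omega} - \sew\omega]_\v \le [\tilde{\omega} - \omega]_\v + [\omega - \sew\omega]_\v < \infty,
\]
that is, $\tilde{\omega} \approx_\v \sew\omega$. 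As $\sew\omega$ is regular, this shows by Definition~\ref{def_sewable1} that $\tilde{\omega}$ is $\v$-sewable, and $\sew\omega$ is a regular germ witnessing this.

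It remains to check that the regular germ associated to $\tilde{\omega}$ is exactly $\sew\omega$, i.e.\ that there is no other choice. This is where Theorem~\ref{thm:sew-unique} enters: if $\sew\tilde{\omega}$ is any regular germ with $\tilde{\omega} \approx_\v \sew\tilde{\omega}$, then $\sew\tilde{\omega} - \sew\omega$ is regular (regularity is preserved by linear combinations), and
\[
\sew\tilde{\omega} - \sew\omega \approx_\v 0
\]
since $\sew\tilde{\omega} \approx_\v \tilde{\omega} \approx_\v \sew\omega$ and $\approx_\v$ is an equivalence relation. As $\v$ is a $k$-Dini gauge and $k \in \{1,2\}$, Theorem~\ref{thm:sew-unique} forces $\sew\tilde{\omega} - \sew\omega = 0$, hence $\sew\tilde{\omega} = \sew\omega$.

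There is essentially no obstacle here: the statement is a formal consequence of the algebraic properties of $[\cdot]_\v$ (triangle inequality) and the uniqueness theorem, and the argument is a near-verbatim repeat of the proof of the preceding corollary, only with one of the two gauges taken equal to the other and with $\omega$ in place of one of the sewable germs. The only point deserving a word is that one should invoke the well-definedness of $\sew$ (the corollary following Theorem~\ref{thm:sew-unique}) to speak of ``the'' associated regular germ, which is precisely what the last paragraph re-establishes in this particular case.
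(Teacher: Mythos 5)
Your argument is correct and coincides with the paper's own (one-line) proof: both rest on the chain $\tilde{\omega} \approx_\v \omega \approx_\v \sew\omega$, with the identification $\sew\tilde{\omega} = \sew\omega$ guaranteed by the uniqueness coming from Theorem~\ref{thm:sew-unique}. Your write-up merely makes explicit the triangle-inequality step and the appeal to well-definedness of $\sew$ that the paper leaves implicit.
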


\begin{proof}
One has $\tilde{\omega} \approx_\v \omega \approx_\v \sew \omega$.
\end{proof}

As an example, if $\omega\in \germ^1(D)$ is sewable, also $\tilde{\omega}_{st}  := \omega_{st} + |t-s|^p$ with $p>1$ is sewable, and $\sew \tilde{\omega}  = \sew \omega$.

As an additional consequence, the sewing map is linear: if $\omega$, $\tilde{\omega}$ are sewable, then any linear combination $\lambda \omega + \tilde{\lambda} \tilde{\omega}$ is sewable as well, with
\[ \sew( \lambda \omega + \tilde{\lambda} \tilde{\omega}) = \lambda \sew\omega + \tilde{\lambda} \sew \tilde{\omega},\]
simply because $\lambda \omega + \tilde{\lambda} \tilde{\omega} \approx_{\w} \lambda \sew \omega + \tilde{\lambda} \sew \tilde{\omega}$, with $\w := \max\cur{\v, \tilde{\v}}$, if we assume that $\omega$ is $\v$-sewable and $\tilde{\omega}$ is $\tilde{\v}$-sewable.

\begin{corollary}[locality]\label{cor:locality}
Let $\tilde \O \subseteq \O$ and $\omega \in \germ^k(\O)$ be sewable. Then, its restriction $\tilde{\omega}$ to $\simp^k(\tilde{ \O})$ is  sewable and $\sew \tilde{\omega}$ is the restriction of $\sew \omega$ to $\simp^k(\tilde{\O})$.
\end{corollary}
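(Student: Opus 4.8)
The plan is to observe that every ingredient entering the notion of sewability restricts to the subset $\tilde{\O}$. Since $\omega$ is sewable, fix a $k$-Dini gauge $\v \in \germ^k(\O)$ with $\omega \approx_\v \sew\omega$, i.e.\ $\sqa{\omega - \sew\omega}_\v < \infty$. As $\simp^k(\tilde{\O}) \subseteq \simp^k(\O)$, we may let $\v'$ and $(\sew\omega)'$ denote the restrictions of $\v$, resp.\ of $\sew\omega$, to $\simp^k(\tilde{\O})$, and the claim is that $\tilde{\omega}$ is $\v'$-sewable with $\sew\tilde{\omega} = (\sew\omega)'$.

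First I would check that $\v'$ is a $k$-Dini gauge in $\tilde{\O}$. It is nonnegative, and it is uniform because isometric simplices in $\tilde{\O}$ are in particular isometric in $\O$. For the defining limit, given $S \in \simp^k(\tilde{\O})$ the simplex $(2^{-n})_\natural S$ need not lie in $\tilde{\O}$, but, exactly as in the remark following the definition of Dini gauges, one evaluates a uniform gauge on (an isometric translate of) it, and the resulting value coincides with $\ang{(2^{-n})_\natural S, \v}$; hence $\lim_{n\to\infty} 2^{nk} \ang{(2^{-n})_\natural S, \v'} = \lim_{n\to\infty} 2^{nk} \ang{(2^{-n})_\natural S, \v} = 0$.

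Next, $(\sew\omega)'$ is regular in $\tilde{\O}$: nonatomicity passes immediately to the restriction, and for any affine $\varphi\colon I \subseteq \R^k \to \tilde{\O}$ we have $\varphi^\natural\bigl((\sew\omega)'\bigr) = \varphi^\natural(\sew\omega)$ as germs on $I$, since both send $S \in \simp^k(I)$ to $\ang{\varphi_\natural S, \sew\omega}$ and $\varphi_\natural S \in \simp^k(\tilde{\O})$; the latter germ is closed because $\sew\omega$ is closed on $k$-planes in $\O$, so $(\sew\omega)'$ is closed on $k$-planes in $\tilde{\O}$. Finally, the estimate $\abs{\omega_S - (\sew\omega)_S} \le \c \ang{S, \v}$, valid for every $S \in \simp^k(\O)$, in particular holds for every $S \in \simp^k(\tilde{\O})$, so $\sqa{\tilde{\omega} - (\sew\omega)'}_{\v'} \le \sqa{\omega - \sew\omega}_\v < \infty$, i.e.\ $\tilde{\omega} \approx_{\v'} (\sew\omega)'$. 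Thus $\tilde{\omega}$ is $\v'$-sewable, and by the uniqueness corollary following Theorem~\ref{thm:sew-unique} we conclude $\sew\tilde{\omega} = (\sew\omega)'$.

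The argument has no genuinely hard step; the only point requiring care is the verification that a uniform $k$-Dini gauge stays $k$-Dini after restriction even though $(2^{-n})_\natural S$ may leave $\tilde{\O}$, which is resolved precisely by the use of isometric copies as in the discussion after the definition of Dini gauges.
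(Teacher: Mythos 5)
Your argument is correct and coincides with the paper's proof: the paper likewise observes that the restriction of $\sew\omega$ to $\simp^k(\tilde{\O})$ is regular and that $\tilde{\omega}\approx_{\tilde{\v}}\sew\omega$ with $\tilde{\v}$ the restriction of the $k$-Dini gauge $\v$, then concludes by uniqueness. Your extra verifications (that the restricted gauge stays $k$-Dini via the isometric-copy convention, and that closedness on $k$-planes restricts) just make explicit what the paper leaves implicit.
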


\begin{proof}
The restriction of $\sew \omega$ to $\simp^k(\tilde{\O})$ is regular, and $\tilde{ \omega} \approx_{\tilde{ \v}} \sew \omega$ with $\tilde{\v}$ being the restriction in $\tilde{\O}$ of a $k$-Dini gauge $\v \in \germ^k(\O)$ such that $\omega$ is $\v$-sewable.
\end{proof}

A further consequence is the following result. Denote by $\norm{\cdot}_0$ the usual supremum norm of a function.

\begin{corollary}\label{coro:close-to-sewable-2} Let $f \in \germ^0(\O)$, $\omega \in \germ^{k}(\O)$ being  $\v$-sewable, and $\norm{f}_0 <\infty$. Then $f\cwedge \omega$ is $\v$-sewable, if and only if  $f \cwedge \sew \omega$ is $\v$-sewable, and in such a case one has
\[ \sew( f\cwedge \omega) = \sew \bra{ f \cwedge \sew \omega}.\]
\end{corollary}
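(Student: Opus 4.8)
The statement reduces the task of sewing $f \cwedge \omega$ to that of sewing $f \cwedge \sew \omega$, which is useful because $\sew \omega$ is already regular (even though $f \cwedge \sew \omega$ need not be in general). The plan is to first show that $f\cwedge \omega$ and $f\cwedge \sew \omega$ are $\v$-close, and then to quote Corollary~\ref{coro:close-to-sewable}.

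For the first step, I would observe that for every $\eta \in \germ^k(\O)$ and every $S = [p_0 p_1 \ldots p_k] \in \simp^k(\O)$ the definition of the cup product with a $0$-germ gives $\ang{S, f\cwedge\eta} = f_{p_0}\,\ang{S,\eta}$, whence $\abs{\ang{S, f\cwedge\eta}} \le \norm{f}_0\,\abs{\ang{S,\eta}}$, i.e.
\[ [f\cwedge\eta]_\v \le \norm{f}_0\, [\eta]_\v. \]
Since the map $\eta \mapsto f\cwedge\eta$ is linear, applying this inequality to $\eta := \omega - \sew \omega$ and using that $\omega \approx_\v \sew \omega$ (which holds because $\omega$ is $\v$-sewable) yields
\[ [f\cwedge\omega - f\cwedge\sew \omega]_\v = [f\cwedge(\omega - \sew \omega)]_\v \le \norm{f}_0\, [\omega - \sew \omega]_\v < \infty, \]
so that $f\cwedge\omega \approx_\v f\cwedge\sew \omega$.

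For the second step I would use that $\approx_\v$ is an equivalence relation, in particular symmetric. If $f\cwedge\sew \omega$ is $\v$-sewable, then Corollary~\ref{coro:close-to-sewable} applied to the pair $\bra{f\cwedge\sew \omega,\, f\cwedge\omega}$ shows that $f\cwedge\omega$ is $\v$-sewable with $\sew(f\cwedge\omega) = \sew\bra{f\cwedge\sew \omega}$. Conversely, if $f\cwedge\omega$ is $\v$-sewable, then Corollary~\ref{coro:close-to-sewable} applied to the pair $\bra{f\cwedge\omega,\, f\cwedge\sew \omega}$ shows that $f\cwedge\sew \omega$ is $\v$-sewable, and once more $\sew\bra{f\cwedge\sew \omega} = \sew(f\cwedge\omega)$. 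This establishes both the asserted equivalence and the identity.

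I do not expect any real obstacle here: the only points needing (routine) care are that $f\cwedge(\cdot)$ is a linear operation on $k$-germs, so that differences commute with taking the cup product, and that the pointwise estimate $\abs{\ang{S, f\cwedge\eta}} \le \norm{f}_0\, \abs{\ang{S,\eta}}$ holds for every $S$; both are immediate from the definition of the cup product with a bounded $0$-germ.
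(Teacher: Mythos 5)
Your proposal is correct and follows essentially the same route as the paper: the paper's proof consists precisely of the estimate $[f\cwedge \omega - f \cwedge \sew \omega]_\v \le \norm{f}_0[\omega - \sew \omega]_\v<\infty$, giving $f \cwedge \omega \approx_\v f \cwedge \sew \omega$, after which the conclusion (in both directions, with equality of the sewings) follows from Corollary~\ref{coro:close-to-sewable} exactly as you argue. Your spelled-out verification of the pointwise bound $\abs{\ang{S, f\cwedge\eta}} \le \norm{f}_0\abs{\ang{S,\eta}}$ and of the symmetry of $\approx_\v$ merely makes explicit what the paper leaves implicit.
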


\begin{proof}
One has $[f\cwedge \omega - f \cwedge \sew \omega]_\v \le \norm{f}_0[\omega - \sew \omega]_\v<\infty$, hence  $f \cwedge \omega \approx_\v f \cwedge \sew \omega$.
\end{proof}

At this level of generality, we may define the  ``integral'' of  $\omega \in \germ^{k}(\tilde{\O})$ along parametrizations
$\varphi: I \to \O$ by pull-back, i.e.\ as $\sew \varphi^\natural \omega$ (if it is sewable).  
As in the smooth setting, we have invariance with respect to the choice of $\varphi$, provided that the ``reparametrization'' is sufficiently nice.

\begin{proposition}\label{prop:indep-par}
Let  $\omega \in \germ^k(\O)$, $\varphi: I\subseteq \R^{n} \to \O$, $\psi: J \subseteq \R^{m}\to \O$, $\varrho: I \to J$ with $\varphi = \psi \circ \varrho$. Let $\psi^\natural \omega$ be $\w$-sewable, and $\varrho^\natural \w \le \v$, for some  $k$-Dini gauge $\v \in \germ^k(I)$.
Then, $\varphi^\natural \omega$ is $\v$-sewable, if and only if $\varrho^\natural\sew\bra{\psi ^\prime \omega}$ is $\v$-sewable, and in such a case one has
\[\sew \bra{ \varrho^\natural \sew \bra{ \psi^\natural \omega}} = \sew \bra{ \varphi^\natural \omega }.\]

\end{proposition}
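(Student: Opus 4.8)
The plan is to reduce the statement to the functoriality of the pull-back together with Remark~\ref{rem:moduli-pullback} and Corollary~\ref{coro:close-to-sewable}.

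\emph{Step 1: functoriality.} I would first record that $\varphi^\natural \omega = \varrho^\natural\bra{\psi^\natural \omega}$. Indeed, on a simplex $[p_0 \ldots p_k] \in \simp^k(I)$ one has $\varphi_\natural [p_0 \ldots p_k] = [\varphi(p_0) \ldots \varphi(p_k)] = [\psi(\varrho(p_0)) \ldots \psi(\varrho(p_k))] = \psi_\natural \varrho_\natural [p_0 \ldots p_k]$, hence $\varphi_\natural = \psi_\natural \varrho_\natural$ on $\chain^k(I)$ by linearity, and dualising gives $\varphi^\natural = \bra{\psi_\natural \varrho_\natural}' = \varrho_\natural' \psi_\natural' = \varrho^\natural \psi^\natural$ on germs.

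\emph{Step 2: transporting the infinitesimal closeness.} By definition of $\w$-sewability, $\psi^\natural \omega \approx_\w \sew\bra{\psi^\natural \omega}$, that is $\bigl[\psi^\natural \omega - \sew\bra{\psi^\natural \omega}\bigr]_\w < \infty$. Since $\varrho^\natural$ is linear, $\varphi^\natural \omega - \varrho^\natural \sew\bra{\psi^\natural \omega} = \varrho^\natural\bigl(\psi^\natural \omega - \sew\bra{\psi^\natural \omega}\bigr)$, and Remark~\ref{rem:moduli-pullback}, applied to the germ $\psi^\natural \omega - \sew\bra{\psi^\natural \omega} \in \germ^k(J)$, the map $\varrho$ and the gauge $\w$, yields
\[
\bigl[\varphi^\natural \omega - \varrho^\natural \sew\bra{\psi^\natural \omega}\bigr]_{\varrho^\natural \w} \le \bigl[\psi^\natural \omega - \sew\bra{\psi^\natural \omega}\bigr]_\w < \infty .
\]
As $\varrho^\natural \w \le \v$, one has $[\cdot]_\v \le [\cdot]_{\varrho^\natural \w}$, whence $\bigl[\varphi^\natural \omega - \varrho^\natural \sew\bra{\psi^\natural \omega}\bigr]_\v < \infty$, i.e.\ $\varphi^\natural \omega \approx_\v \varrho^\natural \sew\bra{\psi^\natural \omega}$, with $\v$ a $k$-Dini gauge by hypothesis.

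\emph{Step 3: conclusion via Corollary~\ref{coro:close-to-sewable}.} Using that $\approx_\v$ is symmetric, I would apply Corollary~\ref{coro:close-to-sewable} in both directions. If $\varphi^\natural \omega$ is $\v$-sewable, the corollary, with $\varphi^\natural \omega$ as the sewable germ and $\varrho^\natural \sew\bra{\psi^\natural \omega}$ as the $\approx_\v$-equivalent one, shows $\varrho^\natural \sew\bra{\psi^\natural \omega}$ is $\v$-sewable with $\sew\bra{\varrho^\natural \sew\bra{\psi^\natural \omega}} = \sew\bra{\varphi^\natural \omega}$. Conversely, if $\varrho^\natural \sew\bra{\psi^\natural \omega}$ is $\v$-sewable, the same corollary with the roles exchanged gives that $\varphi^\natural \omega$ is $\v$-sewable and $\sew\bra{\varphi^\natural \omega} = \sew\bra{\varrho^\natural \sew\bra{\psi^\natural \omega}}$. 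Either way the claimed identity follows.

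I do not expect any genuine obstacle here: the argument is essentially bookkeeping. The only points deserving care are getting the order of composition right in $\varphi^\natural = \varrho^\natural \psi^\natural$, and recognising that Remark~\ref{rem:moduli-pullback} is exactly what pushes the relation $\approx_\w$ forward to $\approx_{\varrho^\natural \w}$, hence to $\approx_\v$. It is worth noting that the biconditional is not vacuous: $\varrho^\natural \sew\bra{\psi^\natural \omega}$ need not be regular, since a non-affine $\varrho$ may send a degenerate $k$-simplex to a nondegenerate one and thereby destroy nonatomicity; this is precisely why $\varphi^\natural \omega$ cannot simply be declared $\v$-sewable in all cases.
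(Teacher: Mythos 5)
Your argument is correct and coincides with the paper's own proof: both use $\varphi^\natural=\varrho^\natural\psi^\natural$, push the relation $\psi^\natural\omega\approx_\w\sew(\psi^\natural\omega)$ forward via Remark~\ref{rem:moduli-pullback} and the comparison $[\cdot]_\v\le[\cdot]_{\varrho^\natural\w}$, and conclude from $\varphi^\natural\omega\approx_\v\varrho^\natural\sew(\psi^\natural\omega)$ (the paper leaves the final appeal to Corollary~\ref{coro:close-to-sewable} implicit, which you spell out). No gaps.
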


\begin{proof}
We have
\[ \begin{split} [ \varrho^\natural \sew( \psi ^\prime \omega) - \varphi^\natural \omega ]_{\v} &  = [ \varrho^\natural\bra{  \sew \psi^\natural \omega - \psi^\natural \omega} ]_{\v} \quad \text{ since $\varphi^\natural = \varrho^\natural \psi^\natural$,}\\
& \le  [\varrho^\natural\bra{ \sew \psi^\natural \omega - \psi^\natural \omega} ]_{\varrho^\natural \w}  \quad \text{since $\varrho^\natural \w \le \v$,} \\
 & \le  [ \sew \psi^\natural \omega - \psi^\natural \omega ]_{\w} \quad \text{by Remark~\ref{rem:moduli-pullback},}\\
 & < \infty.\end{split}\]
Therefore, $\varrho^\natural \sew ( \psi ^\prime \omega) \approx_\v \varphi^\natural \omega$ and the thesis follows.
\end{proof}

 One could ask if for a sewable $(k-1)$-germ $\eta$ one has that  $\delta \eta$ is sewable with $\sew \delta \eta = \delta \sew \eta$, the problem being that one can easily prove $\delta \eta \approx_{\v} \delta \sew \eta$, but only for a $(k-1)$-Dini gauge $\v \in \germ^{k}(\O)$. However, the answer is positive if $\delta \eta$ is sewable. 
 
\begin{theorem}[Stokes-Cartan]\label{thm:stokes}
Let $k\in \cur{1,2}$, $\eta \in \germ^{k-1}(\O)$ be sewable with $\delta \eta$ sewable. Then $\delta \sew \eta = \sew \delta \eta$. In particular, if $\eta$ is regular (so that $\d\eta:=\delta\eta$ is regular by Proposition~\ref{prop_reg1} and Remark~\ref{rem_dom_reg1}), then
\[ \int_{\partial S} \eta = \int_S \,\d\eta \quad \text{for every $S \in \simp^k(\O)$.}\]
\end{theorem}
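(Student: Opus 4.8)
The strategy is to reduce the Stokes--Cartan identity to the uniqueness theorem for sewable germs (Theorem~\ref{thm:sew-unique}). Since $\delta\eta$ is assumed sewable, it suffices to exhibit \emph{some} regular $k$-germ $\rho$ with $\delta\eta\approx_{\v}\rho$ for a $k$-Dini gauge $\v$, and then the corollary of Theorem~\ref{thm:sew-unique} forces $\sew\delta\eta=\rho$. The natural candidate is $\rho:=\delta\sew\eta$, which is regular by Remark~\ref{rem_dom_reg1} (it is $\delta$ applied to a regular germ). So the whole content is the estimate
\[
\delta\eta\approx_{\v}\delta\sew\eta\qquad\text{for some $k$-Dini $\v\in\germ^{k}(\O)$,}
\]
after which $\sew\delta\eta=\delta\sew\eta$ follows, and the displayed Stokes formula is just this identity read through the integral notation together with $\sew\eta=\eta$, $\sew\delta\eta=\delta\eta$ when $\eta$ is already regular (noting $\ang{\partial S,\eta}=\ang{S,\delta\eta}$).

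For the key estimate, start from the hypothesis: $\eta$ is $\w$-sewable for some $(k-1)$-Dini gauge $\w\in\germ^{k-1}(\O)$, i.e.\ $\eta-\sew\eta\approx_{\w}0$, meaning $|\eta_{T}-(\sew\eta)_{T}|\le\c\,\ang{T,\w}$ for all $T\in\simp^{k-1}(\O)$. Applying $\delta$, for $S\in\simp^{k}(\O)$ we get
\[
\bigl|(\delta\eta)_{S}-(\delta\sew\eta)_{S}\bigr|
=\bigl|\ang{\partial S,\eta-\sew\eta}\bigr|
\le\c\sum_{i}\ang{\text{$i$-th face of }S,\w},
\]
so $\delta\eta\approx_{\v_{0}}\delta\sew\eta$ with $\v_{0}$ the $k$-gauge $S\mapsto\sum_{i}\ang{\partial_{i}S,\w}$. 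The subtlety flagged in the text is that $\v_{0}$ is built from a $(k-1)$-Dini gauge on the \emph{facets} of $S$, and a priori this is only a $(k-1)$-Dini gauge on $k$-simplices, not a $k$-Dini one — the boundary faces of $(2^{-n})_{\natural}S$ scale like $2^{-n}$, so $\ang{(2^{-n})_{\natural}S,\v_{0}}$ decays like $2^{-n}\cdot(\text{decay of }\w)$, which need not beat $2^{-nk}$. This is why one cannot conclude uniqueness directly from $\v_{0}$, and this is the main obstacle.

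To get past it I would split into the two cases $k=1$ and $k=2$. For $k=1$: $\eta\in\germ^{0}(\O)$ is a function, $\delta\eta$ and $\delta\sew\eta$ are \emph{both} $0$-coboundaries hence regular, and moreover $\delta\eta=\delta\sew\eta$ is \emph{forced pointwise} — indeed $\eta-\sew\eta\approx_{\w}0$ with $\w$ a $0$-Dini gauge on points just says $\sup|\eta-\sew\eta|<\infty$ up to... more carefully, $\sew\eta$ is a regular $0$-germ, but $0$-germs are arbitrary functions; here one uses that $\delta\eta$ being sewable and $\delta\eta\approx_{\v_{0}}\delta\sew\eta$ with $\v_{0}$ a $1$-gauge of the form $\diam\cdot o(1)$, i.e.\ genuinely $1$-Dini when $\w$ is $0$-Dini with the extra decay — one must check $\v_{0}(S)=o(\diam S)$, which holds since $\w$ uniform $0$-Dini means $\w\to 0$ on shrinking points; then Theorem~\ref{thm:sew-unique} applies and gives $\delta\sew\eta=\sew\delta\eta$. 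For $k=2$: here the honest work is needed. I would \emph{not} try to upgrade $\v_{0}$, but instead prove directly that $\delta\eta\approx_{\v}\delta\sew\eta$ holds with a genuine $2$-Dini gauge $\v$ by re-running the dyadic summation that produces $\sew\eta$ but \emph{one simplicial dimension up}: partition a triangle $S$ dyadically into self-similar subtriangles $S_{i}$, use additivity of the regular germ $\delta\sew\eta$ so that $\ang{S,\delta\sew\eta}=\sum_{i}\ang{S_{i},\delta\sew\eta}$, compare each term with $\ang{S_{i},\delta\eta}=\ang{\partial S_{i},\eta}$, and exploit that the interior edges of the dyadic subdivision are traversed twice with opposite orientation so the $\eta$-contributions telescope, leaving only boundary edges plus an error controlled by $[\eta-\sew\eta]_{\w}$ summed over the dyadic scales — where now the relevant faces shrink at the full rate and the $(k-1)$-Dini summability of $\w$ (equivalently, strong Dini, see Remark~\ref{rem_Dini_gauge1}) makes the series converge, yielding a $2$-Dini $\v$. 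The paper itself remarks that this requires ``a slight modification of the dyadic summation in the two-dimensional case,'' and that modification — choosing the dyadic subdivision of the triangle so that boundary edges are handled coherently — is exactly the step I expect to be delicate; everything else (invoking Theorem~\ref{thm:sew-unique}, rewriting in integral notation, the $\partial\partial=0$ bookkeeping for the telescoping) is routine.
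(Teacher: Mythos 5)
Your reduction is sound as far as it goes: since $\delta\eta$ is sewable, any regular $\rho$ with $\delta\eta\approx_{\v}\rho$ for a $k$-Dini $\v$ must coincide with $\sew\delta\eta$ by Theorem~\ref{thm:sew-unique}, and $\rho:=\delta\sew\eta$ is the right candidate; your $k=1$ case is also essentially correct (a uniform $0$-Dini gauge is a constant forced to vanish, so $\eta=\sew\eta$ and the statement is immediate, which is what the paper does). The gap is in the $k=2$ estimate. First, the asserted telescoping of the $\eta$-contributions over interior edges is false for $\eta$ itself: $\eta$ need not be alternating (think of $f\cup\delta g$), and in $\partial\,\dya\, S$ each interior edge appears with both orientations but $\eta_{pq}+\eta_{qp}\neq 0$; the cancellation is exact, as a chain identity valid for an arbitrary $1$-germ, only for the modified subdivision $\dya^\dagger$ of Remark~\ref{rem:variants}, for which $\partial\,\dya^\dagger=\dya\,\partial$. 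This is not a peripheral ``delicate step'' that can be deferred: it is the pivot of the proof. Second, and more seriously, the error bookkeeping is wrong. You bound the error by $\sqa{\eta-\sew\eta}_{\w}$ ``summed over the dyadic scales'', claiming convergence because $\w$ is $1$-Dini, ``equivalently strong Dini''; but $1$-Dini is not strong $1$-Dini (Remark~\ref{rem_Dini_gauge1}(i) only gives strong $h$-Dini for $h<1$), and in any case at level $n$ the $4^{n}$ subtriangles contribute boundary errors totalling $4^{n}\,o(2^{-n})$, which diverges. Once the chain-level cancellation is in place only the $3\cdot2^{n}$ boundary sub-edges survive; that error is $2^{n}o(2^{-n})\to0$ and there is no series to sum — but then you are left with the genuine discrepancy $\ang{\dya^{n}\partial S-\partial S,\eta}$, which cannot be estimated by $\w$ at all. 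Indeed, if your scheme produced a $2$-Dini gauge from $\sqa{\eta-\sew\eta}_{\w}$ alone, it would show that $\delta\eta$ is automatically sewable whenever $\eta$ is, which is precisely what the discussion preceding the theorem says is unavailable (only a $(k-1)$-Dini control comes that way).

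The hypothesis that $\delta\eta$ is sewable must therefore enter the estimate itself, and your plan never uses it beyond guaranteeing that $\sew\delta\eta$ exists. The paper consumes it as follows: $\sew\delta\eta=\lim_{n}((\dya^\dagger)^{n})'\delta\eta$ by Remark~\ref{rem:conv-rate} (legitimate because $\delta\eta\approx_{\v}\sew\delta\eta$ with $\v$ $2$-Dini and $\sew\delta\eta$, being regular, is alternating and hence $\dya^\dagger$-invariant); the identity $\partial\,\dya^\dagger=\dya\,\partial$ turns $((\dya^\dagger)^{n})'\delta\eta$ into $\delta\bigl((\dya^{n})'\eta\bigr)$; and $(\dya^{n})'\eta\to\sew\eta$ by the sewability of $\eta$. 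Each hypothesis is used through a plain Dini limit; no summation over scales and no strong-Dini property are needed. If you insist on your formulation (exhibit a $2$-Dini gauge, then invoke uniqueness), it can be repaired, but the summable quantity is the gauge of $\delta\eta$, not $\w$: writing $\ang{\dya^{n}\partial S-\partial S,\eta}=\sum_{m<n}\ang{C_{m},\delta\eta}$, where $C_{m}$ consists of the $3\cdot2^{m}$ degenerate triangles spanned by the level-$m$ sub-edges of $\partial S$ and their midpoints, nonatomicity of $\sew\delta\eta$ and uniformity of $\v$ give $\abs{\ang{C_{m},\delta\eta}}\le 3\sqa{\delta\eta-\sew\delta\eta}_{\v}\,2^{m}\ang{(2^{-m})_{\natural}T,\v}$ with $T$ a degenerate triangle on an edge of $S$; since $\v$ is $2$-Dini these terms are $o(2^{-m})$, the series converges, and one checks it defines a $2$-Dini gauge. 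Either way, the step you left open is exactly where the second hypothesis and the modified dyadic subdivision do all the work.
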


Sufficient conditions ensuring the existence of $\sew \omega$ are provided by the following result. 

\begin{theorem}[sewing, existence]\label{thm:sew-existence}
Let $k \in \cur{1,2}$. For any continuous strong $k$-Dini gauge $\u \in \germ^{k+1}(\O)$ such that  the germ $\tilde{\u}$ in~\eqref{eq:v-dini} is continuous, there exists a continuous $k$-Dini gauge $\v \in \germ^{k}(\O)$ such that the following holds: if $\omega \in \germ^k(\O)$ is continuous, nonatomic and $ \delta \omega \approx_\u 0$, then $\omega$ is $\v$-sewable, with $\sew \omega$ continuous and
\begin{equation}\label{eq:sew-thesis} [\omega - \sew \omega  ]_{\v}\le [\delta \omega]_\u.\end{equation}
\end{theorem}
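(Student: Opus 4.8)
The plan is to construct $\sew\omega$ as a pointwise limit of Riemann-type sums over dyadic subdivisions of simplices, exactly as in the classical one-dimensional sewing lemma, but carried out uniformly over all affine $k$-planes so that the resulting germ is closed on $k$-planes. Concretely, fix $S\in\simp^k(\O)$; since $\omega$ is nonatomic we may assume $\vol_k(S)>0$, so $S$ spans a $k$-plane $\pi$ and we work inside $\pi\cong\R^k$. For $k=1$ a segment subdivides into two half-segments, and for $k=2$ a triangle subdivides into four self-similar triangles with ratio $1/2$ (the medial subdivision); iterating gives a sequence of partitions $\{S^{(n)}_j\}_j$ of $S$ into $2^{kn}$ (for $k=1$) resp.\ $4^n$ (for $k=2$) isometric-up-to-translation copies of $(2^{-n})_\natural S$. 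Define
\[
A_n(S) := \sum_j \ang{S^{(n)}_j,\omega}.
\]
First I would show $(A_n(S))_n$ is Cauchy: passing from level $n$ to $n+1$ changes each piece $S^{(n)}_j$ into its children $\sum_i S^{(n+1)}_{j,i}$, and the defect $\ang{S^{(n)}_j,\omega} - \sum_i \ang{S^{(n+1)}_{j,i},\omega}$ equals $\pm\ang{T^{(n)}_j, \delta\omega}$ for an appropriate $(k+1)$-chain $T^{(n)}_j$ built from the subdivision (in the $k=1$ case, $T$ is the $2$-simplex with vertices the two endpoints and the midpoint; in the $k=2$ case one writes the medial triangle plus the three corner triangles as the boundary of explicit tetrahedra up to $k$-planar chains, using Remark~\ref{prop_additive1}). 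Each such $T^{(n)}_j$ is a translate of $(2^{-n})_\natural T$ for a fixed $T$, so $\abs{\ang{T^{(n)}_j,\delta\omega}}\le[\delta\omega]_\u\,\ang{(2^{-n})_\natural T,\u}$, and summing over the $\sim 2^{kn}$ pieces and then over $n$ gives a bound by $[\delta\omega]_\u$ times a convergent series controlled by the strong $k$-Dini gauge $\tilde\u$. This simultaneously proves convergence and the telescoping estimate $\abs{\ang{S,\omega} - \lim_n A_n(S)}\le[\delta\omega]_\u\,\ang{S,\v}$ for a suitable $\v$ built from $\tilde\u$ (via the pull-back $T\mapsto$ translated $\u$, i.e.\ $\v$ is essentially $\psi^\natural\tilde\u$ for the affine map $\psi$ parametrizing $\pi$), which is the claimed \eqref{eq:sew-thesis}; continuity of $\v$ and $\sew\omega$ follows from continuity of $\u$, $\tilde\u$, $\omega$ together with uniform (in $S$ on compacta) control of the tails.

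Next I would verify that $\sew\omega:=\lim_n A_n$ is regular. Nonatomicity is immediate since on a degenerate $S$ every $A_n(S)$ vanishes (the subdivision stays degenerate and $\omega$ is nonatomic). For closedness on $k$-planes, fix an affine $\varphi\colon I\subseteq\R^k\to\O$ and $T\in\simp^{k+1}(I)$; I must show $\ang{\partial T, \varphi^\natural\sew\omega}=0$. The key point is that the dyadic sums are \emph{additive}: because all pieces in the subdivision of a $k$-simplex inside a fixed $k$-plane fit together combinatorially, the sum $A_n$ over a $k$-chain that bounds (within that plane) can be rearranged so that interior faces cancel, leaving only contributions from pieces of vanishing $k$-volume, which vanish. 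More precisely, one shows $A_n$ restricted to each $k$-plane is finitely additive under the dyadic subdivision and that $\sum_i \ang{(\text{faces of }\partial T),\omega}$-type sums telescope; taking $n\to\infty$ preserves the identity $\ang{\partial T,\sew\omega}=0$. Alternatively, and perhaps more cleanly, one first proves $\delta\omega\approx_\u 0$ forces $\delta\sew\omega$ to be the sewing-limit of $\delta\omega$ along $(k+1)$-subdivisions and that this limit is nonatomic, then invokes Proposition~\ref{prop_reg1}; but this requires care that the two subdivision schemes are compatible.

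The main obstacle I expect is the $k=2$ bookkeeping of the defect chains $T^{(n)}_j$ and the additivity/closedness argument. Unlike the interval, a triangle's medial subdivision produces a central triangle with \emph{reversed} orientation relative to the three corner triangles, so expressing $S - \sum_i S_i$ as $\partial(\text{explicit }3\text{-chain}) + (\text{planar correction})$ needs the apparatus of Remark~\ref{prop_additive1} (vanishing-volume $3$-chains) and a genuine choice of auxiliary vertex, and one must check the error term still scales like $\ang{(2^{-n})_\natural T,\u}$ with a fixed reference $T$. Establishing closedness on $2$-planes of the limit is the other delicate spot: one wants $A_n$ to be genuinely additive over planar subdivisions so that the cocycle property $\delta\sew\omega=0$ on planes passes to the limit; this is where the ``slight modification of the dyadic summation'' mentioned in the introduction enters, namely choosing the subdivision and the evaluation points coherently across adjacent simplices lying in the same $2$-plane. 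Once these two combinatorial lemmas are in place, the estimates are routine geometric-series bounds governed by the strong $k$-Dini hypothesis on $\u$, and uniqueness of $\sew\omega$ is already guaranteed by Theorem~\ref{thm:sew-unique}.
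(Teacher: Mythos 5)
Your Step 1 is essentially the paper's: define $\ang{S,\sew\omega}$ as the limit of the dyadic sums $\ang{\dya^n S,\omega}$, get the Cauchy property and the bound~\eqref{eq:sew-thesis} from a decomposition $\dya-\Id=\nu+\partial\varrho$ together with the strong $k$-Dini hypothesis on $\u$, and obtain continuity of $\sew\omega$ by uniform control of the tails. One caveat in your bookkeeping: the correction chains $\nu$ must consist of simplices of \emph{zero} $k$-volume (as they do in Lemmas~\ref{lem:flip}, \ref{lem:cut}, \ref{lem:dyadic-decomposition}), so that nonatomicity of $\omega$ alone kills them; invoking Remark~\ref{prop_additive1} (additivity, a property of \emph{regular} germs) or allowing mere ``$k$-planar corrections'' would be circular, since closedness of $\omega$ is exactly what is not available.

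The genuine gap is the proof that the limit is closed on $k$-planes. The mechanism you propose — that at each finite level the dyadic sums over a planar boundary $\partial T$ rearrange so that interior faces cancel — fails for generic configurations: already for $k=1$, if $p_1=(1-t)p_0+tp_2$ with $t$ non-dyadic, the dyadic partitions of $[p_0p_2]$ and of $[p_0p_1]$, $[p_1p_2]$ admit no common refinement at any finite level, so $A_n$ is not additive and nothing telescopes; the identity $\ang{\partial T,\sew\omega}=0$ can only be extracted \emph{after} proving invariance properties of the limit itself. This is where the bulk of the paper's proof lives: one shows that $\sew\omega$ is alternating (via $\sigma\equiv(-1)^\sigma\Id$ and $\dya\,\sigma=\sigma\,\dya$), that $\flip^\prime\sew\omega=0$ when $k=2$ (via $\flip\equiv 0$ and $\dya\,\flip=\flip\,\tau$), and that $(\cut^n)^\prime\sew\omega=\sew\omega$ — which requires the commutation $\dya\cut^n=\cut^n\dya+\flip^\dagger C$ of Lemma~\ref{lem:dya-commutator}, itself a nontrivial induction — then passes from $\cut^n$ to $\cut_t$ for arbitrary $t\in[0,1]$ by continuity, and finally concludes by a case analysis of four coplanar points using the alternating and cut-invariance properties. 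None of this is replaced by ``choosing subdivision and evaluation points coherently'' (that modification, $\dya^\dagger$, is what the paper uses for Stokes--Cartan, not here), and your fallback route — sewing $\delta\omega$ along $(k+1)$-dimensional subdivisions and invoking Proposition~\ref{prop_reg1} — is unavailable for $k=2$, precisely because $3$-simplices admit no self-similar dyadic subdivision (the reason the paper stops at $k\le 2$). So the construction and the quantitative estimates are right, but the regularity of $\sew\omega$, which is the actual content of the multidimensional sewing lemma, is not established by your argument.
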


\begin{remark}
If $k=1$, then the condition $\delta\omega \approx_\u 0$ already implies that $\omega$ is nonatomic, because $|\omega_{pp}| = |(\delta\omega)_{ppp}| \le [\delta \omega]_\u \u([ppp])= 0$, for $p \in D$.
\end{remark}

\begin{remark}\label{rem:continuous-sewing-lemma}
It is worth noting that this is the only theorem that requires continuity of the gauges $\u$ and $\tilde{\u}$.
An inspection of its proof  shows that the continuity conditions of $\u$, $\tilde{\u}$ and $\omega$ can be replaced by continuity of their restrictions to every $k$-plane; in this case however also $\v$ and $\sew \omega$ are only guaranteed to be continuous on $k$-planes. Similarly, the condition $\delta \omega \approx_\u 0$ may be verified only over each $k$-plane, as long as it is satisfied uniformly over $k$-planes, then $[\delta \omega]_\u$ in~\eqref{eq:sew-thesis} has to be replaced with   its least upper bound over all $k$-planes.
\end{remark}

\begin{remark}\label{rem:explicit}
The proof of Theorem~\ref{thm:sew-existence} and Remark~\ref{rem_Dini_gauge1} (v) give that, if $\u$ is strong $r$-Dini (with $r \ge k$), then $\v$ is in fact $r$-Dini. For $k=1$, letting $\u := \diam^{\gamma}\in \germ^{2}(\O)$ (with $\gamma >1$) the proof of Theorem~\ref{thm:sew-existence}  gives $\v \le \c(\gamma)\diam^{\gamma}$. For $k=2$, letting $\u := \diam^{\gamma_1} \vol_2^{\gamma_2} \in \germ^{k+1}(\O)$ (with $\gamma_1 + 2 \gamma_2 >k$) the proof gives $\v \le \c(\gamma_1, \gamma_2)\diam^{\gamma_1} \vol_2^{\gamma_2}$.
\end{remark}

We end this section with the following result on continuity of the sewing map.

\begin{theorem}[sewing, continuity]\label{thm:sew-continuity}
Let $k\in\cur{1,2}$, $\v \in \germ^k(D)$ be $k$-Dini,  $(\omega^j)_{j \ge 1} \subseteq \germ^k(\O)$ be sewable with 
\[ \omega^j \to \omega \in \germ^k(\O) \quad \text{pointwise} \quad \text{and} \quad \sup_{n \ge 1} [\omega^j - \sew \omega^j]_\v < \infty.\]
Then  $\omega$ is $\v$-sewable and $\sew \omega^j \to \sew \omega$ pointwise.
\end{theorem}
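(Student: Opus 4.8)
The plan is to combine the quantitative uniqueness estimate underlying Theorem~\ref{thm:sew-unique} with a diagonal/equicontinuity-type argument. First I would recall the hypotheses: $(\omega^j)$ converges pointwise to $\omega$, each $\omega^j$ is $\v_j$-sewable for some $k$-Dini gauge $\v_j$ (a priori depending on $j$), but crucially $C := \sup_{j}[\omega^j - \sew\omega^j]_\v < \infty$ for a \emph{single} fixed $k$-Dini gauge $\v$. By Corollary~\ref{coro:close-to-sewable}, each $\omega^j$ is in fact $\v$-sewable with the same $\sew\omega^j$, so we may assume $\v_j = \v$ throughout; this is the point where the fixed-gauge hypothesis is used. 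The candidate limit is $\sew\omega := \lim_j \sew\omega^j$, provided this pointwise limit exists; showing existence and regularity of this limit, together with $\omega \approx_\v \sew\omega$, is the whole content.

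The key step is to prove that $(\sew\omega^j)_j$ is pointwise Cauchy. For fixed $S \in \simp^k(\O)$ and $i,j$ large, the difference $\sew\omega^i - \sew\omega^j$ is a regular germ (difference of regular germs), and
\[ [\sew\omega^i - \sew\omega^j]_\v \le [\sew\omega^i - \omega^i]_\v + [\omega^i - \omega^j]_{?} + [\omega^j - \sew\omega^j]_\v, \]
but the middle term $[\omega^i - \omega^j]$ need not be controlled by $\v$ — pointwise convergence gives nothing uniform. So the naive triangle inequality fails, and the real argument must go through the \emph{explicit dyadic construction} in the proof of Theorem~\ref{thm:sew-existence}: since each $\sew\omega^j$ is the sewing of $\omega^j$, it is recovered as a limit of dyadic sums $\sum \omega^j$ over refining dyadic partitions of $S$, and the bound~\eqref{eq:sew-thesis}-type estimate $[\omega^j - \sew\omega^j]_\v \le C$ controls the \emph{rate} of convergence of these dyadic sums uniformly in $j$. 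Concretely, if $S^{(n)}$ denotes the $n$-th dyadic refinement chain of $S$, then $|\langle S^{(n)}, \omega^j\rangle - \langle S, \sew\omega^j\rangle| \le C\, \langle S, \tilde{\v}^{(n)}\rangle$ where the tail $\tilde{\v}^{(n)} \to 0$ uniformly in $j$ because $\v$ is $k$-Dini (or strong, cf. Remark~\ref{rem_Dini_gauge1}). Then for fixed $n$, $\langle S^{(n)}, \omega^j\rangle \to \langle S^{(n)}, \omega\rangle$ as $j \to \infty$ since $S^{(n)}$ is a finite chain and $\omega^j \to \omega$ pointwise. An $\varepsilon/3$ argument — choose $n$ to kill the tail uniformly in $j$, then $j$ large to make the finite sums close — shows $(\langle S, \sew\omega^j\rangle)_j$ is Cauchy, hence convergent to some $\langle S, \sew\omega\rangle$; and the same estimate passed to the limit in $j$ gives $|\langle S^{(n)}, \omega\rangle - \langle S, \sew\omega\rangle| \le C\langle S, \tilde\v^{(n)}\rangle$, i.e. $\langle S^{(n)},\omega\rangle \to \langle S,\sew\omega\rangle$.

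It then remains to check that $\sew\omega$ is regular and $\v$-near $\omega$. Regularity follows from Corollary~\ref{cor:pointwise-limit-regluar} since $\sew\omega$ is a pointwise limit of the regular germs $\sew\omega^j$. For $\omega \approx_\v \sew\omega$: from $|\langle S,\omega^j\rangle - \langle S,\sew\omega^j\rangle| \le C\langle S,\v\rangle$ for all $j$ and all $S$, we pass to the limit $j\to\infty$ using pointwise convergence on both sides to get $|\langle S,\omega\rangle - \langle S,\sew\omega\rangle| \le C\langle S,\v\rangle$, i.e. $[\omega - \sew\omega]_\v \le C < \infty$, so $\omega$ is $\v$-sewable with $\sew\omega$ as constructed (uniqueness of the sewing from the corollary to Theorem~\ref{thm:sew-unique} identifies it). The main obstacle is the first part: one cannot simply quote Theorem~\ref{thm:sew-existence} as a black box because its hypothesis is on $\delta\omega^j$, not on $[\omega^j - \sew\omega^j]_\v$; instead one must extract from its \emph{proof} the uniform dyadic-convergence estimate, or equivalently re-run the dyadic summation directly for the limit germ $\omega$ using the uniform bound $C$. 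I expect this bookkeeping — making precise the uniform-in-$j$ tail control of the dyadic sums and justifying the interchange of the two limits — to be the technical heart of the argument, with everything else being soft (pointwise limits, regularity stability, triangle inequality for $[\cdot]_\v$).
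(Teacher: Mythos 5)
Your proposal is correct and follows essentially the same route as the paper: a uniform-in-$j$ rate estimate comparing $\sew\omega^j$ with the $n$-th dyadic refinement of $\omega^j$ (in the paper this is Remark~\ref{rem:conv-rate}, a consequence of the uniqueness Lemma~\ref{lem:uniqueness-general} and the regularity of $\sew\omega^j$, rather than something extracted from the existence proof), followed by the $\varepsilon/3$ interchange of the limits in $n$ and $j$ to get pointwise Cauchyness, regularity of the limit via Corollary~\ref{cor:pointwise-limit-regluar}, and $[\omega-\sew\omega]_\v\le C$ by passing to the limit in the uniform bound. The only cosmetic difference is that you need not invoke Corollary~\ref{coro:close-to-sewable} or the construction inside Theorem~\ref{thm:sew-existence}: the invariance $(\dya^n)'\sew\omega^j=\sew\omega^j$ and the uniform gauge $\v$ already give the required rate.
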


 In particular, in view of Theorem~\ref{thm:sew-existence}, the following result holds:

 \begin{corollary}\label{cor:sew-stability}
Let $k\in\cur{1,2}$, $\u \in \germ^{k+1}(D)$ be $k$-Dini, continuous with $\tilde{\u}$ in~\eqref{eq:v-dini} continuous. Let $(\omega^j)_{j \ge 1} \subseteq \germ^k(\O)$ be continuous, nonatomic with 
\[  \omega^j \to \omega \in \germ^k(\O) \quad \text{pointwise} \quad \text{and} \quad \sup_{j \ge 1} [\delta \omega]_{\u} < \infty.\]
Then $(\omega^j)_{j \ge 1}$ and $\omega$ are sewable and $\sew \omega^j \to \sew
 \omega$ pointwise.
\end{corollary}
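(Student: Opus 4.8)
The plan is to chain the two preceding theorems. Theorem~\ref{thm:sew-existence} produces, from the data $\u$ alone, a single $k$-Dini gauge $\v$ that witnesses $\v$-sewability for \emph{every} continuous nonatomic germ with $\u$-controlled coboundary, with a bound that is uniform once $[\delta\omega]_\u$ is uniformly bounded; Theorem~\ref{thm:sew-continuity} then promotes pointwise convergence of the germs to pointwise convergence of their sewings.

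Concretely, first I would apply Theorem~\ref{thm:sew-existence} to the gauge $\u$ (which, since $\tilde\u$ in~\eqref{eq:v-dini} is assumed well-defined and continuous, is in particular a continuous strong $k$-Dini gauge) to obtain a continuous $k$-Dini gauge $\v\in\germ^k(D)$ with the stated properties. Put $M:=\sup_{j\ge 1}[\delta\omega^j]_\u<\infty$. Each $\omega^j$ is continuous, nonatomic, and satisfies $\delta\omega^j\approx_\u 0$ because $[\delta\omega^j]_\u\le M<\infty$; hence Theorem~\ref{thm:sew-existence} gives that $\omega^j$ is $\v$-sewable, that $\sew\omega^j$ is continuous, and that $[\omega^j-\sew\omega^j]_\v\le[\delta\omega^j]_\u\le M$. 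In particular $\sup_{j\ge 1}[\omega^j-\sew\omega^j]_\v\le M<\infty$.

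It remains to feed this into Theorem~\ref{thm:sew-continuity}: the gauge $\v$ is $k$-Dini, the germs $\omega^j$ are $\v$-sewable, $\omega^j\to\omega$ pointwise by hypothesis, and the uniform bound $\sup_j[\omega^j-\sew\omega^j]_\v\le M$ has just been established. The theorem then yields at once that $\omega$ is $\v$-sewable, hence sewable, and that $\sew\omega^j\to\sew\omega$ pointwise, which is the claim. (If desired, one may additionally observe via stability of nonatomicity under pointwise limits, as in Corollary~\ref{cor:pointwise-limit-regluar}, that $\omega$ is nonatomic, though this is not required for the statement.) The only point demanding care — there is no genuine analytic difficulty beyond the two cited theorems — is that the gauge $\v$ delivered by Theorem~\ref{thm:sew-existence} depends on $\u$ but not on the individual $\omega^j$, so the single inequality $[\omega^j-\sew\omega^j]_\v\le[\delta\omega^j]_\u$ supplies the $j$-uniform bound needed to invoke Theorem~\ref{thm:sew-continuity}.
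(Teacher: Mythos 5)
Your proof is correct and follows exactly the route the paper intends: it deduces the corollary by applying Theorem~\ref{thm:sew-existence} with the single gauge $\u$ to get a common $\v$ and the uniform bound $[\omega^j-\sew\omega^j]_\v\le[\delta\omega^j]_\u$, and then feeds this into Theorem~\ref{thm:sew-continuity}. Your reading of the hypotheses (interpreting $\sup_j[\delta\omega]_\u$ as $\sup_j[\delta\omega^j]_\u$ and the finiteness/continuity of $\tilde\u$ as the strong $k$-Dini condition required by the existence theorem) matches the paper's intent.
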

%

\section{Young and Z\"ust integrals}\label{sec:young-zust}

In this section, we show how both Young~\cite{young_inequality_1936} and  Z\"ust~\cite{zust_integration_2011} integrals (in the case of $2$-germs) can be recovered in the framework introduced above. For simplicity of exposition, we restrict ourselves to Dini gauges of the form $\diam^\gamma$ in case of $1$-germs or $\diam^{\gamma_1}\vol^{\gamma_2}_2$ in case of $2$-germs.

Let us introduce the following notation (extending that of Example~\ref{ex:sew-leb}): for a $0$-germ $f$ and a $(k-1)$-germ $\eta$, if $f \cwedge \delta \eta$ is sewable, we write  $f \d \eta := \sew (f \cwedge \delta \eta)$. 

\subsection{Young integral}


%
%

As already noticed, every $1$-germ $\delta f$ is regular. Hence,  Theorem~\ref{thm:sew-existence} together with Remark~\ref{rem:explicit} yields the following result. 

\begin{theorem}[Young integral]\label{thm:young}
Let  $\gamma >1$, $f$, $g \in \germ^0(\O)$ be continuous with $\sqa{\delta f \cwedge \delta g}_{\gamma} < \infty$. 
Then,  $f \cwedge \delta g$ is $\gamma$-sewable with
\begin{equation}\label{eq:bound-young-regularized-germ} \sqa{f\cwedge \delta g-  f \d g}_{\gamma} \le \c \sqa{\delta f \cwedge \delta g}_{\gamma},\end{equation}
where $\c = \c(\gamma)$.
\end{theorem}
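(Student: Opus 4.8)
The plan is to deduce Theorem~\ref{thm:young} directly from the sewing existence result, Theorem~\ref{thm:sew-existence}, by verifying its hypotheses for the $1$-germ $\omega := f \cwedge \delta g$. First I would observe that $\omega$ is continuous (being a product of the continuous germ $f$ with $\delta g$, which is continuous as soon as $g$ is) and nonatomic, since $\omega_{pp} = f_p (\delta g)_{pp} = f_p(g_p - g_p) = 0$. The substantive point is to identify $\delta\omega$ and bound it by an appropriate strong $1$-Dini gauge. Using the Leibniz rule~\eqref{eq:identity-leibniz} together with the fact that $\delta\delta g = 0$, one computes
\[
\delta\omega = \delta(f \cwedge \delta g) = (\delta f)\cwedge(\delta g) + f \cwedge \delta(\delta g) = (\delta f)\cwedge(\delta g),
\]
so that $\delta\omega_{p_0p_1p_2} = (\delta f)_{p_0p_1}(\delta g)_{p_1p_2}$, and hence $\abs{\delta\omega} \le \sqa{\delta f \cwedge \delta g}_\gamma\,\diam^\gamma$ (after noting that the relevant diameters of the faces of a $2$-simplex are controlled by its diameter, which is the content of the hypothesis $\sqa{\delta f \cwedge \delta g}_\gamma < \infty$). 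Thus $\delta\omega \approx_\u 0$ with $\u := \diam^\gamma \in \germ^2(\O)$.

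Next I would invoke that $\u = \diam^\gamma$ is a continuous strong $1$-Dini gauge whenever $\gamma > 1$: this is Example~\ref{ex:gauge-diam} (with $\gamma_1 = \gamma$, $\gamma_2 = 0$), which also shows that the associated $\tilde{\u}$ from~\eqref{eq:v-dini} equals $(1 - 2^{1-\gamma})^{-1}\diam^\gamma$ and is therefore continuous. So Theorem~\ref{thm:sew-existence} applies and yields a continuous $1$-Dini gauge $\v$ such that $\omega$ is $\v$-sewable with $[\omega - \sew\omega]_\v \le [\delta\omega]_\u \le \sqa{\delta f \cwedge \delta g}_\gamma$. By Remark~\ref{rem:explicit}, since $\u = \diam^\gamma$ with $\gamma > 1$, one may take $\v \le \c(\gamma)\diam^\gamma$, i.e.\ $\v$ is comparable to $\diam^\gamma$; then $[\cdot]_{\diam^\gamma} \le \c(\gamma)[\cdot]_\v$ and the estimate~\eqref{eq:bound-young-regularized-germ} follows, namely $\sqa{\omega - \sew\omega}_\gamma \le \c(\gamma)[\omega - \sew\omega]_\v \le \c(\gamma)\sqa{\delta f\cwedge\delta g}_\gamma$. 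Finally, recalling the notational convention $f\,\d g := \sew(f\cwedge\delta g)$, this is exactly the claimed bound. (One should also remark that $\sqa{\delta f\cwedge\delta g}_\gamma < \infty$ is implied by the more familiar hypothesis $f \in C^\alpha$, $g \in C^\beta$ with $\alpha + \beta = \gamma > 1$, since then $\abs{(\delta f)_{p_0p_1}(\delta g)_{p_1p_2}} \le \sqa{\delta f}_\alpha\sqa{\delta g}_\beta\,\diam^{\alpha+\beta}$, recovering the classical Young condition.)

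The main obstacle is essentially bookkeeping rather than conceptual: one must be slightly careful that the gauge bound $\abs{\delta\omega_{p_0p_1p_2}} = \abs{(\delta f)_{p_0p_1}}\,\abs{(\delta g)_{p_1p_2}} \le \sqa{\delta f\cwedge\delta g}_\gamma\diam([p_0p_1p_2])^\gamma$ is precisely what the hypothesis $\sqa{\delta f \cwedge \delta g}_\gamma < \infty$ encodes, since $\delta f \cwedge \delta g$ is itself a $2$-germ whose value on $[p_0p_1p_2]$ is $(\delta f)_{p_0p_1}(\delta g)_{p_1p_2}$; no separate Hölder estimates on $f$ and $g$ individually are needed. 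Everything else is a direct citation of Theorem~\ref{thm:sew-existence}, Remark~\ref{rem:explicit}, and Example~\ref{ex:gauge-diam}, together with the Leibniz identity~\eqref{eq:identity-leibniz} and the observation that $\delta f$ is regular (hence $\delta\delta g = 0$), which is already recorded in the excerpt.
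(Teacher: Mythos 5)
Your proposal is correct and follows essentially the same route as the paper, whose proof of Theorem~\ref{thm:young} is precisely the observation that $\omega=f\cwedge\delta g$ is continuous and nonatomic with $\delta\omega=\delta f\cwedge\delta g$ (Leibniz rule~\eqref{eq:identity-leibniz}), so that Theorem~\ref{thm:sew-existence} with the continuous strong $1$-Dini gauge $\u=\diam^{\gamma}$ (Example~\ref{ex:gauge-diam}) together with Remark~\ref{rem:explicit} gives the $\gamma$-sewability and the bound~\eqref{eq:bound-young-regularized-germ}. Only a cosmetic slip: $\delta\delta g=0$ is the general identity dual to $\partial\partial=0$, not a consequence of the regularity of $\delta f$, and no control of face diameters is needed since the hypothesis $\sqa{\delta f\cwedge\delta g}_{\gamma}<\infty$ is already a bound on $2$-simplices, as you note yourself.
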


In particular, the result applies if $\sqa{\delta f}_\alpha + \sqa{\delta g}_\beta <\infty$ and $\gamma:=\alpha+\beta>1$. The triangle inequality in~\eqref{eq:bound-young-regularized-germ}  gives the bound, for any $\beta\le \gamma$,
\[ \sqa{f \d g}_{\beta} \le \norm{f}_0 \sqa{\delta g}_\beta + \c \, \diam(\O)^{\gamma-\beta} \sqa{\delta f \cwedge \delta g}_{\gamma}.\]

\begin{remark}[continuity of Young integral]\label{rem:continuity-young}
By Corollary~\ref{cor:sew-stability}, we have that, if  $\gamma >1$, $(f^j)_{j \ge 1}$, $(g^j)_{j \ge 1} \subseteq  \germ^0(\O)$ are continuous with $\sup_{j \ge 1} \sqa{\delta f^j \cwedge \delta g^j}_{\gamma} < \infty$ and $f^j \to f \in \germ^0$, $g^j \to g \in \germ^0$ pointwise, then $f \cup \delta g$ is $\gamma$-sewable and $ f^j \d g^j \to f \d g$ pointwise. Since $f \d g$, when $\sqa{\delta g}_1<\infty$, is given by classical Lebesgue integrals (Example~\ref{ex:leb-germ}), this would allow to transfer by approximation many results from classical differential forms to germs of Young type, e.g., the chain rule Proposition~\ref{prop:chain-rule}. However, we prefer to give alternative proofs allowing for self-contained exposition of the theory.
\end{remark}

 \begin{remark}[locality]\label{cor:loc-young}
Under the assumptions of Theorem~\ref{thm:young}, if $g$ is constant in $\O$, by uniqueness of $\sew$ we obtain $f \d g = 0$. By Corollary~\ref{cor:locality}, we deduce that $f \d g$ is local, i.e., if $g$ is constant on $\conv\bra{ [p_0 p_1]} \subseteq \O$, then $\int_{[p_0 p_1]} f \d g  =0$.
\end{remark}

\begin{corollary}\label{coro:young-iterated}
Let $\gamma>1$,  $f$, $g$, $h \in \germ^0(\O)$ be continuous with
\[ \norm{f}_0 + \norm{h}_0 <\infty \quad \text{and} \quad \sqa{\delta f \cwedge\delta g}_{\gamma}  + \sqa{\delta h \cwedge \delta g}_{\gamma} <\infty.\]
 Then $f\cwedge (h \d g)$,  $h \cwedge (f\d g)$ and $(hf) \cwedge \delta g$ are $\gamma$-sewable with
 \[ (fh) \d g = \sew( (fh) \cup \delta g) = \sew\bra{f \cwedge (h \d g )} = \sew \bra{h \cwedge (f \d g)}.\]
\end{corollary}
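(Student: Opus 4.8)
The plan is to reduce everything to the uniqueness of the sewing map (the Corollary to Theorem~\ref{thm:sew-unique}) together with Corollaries~\ref{coro:close-to-sewable}, \ref{coro:close-to-sewable-2} and the linearity of $\sew$. First I would establish that $(fh)\cwedge\delta g$ is $\gamma$-sewable. By the hypothesis $\sqa{\delta(fh)\cwedge\delta g}_\gamma<\infty$: indeed, using the commutator identity~\eqref{eq:identity-commutator} and the Leibniz rule one checks that $\delta(fh)=f\cwedge\delta h+h\cwedge\delta f + (\delta f)(\delta h)$ (as $0$-germs the pointwise product is simply multiplication), so $\abs{\delta(fh)\cwedge\delta g}\le \norm{f}_0\abs{\delta h\cwedge\delta g}+\norm{h}_0\abs{\delta f\cwedge\delta g}+\sqa{\delta f\cwedge\delta g}_\gamma\sqa{\delta h}_0\diam^{\gamma}$ (bounding $\abs{\delta h}$ by $2\norm{h}_0$ if necessary), which is $O(\diam^\gamma)$. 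Hence Theorem~\ref{thm:young} applies to the pair $(fh,g)$ and gives that $(fh)\cwedge\delta g$ is $\gamma$-sewable with $\sew\bra{(fh)\cwedge\delta g}=(fh)\d g$.

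Next I would handle $h\cwedge(f\d g)$. Since $f\cwedge\delta g$ is $\gamma$-sewable (Theorem~\ref{thm:young}) with $\sew(f\cwedge\delta g)=f\d g$, and $\norm{h}_0<\infty$, Corollary~\ref{coro:close-to-sewable-2} applies with the roles of $f$ and $\omega$ there played by $h$ and $f\cwedge\delta g$: it says $h\cwedge(f\cwedge\delta g)$ is $\gamma$-sewable iff $h\cwedge\sew(f\cwedge\delta g)=h\cwedge(f\d g)$ is, and in that case the two have the same sewing. Now $h\cwedge(f\cwedge\delta g)=(hf)\cwedge\delta g$ by associativity of the cup product (and commutativity of the $0$-germ product $hf=fh$), which we have just shown is $\gamma$-sewable; therefore $h\cwedge(f\d g)$ is $\gamma$-sewable and
\[ \sew\bra{h\cwedge(f\d g)}=\sew\bra{(hf)\cwedge\delta g}=(fh)\d g.\]
The same argument with $f$ and $h$ interchanged — legitimate because the hypotheses are symmetric in $f$ and $h$ — gives that $f\cwedge(h\d g)$ is $\gamma$-sewable with $\sew\bra{f\cwedge(h\d g)}=(fh)\d g$.

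The main point requiring a little care, rather than a genuine obstacle, is the algebraic identity for $\delta(fh)$ on $0$-germs and the attendant bound showing $\sqa{\delta(fh)\cwedge\delta g}_\gamma<\infty$ from the stated hypotheses; this is where one uses that $\norm{f}_0,\norm{h}_0<\infty$ and that a function with finite $\sqa{\delta\cdot}_\gamma$ for $\gamma>1$ on a bounded convex set is automatically bounded (or one simply absorbs constants into $\diam(\O)^\gamma$). Everything else is a bookkeeping exercise: associativity of $\cwedge$, commutativity of the pointwise product of $0$-germs, and two invocations of Corollary~\ref{coro:close-to-sewable-2}. One could alternatively phrase the last two equalities directly via Corollary~\ref{coro:close-to-sewable}, noting $(hf)\cwedge\delta g\approx_\gamma h\cwedge(f\d g)$ because $h\cwedge\bra{(f\cwedge\delta g)-f\d g}$ has $\sqa{\cdot}_\gamma$-norm at most $\norm{h}_0\sqa{f\cwedge\delta g-f\d g}_\gamma<\infty$ by~\eqref{eq:bound-young-regularized-germ}, which is really the proof of Corollary~\ref{coro:close-to-sewable-2} unwound.
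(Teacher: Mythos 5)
Your proposal is correct and follows essentially the same route as the paper: bound $\sqa{\delta\bra{(fh)\cwedge\delta g}}_\gamma$ using $\norm{f}_0,\norm{h}_0<\infty$ to get $\gamma$-sewability of $(fh)\cwedge\delta g$ via Theorem~\ref{thm:young}, then transfer to $h\cwedge(f\d g)$ and $f\cwedge(h\d g)$ by Corollary~\ref{coro:close-to-sewable-2} and the identity $h\cwedge(f\cwedge\delta g)=(hf)\cwedge\delta g$. The only cosmetic difference is your splitting $\delta(fh)=f\cwedge\delta h+h\cwedge\delta f+(\delta f)(\delta h)$ with the remainder absorbed via $\abs{\delta h}\le 2\norm{h}_0$, whereas the paper uses the exact two-term splitting $\delta(fh)_{pq}=h_q(\delta f)_{pq}+f_p(\delta h)_{pq}$; both give the needed estimate.
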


\begin{proof}
One has
\[ \sqa{\delta \bra{ (hf) \cwedge \delta g}}_{\gamma} \le  \norm{f}_0 \sqa{\delta h \cwedge \delta g}_\gamma + \norm{h}_0 \sqa{\delta f \cwedge \delta g}_{\gamma} <\infty\]
hence $(hf)\cwedge \delta g$ is $\gamma$-sewable and the thesis follows from Corollary~\ref{coro:close-to-sewable-2}.
\end{proof}


We also have the following chain rule (which could be also proved by approximation with smooth functions, using Remark~\ref{rem:continuity-young}).

\begin{proposition}[chain rule]\label{prop:chain-rule}
Let $\alpha$, $\beta$, $\sigma >0$ with $\min\cur{\alpha, \sigma \beta} + \beta>1$ and let $D\subseteq \R^d$ be bounded. If $f \in \germ^0(D)$, $g = (g^i)_{i=1}^\ell \in \germ^0(D; \R^\ell)$, $\Psi\colon \R^\ell \to \R$ are such that   $\sqa{ \delta f}_\alpha <\infty$, $\sqa{ \delta g^i}_{\beta}< \infty$,  and $\Psi$ is $C^{1, \sigma}$, i.e., the partial derivatives $\partial_i \Psi$ satisfy $\sqa{\delta (\partial_i \Psi)}_\sigma <\infty$ for every $i = 1, \ldots, \ell$, then
\begin{equation}
\label{eq:chain-rule-thesis}
 f \d (\Psi \circ g) = (f \nabla \Psi \circ g) \cdot \d g := \sum_{i=1}^\ell (f \partial_i \Psi\circ g )\d g^i. \end{equation}
\end{proposition}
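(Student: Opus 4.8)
The strategy is to verify that the germ $\omega := \sum_{i=1}^\ell (f\,\partial_i\Psi\circ g)\cwedge \delta g^i$ is $\gamma$-sewable for a suitable $\gamma>1$ and that it is $\gamma$-near to the germ $f\cwedge\delta(\Psi\circ g)$, so that by the uniqueness Corollary after Theorem~\ref{thm:sew-unique} both have the same sewing, which is exactly the asserted identity~\eqref{eq:chain-rule-thesis}. First I would record that, since each $\partial_i\Psi\circ g$ satisfies $\sqa{\delta(\partial_i\Psi\circ g)}_{\sigma\beta}<\infty$ (because $\abs{(\partial_i\Psi\circ g)_{q}-(\partial_i\Psi\circ g)_{p}}\le \sqa{\delta(\partial_i\Psi)}_\sigma\,\abs{g_q-g_p}^\sigma\le \sqa{\delta(\partial_i\Psi)}_\sigma\,(\sum_i\sqa{\delta g^i}_\beta)^\sigma\abs{q-p}^{\sigma\beta}$ on the bounded set $D$), and $\sqa{\delta f}_\alpha<\infty$, the product $f\,\partial_i\Psi\circ g$ has a modulus bounded by $\diam^{\min\{\alpha,\sigma\beta\}}$ up to constants (for its $\delta$), while $\delta g^i$ is controlled by $\diam^\beta$. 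Hence Corollary~\ref{coro:young-iterated} applies with $h:=\partial_i\Psi\circ g$ and shows each summand $(f\,\partial_i\Psi\circ g)\cwedge\delta g^i$ is $\gamma$-sewable with $\gamma:=\min\{\alpha,\sigma\beta\}+\beta>1$, and therefore so is $\omega$ by linearity of the sewing map.

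The core estimate is the pointwise Taylor bound on a one-dimensional simplex $[pq]$: writing $g_q-g_p$ for the increment and using the $C^{1,\sigma}$ regularity of $\Psi$,
\[
\Bigl|(\Psi\circ g)_q-(\Psi\circ g)_p-\sum_{i=1}^\ell(\partial_i\Psi\circ g)_p\,(g^i_q-g^i_p)\Bigr|
\le \sum_{i=1}^\ell\sqa{\delta(\partial_i\Psi)}_\sigma\,\abs{g_q-g_p}^{\sigma}\,\abs{g^i_q-g^i_p}
\le \c\,\abs{q-p}^{(1+\sigma)\beta}.
\]
Multiplying by $\norm{f}_0$ and noting $(1+\sigma)\beta\ge\min\{\alpha,\sigma\beta\}+\beta=\gamma$ (since $\beta\le\alpha$ is not assumed, one checks $(1+\sigma)\beta\ge\sigma\beta+\beta\ge\gamma$ directly, and also $(1+\sigma)\beta\ge\beta+\alpha\wedge\sigma\beta$), this gives
\[
\bigl[f\cwedge\delta(\Psi\circ g)-\textstyle\sum_i(f\,\partial_i\Psi\circ g)\cwedge\delta g^i\bigr]_{\gamma}<\infty,
\]
because the left-hand germ on $[pq]$ equals $f_p\bigl((\Psi\circ g)_q-(\Psi\circ g)_p-\sum_i(\partial_i\Psi\circ g)_p(g^i_q-g^i_p)\bigr)$. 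Thus $f\cwedge\delta(\Psi\circ g)\approx_\gamma\omega$. Combined with the first paragraph, $f\cwedge\delta(\Psi\circ g)$ is $\gamma$-sewable (by Corollary~\ref{coro:close-to-sewable}) with the same sewing as $\omega$, which is $\sum_i(f\,\partial_i\Psi\circ g)\d g^i$; and by definition of the notation $f\,\d(\Psi\circ g):=\sew(f\cwedge\delta(\Psi\circ g))$, this is precisely~\eqref{eq:chain-rule-thesis}.

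The main obstacle is purely the bookkeeping of Hölder exponents: one must make sure that the ``effective'' exponent $(1+\sigma)\beta$ coming from the second-order Taylor remainder genuinely dominates $\gamma=\min\{\alpha,\sigma\beta\}+\beta$, so that the remainder germ has finite $\sqa{\cdot}_\gamma$; this is immediate since $(1+\sigma)\beta=\beta+\sigma\beta\ge\beta+\min\{\alpha,\sigma\beta\}$. A secondary point requiring care is that $\Psi\circ g$ need not have a Lipschitz modulus, so $f\,\d(\Psi\circ g)$ is not a priori a Lebesgue integral; this is why I route the argument through the abstract uniqueness of $\sew$ rather than through the explicit formula of Example~\ref{ex:leb-germ}, exactly as the remark following the statement suggests. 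Everything else — products of germs, linearity, and the passage from $\approx_\gamma$ to equality of sewings — is supplied verbatim by Corollaries~\ref{coro:close-to-sewable}, \ref{coro:young-iterated} and the uniqueness corollary of Theorem~\ref{thm:sew-unique}.
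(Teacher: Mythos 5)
Your proof is correct and follows essentially the same route as the paper: the key step in both is the first-order Taylor expansion of $\Psi$ composed with $g$, giving $\delta(\Psi\circ g)\approx_{\beta(1+\sigma)}\sum_{i=1}^\ell(\partial_i\Psi\circ g)\cup\delta g^i$ with $\beta(1+\sigma)\ge\min\{\alpha,\sigma\beta\}+\beta>1$, followed by the sewing uniqueness/perturbation machinery. The only cosmetic difference is that the paper first settles the case $f\equiv 1$, obtaining $\d(\Psi\circ g)=\sum_i(\partial_i\Psi\circ g)\,\d g^i$ from regularity of the exact germ $\delta(\Psi\circ g)$, and then multiplies by $f$ via Corollary~\ref{coro:young-iterated}, whereas you absorb the bounded factor $f$ into the Taylor comparison from the outset; both are valid.
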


\begin{proof}
Taylor formula for $\Psi$ reads in our language as
\[ \delta \Psi \approx_{1+\sigma} \sum_{i=1}^\ell \partial_i \Psi \cup \delta x^i,\]
where $(x^i)_{i=1}^\ell$ denote the coordinate functions on $\R^\ell$. By composition with $g$, using the assumption $\sqa{ \delta g^i}_{\beta}< \infty$ for $i  = 1, \ldots, \ell$, we deduce that
\[ \delta (\Psi \circ g)  \approx_{ \beta(1+\sigma)}  \sum_{i=1}^\ell \partial_i \Psi \circ g \cup \delta g^i.\]
Since $\delta(\Psi \circ g) = \d (\Psi \circ g)$ and $\sew \bra{ \partial_i \Psi \circ g \cup \delta g^i}= \partial_i \Psi \circ g\,  \d g^i$  by Theorem~\ref{thm:young}, we conclude that
\[ \d (\Psi \circ g) =  \sum_{i=1}^\ell \partial_i \Psi \circ g \d g^i.\]
Applying $f \cup $ to both sides  and using Corollary~\ref{coro:young-iterated} we obtain the thesis. 
\end{proof}

As usual, the chain rule implies Leibniz rule, for which we give a self-contained proof in a slightly more general assumption on $\sqa{\delta g^1 \cup \delta g^2}_\gamma$ instead of $\sqa{ \delta g^1}_{\beta} + \sqa{\delta g^2}_\beta$ as it would follow from Proposition~\ref{prop:chain-rule} (with $\beta = \gamma/2$).

\begin{corollary}[Leibniz or integration by parts rule]\label{cor:leibniz}
Let $g^1$, $g^2 \in \germ^0(\O)$ be continuous with  $\sqa{\delta g^1 \cwedge \delta g^2}_\gamma<\infty$ for some $\gamma>1$. Then $ \delta (g^1 g^2) =g^1 \d g^2 + g^2 \d g^1$, i.e.,
\[ \int_{[pq]} g^1 \d g^2 = \delta (g^1 g^2)_{pq}  - \int_{[pq]} g^1 \d g^2 \quad \text{for $[pq] \in \simp^1(D)$.}\]

\end{corollary}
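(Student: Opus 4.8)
The plan is to prove the Leibniz rule by showing that the regular $1$-germ $\delta(g^1 g^2)$ coincides with $g^1\,\d g^2 + g^2\,\d g^1$, via the uniqueness part of the sewing lemma (Theorem~\ref{thm:sew-unique}). First I would observe that $\delta(g^1 g^2)$ is indeed regular: it is exact, hence closed, and it is nonatomic since $\delta(g^1 g^2)_{pp} = (g^1 g^2)_p - (g^1 g^2)_p = 0$. Therefore it is its own sewing, so the task reduces to exhibiting a $1$-Dini gauge $\v$ with $\delta(g^1 g^2) \approx_\v g^1 \cwedge \delta g^2 + g^2 \cwedge \delta g^1$, after which Corollary~\ref{coro:close-to-sewable} (together with the fact that $g^1 \cwedge \delta g^2 + g^2 \cwedge \delta g^1$ is $\gamma$-sewable) forces $\sew(g^1 \cwedge \delta g^2 + g^2 \cwedge \delta g^1) = g^1 \d g^2 + g^2 \d g^1 = \delta(g^1 g^2)$.

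The key algebraic step is the pointwise identity on a $1$-simplex $[pq]$:
\[
\delta(g^1 g^2)_{pq} = g^1_q g^2_q - g^1_p g^2_p = g^1_p(g^2_q - g^2_p) + g^2_q(g^1_q - g^1_p) = g^1_p \cwedge \delta g^2_{pq} + g^2_q \cwedge \delta g^1_{pq}.
\]
Comparing this with $g^1_p \cwedge \delta g^2_{pq} + g^2_p \cwedge \delta g^1_{pq}$ (which is the value of the germ $g^1 \cwedge \delta g^2 + g^2 \cwedge \delta g^1$, since the cup product $g^2 \cwedge \delta g^1$ evaluates $g^2$ at the base point $p$), the difference is exactly $(g^2_q - g^2_p)(g^1_q - g^1_p) = \delta g^1_{pq}\,\delta g^2_{pq}$. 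Thus
\[
\bigl[\delta(g^1 g^2) - (g^1 \cwedge \delta g^2 + g^2 \cwedge \delta g^1)\bigr]_{pq} = \delta g^1_{pq}\,\delta g^2_{pq},
\]
and the hypothesis $\sqa{\delta g^1 \cwedge \delta g^2}_\gamma < \infty$ means precisely that this difference is bounded by a constant times $\diam^\gamma$, which is a $1$-Dini gauge since $\gamma > 1$ (Remark~\ref{rem_Dini_gauge1}(ii) / Example~\ref{ex:gauge-diam}). Hence $\delta(g^1 g^2) \approx_\gamma g^1 \cwedge \delta g^2 + g^2 \cwedge \delta g^1$. Note this also re-derives, via Corollary~\ref{coro:close-to-sewable}, that $g^1 \cwedge \delta g^2 + g^2 \cwedge \delta g^1$ is $\gamma$-sewable without invoking Theorem~\ref{thm:young} separately, though one can equally cite that theorem for the $\gamma$-sewability of each summand.

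I do not expect a serious obstacle here: the whole proof is a one-line discrete computation plus a bookkeeping application of the uniqueness of the sewing map. The only point requiring a little care is the non-commutativity of the cup product — one must keep track of which endpoint each $g^i$ is evaluated at in the terms $g^1 \cwedge \delta g^2$ versus $g^2 \cwedge \delta g^1$, and exploit the commutator identity~\eqref{eq:identity-commutator} (equivalently, just expand directly) to see that the "error" is symmetric and equals $\delta g^1 \cdot \delta g^2$. The final displayed formula in the statement is then just the identity $\int_{[pq]} g^1\,\d g^2 + \int_{[pq]} g^2\,\d g^1 = \delta(g^1 g^2)_{pq}$ rewritten; I would present it in that symmetric form rather than the (evidently typo-ridden) asymmetric display in the statement.
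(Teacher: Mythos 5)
Your proposal is correct and follows essentially the same route as the paper: expand $\delta(g^1g^2) = g^1\cwedge\delta g^2 + g^2\cwedge\delta g^1 + (\delta g^1)(\delta g^2)$, bound the error term via the hypothesis, and apply $\sew$ (equivalently, uniqueness from Theorem~\ref{thm:sew-unique}) to both sides, since $\delta(g^1g^2)$ is regular. The only step you leave implicit — that $\sqa{\delta g^1\cwedge\delta g^2}_\gamma<\infty$ controls $(\delta g^1)_{pq}(\delta g^2)_{pq}$ — is made explicit in the paper by evaluating the $2$-germ on the degenerate simplex $[pqp]$, i.e.\ $(\delta g^1)_{pq}(\delta g^2)_{pq} = -\ang{[pqp],\delta g^1\cup\delta g^2}$, which is exactly the justification your phrase ``means precisely'' needs.
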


In particular, if $g\in \germ^0(\O)$ is such that $\sqa{\delta g}_{\alpha} <\infty$ for  $\alpha>1/2$, then $\delta g^2 = 2 g \d g$.

\begin{proof}
We use the identity 
\[ \delta (g^1 g^2) = g^1 \cwedge \delta g^2 + g^2 \cwedge \delta g^1 + (\delta g^1) (\delta g^2).\]
For $[pq] \in \simp^1(\O)$, we have
\[ \ang{[pq], (\delta g^1) (\delta g^2) } =  (\delta g^1)_{pq} (\delta g^2)_{pq} = -\ang{ [pqp],  \delta g^1  \cup  \delta g^2}\]
hence, by the assumption $\sqa{ \delta g^1 \cup \delta g^2}_{\gamma} < \infty$,
\[ \delta (g^1 g^2) \approx_{\gamma} g^1 \cwedge \delta g^2 + g^2 \cwedge \delta g^1.\]
We conclude then as in Proposition~\ref{prop:chain-rule}, applying $\sew$ to both sides of the above relationship.
\end{proof}

\subsection{Z\"ust integral}

To study the case of of integration of $2$-germs of the type $f \d g^1 \wedge \d  g^2$, for $f, g^1, g^2 \in \germ^0(\O)$, as introduced in  R.\ Z\"ust~\cite{zust_integration_2011}, the starting point is to notice that the naive germs $f \cup \delta g^1 \cup \delta g^2$ 
in general fail to be nonatomic, already when $D \subseteq \R$. Our  construction (as well as Z\"ust's) is then based on the identity $\delta g^1 \cwedge \delta g^2 =  \delta \bra{ g^1 \cwedge \delta g^2 }$,
which suggests to apply  Theorem~\ref{thm:sew-existence} to the $2$-germ  $f \cwedge \delta \eta$, where $\eta =  g^1 \d g^2 $, i.e.,
\[ \eta_{[pq]} = \int_{[pq]}g ^1 \d g^2,\]
the integral being in the sense of Young as  in the previous section. More explicitly,
\[ \begin{split} \ang{[p_0 p_1 p_2], \delta (g^1 \d g^2)}&  = \ang{\partial [p_0 p_1 p_2], g^1 \d g^2} \\
& = \int_{ [p_1 p_2]} g^1 \d g^2 -  \int_{[p_0p_2]} g^1 \d g^2 + \int_{[p_0p_1]} g^1 \d g^2,\end{split}\]
and
\[ \ang{ [p_0 p_1 p_2], f \cup \delta (g^1 \d g^2)} = f_{p_0} \ang{[p_0 p_1 p_2], \delta g^1 \d g^2}.\]

 \begin{example}[classical forms]\label{ex:classical-zust}
 When $g^1 =x^i$, $g^2 = x^j$ coordinate functions, in the notation of Example~\ref{ex:xidxj}  we have that $\delta (x^i \d x^j)$ equals $\delta \omega^{ij}$ in~\eqref{eq:signed-area}, hence one can argue as in Example~\ref{ex:sew-leb} and show that the $2$-germ $f \cup \delta (x^i \d x^j)$ is sewable if $f$ is (uniformly) continuous, with
 \[ \ang{S, \sew( f \cup \delta (x^i \d x^j)} =\int_{S} f \d x^i \wedge \d x^j,\]
 the integral being in the classical sense of differential forms. Indeed, letting $\eta_f$ stand for the modulus of continuity of $f$, we have
\[ \begin{split}    \abs{ \int_{[pqr]} f \d x^i \wedge \d x^j - f_p \cup \delta (x^i \d x^j)_{pqr}   }  & = \abs{ \int_{[pqr]} (f - f_p) \d x^i \wedge \d x^j } \\
& \le \eta_f(\diam[pqr])  \vol_2([pqr]) =: \v([pq])\end{split}\]
which defines a $2$-Dini gauge.
 \end{example}

To argue with more general functions than coordinates, we rely on the following bound for $\delta (g^1 \d g^2)$.

\begin{proposition}\label{prop:bound-delta-young}
Let  $\beta >1$, $g^1, g^2 \in \germ^0(\O)$ be continuous with $\sqa{\delta g^1 \cwedge \delta g^2}_\beta< \infty$. 
For  $\c = \c(\beta)$, one has then
\begin{equation}
\label{eq:bound-germ-pre-zust} \sqa{ \delta (g^1 \d g^2)}_\beta \le  \c \sqa{\delta g^1 \cwedge \delta g^2}_{\beta}.\end{equation}
\end{proposition}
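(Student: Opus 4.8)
The goal is to bound $\sqa{\delta(g^1\d g^2)}_\beta$, where $\eta := g^1\d g^2 = \sew(g^1\cwedge\delta g^2)$ is the Young integral $1$-germ. The natural strategy is to compare $\delta\eta$ with the manifestly computable $2$-germ $\delta(g^1\cwedge\delta g^2)$. By the Leibniz rule~\eqref{eq:identity-leibniz} applied to the cup product of the $0$-germ $g^1$ and the $1$-germ $\delta g^2$, together with $\delta\delta g^2 = 0$, one gets
\[ \delta(g^1\cwedge\delta g^2) = (\delta g^1)\cwedge \delta g^2, \]
whose value on $[p_0p_1p_2]$ is $(\delta g^1)_{p_0p_1}(\delta g^2)_{p_1p_2}$, so that $\sqa{\delta(g^1\cwedge\delta g^2)}_\beta = \sqa{\delta g^1\cwedge\delta g^2}_\beta$ by definition of the cup product and of $\sqa{\cdot}_\beta$. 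Thus the whole content is to show $\delta\eta \approx_\beta \delta(g^1\cwedge\delta g^2)$, i.e., that these two $2$-germs differ by something of size $\lesssim \diam^\beta$.

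To do this I would use Theorem~\ref{thm:young}: since $\sqa{\delta g^1\cwedge\delta g^2}_\beta<\infty$ and $\beta>1$, the germ $g^1\cwedge\delta g^2$ is $\beta$-sewable with
\[ \sqa{g^1\cwedge\delta g^2 - \eta}_\beta \le \c\,\sqa{\delta g^1\cwedge\delta g^2}_\beta. \]
Apply $\delta$ to the difference $g^1\cwedge\delta g^2 - \eta$. The key linear-algebra fact is that $\delta$ maps a $1$-germ that is $\approx_\beta 0$ (i.e. bounded by $\c\,\diam^\beta$) to a $2$-germ that is $\approx_\beta 0$ as well: indeed, for $[p_0p_1p_2]$,
\[ (\delta\theta)_{p_0p_1p_2} = \theta_{p_1p_2} - \theta_{p_0p_2} + \theta_{p_0p_1}, \]
and each of the three terms is bounded by $\c\,\diam([p_0p_1p_2])^\beta$ since $\diam$ of each edge is $\le\diam$ of the triangle. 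Hence $\sqa{\delta\theta}_\beta \le 3\sqa{\theta}_\beta$. Applying this with $\theta = g^1\cwedge\delta g^2 - \eta$ gives
\[ \sqa{\delta(g^1\cwedge\delta g^2) - \delta\eta}_\beta \le 3\,\c\,\sqa{\delta g^1\cwedge\delta g^2}_\beta. \]
Combining with $\sqa{\delta(g^1\cwedge\delta g^2)}_\beta = \sqa{\delta g^1\cwedge\delta g^2}_\beta$ and the triangle inequality for $\sqa{\cdot}_\beta$ yields
\[ \sqa{\delta(g^1\d g^2)}_\beta = \sqa{\delta\eta}_\beta \le (1 + 3\c)\,\sqa{\delta g^1\cwedge\delta g^2}_\beta, \]
which is~\eqref{eq:bound-germ-pre-zust} with a new constant $\c(\beta)$.

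The only genuinely delicate point is making sure the elementary estimate $\sqa{\delta\theta}_\beta \le 3\sqa{\theta}_\beta$ is legitimate here: it requires nothing more than the monotonicity $\diam(\text{edge})\le\diam(\text{triangle})$ and the triangle inequality for $\sqa{\cdot}_\beta$, both already recorded in Section~\ref{sec:gauges}. Everything else is a direct invocation of Theorem~\ref{thm:young} (for the sewing bound) and of the Leibniz identity~\eqref{eq:identity-leibniz} (to identify $\delta(g^1\cwedge\delta g^2)$); no new sewing-lemma machinery is needed, since we are only transferring an already-established $\approx_\beta$ bound through the linear operator $\delta$.
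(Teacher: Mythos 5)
Your proof is correct and follows essentially the same route as the paper: both arguments use the Leibniz identity $\delta(g^1\cwedge\delta g^2)=\delta g^1\cwedge\delta g^2$ to add and subtract that term, invoke the Young bound~\eqref{eq:bound-young-regularized-germ} for $g^1\d g^2-g^1\cwedge\delta g^2$, and transfer it through $\delta$ via $\ang{S,\delta\theta}=\ang{\partial S,\theta}$ with the factor $3$ coming from $\diam(S^i)\le\diam(S)$ for the three edges of $\partial S$. Your packaging of the last step as the general estimate $\sqa{\delta\theta}_\beta\le 3\sqa{\theta}_\beta$ is only a cosmetic reorganization of the paper's direct computation.
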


\begin{proof}
For any $S\in \simp^2(\O)$, to prove that,  for some constant $\c =\c(\beta)$, one has
\[\abs{ \ang{ S, \delta( g^1 \d g^2) } }   \le \c \sqa{ \delta g^1 \cup \delta g^2}_\beta \diam^\beta(S),\]
we add and subtract $\delta( g^1 \cup \delta g^2)= \delta g^1 \cup \delta g^2$, hence
\[\begin{split} \abs{ \ang{  S, \delta( g^1 \d g^2)} } & \le  \abs{ \ang{ S, \delta( g^1 \d g^2 - g^1 \cup \delta g^2)}} + \abs{ \ang{ S, \delta g^1 \cup \delta g^2 } } \\
& \le \abs{ \ang{ \partial S,  g^1 \d g^2 - g^1 \cup \delta g^2}} +\sqa{ \delta g^1 \cup \delta g^2}_\beta \diam^\beta(S).\end{split} \]
For the first term, since $\diam(S^i) \le \diam(S)$ for each of the three $1$-simplices in $\partial S = \sum_{i=0}^2 (-1)^i S^i$, by~\eqref{eq:bound-young-regularized-germ} we have
\[\begin{split}   \abs{ \ang{ \partial S, g^1 \d g^2 - g^1 \cup \delta g^2}} &  \le  \sum_{i=0}^2  \abs{ \ang{ S^i, g^1 \d g^2 - g^1 \cup \delta g^2}} \\
& \le \sum_{i=0}^2 \c  \sqa{ \delta g^1\cup \delta g^2}_\beta\diam^{\beta}(S^i)\\
& \le 3 \c \sqa{ \delta  g^1 \cup \delta g^2}_\beta\diam^{\beta}(S),\end{split}\]
hence the thesis.
\end{proof}

\begin{remark}[bound improvement]
By Lemma~\ref{lem:bound-improvement}, since $\delta( g^1 \d g^2)$ is regular, we can improve~\eqref{eq:bound-germ-pre-zust} to the inequality
\begin{equation}\label{eq:bound-improvemente-dg-dg} [ \delta (g^1 \d g^2) ]_{\diam^{2- \beta} \vol_2^{\beta-1}} \le \c(\beta) \sqa{ \delta g^1 \cup \delta g^2}_{\beta}.\end{equation}
\end{remark}

\begin{theorem}[Z\"ust integral]\label{thm:zust}
Let $\alpha >0$, $\beta >1$ with $\gamma:= \alpha+\beta>2$, let $f$, $g^1$, $g^2 \in \germ^0(D)$ be continuous with $\sqa{\delta f}_\alpha+\sqa{\delta g^1 \cup \delta g^2}_{\beta} < \infty$. Then, $f \cup \delta (g^1 \d g^2)$ is $\gamma$-sewable with
\begin{equation}\label{eq:bound-zust-regularized-germ} \sqa{f\cwedge \delta (g^1 \d g^2) - f \d( g^1 \d g^2) }_{\gamma} \le \c \sqa{\delta f}_{\alpha} \sqa{\delta g^1 \cup \delta g^2}_{\beta},
 \end{equation}
where $\c = \c(\alpha, \beta)$.
\end{theorem}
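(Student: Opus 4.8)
The plan is to apply the existence part of the sewing lemma, Theorem~\ref{thm:sew-existence}, to the $2$-germ $\omega := f \cwedge \delta(g^1 \d g^2)$, after verifying its hypotheses and identifying the right gauge $\u \in \germ^3(\O)$. First I would compute $\delta \omega$: using the Leibniz rule~\eqref{eq:identity-leibniz} for the cup product with the $0$-germ $f$, one has $\delta\bra{f \cwedge \delta(g^1\d g^2)} = \delta f \cwedge \delta(g^1 \d g^2) - f \cwedge \delta\delta(g^1 \d g^2) = \delta f \cwedge \delta(g^1 \d g^2)$, since $\delta\delta = 0$. Thus $\delta\omega = \delta f \cwedge \delta(g^1\d g^2)$, and I would estimate its size using Proposition~\ref{prop:bound-delta-young} (or rather its improved version~\eqref{eq:bound-improvemente-dg-dg}): the cup product of a $1$-germ $\delta f$ bounded by $\sqa{\delta f}_\alpha \diam^\alpha$ with the $2$-germ $\delta(g^1 \d g^2)$, which by~\eqref{eq:bound-improvemente-dg-dg} is bounded by $\c(\beta)\sqa{\delta g^1 \cup \delta g^2}_\beta \diam^{2-\beta}\vol_2^{\beta-1}$, satisfies
\[ \abs{\ang{[p_0p_1p_2p_3], \delta f \cwedge \delta(g^1\d g^2)}} \le \sqa{\delta f}_\alpha \diam^\alpha \cdot \c(\beta)\sqa{\delta g^1 \cup \delta g^2}_\beta \diam^{2-\beta}\vol_2^{\beta-1}, \]
where on the right $\diam$ and $\vol_2$ refer to the appropriate sub-simplices; bounding all of them by the diameter resp.\ the $\vol_2$ of the full $3$-simplex $[p_0p_1p_2p_3]$, this gives $\delta\omega \approx_\u 0$ with $\u := \diam^{\alpha + 2 - \beta}\vol_2^{\beta - 1} = \diam^{\gamma-\beta}\diam^{2-\beta+\beta-1}\cdots$ — more precisely $\u := \diam^{\alpha+2-\beta}\vol_2^{\beta-1}$, and $\sqa{\delta\omega}_\u \le \c(\alpha,\beta)\sqa{\delta f}_\alpha\sqa{\delta g^1\cup\delta g^2}_\beta$.

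Next I would check that $\u$ is a continuous strong $2$-Dini $3$-gauge with $\tilde\u$ continuous. By Example~\ref{ex:gauge-diam}, $\u = \diam^{\gamma_1}\vol_2^{\gamma_2}$ with $\gamma_1 = \alpha + 2 - \beta$ and $\gamma_2 = \beta - 1 > 0$ is strong $r$-Dini for every $r < \gamma_1 + 2\gamma_2 = \alpha + 2 - \beta + 2\beta - 2 = \alpha + \beta = \gamma$; since $\gamma > 2$, we may take $r = 2$, so $\u$ is strong $2$-Dini, and moreover (Remark~\ref{rem:explicit}, Remark~\ref{rem_Dini_gauge1}(v)) the associated $\v$ from Theorem~\ref{thm:sew-existence} will be $r$-Dini for $r$ up to $\gamma$, in particular $\v \le \c(\alpha,\beta)\diam^{\gamma_1}\vol_2^{\gamma_2}$ and hence (by the isodiametric inequality $\vol_2 \le \c\diam^2$) $\v \le \c(\alpha,\beta)\diam^\gamma$, which is what we want in order to conclude $\gamma$-sewability. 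The continuity of $\u$ and $\tilde\u$ is clear since $\diam$ and $\vol_2$ are continuous gauges and $\tilde\u$ is a convergent geometric multiple of $\u$ by Example~\ref{ex:gauge-diam}. Finally, $\omega = f \cwedge \delta(g^1\d g^2)$ is continuous (as $f$ is continuous and $g^1\d g^2$ is a Young integral, continuous by Remark~\ref{rem:continuity-young} or directly), and nonatomic: indeed $\delta(g^1 \d g^2)$ restricted to any $2$-plane vanishes on degenerate simplices since $g^1 \d g^2$ is a regular $1$-germ, so $\delta(g^1 \d g^2)$ is nonatomic, and multiplying by the $0$-germ $f$ preserves this.

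With these verifications in hand, Theorem~\ref{thm:sew-existence} yields that $\omega = f \cwedge \delta(g^1 \d g^2)$ is $\v$-sewable (hence $\gamma$-sewable, since $\v \le \c\diam^\gamma$ implies $\approx_\v$ is coarser than $\approx_\gamma$ — wait, the inequality goes the other way, so $[\cdot]_{\diam^\gamma} \le \c[\cdot]_\v$ and $\v$-sewability with $\v \le \c\diam^\gamma$ gives $\gamma$-sewability via Corollary~\ref{coro:close-to-sewable} applied with $\v$ and $\diam^\gamma$, or simply because $[\omega - \sew\omega]_\gamma \le \c[\omega-\sew\omega]_\v < \infty$), and that
\[ \sqa{\omega - \sew\omega}_\gamma \le \c(\alpha,\beta)\sqa{\omega-\sew\omega}_\v \le \c(\alpha,\beta)\sqa{\delta\omega}_\u \le \c(\alpha,\beta)\sqa{\delta f}_\alpha\sqa{\delta g^1 \cup \delta g^2}_\beta, \]
which is precisely~\eqref{eq:bound-zust-regularized-germ} once we recall the notational convention $f\d(g^1\d g^2) := \sew(f\cwedge\delta(g^1\d g^2)) = \sew\omega$. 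The main obstacle I anticipate is purely bookkeeping: correctly tracking the sub-simplex diameters and $2$-volumes appearing when one expands the cup product $\delta f \cwedge \delta(g^1\d g^2)$ over a $3$-simplex, and checking that the bound~\eqref{eq:bound-improvemente-dg-dg} can be applied on the relevant $2$-dimensional faces with the right orientation — all routine, but requiring care so that the exponents add up to give a strong $2$-Dini gauge with $\gamma_1 + 2\gamma_2 = \gamma > 2$.
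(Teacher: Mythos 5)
Your proposal is correct and follows the paper's intended route: the theorem is exactly an application of the sewing lemma (Theorem~\ref{thm:sew-existence}, with Remark~\ref{rem:explicit}) to the continuous, nonatomic $2$-germ $\omega = f \cwedge \delta(g^1\,\d g^2)$, whose coboundary $\delta\omega = \delta f \cwedge \delta(g^1\,\d g^2)$ is controlled through Proposition~\ref{prop:bound-delta-young}. The one place where you deviate is the choice of gauge: the paper simply combines $\abs{\delta f}\le \sqa{\delta f}_\alpha\diam^\alpha$ with~\eqref{eq:bound-germ-pre-zust} to get $\delta\omega\approx_{\u}0$ for $\u=\diam^{\gamma}$, which is strong $2$-Dini precisely because $\gamma>2$, and Remark~\ref{rem:explicit} (case $\gamma_2=0$) then yields $\v\le\c\,\diam^\gamma$ and hence~\eqref{eq:bound-zust-regularized-germ} directly; you instead route through the improved estimate~\eqref{eq:bound-improvemente-dg-dg} (i.e.\ Lemma~\ref{lem:bound-improvement}) to work with $\u=\diam^{\alpha+2-\beta}\vol_2^{\beta-1}$ and only at the end downgrade to $\diam^\gamma$ by the isodiametric inequality. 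That detour is not needed for the stated bound (it is what the paper uses afterwards, in the ``bound improvement for integrals'' remark, to sharpen~\eqref{eq:bound-zust-regularized-germ}) and it is also the only spot where care is required: passing from the face $[p_1p_2p_3]$ to the full $3$-simplex is monotone for the factor $\vol_2^{\beta-1}$, but the factor $\diam^{2-\beta}$ is monotone in the right direction only when $\beta\le 2$ (and for $\beta>\alpha+2$ the exponent $\alpha+2-\beta$ even becomes negative, so $\u$ degenerates on flat simplices); the direct gauge $\diam^\gamma$ avoids these issues for all admissible $\alpha,\beta$. With that simplification your argument coincides with the paper's.
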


\begin{remark}[alternating property]\label{rem:alternating-zust}
The assumptions of Theorem~\ref{thm:zust} are symmetric in $g^1$, $g^2$, and by Corollary~\ref{cor:leibniz} we have $g^1 \d g^2 =  \delta (g^1 g^2) - g^2 \d g^1$, hence $\delta( g^1 \d g^2) = - \delta(g^2 \d g^1)$. By uniqueness Theorem~\ref{thm:sew-unique} we obtain that
\[ f \d (g^1 \d g^2) = - f \d (g^2 \d g^1),\]
which justifies the introduction of the usual wedge notation 
\[ f \d g^1 \wedge \d g^2 :=  f \d ( g^1 \d g^2).\]
\end{remark}

 \begin{remark}[locality]\label{cor:loc-zust}
Under the assumptions of Theorem~\ref{thm:zust}, if $g^1$ or $g^2$ is constant in $\O$, by uniqueness of $\sew$ we obtain $f \d g^1 \wedge \d g^2 = 0$. By Corollary~\ref{cor:locality}, we deduce that $f \d g^1 \wedge \d g^2$ is local, i.e., if $g^1$ or $g^2$ is constant on the geometric simplex $\conv\bra{ S} \subseteq \O$, then $\int_{S} f \d g^1 \wedge \d g ^2  =0$.
\end{remark}

\begin{remark}[continuity of Z\"ust integral]\label{rem:continuity-zust}
As for Young type germs, by Corollary~\ref{cor:sew-stability}, we have that, if  $\alpha>0$, $\beta>1$ with $\alpha+\beta>2$, and $(f^j)_{j \ge 1}$, $(g^j)_{j \ge 1}$, $(\tilde g^j)_{j \ge 1}\subseteq  \germ^0(\O)$ are continuous with $\sup_{j \ge 1} \sqa{\delta f^j}_{\alpha} + \sqa{ \delta g^j \cwedge \delta \tilde g^j}_{\beta} < \infty$ and $f^j \to f \in \germ^0$, $g^j \to g \in \germ^0$, $\tilde{ g}^j \to \tilde{g} \in \germ^0(\O)$ pointwise, then $f^j \d g^j \wedge \d \tilde{g}^j \to f \d g \wedge \d \tilde g$. Again, this would allow us to obtain calculus results by approximation from classical differential forms.
\end{remark}

\begin{remark}[bound improvement for integrals]
From~\eqref{eq:bound-improvemente-dg-dg} we have that
\[ [ \delta f \cup  \delta (g^1 \d g^2) ]_{\diam^{\alpha+2 - \beta} \vol_2^{\beta-1}} \le \sqa{ \delta f}_\alpha [\delta (g^1 \d g^2) ]_{\diam^{2 - \beta} \vol_2^{\beta-1}} < \infty. \]
By Theorem~\ref{thm:sew-existence} applied with the strong $2$-Dini gauge $\diam^{\alpha+2 -\beta} \vol_2^{\beta-2}$ (under the assumption $\alpha+ \beta>2$) and Remark~\ref{rem:explicit}, we improve inequality~\eqref{eq:bound-zust-regularized-germ} to
\[\sqa{f\cwedge \delta (g^1 \d g^2) - f \d g^1 \wedge \d g^2}_{\diam^{\alpha+2 - \beta} \vol_2^{\beta-1}} \le \c \sqa{\delta f}_{\alpha} \sqa{\delta g^1 \cup \delta g^2}_{\beta},\]
from which, by the triangle inequality and~\eqref{eq:bound-improvemente-dg-dg} again,  we deduce that
\begin{equation}\label{eq:bound-size-zust} [f \d g^1 \wedge \d g^2]_{\diam^{2 - \beta} \vol_2^{\beta-1}} \le \c\bra{ \norm{f}_0 + \diam(D)^{\alpha} \sqa{ \delta f}_\alpha} \sqa{ \delta g^1 \cup \delta g^2}_\beta.\end{equation}
This is analogous to~\cite[Corollary 3.4]{zust_integration_2011} (for $k=2$) with triangles instead of rectangles.
\end{remark}

The following result is analogous to Corollary~\ref{coro:young-iterated}.

\begin{corollary}\label{cor:zust-iterated}
Let $\alpha >0$, $\beta >1$ with $\gamma:= \alpha+\beta>2$, let $f$, $h$, $g^1$, $g^2 \in \germ^0(D)$ be continuous with
\[ \norm{f}_0 + \norm{h}_0 < \infty \quad \text{and} \quad  \sqa{\delta f}_\alpha+\sqa{\delta h}_\alpha+\sqa{\delta g^1 \cup \delta g^2}_{\beta} < \infty.\] Then $f \cup(h \d g^1 \wedge \d g^2)$, $h \cup (f \d g^1 \wedge \d g^2)$ and $(fh) \cup ( \d g^1 \wedge \d g^2)$ are $\gamma$-sewable with
\[ \begin{split} (fh) \d g ^1\wedge \d g^2 &=  \sew ( (fh) \cup ( \d g^1 \wedge \d g^2)) \\
& = \sew( f \cup(h \d g^1 \wedge \d g^2)) = \sew (h \cup (f \d g^1 \wedge \d g^2)).\end{split}\]
\end{corollary}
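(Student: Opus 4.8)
The plan is to mirror the proof of Corollary~\ref{coro:young-iterated}, replacing the role of Theorem~\ref{thm:young} by the Z\"ust integral Theorem~\ref{thm:zust} and using Corollary~\ref{coro:close-to-sewable-2} to move the extra scalar factor in and out of the sewing map. The first step would be to record the Leibniz-type identity on $1$-simplices,
\[ \delta(fh)_{pq} = f_q h_q - f_p h_p = f_q\,\delta h_{pq} + h_p\,\delta f_{pq},\]
which together with $\norm{f}_0,\norm{h}_0<\infty$ yields at once $\norm{fh}_0\le\norm{f}_0\norm{h}_0<\infty$ and
\[ \sqa{\delta(fh)}_\alpha \le \norm{f}_0\sqa{\delta h}_\alpha + \norm{h}_0\sqa{\delta f}_\alpha < \infty.\]
Since $fh$ is clearly continuous, it then satisfies the hypotheses of Theorem~\ref{thm:zust}; applying that theorem with $fh$ in place of $f$ shows that $(fh)\cwedge\delta(g^1\d g^2) = (fh)\cwedge(\d g^1\wedge\d g^2)$ is $\gamma$-sewable with sewing $(fh)\d(g^1\d g^2) = (fh)\d g^1\wedge\d g^2$, which is exactly the third of the three claimed identities.

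For the first identity I would set $\omega := h\cwedge\delta(g^1\d g^2)$; Theorem~\ref{thm:zust} applied with $h$ in place of $f$ makes $\omega$ $\gamma$-sewable with $\sew\omega = h\d g^1\wedge\d g^2$. By associativity of the cup product and the fact that $f\cwedge h = fh$ for $0$-germs, one has $f\cwedge\omega = (fh)\cwedge\delta(g^1\d g^2)$, which is $\gamma$-sewable by the previous step. Since $\norm{f}_0<\infty$, Corollary~\ref{coro:close-to-sewable-2} then gives that $f\cwedge\sew\omega = f\cwedge(h\d g^1\wedge\d g^2)$ is $\gamma$-sewable with
\[ \sew\bra{f\cwedge(h\d g^1\wedge\d g^2)} = \sew\bra{(fh)\cwedge\delta(g^1\d g^2)} = (fh)\d g^1\wedge\d g^2.\]
The second identity follows in the same way after exchanging the roles of $f$ and $h$ (the hypotheses being symmetric in $f$ and $h$), using $(hf)\d g^1\wedge\d g^2 = (fh)\d g^1\wedge\d g^2$.

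I do not expect any genuine obstacle: the whole argument is bookkeeping, and the only computation, the Leibniz splitting of $\delta(fh)$, is the one already used in Corollary~\ref{coro:young-iterated}. The single point requiring a little care is to invoke Corollary~\ref{coro:close-to-sewable-2} in the correct direction, i.e.\ to deduce sewability of $f\cwedge\sew\omega$ from the \emph{already established} sewability of $f\cwedge\omega$ (obtained via Theorem~\ref{thm:zust} applied to the product $fh$), rather than the converse implication, which would be circular.
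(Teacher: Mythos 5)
Your proposal is correct and is essentially the proof the paper intends (it gives none explicitly, stating only that the result is analogous to Corollary~\ref{coro:young-iterated}): the Leibniz splitting of $\delta(fh)$ to get $\sqa{\delta(fh)}_\alpha<\infty$, Theorem~\ref{thm:zust} applied with $fh$ in place of $f$, and Corollary~\ref{coro:close-to-sewable-2} (used in the non-circular direction, as you note) to identify $\sew\bra{f\cwedge(h\d g^1\wedge\d g^2)}$ and, by symmetry, $\sew\bra{h\cwedge(f\d g^1\wedge\d g^2)}$ with $(fh)\d g^1\wedge\d g^2$.
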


The chain rule also holds: in order to prove it we first establish the following Leibniz-type expansion.

\begin{lemma}\label{lem:leibniz-zust}
Let $\alpha_1$, $\alpha_2$, $\beta>0$ with $\gamma:= \alpha_1+\alpha_2+\beta>2$. Let $f^1$, $f^2$, $g \in \germ^0 (\O)$ with $\sqa{\delta f^1}_{\alpha_1} + \sqa{\delta f^2}_{\alpha_2} + \sqa{\delta g}_\beta< \infty$. Then
\[ \delta (f^ 1f^2 \d g) \approx_\gamma f^1 \cup \delta (f^2 \d g) + f^2 \cup \delta (f^1 \d g).\]
\end{lemma}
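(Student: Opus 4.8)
The identity to prove, $\delta(f^1 f^2 \d g) \approx_\gamma f^1 \cup \delta(f^2 \d g) + f^2 \cup \delta(f^1 \d g)$, should follow by expanding all three terms at the level of $\delta$ applied to Young-type germs, and comparing the resulting "naive" expansions, all modulo $\approx_\gamma$. The key input is that for a product $hk \in \germ^0(\O)$ with suitable Hölder exponents, $h k \cup \delta g$ is close (in the relevant gauge) to its sewing $hk \,\d g$, together with the commutator identity~\eqref{eq:identity-commutator} and the Leibniz rule~\eqref{eq:identity-leibniz} for $\delta$ on cup products.

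First I would record the basic approximations coming from Theorem~\ref{thm:young}: under the stated exponent conditions, $f^1 f^2 \cup \delta g \approx_\gamma f^1 f^2 \,\d g = \sew(f^1 f^2 \cup \delta g)$, and similarly $f^i \cup \delta g \approx_{\alpha_i + \beta} f^i \,\d g$ (note $\alpha_1 + \alpha_2 + \beta > 2$ forces $\alpha_j + \beta > 1$ when combined appropriately, or one uses $\gamma > 2 > 1$ for the relevant pairing). Applying $\delta$ and using that $\delta$ is bounded from $\approx_{\gamma}$ on $1$-germs into $\approx_{\gamma}$ on $2$-germs up to a $1$-Dini loss — more precisely, that $\delta$ maps the $\diam^\gamma$-neighborhood into a $\diam^\gamma$-neighborhood on $2$-simplices via $\langle S, \delta \omega\rangle = \langle \partial S, \omega\rangle$ and $\diam(S^i) \le \diam(S)$, exactly as in the proof of Proposition~\ref{prop:bound-delta-young} — I get $\delta(f^1 f^2 \,\d g) \approx_\gamma \delta(f^1 f^2 \cup \delta g)$ and $f^i \cup \delta(f^j \,\d g) \approx_\gamma f^i \cup \delta(f^j \cup \delta g)$ (bounding $f^i$ by its sup norm, finite since $D$ is bounded and $f^i$ continuous with finite Hölder seminorm). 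So it suffices to prove the purely algebraic identity
\[ \delta\bra{ f^1 f^2 \cup \delta g} \approx_\gamma f^1 \cup \delta\bra{f^2 \cup \delta g} + f^2 \cup \delta \bra{f^1 \cup \delta g}.\]

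For this last step I would compute both sides using the Leibniz rule~\eqref{eq:identity-leibniz}. On the left, $\delta(f^1 f^2 \cup \delta g) = \delta(f^1 f^2) \cup \delta g + (f^1 f^2)\cup \delta\delta g = \delta(f^1 f^2)\cup \delta g$, and $\delta(f^1 f^2)$ expands (cf.\ the computation preceding Corollary~\ref{cor:leibniz}) as $f^1 \cup \delta f^2 + f^2 \cup \delta f^1 + (\delta f^1)(\delta f^2)$, where the pointwise-product term is of size $\diam^{\alpha_1 + \alpha_2}$. On the right, $\delta(f^j \cup \delta g) = \delta f^j \cup \delta g$, so $f^i \cup \delta(f^j \cup \delta g) = f^i \cup \delta f^j \cup \delta g$. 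Hence the difference between left and right sides is the $2$-germ $((\delta f^1)(\delta f^2)) \cup \delta g$ together with cup-product associativity/commutator corrections of the form $f^i \cup \delta f^j \cup \delta g$ versus $(f^i \cup \delta f^j) \cup \delta g$ — but cup product is associative, so there are no such corrections, and the only genuine discrepancy is $((\delta f^1)(\delta f^2))\cup \delta g$. I would then estimate $\abs{\langle [p_0 p_1 p_2], ((\delta f^1)(\delta f^2)) \cup \delta g\rangle} = \abs{(\delta f^1)_{p_0 p_1}(\delta f^2)_{p_0 p_1} (\delta g)_{p_1 p_2}} \le \sqa{\delta f^1}_{\alpha_1}\sqa{\delta f^2}_{\alpha_2}\sqa{\delta g}_\beta \diam(S)^{\alpha_1 + \alpha_2 + \beta}$, which is exactly $\le \c\,\diam^\gamma(S)$, i.e.\ this term is $\approx_\gamma 0$. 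This closes the argument.

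The main obstacle I anticipate is bookkeeping the exponents and gauges consistently: one must check that each intermediate $\approx$ holds with a gauge dominated by $\diam^\gamma$ (not merely some $\diam^{\gamma'}$ with $\gamma' < \gamma$ that would be too weak, nor one requiring more regularity than available), and in particular that the step "$\delta$ preserves $\approx_\gamma$ on the relevant germs" is legitimate — this uses that $f^1 f^2 \cup \delta g$ and $f^i \cup \delta g$ are nonatomic with $\delta$ of them controlled by $\diam^\gamma$, so Theorem~\ref{thm:young} / Proposition~\ref{prop:bound-delta-young}-type reasoning applies and the sewing is available; the subtle point is that one should not apply $\delta$ blindly to an $\approx_\gamma$ relation (which in general loses a power), but rather observe that here both $1$-germs being compared have regular sewings, so $\delta$ of the difference equals the difference of $\delta$'s of genuinely close regular germs, and regularity makes $\delta$ act as the honest coboundary with the $\diam^\gamma$ bound surviving by the $\partial S = \sum (-1)^i S^i$, $\diam(S^i)\le \diam(S)$ estimate. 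Making this "$\delta$ of $\approx_\gamma$" step rigorous — rather than hand-wavy — is where I would spend the most care.
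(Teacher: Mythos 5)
Your reduction breaks at its first approximation, and the error propagates through the whole plan. Theorem~\ref{thm:young} applied to the product only gives $\sqa{f^1f^2\cup\delta g-f^1f^2\d g}_{\gamma'}\le\c\,\sqa{\delta(f^1f^2)\cup\delta g}_{\gamma'}$, and the right-hand side is finite at $\gamma'=\tilde\gamma:=\min\{\alpha_1,\alpha_2\}+\beta$ but in general \emph{infinite} at $\gamma'=\gamma$: the germ $\delta(f^1f^2)\cup\delta g$ contains $f^1\cup\delta f^2\cup\delta g$, which is only of size $\diam^{\alpha_2+\beta}$. So the claim ``$f^1f^2\cup\delta g\approx_\gamma f^1f^2\d g$'' is false, and with it your two derived claims $\delta(f^1f^2\d g)\approx_\gamma\delta(f^1f^2\cup\delta g)$ and $f^i\cup\delta(f^j\d g)\approx_\gamma f^i\cup\delta(f^j\cup\delta g)$; after bounding $f^i$ by $\norm{f^i}_0$ the latter difference is only $O(\diam^{\alpha_j+\beta})$, and nothing upgrades either exponent to $\gamma$. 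A quick counterexample: take $f^2\equiv1$ (so $\sqa{\delta f^2}_{\alpha_2}=0$ for any $\alpha_2>0$); your first claim would force $\delta(f^1\d g)-\delta f^1\cup\delta g\approx_{\alpha_1+\beta+\alpha_2}0$ for every $\alpha_2>0$, whereas this germ is generically of order exactly $\diam^{\alpha_1+\beta}$ (already for smooth $f^1,g$ it is nonzero of order $\diam^2$ on collinear simplices, where the first term vanishes by additivity but $\delta f^1\cup\delta g$ does not). Note also that the constant in the lemma is $\sqa{\delta f^1}_{\alpha_1}\sqa{\delta f^2}_{\alpha_2}\sqa{\delta g}_\beta$, vanishing when either $f^i$ is constant; your intermediate bounds carry $\norm{f^i}_0$ and cannot have that product structure. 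Your correct observation that at the naive level the discrepancy is exactly $((\delta f^1)(\delta f^2))\cup\delta g\approx_\gamma0$ does not close the argument: the entire content of the lemma is the cancellation between the sub-$\gamma$ errors above, which your plan asserts rather than proves. (Your closing diagnosis also misplaces the difficulty: applying $\delta$ to a $1$-germ bounded by $\diam^\gamma$ costs only a factor $3$, so no power is lost there; what is missing is the exponent $\gamma$ in the input.)

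The missing idea is a centering at the base point of the simplex, which is how the paper argues. Fix $S=[p_0p_1p_2]$, restrict to $\O=S$ (the constants do not depend on $\O$, cf.\ Corollary~\ref{cor:locality}), and use linearity of $f\mapsto f\d g=\sew(f\cup\delta g)$ to write
\[ f^1f^2\d g-f^1_{p_0}\,(f^2\d g)-f^2_{p_0}\,(f^1\d g)+f^1_{p_0}f^2_{p_0}\,\delta g=\bra{(f^1-f^1_{p_0})(f^2-f^2_{p_0})}\d g. \]
Applying $\delta$ kills the $\delta g$ term, and evaluating at $S$ identifies $\ang{S,\omega}$ (with $\omega$ the difference in the statement) with $\ang{S,\delta\bra{(f^1-f^1_{p_0})(f^2-f^2_{p_0})\d g}}$. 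Then Proposition~\ref{prop:bound-delta-young} at the admissible exponent $\tilde\gamma$, combined with $\norm{f^i-f^i_{p_0}}_0\le\sqa{\delta f^i}_{\alpha_i}\diam(S)^{\alpha_i}$ on $S$, produces the extra factor $\diam(S)^{\gamma-\tilde\gamma}$ and the bound $\c\,\sqa{\delta f^1}_{\alpha_1}\sqa{\delta f^2}_{\alpha_2}\sqa{\delta g}_\beta\diam(S)^{\gamma}$, with the correct product structure. This centering step is what your proposal lacks and what makes the exponent $\gamma$ attainable.
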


\begin{proof}
Write $\omega:= \delta (f^1f^2 \d g) - f^1 \cup \delta (f^2 \d g)- f^2 \cup \delta (f^1 \d g) \in \germ^1(\O)$.  For any given $S = [p_0 p_1 p_2]$, we prove that, for some $\c = \c(\alpha_1, \alpha_2, \beta)$,
\begin{equation}\label{eq:leibniz-zust} \abs{ \ang{S, \omega}} \le \c \sqa{\delta f^1}_{\alpha_1}\sqa{\delta f^2}_{\alpha_2} \sqa{\delta g}_{\beta}  \diam^{\gamma}(S).  \end{equation}
Without loss of generality, we  assume that $\O = S$ (since the constant $\c$ does not depend on $\O$ either). By linearity of Young integral, we have the identity
\[ (f^1f^2 \d g) - f^1_{p_0}  (f^2 \d g) + f^2_{p_0} (f^1 \d g) = (f^1 - f^1_{p_0})(f^2-f^2_{p_0}) \d g -  f^1_{p_0} f^2_{p_0}  \delta  g,\]
hence (with $p_0$  fixed)
\[ \begin{split} \delta (f^1f^2 \d g) - f^1_{p_0} \delta (f^2 \d g) + f^2_{p_0}\delta (f^1 \d g) & = \delta \bra{ (f^1 - f^1_{p_0})(f^2-f^2_{p_0}) \d g }-  f^1_{p_0} f^2_{p_0} \delta^2 g\\
& = \delta \bra{ (f^1 - f^1_{p_0})(f^2-f^2_{p_0}) \d g},\end{split}\]
and evaluating at $S = [p_0 p_1 p_2]$ we obtain the identity
\[ \ang{S,\omega} = \ang{S, \delta \bra{ (f^1 - f^1_{p_0})(f^2-f^2_{p_0}) \d g}}.\]
The thesis follows by Proposition \ref{prop:bound-delta-young} applied with $\tilde \gamma := \min\cur{\alpha_1, \alpha_2} + \beta$ instead of $\gamma$, because
\begin{equation}\label{eq:leibniz-zust-2} \abs{  \ang{S, \delta \bra{ (f^1 - f^1_{p_0})(f^2-f^2_{p_0}) \d g}} } \le \c \sqa{ \delta\bra{ (f^1 - f^1_{p_0})(f^2-f^2_{p_0}) }\cup \delta g}_{\tilde \gamma} \diam(S)^{\tilde \gamma}\end{equation}
and
\[ \begin{split} \| \delta\bra{ (f^1 - f^1_{p_0})(f^2-f^2_{p_0})} & \cup \delta g \|_{\tilde \gamma}  \le  \sqa{ \delta f^1}_{\min\cur{\alpha_1, \alpha_2}} \norm{f^2-f^2_{p_0} }_0 \sqa{\delta g}_{\beta}\\
&\qquad \qquad + \norm{f^1-f^1_{p_0} }_0\sqa{ \delta f^2}_{\min\cur{\alpha_1, \alpha_2}}  \sqa{\delta g}_{\beta}\\
&  \le 2 \sqa{\delta f^1}_{\alpha_1} \sqa{\delta f^2}_{\alpha_2} \sqa{ \delta g}_{\beta} \diam(S)^{\alpha_1 + \alpha_2 - \min \cur{\alpha_1, \alpha_2}},  \\
& =  2 \sqa{\delta f^1}_{\alpha_1} \sqa{\delta f^2}_{\alpha_2} \sqa{ \delta g}_{\beta} \diam(S)^{\gamma - \tilde{ \gamma}}\end{split}\]
hence~\eqref{eq:leibniz-zust} follows from~\eqref{eq:leibniz-zust-2}.
\end{proof}

\begin{proposition}[chain rule]\label{prop:chain-rule-zust}
Let $\alpha>0$, $\beta>0$, $\sigma \in (0,1]$ with $\min\cur{\alpha, \sigma \beta} + 2\beta>2$. If $f \in \germ^0(D)$, $g = (g^i)_{i=1}^\ell \in \germ^0(D; \R^\ell)$, $\Psi = (\Psi^1, \Psi^2) \colon \R^\ell \to \R^2$ are such that $\sqa{\delta f}_\alpha <\infty$, $\sqa{ \delta g^i}_{\beta}< \infty$,  and $\Psi$ is $C^{1, \sigma}$, i.e., the partial derivatives $\partial_i \Psi^m$ satisfy $[\delta (\partial_i \Psi^m)]_\sigma <\infty$ for every $i \in \cur{1, \ldots, \ell}$, $m \in \cur{1,2}$, then
\[ f \d (\Psi^1 \circ g) \wedge \d (\Psi^2 \circ g) = \sum_{\substack{i,j=1, \ldots,\ell \\ i<j}} ( f \, J^{ij}_\Psi \circ g ) \d g^i \wedge \d g ^j, \quad \text{where} \]
\[ \quad J^{ij}_\Psi = \det\bra{ \begin{array}{cc}
\partial_i \Psi^1  &  \partial_j \Psi^1  \\ \partial_i \Psi^2 & \partial_j \Psi^2
\end{array}}.\]
\end{proposition}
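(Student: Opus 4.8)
The plan is to reduce the statement to the germ-level identity, between \emph{regular} $2$-germs,
\[
\delta\big((\Psi^1\circ g)\,\d(\Psi^2\circ g)\big)=\sum_{i<j}(J^{ij}_\Psi\circ g)\,\d g^i\wedge\d g^j,
\]
from which the thesis is immediate: by the notation preceding Theorem~\ref{thm:young} and by Remark~\ref{rem:alternating-zust}, its left-hand side equals $\sew\big(f\cup\delta((\Psi^1\circ g)\d(\Psi^2\circ g))\big)$, and cupping the displayed identity with $f$ and applying $\sew$ (using linearity of $\sew$ and Corollary~\ref{cor:zust-iterated} in the form $(f\,h)\,\d g^i\wedge\d g^j=\sew(f\cup(h\,\d g^i\wedge\d g^j))$ with $h=J^{ij}_\Psi\circ g$) produces exactly the right-hand side. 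Since both sides of the thesis are unchanged if $D$ is replaced by $\conv(S)$ for each $S\in\simp^2(D)$ (Corollary~\ref{cor:locality}), we may assume $D$ bounded; then $\Psi^m$ is Lipschitz and $\partial_i\Psi^m$ bounded and $\sigma$-H\"older on $\overline{g(D)}$, whence $\Psi^m\circ g$ and $J^{ij}_\Psi\circ g$ are $\beta$- and $\sigma\beta$-H\"older, while $\delta g^i\cup\delta g^j$ is $2\beta$-H\"older. Throughout we use without mention that $(2+\sigma)\beta>2$, $(1+\sigma)\beta>1$ and $2\beta>1$, all consequences of $\min\{\alpha,\sigma\beta\}+2\beta>2$ and $\sigma\le 1$.

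To prove the germ identity, write $a_j:=\partial_j\Psi^2\circ g$, $b_i:=\partial_i\Psi^1\circ g$. By the Young chain rule (Proposition~\ref{prop:chain-rule}, applied with trivial $f\equiv1$ to each scalar component $\Psi^m$), $\d(\Psi^m\circ g)=\sum_i(\partial_i\Psi^m\circ g)\,\d g^i$ as regular $1$-germs. Using this for $m=2$ together with Corollary~\ref{coro:young-iterated}, one rewrites $(\Psi^1\circ g)\,\d(\Psi^2\circ g)=\sum_j h_j\,\d g^j$ with $h_j:=(\Psi^1\,\partial_j\Psi^2)\circ g=(\Psi^1\circ g)\,a_j$, so that $\delta\big((\Psi^1\circ g)\,\d(\Psi^2\circ g)\big)=\sum_j\delta\big(h_j\,\d g^j\big)$. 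Applying Lemma~\ref{lem:leibniz-zust} to each summand (with $f^1=\Psi^1\circ g$, $f^2=a_j$, and $g^j$ in place of $g$; the exponents $\beta,\sigma\beta,\beta$ sum to $(2+\sigma)\beta>2$) and summing over $j$, the block $(\Psi^1\circ g)\cup\sum_j\delta(a_j\,\d g^j)=(\Psi^1\circ g)\cup\delta\big(\d(\Psi^2\circ g)\big)$ \emph{vanishes}, because $\d(\Psi^2\circ g)$ is exact and $\delta\delta=0$; hence
\[
\delta\big((\Psi^1\circ g)\,\d(\Psi^2\circ g)\big)\approx_{(2+\sigma)\beta}\sum_j a_j\cup\delta\big((\Psi^1\circ g)\,\d g^j\big).
\]

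It remains to expand $\delta\big((\Psi^1\circ g)\,\d g^j\big)$. Here I integrate by parts via Corollary~\ref{cor:leibniz}: $(\Psi^1\circ g)\,\d g^j=\delta\big((\Psi^1\circ g)\,g^j\big)-g^j\,\d(\Psi^1\circ g)$, so applying $\delta$ (using $\delta\delta=0$) and $g^j\,\d(\Psi^1\circ g)=\sum_i(g^j\,b_i)\,\d g^i$ (Corollary~\ref{coro:young-iterated}) we get $\delta\big((\Psi^1\circ g)\,\d g^j\big)=-\sum_i\delta\big((g^j\,b_i)\,\d g^i\big)$. Lemma~\ref{lem:leibniz-zust} applied once more to each $\delta((g^j b_i)\,\d g^i)$ (with $f^1=g^j$, $f^2=b_i$) and summed over $i$ again kills the exact block $\sum_i\delta(b_i\,\d g^i)=\delta(\d(\Psi^1\circ g))=0$, and using $\delta(g^j\d g^i)=-\delta(g^i\d g^j)$ (Remark~\ref{rem:alternating-zust}) this leaves $\delta\big((\Psi^1\circ g)\,\d g^j\big)\approx_{(2+\sigma)\beta}\sum_i b_i\cup\delta(g^i\d g^j)$. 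Substituting back (cupping with the bounded $0$-germs $a_j$ preserves $\approx_{(2+\sigma)\beta}$) gives
\[
\delta\big((\Psi^1\circ g)\,\d(\Psi^2\circ g)\big)\approx_{(2+\sigma)\beta}\sum_{i,j}(b_i\,a_j)\cup\delta(g^i\d g^j)=\sum_{i<j}(J^{ij}_\Psi\circ g)\cup\delta(g^i\d g^j),
\]
the last step by antisymmetry of $(i,j)\mapsto\delta(g^i\d g^j)$, which collapses $b_ia_j$ into $b_ia_j-b_ja_i=J^{ij}_\Psi\circ g$. The right-hand side is $(2+\sigma)\beta$-sewable by Theorem~\ref{thm:zust}, with sewing $\sum_{i<j}(J^{ij}_\Psi\circ g)\,\d g^i\wedge\d g^j$; since the left-hand side is regular (Remark~\ref{rem_dom_reg1}) and $(2+\sigma)\beta>2$, Corollary~\ref{coro:close-to-sewable} and the uniqueness Theorem~\ref{thm:sew-unique} force the two sides to coincide, which is the germ identity.

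The main obstacle is keeping every error term of order strictly above $2$. The naive route — replacing $\delta(F\,\d g)$ by $\delta F\cup\delta g$ and $\delta g^i\cup\delta g^j$ by $\delta(g^i\d g^j)$ — only produces errors of order $2\beta$, which need not exceed $2$, so the uniqueness theorem cannot be invoked directly; instead one must never leave the ``$\delta(\,\cdot\,\d\,\cdot\,)$'' calculus and always use Lemma~\ref{lem:leibniz-zust}, whose error exponent $(2+\sigma)\beta$ always beats $2$, while exploiting that the ``leftover'' pieces $\sum_j\delta(a_j\d g^j)$ and $\sum_i\delta(b_i\d g^i)$ are coboundaries of exact germs and hence vanish identically. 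A secondary, routine chore is checking the sewability and boundedness hypotheses of the auxiliary results invoked (Theorems~\ref{thm:young},~\ref{thm:zust}, Corollaries~\ref{coro:young-iterated},~\ref{cor:leibniz},~\ref{cor:zust-iterated}, Lemma~\ref{lem:leibniz-zust}), which is precisely why one first reduces to bounded $D$.
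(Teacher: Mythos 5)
Your proposal is correct and follows essentially the same route as the paper's proof: expand $\delta\bigl((\Psi^1\circ g)\,\d(\Psi^2\circ g)\bigr)$ via the Young chain rule (Proposition~\ref{prop:chain-rule}), apply Lemma~\ref{lem:leibniz-zust} twice with the exact blocks killed by $\delta\delta=0$, integrate by parts and use the alternating property of $\delta(g^i\d g^j)$ to reach the $\approx_{(2+\sigma)\beta}$ identity, then conclude by sewing (uniqueness) and handle the factor $f$ through Corollary~\ref{cor:zust-iterated}. The only differences are cosmetic: you make the localization to bounded $D$ and the final appeal to Corollary~\ref{coro:close-to-sewable}/Theorem~\ref{thm:sew-unique} explicit, which the paper leaves implicit.
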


\begin{proof}
Letting $\gamma:=\beta(\sigma +2)$, we prove that
\begin{equation}\label{eq:thesis-chain-rule-zust} \delta \bra{ (\Psi^1 \circ g)  \d (\Psi^2 \circ g) } \approx_{\gamma}  \sum_{\substack{i,j=1, \ldots,\ell \\ i<j}} ( f \, J^{ij}_\Psi \circ g ) \cup \delta ( g^i \d g ^j)\end{equation}
holds, hence equality must hold if we compose with $\sew$ both sides (the case with $f$ follows from Corollary~\ref{cor:zust-iterated}).

To prove~\eqref{eq:thesis-chain-rule-zust} notice first that  Proposition~\ref{prop:chain-rule} gives the identity
\begin{equation}\label{eq:chain-rule-first-identity-zust}  \delta (\Psi^2 \circ g) = \sum_{i=1}^\ell  ( \partial_i \Psi^2 \circ g) \d g^i\end{equation}
as well as (with $\Psi^1\circ g$ instead of $f$)
\[ (\Psi^1\circ g) \d (\Psi^2 \circ g)  = \sum_{i=1}^\ell  (\Psi^1 \circ g \, \partial_i \Psi^2 \circ g) \d g^i.\]

Therefore,
\begin{equation}\label{eq:chain-rule-zust-proof-2} \begin{split} \delta ( \Psi^1 \circ g \d \Psi^2 \circ g) & =  \sum_{i=1}^\ell\delta\bra{ ( \Psi^1 \circ g \partial_i \Psi^2 \circ g) \d g^i} \\
& \approx_{\gamma} \sum_{i=1}^\ell \Psi^1 \circ g  \cup \delta ( \partial_i \Psi^2 \circ g \d g^i) +  \partial_i \Psi^2 \circ g   \cup \delta (\Psi^1 \circ g  \d g^i) \\
& \quad \quad \text{by Lemma~\ref{lem:leibniz-zust},}\\
& \approx_{\gamma}    \sum_{i=1}^\ell \partial_i \Psi^2 \circ g   \cup \delta ( (\Psi^1 \circ g)  \d g^i),  \end{split}\end{equation}
the sum of the first  terms vanishing since, by~\eqref{eq:chain-rule-first-identity-zust},
\[ \sum_{i=1}^\ell   \Psi^1 \circ g  \cup \delta ( (\partial_i \Psi^2 \circ g) \d g^i)  = \Psi^1 \circ g  \cup \delta ( \sum_{i=1}^\ell \partial_i \Psi^2 \circ g \d g^i)  =  \Psi^1 \circ g  \cup \delta^2  (\Psi^2 \circ g )= 0.\]
Integrating by parts, for every $i=1, \ldots, \ell$,
\[ \delta (\Psi^1 \circ g  \d g^i) = - \delta^2 ( g^i \Psi^1 \circ g ) - \delta( g^i \d \Psi^1 \circ g) =  - \delta( g^i \d \Psi^1 \circ g).\]
Using again  Lemma~\ref{lem:leibniz-zust}, in a similar way as in~\eqref{eq:chain-rule-zust-proof-2}, with $\Psi^2$ instead of $\Psi^1$ and $g^i$ instead of $\Psi^1 \circ g$, 
we obtain
\[ \delta( g^i \d \Psi^1 \circ g) \approx_\gamma \sum_{j=1}^\ell \partial_j \Psi^1 \circ g \cup \delta (g^i \d g^j).\]
Using this relationship in each summand in the last line of~\eqref{eq:chain-rule-zust-proof-2}, we have 
\[ \delta ( \Psi^1 \circ g \d \Psi^2 \circ g) \approx_\gamma -\sum_{i,j=1}^\ell \bra{\partial_i \Psi^2 \circ g \, \partial_j \Psi^1 \circ g} \cup \delta (g^i \d g^j),\]
which is equivalent to~\eqref{eq:thesis-chain-rule-zust} by Remark~\ref{rem:alternating-zust}.
\end{proof}

\begin{remark}[classical chain rule]\label{rem:classical-chain-rule}
We notice that same arguments may be used to prove the classical chain rule for $2$-germs of the type $f \d x^i \wedge \d x^j$ and maps $\Psi$ that are differentiable with (uniformly) continuous derivatives, introducing the $2$-Dini gauge $S \mapsto \eta(\diam(S)) \vol_2(S)$,  $\eta$ standing for the modulus of continuity of $\nabla \Psi$.
\end{remark}

\begin{remark}
The requirement $\Psi \in C^{1, \sigma}$ enters  in the proof only to write~\eqref{eq:chain-rule-first-identity-zust} (and the analogous identity for $\Psi^1$). Therefore, substituting $\Psi \circ g$ with $h = (h^1, h^2) \in \germ^0(\O; \R^2)$ such that, for $m \in \cur{1,2}$,
\begin{equation}\label{eq:g-diff} \delta h^m \approx_{\beta(1+\sigma)} \sum_{i=1}^\ell (\partial_{g^i} h^m) \cup \delta g^i\end{equation}
where $\partial_{g^i} h^m \in \germ^0(\O; \R^2)$ are such that $\sqa{ \delta ( \partial_{g^i} h^m) }_{\sigma \beta} < \infty$, we have that
\[ f \d h^1 \wedge \d h^2 = \sum_{\substack{i,j=1, \ldots,\ell \\ i<j}}  f \, (J_gh)^{ij} \d g^i \wedge \d g ^j, \quad \text{where} \]
\[ \quad (J_g h)^{ij} = \det\bra{ \begin{array}{cc}
\partial_{g^i} h^1  &  \partial_{g^j} h^1  \\ \partial_{g^i} h^2 & \partial_{g^j} h^2
\end{array}}.\]
The class of functions $h$ satisfying~\eqref{eq:g-diff} generalize that of {controlled paths} in~\cite{gubinelli_controlling_2004} (corresponding to the case $\O \subseteq \R$) and it is tempting to call them $g$\emph{-differentiable}, a concept that will be further investigated in~\cite{frobenius-zust}. Note that in particular when $\ell = 1$ (i.e., $g^1:=g$) and $h^1$, $h^2$ are both $g$-differentiable, then $f \d h^1 \wedge \d h^2 = 0$.
\end{remark}


\subsection{Pull-back}

 In this section, we  study how germs of Young and Z\"ust type introduced above behave via the pull-back of a map $\varphi: I \subseteq \R^{k} \to D$. More precisely, we compare the ``algebraic'' and ``differential'' pull-backs, in Young's case (i.e., $f \d g$) respectively  given by
\begin{equation}\label{eq:pull-back-young} \varphi^\natural( f \d g ) \quad \text{and} \quad \varphi^*(f \d g ) := (f \circ \varphi) \d (g \circ \varphi),\end{equation}
and similarly in  Z\"ust's case (i.e., $f \d g ^1 \wedge \d g^2$) by
\begin{equation}\label{eq:pull-back-zust} \varphi^\natural( f \d g^1 \wedge \d g^2) \quad \text{and}  \quad \varphi^*(f \d g^1 \wedge \d g^2 ) := (f \circ \varphi) \d (g^1 \circ \varphi) \wedge \d (g^2 \circ \varphi),\end{equation}
provided these are defined, e.g., if the assumptions of Theorem~\ref{thm:young} (respectively  Theorem~\ref{thm:zust}) are met.

In the case of $1$-germs of Young type, the two pull-backs in~\eqref{eq:pull-back-young} are related to each other by a sewing procedure, as the following result shows.

\begin{proposition}[integration over curves via pull-back]\label{prop:pull-back-young}
Let  $\gamma>1$, $\sigma>1/\gamma$, $f, g \in \germ^0(\O)$ be continuous with $\sqa{\delta f \cwedge \delta g}_\gamma < \infty$ and let $\varphi \colon I \subseteq \R \to \O$ with $\sqa{\delta \varphi}_\sigma < \infty$. Then $\varphi^\natural(f \d g)$ is $\sigma \gamma$-sewable with
\[ \sew \bra{ \varphi^\natural( f \d g) } = \sew( f \circ \varphi \cup \delta g\circ \varphi) = \varphi^*( f \d g)\] 
\end{proposition}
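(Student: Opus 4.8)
The plan is to show that the ``algebraic'' pull-back $\varphi^\natural(f\d g)$ is close — in the sense of $\approx_{\v}$ for a suitable $1$-Dini gauge $\v$ on $I$ — to the naive germ $(f\circ\varphi)\cup\delta(g\circ\varphi)$, and then invoke the uniqueness of the sewing map (the corollary to Theorem~\ref{thm:sew-unique}) together with Theorem~\ref{thm:young} applied on $I$. First I would record that, by Theorem~\ref{thm:young} on the domain $D$, the germ $f\cup\delta g$ is $\gamma$-sewable with $\sew(f\cup\delta g)=f\d g$ and $[\,f\cup\delta g - f\d g\,]_\gamma\le \c\,[\delta f\cup\delta g]_\gamma<\infty$; applying the pull-back operator $\varphi^\natural$ and Remark~\ref{rem:moduli-pullback} gives
\[
[\,\varphi^\natural(f\cup\delta g) - \varphi^\natural(f\d g)\,]_{\varphi^\natural\diam^\gamma}\le [\,f\cup\delta g-f\d g\,]_\gamma<\infty .
\]
Since $\sqa{\delta\varphi}_\sigma<\infty$ means $\varphi^\natural\diam\le[\delta\varphi]_\sigma\,\diam^\sigma$ as gauges on $I$, we have $\varphi^\natural\diam^\gamma\le \c\,\diam^{\sigma\gamma}$, hence $\varphi^\natural(f\cup\delta g)\approx_{\sigma\gamma}\varphi^\natural(f\d g)$ on $I$, and $\sigma\gamma>1$ is exactly the hypothesis $\sigma>1/\gamma$.

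Next I would compare $\varphi^\natural(f\cup\delta g)$ with the genuinely ``new'' germ $(f\circ\varphi)\cup\delta(g\circ\varphi)$ on $I$. By definition of the push-forward and pull-back, for $[st]\in\simp^1(I)$ one has $\ang{[st],\varphi^\natural(f\cup\delta g)}=f(\varphi(s))\,(g(\varphi(t))-g(\varphi(s)))=\ang{[st],(f\circ\varphi)\cup\delta(g\circ\varphi)}$, so in fact these two germs coincide exactly, and no estimate is needed here. Thus $\varphi^\natural(f\d g)\approx_{\sigma\gamma}(f\circ\varphi)\cup\delta(g\circ\varphi)$. It remains to check that the right-hand germ is $\sigma\gamma$-sewable with the stated value: this is Theorem~\ref{thm:young} applied on $I$ to the pair $f\circ\varphi$, $g\circ\varphi$, provided $\sqa{\delta(f\circ\varphi)\cup\delta(g\circ\varphi)}_{\sigma\gamma}<\infty$ and $\sigma\gamma>1$. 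The latter holds; for the former, note $\delta(f\circ\varphi)\cup\delta(g\circ\varphi)=\varphi^\natural(\delta f\cup\delta g)$, so by Remark~\ref{rem:moduli-pullback} again, $[\varphi^\natural(\delta f\cup\delta g)]_{\varphi^\natural\diam^\gamma}\le[\delta f\cup\delta g]_\gamma<\infty$, and $\varphi^\natural\diam^\gamma\le\c\,\diam^{\sigma\gamma}$ gives $\sqa{\delta(f\circ\varphi)\cup\delta(g\circ\varphi)}_{\sigma\gamma}<\infty$. Hence $(f\circ\varphi)\cup\delta(g\circ\varphi)$ is $\sigma\gamma$-sewable with $\sew\bigl((f\circ\varphi)\cup\delta(g\circ\varphi)\bigr)=(f\circ\varphi)\d(g\circ\varphi)=\varphi^*(f\d g)$.

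Finally I would assemble the pieces. We have shown $\varphi^\natural(f\d g)\approx_{\sigma\gamma}(f\circ\varphi)\cup\delta(g\circ\varphi)$, the latter being $\sigma\gamma$-sewable; by Corollary~\ref{coro:close-to-sewable} the germ $\varphi^\natural(f\d g)$ is then $\sigma\gamma$-sewable with the same sewing, i.e.
\[
\sew\bigl(\varphi^\natural(f\d g)\bigr)=\sew\bigl((f\circ\varphi)\cup\delta(g\circ\varphi)\bigr)=\varphi^*(f\d g),
\]
which is the claim. I do not expect a serious obstacle here: the only subtlety is bookkeeping the gauges under pull-back (using $\sqa{\delta\varphi}_\sigma<\infty$ to turn $\diam$ on $D$ into $\diam^\sigma$ on $I$, and Remark~\ref{rem:moduli-pullback} to transfer the finiteness of the bracket norms), plus the elementary observation that $\varphi^\natural$ commutes with $\cup$ and $\delta$ on the level of germs so that $\varphi^\natural(f\cup\delta g)$ and $\varphi^\natural(\delta f\cup\delta g)$ are literally $(f\circ\varphi)\cup\delta(g\circ\varphi)$ and $\delta(f\circ\varphi)\cup\delta(g\circ\varphi)$. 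One should also double-check that $\varphi^*(f\d g)$ is well-defined, i.e.\ that the hypotheses of Theorem~\ref{thm:young} are genuinely met on $I$ — which is exactly the finiteness estimate established above — so the statement is not vacuous.
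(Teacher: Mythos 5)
Your proposal is correct and follows essentially the same route as the paper's proof: both reduce to the identity $\varphi^\natural(f\cup\delta g)=(f\circ\varphi)\cup\delta(g\circ\varphi)$, transfer the Young estimate $[\,f\d g-f\cup\delta g\,]_\gamma<\infty$ through $\varphi^\natural$ via Remark~\ref{rem:moduli-pullback} together with $\varphi^\natural\diam^\gamma\le\c\,\diam^{\sigma\gamma}$, verify $\sqa{\delta(f\circ\varphi)\cup\delta(g\circ\varphi)}_{\sigma\gamma}<\infty$ so that Theorem~\ref{thm:young} applies on $I$, and conclude by uniqueness of the sewing (Corollary~\ref{coro:close-to-sewable}). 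The only cosmetic difference is that you obtain the H\"older bound on $\delta(f\circ\varphi)\cup\delta(g\circ\varphi)$ by pulling back $\delta f\cup\delta g$, whereas the paper estimates it directly; both give the same bound (up to the constant $[\delta\varphi]_\sigma^\gamma$).
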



 The above proposition shows that when the latter regular germ is defined (using the conditions on $\gamma$ and $\sigma$) this is a limit both of Riemann sums of increments $f_{p^i} \delta g_{p^ip^{i+1}}$ computed along the curve $\varphi(I)$ and of ``integrated'' increments $\int_{[p^i p^{i+1}]} f \d g$, even in cases where $\varphi$ is not a rectifiable curve. In particular, this shows that the curvilinear integral $\int_{\varphi(I)} f \d g := \int_I \varphi^* (f \d g)$ remains invariant by reparametrization of $\varphi$.

\begin{proof}
The inequality
\[  \sqa{\delta (f\circ \varphi) \cwedge  \delta (g\circ \varphi)}_{\sigma\gamma } \le \sqa{\delta \varphi}_\sigma \sqa{\delta f \cwedge \delta g}_\gamma < \infty\]
implies that $ (f\circ \varphi) \cwedge \delta (g\circ \varphi)$ is $\sigma\gamma$-sewable by Theorem~\ref{thm:young}. Since $\varphi^\natural\diam ^\gamma \le \sqa{\delta \varphi}_{\sigma}^\gamma \diam^{\sigma \gamma}$, we have
\[ \sqa{ \varphi^\natural (f \d g) - \varphi^\natural (f \cwedge \delta g ) } _{\sigma \gamma} \le \c [ \varphi^\natural(  f \d g - f \cwedge \delta g) ] _{\varphi^\natural\diam^\gamma} \le \c \sqa{ f \d g - f \cwedge \delta g } _{\gamma} < \infty.\]
by Remark~\ref{rem:moduli-pullback}. Therefore,
\[  \varphi^\natural (f \d g) \approx_{\sigma \gamma} \varphi^\natural (f \cwedge \delta g ) = f\circ \varphi \cwedge \delta (g\circ \varphi) \approx_{\sigma \gamma} \sew ( (f \circ \varphi) \cup \delta (g \circ \varphi)) =  (f \circ \varphi) \d (g \circ \varphi)\]
and the thesis follows.
\end{proof}

In case of $2$-germs $f \d g^1 \wedge \d g^2$, the situation is not so simple, for additional boundary terms appear already in the classical (differentiable) case.

\begin{example}\label{ex:pull-back-zust-smooth}
Let $I \subseteq \R^2$, $\O \subseteq \R^d$ be bounded, $g^1 = x^i$, $g^2 = x^j$ be coordinate functions and let $f \in \germ^0(\O)$  and $\varphi: I \subseteq \R^2 \to \O$ be continuously differentiable. Then, by Example~\ref{ex:classical-zust} and Remark~\ref{rem:classical-chain-rule},
\[ (f \circ \varphi) \d \varphi^i \wedge \d \varphi^j =   (f \circ \varphi) J_\varphi \d s^1 \wedge \d s^2 \quad
\text{with $J_\varphi = \det \bra{
 \begin{array}{cc}
\partial_{s^1} \varphi^i  & \partial_{s^2} \varphi^i  \\
 \partial_{s^1} \varphi^j & \partial_{s^2} \varphi^j
\end{array}
}$}.\]

\begin{figure}[ht]
 \begin{minipage}{0.48\textwidth}
\begin{center}
     \begin{tikzpicture}[scale=1.9]
\filldraw[bottom color=lightgray, top color=white] (0,0) to [bend left=20]  (1,2) to [bend left =30] (2,1) to [bend right=30] (0,0) -- cycle;
	\draw   (0,0) node[below left]{$\varphi(p_0)$} (1,2) node[above]{$\varphi(p_1)$} (2,1) node[right]{$\varphi(p_2)$} (1,1.3) node {$\varphi(S)$}(1,0.7) node {$\varphi_\natural S$};
	\draw[dashed] (0,0) -- (1,2) -- (2,1);
		\draw (2,1) -- (0,0) ;

    \end{tikzpicture}
    \end{center}\subcaption{$\varphi_\natural S$ and $\varphi(S)$}\label{fig:pull-back-classical_1}
\end{minipage}
 \begin{minipage}{0.48\textwidth}
\begin{center}
     \begin{tikzpicture}[scale=1.9]
\filldraw[bottom color=lightgray, top color=white] (0,0) to [bend left=20]  (1,2) to [bend left =30] (2,1) to [bend right=30] (0,0) -- cycle;
	\draw   (0,0) node[below left]{$\varphi(p_0)$} (1,2) node[above]{$\varphi(p_1)$} (2,1) node[right]{$\varphi(p_2)$} (1,0.7) node {$S_1$} (1.65,1.5) node {$S_0$} (0.4,1.05) node {$S_2$};
		\draw[dashed] (0,0) -- (1,2) -- (2,1);
		\draw (2,1) -- (0,0) ;
	\draw[dashed, draw=none, fill = gray, fill opacity=0.1,]	 (0,0) to [bend left=20]  (1,2) --(0,0) -- cycle;
	\draw[dashed, draw=none, fill = gray, fill opacity=0.1] (1,2) to [bend left=30]  (2,1) --(1,2) -- cycle;
		\draw[draw=none, fill = gray, fill opacity=0.2] (2,1) to [bend right=30]  (0,0) --(2,1) -- cycle;

	    \end{tikzpicture}
    \end{center}\subcaption{The ``side faces'' $S_0$, $S_1$, $S_2$}\label{fig:pull-back-classical_2}
\end{minipage}
\caption{}\label{fig:pull-back-classical}
\end{figure}

Given a simplex $S = [p_0 p_1 p_2]$, see Figure~\ref{fig:pull-back-classical}, the quantity
\[ \ang{S, \varphi^\natural (f \d x^i \wedge \d x^j)} = \int_{[\varphi(p_0)\varphi(p_1) \varphi(p_1)]} f \d x^i \wedge \d x^j\]
is computed on the ``basis'' $\varphi_\natural S = [\varphi(p_0)\varphi(p_1) \varphi(p_1)]$, while
\[ \ang{S, \varphi^*( f \d x^i \wedge \d x^j )} = \int_{S}(f \circ \varphi) J_\varphi \d s^1 \wedge \d s^2 \]
is computed over the surface $\varphi(S)$ (Figure~\ref{fig:pull-back-classical_1}), hence their difference can be computed using Stokes-Cartan theorem on a (piecewise $C^1$) domain $U$ whose boundary consists of $\varphi_\natural S$, $\varphi(S)$ and the three suitably oriented ``side faces'' $(S_\ell)_{\ell=0}^2$  (Figure~\ref{fig:pull-back-classical_2}). We have the identity
\begin{equation}\label{eq:identity-zust-pull-back-classical} \begin{split} \ang{S, \varphi^\natural (  f \d x^i \wedge \d x^j )  - \varphi^*(f \d x ^i \wedge \d x^j)} & =  \int_{ \varphi_\natural S} f \d x^i\wedge \d x^j - \int_{\varphi(S)}  f \d x^i \wedge \d x^j  \\
& = \sum_{\ell=0}^2\int_{S_\ell} f \d x^i \wedge \d x^j  + \int_{\partial U} f \d x^i\wedge \d x^j \\
& = \sum_{\ell=0}^2\int_{S_\ell} f \d x^i \wedge \d x^j  + \int_{U} \d f \wedge \d x^i \wedge \d x^j.\end{split}\end{equation}
Since the ``orders'' of the two terms in the last line are respectively $\diam^2(S)$ and $\diam^3(S)$, this shows that we cannot in general expect a result analogous to Proposition~\ref{prop:pull-back-young}, i.e., that $\sew( \varphi^\natural (  f \d x^i \wedge \d x^j ) ) = \varphi^*(f \d x ^i \wedge \d x^j)$, unless some cancellations happen in the ``side faces''.
\end{example}

To generalize the above example  to less regular germs and maps we suitably define the integrals along the ``side faces'' $S_\ell$, $\ell \in \cur{0,1,2}$. Ultimately, their appearance is due to the fact that the $1$-germ $\omega:=\varphi^\natural( f \d g^1 \wedge \d g^2)$ may fail to be nonatomic, hence we first study a more general construction of a ``side compensator'' for $2$-germs.

\begin{definition}[side compensator]
Given $\omega \in \germ^2(\O)$, we say that $\eta \in \germ^1(\O)$ is a \emph{side compensator} of $\omega$  if  $\eta \approx_{\u} 0$ for some $1$-Dini gauge $\u \in \germ^1(\O)$ and
\begin{equation}\label{eq:boundary-comp} \ang{ [prq], \omega - \delta \eta} = 0 \quad \text{for every $[prq] \in \simp^1(\O)$ such that $r = \frac{p+q}{2}$.}\end{equation}
\end{definition}

\begin{example}\label{ex:area-corrector}
If $\omega$ is nonatomic, then $\eta = 0$ is a side compensator for $\omega$. If $\omega = \delta \eta$, with $\eta \approx_\u 0$ for some $1$-Dini gauge $\u \in \germ^1(\O)$ ($\eta$ possibly not regular)  $\eta$ itself is a side compensator for $\omega$.
\end{example}

Condition~\eqref{eq:boundary-comp} is equivalent to
\begin{equation}\label{eq:area-corrector}  \eta_{pq} =  \eta_{pr}+\eta_{rq}-\omega_{prq}  \quad \text{for every $[prq] \in \simp^1(\O)$ such that $r = \frac{p+q}{2}$.}\end{equation}
Uniqueness of $\eta$ is a consequence of the following result (applied to the difference of two side compensators of a given $\omega$). Its proof is given in Appendix~\ref{app:side-compensator}.

\begin{theorem}[side compensator, uniqueness]\label{thm:comp-uni}
Let $\eta \in \germ^1(\O)$,  $\eta \approx_\u 0$ for some $1$-Dini gauge $\u \in \germ^1(\O)$, and $\ang{ [prq],  \delta \eta} = 0$ for every $[prq] \in \simp^1(\O)$ such that $r = \frac{p+q}{2}$. Then $\eta = 0$. 
\end{theorem}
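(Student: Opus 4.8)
The plan is to exploit the bisection condition \eqref{eq:area-corrector} iteratively along dyadic subdivisions of a segment, and combine it with the Dini decay of $\u$ to force $\eta$ to vanish. Fix $[pq] \in \simp^1(\O)$; set $p_0 := p$, $q_0 := q$, and $r_0 := (p+q)/2$. Condition \eqref{eq:area-corrector} with $\omega = 0$ (i.e.\ $\ang{[prq],\delta\eta}=0$ for midpoints) reads
\[
\eta_{pq} = \eta_{p r_0} + \eta_{r_0 q}.
\]
Iterating this on each half, after $n$ dyadic steps we obtain
\[
\eta_{pq} = \sum_{i=0}^{2^n - 1} \eta_{[x_i x_{i+1}]},
\]
where $x_i := p + (i/2^n)(q-p)$ are the dyadic points of level $n$ on the segment $\conv([pq])$. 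Each simplex $[x_i x_{i+1}]$ is a translated copy of $(2^{-n})_\natural [pq]$, so by $\eta \approx_\u 0$ together with uniformity of $\u$ we get $|\eta_{[x_i x_{i+1}]}| \le [\eta]_\u \ang{(2^{-n})_\natural [pq], \u}$. Hence
\[
|\eta_{pq}| \le 2^n [\eta]_\u \ang{(2^{-n})_\natural [pq], \u}.
\]

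The key step is now to let $n \to \infty$: since $\u$ is $1$-Dini, by definition $\lim_{n\to\infty} 2^{n}\ang{(2^{-n})_\natural [pq],\u} = 0$, and therefore $\eta_{pq} = 0$. As $[pq]$ was arbitrary, $\eta = 0$.

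The only point requiring a small argument is the iteration itself: one must check that the dyadic refinement of the telescoping identity is legitimate, i.e.\ that at each level the midpoint of $[x_i x_{i+1}]$ is indeed $x_i + \tfrac12(x_{i+1}-x_i) = p + \tfrac{2i+1}{2^{n+1}}(q-p)$, which is the level-$(n+1)$ dyadic point, so that \eqref{eq:area-corrector} applies verbatim to each sub-simplex; and that $\conv([x_i x_{i+1}]) \subseteq \conv([pq]) \subseteq D$, so these are genuine $1$-simplices in $\O$. Both are immediate. I do not expect any real obstacle here: the structure is exactly the one-dimensional sewing-uniqueness argument (cf.\ the proof of Theorem~\ref{thm:sew-unique} for $k=1$), specialized to midpoint subdivisions, and the $1$-Dini hypothesis on $\u$ is precisely what is needed to kill the factor $2^n$ in the limit. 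If anything, the mild care needed is only in writing the induction cleanly; the estimate itself is a one-line consequence of uniformity plus the Dini condition.
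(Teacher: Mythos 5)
Your proof is correct and follows essentially the same route as the paper: the paper merely packages your explicit dyadic-midpoint iteration into Lemma~\ref{lem:uniqueness-general}, applied with $\ell=2$, $\tau_1[pq]:=[pr]$, $\tau_2[pq]:=[rq]$, $\omega:=\eta$ and $\v:=\u$, where uniformity of the gauge and the $1$-Dini decay play exactly the roles you assign them.
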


In what follows, we write $L(\omega):=\eta$ whenever the side compensator $\eta$ of $\omega$ exists. With this notation, Example~\ref{ex:area-corrector} gives
\begin{eqnarray} L(\omega) &=& 0 \quad \text{if $\omega$ is nonatomic, and} \label{eq:ex-sude-corr-1}\\
 L(\delta \eta) &=& \eta \quad \quad \text{if $\eta \approx_\u 0 $  with $\u$  $1$-Dini.} \label{eq:ex-sude-corr-2}
 \end{eqnarray}
 The map $\omega \mapsto L(\omega)$ is linear on $2$-germs for which it is defined and hence, by~\eqref{eq:ex-sude-corr-2},
 \begin{equation}\label{eq:ex-side-corr-3} L(\omega - \delta L(\omega)) = L(\omega) - L(\delta L(\omega)) = L(\omega) - L(\omega) = 0.\end{equation}

 \begin{remark}[locality]\label{rem:locality-side}
Arguing as in Corollary~\ref{cor:locality}, we have that, if $\omega \in \germ^2(\O)$ is such that $L(\omega)$ exists, then, for every $\tilde{\O} \subseteq \O$, its restriction $\tilde{\omega}$ to $\simp^k(\tilde{ \O})$ is such that $L(\tilde\omega)$ exists, being simply the restriction of $L(\omega)$ to $\simp^k(\tilde{\O})$.\end{remark}

\begin{remark}\label{rem:side-alterating} If $\omega$ is alternating, then $L(\omega)$ is also alternating, since if $\sigma$ is the transposition $\sigma[pq]= [qp]$, then letting $\tilde\sigma [prq] := [qrp]$ one has $\tilde \sigma^\prime \omega = -\omega$ and since $r = \frac{p+q}{2} = \frac{q+p}{2}$, we have
\[\begin{split}
 0 & =  \ang{ [qrp],  \delta  \eta - \omega}  = \ang{ \tilde \sigma [qrp],  \delta  \eta - \omega} = \ang{ [prq], \tilde \sigma^\prime( \delta  \eta - \omega)} \\
 &  =\ang{ [prq], \delta (\sigma^\prime  \eta)  - \tilde \sigma^\prime \omega}
\end{split}
\]
yielding $\sigma^\prime \eta = L( \tilde \sigma^\prime \omega)  = L(-\omega) = - L(\omega)$ (we also use the fact that $\sigma^\prime \eta \approx_\u 0$ follows from $\eta \approx_\u 0$).
\end{remark}

Sufficient conditions for existence of the side compensator are given in the following result, whose proof is postponed in Appendix~\ref{app:side-compensator}.

\begin{theorem}[side compensator, existence]\label{thm:comp:ex}
Let $\u \in \germ^2(\O)$ be strong $1$-Dini. Then, there exists a $1$-Dini gauge $\v \in \germ^1(\O)$ such that, for every $\omega \in \germ^2(\O)$ with $\omega \approx_\u 0$, the side compensator $L(\omega)$ exists with $[L(\omega)]_{\v} \le [\omega]_\u$. Moreover, if $\u = \diam^{\alpha}$ then $\v = \c(\alpha) \diam^{\alpha}$.
\end{theorem}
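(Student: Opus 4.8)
The plan is to build $\eta := L(\omega)$ as a limit of a dyadic telescoping procedure along each fixed segment, exactly mimicking the one-dimensional sewing lemma but relative to the midpoint subdivision. Fix $[pq] \in \simp^1(\O)$ and let $m = \frac{p+q}{2}$ be its midpoint. Equation~\eqref{eq:area-corrector} forces $\eta_{pq} = \eta_{pm} + \eta_{mq} - \omega_{pmq}$, so iterating the midpoint split $n$ times produces $2^n$ subsegments $[x_i x_{i+1}]$ of the dyadic partition of $[pq]$, and
\[
\eta_{pq} = \sum_{i=0}^{2^n - 1} \eta_{x_i x_{i+1}} - \sum_{\text{dyadic } [arb]} \omega_{arb},
\]
where the second sum runs over all midpoint-triples generated at levels $0,\dots,n-1$. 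Since $[x_i x_{i+1}]$ is $(2^{-n})_\natural$ of a translate of $[pq]$ and $\u$ is uniform, the natural \emph{ansatz} is
\[
\eta_{pq} := -\sum_{n=0}^{\infty} \sum_{j=0}^{2^n - 1} \omega_{a_{n,j}\, r_{n,j}\, b_{n,j}},
\]
the triple at level $n$, position $j$, being the $j$-th dyadic subsegment of $[pq]$ together with its midpoint. Convergence of this series is immediate from $|\omega_{arb}| \le [\omega]_\u \ang{[arb],\u}$ together with the strong $1$-Dini property: the level-$n$ triples are isometric to $(2^{-n})_\natural[pq']$ (suitably translated), so $\sum_j |\omega_{a_{n,j} r_{n,j} b_{n,j}}| \le 2^n [\omega]_\u \ang{(2^{-n})_\natural[pq'], \u}$, and summing over $n$ gives $|\eta_{pq}| \le [\omega]_\u \ang{[pq],\tilde\u}$ with $\tilde\u$ as in~\eqref{eq:v-dini}. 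Thus $\v := \tilde\u$ is the desired $1$-Dini gauge (it is $1$-Dini by Remark~\ref{rem_Dini_gauge1}(iv)), and $[L(\omega)]_\v \le [\omega]_\u$; the explicit case $\u = \diam^\alpha$ follows from Example~\ref{ex:gauge-diam} with $\gamma_2 = 0$, giving $\v = (1-2^{1-\alpha})^{-1}\diam^\alpha$.

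It remains to verify that this $\eta$ genuinely satisfies the defining relation~\eqref{eq:area-corrector}, i.e.\ that the formal telescoping is consistent. Given $[pq]$ with midpoint $m$, the level-$n$ triples of $[pq]$ for $n \ge 1$ are precisely the union of the level-$(n-1)$ triples of $[pm]$ and of $[mq]$, while the single level-$0$ triple of $[pq]$ is $[pmq]$ itself; reindexing the double series accordingly yields
\[
\eta_{pq} = -\omega_{pmq} - \sum_{n\ge 0}\sum_j \omega_{(\cdot)\text{ in }[pm]} - \sum_{n\ge 0}\sum_j \omega_{(\cdot)\text{ in }[mq]} = \eta_{pm} + \eta_{mq} - \omega_{pmq},
\]
which is exactly~\eqref{eq:area-corrector}, equivalently~\eqref{eq:boundary-comp}. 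Uniqueness is already guaranteed by Theorem~\ref{thm:comp-uni}, so $L(\omega)$ is well defined and equals this series.

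The main obstacle I anticipate is not the convergence estimate — that is a direct consequence of the strong $1$-Dini hypothesis — but rather a bookkeeping subtlety: one must check that the reindexing of the dyadic tree of triples of $[pq]$ into the trees of $[pm]$ and $[mq]$ is a genuine bijection that respects the isometry classes, so that the uniform gauge $\u$ is evaluated on the correct simplices and the cancellation producing~\eqref{eq:area-corrector} is exact rather than merely asymptotic. A secondary point requiring care is that $(2^{-n})_\natural S$ of a translate of $[pq]$ need not lie in $\O$; as in the remark following the definition of Dini gauges, this is harmless because $\u$ is uniform, so we may evaluate it on any isometric translate inside $\O$, but this has to be stated. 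No continuity of $\omega$, $\u$ or $\tilde\u$ is needed here, in contrast to Theorem~\ref{thm:sew-existence}, since we work one segment at a time and never need to patch across a two-dimensional domain.
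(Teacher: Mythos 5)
Your construction is correct and essentially coincides with the paper's proof: the paper defines $L^n(\omega)$ by the very same midpoint recursion, shows the sequence is Cauchy via the strong $1$-Dini bound $2^{n}\ang{(2^{-n})_\natural [prq],\u}$, and its limit is exactly your absolutely convergent series over the dyadic tree of midpoint triples, with the same gauge (formally the $1$-gauge $[pq]\mapsto \ang{[prq],\tilde\u}$ with $r=\frac{p+q}{2}$, rather than the $2$-gauge $\tilde\u$ itself) and the same constant $(1-2^{1-\alpha})^{-1}$ when $\u=\diam^\alpha$. The only difference is presentational: you verify~\eqref{eq:area-corrector} by reindexing the absolutely convergent series, while the paper obtains it by passing to the limit in the recursion defining $L^{n+1}$ from $L^n$.
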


As in the case of Theorem~\ref{thm:sew-existence}, the proof of this result and Remark~\ref{rem_Dini_gauge1} (v) give that if $\u$ is strong $r$-Dini (with $r \ge 1$) it follows that $\v$ is $r$-Dini as well. Another useful result is the following one (again, the proof is postponed in Appendix~\ref{app:side-compensator}).

\begin{theorem}\label{thm:cancellation}
Let $\omega \in \germ^2(\O)$ be closed on $2$-planes, alternating and such that $\omega \approx_\u 0$ for some strong $2$-Dini gauge $\u \in \germ^2(\O)$. Then $\omega = \delta L(\omega)$.
\end{theorem}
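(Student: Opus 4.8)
The plan is to introduce $\eta := L(\omega)$ and set $\xi := \omega - \delta\eta$, the goal being to show $\xi = 0$, which is exactly $\omega = \delta L(\omega)$. First, $\eta$ exists: a strong $2$-Dini gauge is strong $1$-Dini (since $2^{n}\le 2^{2n}$), so Theorem~\ref{thm:comp:ex} applies and produces $\eta = L(\omega)$ with $[\eta]_\v \le [\omega]_\u$ for a $1$-Dini gauge $\v$; moreover, by the remark following that theorem, since $\u$ is strong $2$-Dini this $\v$ is in fact $2$-Dini. Next I would record the properties of $\xi$: it is closed on $2$-planes (because $\omega$ is, and every coboundary $\delta\eta$ is closed); it is alternating (because $\omega$ is, and $L(\omega)$ is alternating by Remark~\ref{rem:side-alterating}); it satisfies the midpoint relation $\ang{[prq],\xi} = 0$ whenever $r = \tfrac{p+q}{2}$, directly from the defining condition~\eqref{eq:boundary-comp} of the side compensator; and $\xi \approx_\w 0$ with $\w := \max\{\u,\hat\v\}$, where $\hat\v$ is the $2$-gauge $\ang{[p_0p_1p_2],\hat\v} := \ang{[p_1p_2],\v}+\ang{[p_0p_2],\v}+\ang{[p_0p_1],\v}$, which dominates $\delta\eta$ (indeed $[\delta\eta]_{\hat\v}\le[\eta]_\v<\infty$). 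Crucially $\w$ is a \emph{uniform} $2$-Dini gauge: uniformity and $2$-Dini-ness of $\hat\v$ follow from those of $\v$, and here it matters that $\v$ is $2$-Dini and not merely $1$-Dini — this is the one place the ``strong'' hypothesis on $\u$ is genuinely used.

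The heart of the proof is a self-similar midpoint subdivision. For $S = [p_0p_1p_2]\in\simp^2(\O)$ put $m_0 = \tfrac{p_1+p_2}{2}$, $m_1 = \tfrac{p_0+p_2}{2}$, $m_2 = \tfrac{p_0+p_1}{2}$ and form the four sub-simplices $A = [p_0m_2m_1]$, $B = [m_2p_1m_0]$, $C = [m_1m_0p_2]$, $D = [m_0m_1m_2]$. A direct boundary computation gives
\[ A + B + C + D \;=\; S + N' + \partial T, \qquad N' := [p_0m_2p_1] + [p_1m_0p_2] - [p_0m_1p_2],\]
where $T$ may be taken as the cone from any point of $\conv(S)$ over the cycle $A+B+C+D-S-N'$, so that $\conv(T)\subseteq\conv(S)$. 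Pairing with $\xi$: the term $\ang{\partial T,\xi} = \ang{T,\delta\xi}$ vanishes since $\conv(T)\subseteq\pi$ for a $2$-plane $\pi$ and $\xi$ is closed on $2$-planes; and $\ang{N',\xi} = 0$ since each simplex occurring in $N'$ has its middle vertex equal to the midpoint of the outer two, so the midpoint relation applies. Hence
\[ \ang{S,\xi} \;=\; \ang{A,\xi} + \ang{B,\xi} + \ang{C,\xi} + \ang{D,\xi},\]
and the decisive point is that each of $A,B,C,D$ is isometric to $(\tfrac12)_\natural S$ (a translation for $A,B,C$; a point reflection composed with a translation for $D$).

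Iterating this identity $n$ times (legitimate, as all the sub-simplices lie in $\conv(S)\subseteq\O$ and the properties of $\xi$ used are global) expresses $\ang{S,\xi}$ as a sum of $4^n$ terms $\ang{S_j^{(n)},\xi}$ with each $S_j^{(n)}$ isometric to $(2^{-n})_\natural S$. Since $\xi\approx_\w 0$ and $\w$ is uniform,
\[ \abs{\ang{S,\xi}} \;\le\; 4^n\,[\xi]_\w\,\ang{(2^{-n})_\natural S,\w} \;=\; [\xi]_\w\; 2^{2n}\ang{(2^{-n})_\natural S,\w} \;\longrightarrow\; 0 \quad\text{as } n\to\infty,\]
by the definition of a $2$-Dini gauge. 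Thus $\ang{S,\xi} = 0$ for every $S\in\simp^2(\O)$, i.e.\ $\omega = \delta L(\omega)$.

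The main obstacle is obtaining the subdivision identity in the exact form $\ang{S,\xi} = \sum_i\ang{S_i,\xi}$ with no leftover terms: subdividing a triangle inevitably produces the degenerate ``seam'' chain $N'$, and since $\omega$ (hence $\xi$) is not assumed nonatomic there is no a priori reason for $\ang{N',\xi}$ to vanish — it is precisely the midpoint property built into the side compensator $L(\omega)$, and not nonatomicity, that annihilates it. The remaining ingredients — the chain computation behind the displayed identity, the identification of $A,B,C,D$ up to isometry with $(\tfrac12)_\natural S$, and the telescoping estimate — are routine; the only other point worth care is threading the \emph{strong} $2$-Dini assumption on $\u$ through the side-compensator lemma so that $\w$ ends up genuinely $2$-Dini.
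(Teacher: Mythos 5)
Your strategy is in substance the paper's own: your $A,B,C,D$ are exactly the paper's dyadic pieces $\dya^4 S,\dya^3 S,\dya^2 S,\dya^1 S$, and the combination ``midpoint property of $L(\omega)$ kills the seams, then telescope along the self-similar subdivision against a uniform $2$-Dini gauge'' is precisely Remark~\ref{rem:dya-cancellations} followed by Lemma~\ref{lem:uniqueness-general}. The one genuine flaw is the claimed chain identity $A+B+C+D=S+N'+\partial T$: the chain $Z:=A+B+C+D-S-N'$ is \emph{not} a cycle in this paper's formalism. Computing boundaries, the edges meeting the vertices $p_i$ match those of $\partial(S+N')$ exactly, but the interior edges survive:
\[ \partial Z=[m_2m_1]+[m_1m_2]-[m_2m_0]-[m_0m_2]+[m_1m_0]+[m_0m_1], \]
and oppositely oriented segments such as $[m_1m_2]$ and $[m_2m_1]$ are \emph{distinct} generators of $\chain^1(\O)$, so they do not cancel (germs are not assumed antisymmetric; cf.\ Example~\ref{ex_abs_dt1}). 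Hence the cone $T$ over $Z$ from $\bar p\in\conv(S)$ satisfies only $\partial T=Z-\bar p\,(\partial Z)$, and your identity acquires the extra $2$-chain $\bar p\,(\partial Z)$, consisting of three pairs $[\bar p\, m_i m_j]+[\bar p\, m_j m_i]$ (with signs).

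The repair uses an ingredient you already established but never invoked: $\xi$ is alternating (via Remark~\ref{rem:side-alterating}), so each reversed pair pairs to zero against $\xi$, $\ang{\bar p\,(\partial Z),\xi}=0$, and the key relation $\ang{S,\xi}=\ang{A+B+C+D,\xi}$ survives; from there your telescoping estimate is correct. This is exactly the point where alternation is indispensable — without it the subdivision cancellation fails — and it is why the paper routes this step through the $\cut$/$\flip$ machinery (Lemmas~\ref{lem:cut},~\ref{lem:flip},~\ref{lem:dyadic-decomposition} and Remark~\ref{rem:dya-cancellations}), where the same reversed-orientation leftovers are absorbed using the alternating hypothesis. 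Your handling of the gauges is fine (existence of $L(\omega)$ since strong $2$-Dini implies strong $1$-Dini, $2$-Dini-ness of $\v$ from the remark after Theorem~\ref{thm:comp:ex}, and the uniform $2$-Dini gauge $\max\cur{\u,\hat\v}$ dominating $\xi$), as is the identification of $A,B,C,D$ with isometric copies of $(2^{-1})_\natural S$; with the one-line fix above, or by citing Remark~\ref{rem:dya-cancellations} directly, your argument coincides with the paper's proof.
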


Using the notion of side compensator, we formulate the analogue of Proposition~\ref{prop:pull-back-young} for $2$-germs.

\begin{proposition}\label{prop:pull-back-zust}
Let $\sigma \in (0,1]$, $\alpha>0$, $\beta>1$ with $\gamma := \alpha +\beta >2/\sigma$. Let $f, g^1, g^2 \in \germ^0(\O)$ be continuous with $\sqa{\delta f}_{\alpha} + \sqa{ \delta g^1 \cwedge \delta g^2}_\beta < \infty$ and let $\varphi \colon I \subseteq \R^2 \to \O$ with $\sqa{\delta \varphi}_\sigma < \infty$. Then  
\begin{equation}\label{eq:pull-back-zust-proof} \varphi^\natural ( f \d g^1 \wedge \d g^2) - \varphi^* ( f \d g^1 \wedge \d g^2) \approx_{\gamma \sigma} \delta \eta,\end{equation}
with $\eta =  L(\varphi^\natural ( f \d g^1 \wedge \d g^2))$.
\end{proposition}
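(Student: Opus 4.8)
\emph{Overall strategy.} Write $\omega:=\varphi^\natural(f\,\d g^1\wedge\d g^2)$ and $\tilde\omega:=\varphi^*(f\,\d g^1\wedge\d g^2)=\bar f\,\d\bar g^1\wedge\d\bar g^2$, where $\bar f:=f\circ\varphi$, $\bar g^i:=g^i\circ\varphi$. The plan is to mimic the proof of Proposition~\ref{prop:pull-back-young}, with the side compensator $\eta:=L(\omega)$ now playing the role of the ``side face'' contributions appearing in the smooth identity~\eqref{eq:identity-zust-pull-back-classical} of Example~\ref{ex:pull-back-zust-smooth}, while the ``interior'' term $\int_U\d f\wedge\d x^i\wedge\d x^j$ there becomes the asserted $\approx_{\gamma\sigma}$-remainder. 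First I would check that both germs are well defined: since $\delta\bar g^1\cwedge\delta\bar g^2=\varphi^\natural(\delta g^1\cwedge\delta g^2)$, Remark~\ref{rem:moduli-pullback} and $\varphi^\natural\diam^{\beta}\le\sqa{\delta\varphi}_\sigma^{\beta}\diam^{\sigma\beta}$ give $\sqa{\delta\bar g^1\cwedge\delta\bar g^2}_{\sigma\beta}\le\sqa{\delta\varphi}_\sigma^{\beta}\sqa{\delta g^1\cwedge\delta g^2}_{\beta}$ and likewise $\sqa{\delta\bar f}_{\sigma\alpha}\le\sqa{\delta f}_\alpha\sqa{\delta\varphi}_\sigma^\alpha$; the hypotheses on $\sigma,\alpha,\beta$ are exactly what is needed to apply Theorem~\ref{thm:zust} to $(\bar f,\bar g^1,\bar g^2)$, so $\tilde\omega$ is a well-defined \emph{regular} $2$-germ, hence nonatomic, and $L(\tilde\omega)=0$ by~\eqref{eq:ex-sude-corr-1}; while $\omega$, as the pull-back of a regular germ, is alternating.

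\emph{Side compensator and reduction.} Applying Remark~\ref{rem:moduli-pullback} to the size bound~\eqref{eq:bound-size-zust}, together with $\varphi^\natural\diam\le\sqa{\delta\varphi}_\sigma\diam^\sigma$ and the isodiametric estimate $\varphi^\natural\vol_2\le\c\sqa{\delta\varphi}_\sigma^2\diam^{2\sigma}$, I get $\omega\approx_\u0$ for $\u:=\c\,\diam^{\sigma\beta}\in\germ^2(I)$, a strong $1$-Dini $2$-gauge. Then Theorem~\ref{thm:comp:ex} provides $\eta:=L(\omega)$ with $\sqa{\eta}_\v<\infty$ for a $1$-Dini $\v\in\germ^1(I)$, and $\eta$ is alternating by Remark~\ref{rem:side-alterating}. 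By linearity of $L$ and $L(\tilde\omega)=0$ one has $L(\omega-\tilde\omega)=\eta$, and~\eqref{eq:ex-side-corr-3} applied to $\omega-\tilde\omega$ yields $L(\zeta)=0$ for $\zeta:=\omega-\tilde\omega-\delta\eta$; explicitly, $\zeta_{prq}=0$ whenever $r=\tfrac{p+q}{2}$. Since~\eqref{eq:pull-back-zust-proof} is precisely the statement $\zeta\approx_{\gamma\sigma}0$, what remains is to prove $\sqa{\zeta}_{\diam^{\gamma\sigma}}<\infty$.

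\emph{The coboundary of $\zeta$.} Using $\delta\circ\varphi^\natural=\varphi^\natural\circ\delta$, the vanishing $\delta\tilde\omega=0$ (a regular $3$-germ on the two-dimensional domain $I$ is identically zero, as every $3$-simplex in $\R^2$ is degenerate) and $\delta\delta\eta=0$, I obtain $\delta\zeta=\varphi^\natural\big(\d(f\,\d g^1\wedge\d g^2)\big)$. From~\eqref{eq:bound-improvemente-dg-dg} for $\delta(g^1\,\d g^2)$ and $\sqa{\delta f}_\alpha<\infty$, together with the improved form of~\eqref{eq:bound-zust-regularized-germ} (with gauge $\diam^{\alpha+2-\beta}\vol_2^{\beta-1}$, after applying $\delta$, which preserves $\approx$ for such gauges since neither $\diam$ nor $\vol_2$ increases on faces) and the Leibniz identity $\delta(f\cwedge\delta(g^1\,\d g^2))=\delta f\cwedge\delta(g^1\,\d g^2)$, one gets $\sqa{\d(f\,\d g^1\wedge\d g^2)}_{\diam^{\alpha+2-\beta}\vol_2^{\beta-1}}<\infty$. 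Pulling back as above, $\varphi^\natural(\diam^{\alpha+2-\beta}\vol_2^{\beta-1})\le\c\sqa{\delta\varphi}_\sigma^{\gamma}\diam^{\sigma\gamma}$, so $\delta\zeta\approx_{\u'}0$ with $\u':=\c\,\diam^{\sigma\gamma}\in\germ^3(I)$, which is strong $2$-Dini because $\sigma\gamma>2$ (this is where the hypothesis $\gamma>2/\sigma$ really enters).

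\emph{Conclusion by dyadic medial subdivision — the main obstacle.} Fix $S\in\simp^2(I)$ and its medial subdivision $S=\sum_{i=1}^4 S_i+\partial T+N$, with $T$ a degenerate $3$-chain and $N$ a combination of midpoint-split degenerate $2$-simplices. Expanding $\omega_S$, $\tilde\omega_S$ and $\ang{S,\delta\eta}$ over this subdivision — using additivity of the regular germ $\tilde\omega$ (Remark~\ref{prop_additive1}); the relation $\ang{N,\delta\eta}=\ang{N,\omega}$, which is precisely the defining property~\eqref{eq:area-corrector} of $L(\omega)$; $\delta\delta\eta=0$; and the pull-back identities $\ang{\varphi_\natural S_i,f\,\d g^1\wedge\d g^2}=\omega_{S_i}$, $\ang{\varphi_\natural N,f\,\d g^1\wedge\d g^2}=\ang{N,\omega}$, $\ang{\partial\varphi_\natural T,f\,\d g^1\wedge\d g^2}=\ang{\varphi_\natural T,\d(f\,\d g^1\wedge\d g^2)}$ — all the midpoint-degenerate contributions cancel pairwise, and I am left with the \emph{exact} recursion $\zeta_S=\sum_{i=1}^4\zeta_{S_i}+\ang{\varphi_\natural T,\d(f\,\d g^1\wedge\d g^2)}$. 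Iterating and bounding, at each level $m$, the $O(4^m)$ coboundary error terms by $\c\sqa{\delta\varphi}_\sigma^{\gamma}(2^{-m}\diam S)^{\sigma\gamma}$ via the previous paragraph, the accumulated error is controlled by $\c\sqa{\delta\varphi}_\sigma^{\gamma}\diam(S)^{\sigma\gamma}\sum_{m\ge0}2^{m(2-\sigma\gamma)}<\infty$, the series converging exactly because $\sigma\gamma>2$; a self-improving (``sewing'') argument in the spirit of the proof of Theorem~\ref{thm:sew-existence} then upgrades the a priori bound $|\zeta_{S_i}|\lesssim\diam(S_i)^{\sigma\beta}$ to $|\zeta_S|\le\c\sqa{\delta\varphi}_\sigma^{\gamma}\diam(S)^{\sigma\gamma}$, i.e.\ $\zeta\approx_{\gamma\sigma}0$. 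The hard part — and where essentially all the technical work goes — is this final step: $\zeta$ is only ``approximately nonatomic'' (it vanishes on midpoint-split simplices, not on all degenerate ones), so one must re-run the two-dimensional sewing lemma while carefully tracking the degenerate remainder terms and verifying that the fully refined Riemann-type sums indeed converge back to $\zeta_S$.
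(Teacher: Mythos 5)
Your setup is sound up to the last step: the midpoint-vanishing of $\zeta:=\varphi^\natural(f\d g^1\wedge\d g^2)-\varphi^*(f\d g^1\wedge\d g^2)-\delta L\bra{\varphi^\natural(f\d g^1\wedge\d g^2)}$, the identity $\delta\zeta=\varphi^\natural\bra{\delta(f\d g^1\wedge\d g^2)}$ and its bound by $\c\,\diam^{\sigma\gamma}$ are all fine (modulo the point that $\u=\diam^{\sigma\beta}$ is strong $1$-Dini only when $\sigma\beta>1$, which the stated hypotheses do not force; the paper implicitly needs this too, e.g.\ for $\varphi^*$ to be defined, so I set it aside). The genuine gap is the concluding ``self-improving sewing'' step. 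The dyadic recursion you derive controls only the difference $\zeta_S-\ang{\dya^n S,\zeta}$, by $\c\,\diam(S)^{\sigma\gamma}\sum_m 2^{m(2-\sigma\gamma)}$; it says nothing about $\ang{\dya^n S,\zeta}$ itself. To ``upgrade'' the a priori bound $|\zeta|\lesssim\diam^{\sigma\beta}$ you would need $\ang{\dya^n S,\zeta}\to 0$, i.e.\ to beat the $4^n$ count of simplices, i.e.\ a $2$-Dini bound on $\zeta$ (exponent strictly larger than $2$); but $\sigma\beta\le 2$ is perfectly compatible with the hypotheses (e.g.\ $\sigma=1$, $\beta=3/2$, $\alpha=3/5$). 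So your iteration only yields $\zeta_S=R_S+O(\diam(S)^{\sigma\gamma})$ with $R:=\lim_n(\dya^n)^\prime\zeta$, and proving $R=0$ --- i.e.\ that the dyadic regularization of $\varphi^\natural(f\d g^1\wedge\d g^2)-\delta L(\cdot)$ is exactly $\varphi^*(f\d g^1\wedge\d g^2)$ --- is precisely the cancellation the proposition asserts; the sketch assumes it rather than proves it, and neither Theorem~\ref{thm:sew-unique} nor Lemma~\ref{lem:uniqueness-general} applies at the gauge $\diam^{\sigma\beta}$ you actually have for $\zeta$ (or for $R$).

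The paper avoids any two-dimensional sewing of a non-nonatomic germ. For $f\equiv 1$ the statement is an exact identity: both $\varphi^\natural(\d g^1\wedge\d g^2)$ and $\varphi^*(\d g^1\wedge\d g^2)$ are coboundaries (of $\varphi^\natural(g^1\d g^2)$ and of $(g^1\circ\varphi)\d(g^2\circ\varphi)$ respectively), so their difference equals $\delta\eta_0$ with $\eta_0=\varphi^\natural(g^1\d g^2)-(g^1\circ\varphi)\d(g^2\circ\varphi)$, which is $\approx_{\beta\sigma}0$ by Proposition~\ref{prop:pull-back-young} and is identified with the side compensator through the midpoint condition and Theorem~\ref{thm:comp-uni}. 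The general case is then reduced to $f\equiv1$ via Theorem~\ref{thm:zust} plus two approximations that move $f\circ\varphi$ in and out of $\delta L(\cdot)$, controlled by linearity of $L$ and the quantitative bound of Theorem~\ref{thm:comp:ex} after normalizing $f_{\varphi(p_0)}=0$; only one-dimensional (factor-$2$) compensator iterations occur, which is why an exponent above $1$, not above $2$, suffices. To salvage your route you would have to prove $R=0$, and the natural way to do so is essentially to re-import the paper's exact identity for $f=1$, at which point the direct two-dimensional argument no longer buys anything.
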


\begin{remark}
By Theorem~\ref{thm:zust} and Example~\ref{ex:classical-zust}, letting $x^0, x^1, x^2$ coordinate functions in $D \subseteq \R^3$ bounded,
\[ x^0 \d x^1 \wedge \d x^2 \approx_3 x^0 \cup \frac 12 \bra{ \delta x^1\cup \delta x^2 - \delta x^2 \cup \delta x^1}.\]
Therefore, choosing $\varphi: I^2 \to \O \subseteq \R^3$, $\varphi:= (f, g^1, g^2)$ with $\sqa{\delta \varphi}_\alpha$ and $\alpha> 2/3$, we have
\[ \varphi^\natural( x^0 \d x^1 \wedge \d x^2) \approx_{3\alpha}f \cup   \frac 1 2 \bra{ \delta g^1 \cup \delta g^2 - \delta g^2 \cup \delta g^1}\]
and $\varphi^*(x^0 \d x^1 \wedge \d x^2) = f \d g^1 \wedge \d g^2$, hence we obtain from~\eqref{eq:pull-back-zust-proof} that
\[   f \cup \frac 1 2 \bra{ \delta g^1 \cup \delta g^2 - \delta g^2 \cup \delta g^1} - f \cup \delta(g^1 \d g^2) \approx_{3 \alpha} \delta \eta,\]
with $\eta:= L( \varphi^\natural( x^0 \d x^1 \wedge \d x^2))$. In particular,
\[ \sew\bra{   f \cup\frac 12  \bra{ \delta g^1 \cup \delta g^2 - \delta g^2 \cup \delta g^1} - \delta \eta} = f \d g ^1 \wedge \d g^2. \]
Note also that, from Theorem~\ref{thm:comp:ex}, one has $\eta \approx_{3\alpha} L(  f \cup\frac 12  \bra{ \delta g^1 \cup \delta g^2 - \delta g^2 \cup \delta g^1})$.
\end{remark}

\begin{proof}
Consider first the case $f = 1$. In this case, we claim that
\begin{equation}\label{side-corrector-proof-step-1-zust}  L(  \varphi^\natural(\d g^1 \wedge\d g^2) ) = \varphi^\natural(g^1 \d g^2) - (g^1 \circ \varphi) \d (g^2 \circ \varphi) =: \eta,\end{equation}
so that~\eqref{eq:pull-back-zust-proof} is an identity, i.e.,
\begin{equation}\label{eq:identity-proof-pull-back-zust} \varphi^\natural ( \d g^1 \wedge \d g^2) - \varphi^* ( \d g^1 \wedge \d g^2) = \delta \eta.\end{equation}
Minding Example~\ref{ex:pull-back-zust-smooth}, this follows because the volume integral in the right hand side in the last line of~\eqref{eq:identity-zust-pull-back-classical} is identically zero.
To prove~\eqref{side-corrector-proof-step-1-zust}, we notice that $\eta$ in a  $1$-germ with $\eta \approx_{ \beta \sigma} 0 $, because of Proposition~\ref{prop:pull-back-young} (with $\beta$ instead of $\gamma$) and if $[prq] \in \simp^2(\O)$ is such that $r = \frac{p+q}{2}$, then
\[\ang{ [prq],  \delta \varphi^\natural(g^1 \d g^2) - \delta  \eta} = \ang{ [pqr], \delta (g^1 \circ \varphi) \d (g^2 \circ \varphi)} = 0\]
since $(g^1 \circ \varphi) \d (g^2 \circ \varphi)$ is regular, hence $\delta (g^1 \circ \varphi) \d (g^2 \circ \varphi)$ is nonatomic.

Next, to obtain the thesis for a general $f$, we argue that
\[\begin{split}
\varphi^\natural ( f \d g^1 \wedge \d g^2) & - \varphi^* ( f \d g^1 \wedge \d g^2) \\
& \approx_{\gamma \sigma} \varphi^\natural \bra{ f \cup \delta (g^1 \d g^2) } - f\circ \varphi  \cup \delta \varphi^* (g^1 \d g^2) \quad \text{by Theorem~\ref{thm:zust}}\\
& = f\circ \varphi  \cup   \varphi^\natural \delta (g^1 \d g^2) -  f\circ \varphi  \cup \delta \varphi^* (g^1 \d g^2) \\
& = f\circ \varphi  \cup  \bra{  \varphi^\natural ( \d g^1 \wedge \d g^2) -  \varphi^* (\d g^1 \wedge \d g^2) }\\
& = f\circ \varphi  \cup \delta L( \varphi^\natural( \d g^1 \wedge \d g^2) ) \quad \text{by~\eqref{eq:identity-proof-pull-back-zust}}\\
& \approx_{\gamma \sigma} \delta L\bra{ \varphi^\natural( \d g^1 \wedge \d g^2) } \quad \text{(to be justified),}
\end{split}
\]
which concludes the proof up to justifying the last passage. The latter justification follows from
\begin{eqnarray}
f \circ \varphi \cup \delta L\bra{ \varphi^\natural( \d g^1 \wedge \d g^2) } & \approx_{\gamma\sigma} &  \delta L\bra{ f \circ \varphi \cup \varphi^\natural( \d g^1 \wedge \d g^2) }, \label{eq:aux-1-zust-pull-back}\\
L\bra{ f \circ \varphi \cup \varphi^\natural( \d g^1 \wedge \d g^2) }  &\approx_{\gamma \sigma}&  L\bra{ \varphi^\natural (f \d g^1 \wedge \d g^2) } . \label{eq:aux-2-zust-pull-back}
\end{eqnarray}
To prove~\eqref{eq:aux-2-zust-pull-back}, we notice that, as a consequence of Theorem~\ref{thm:comp:ex}, if $\omega_1 \approx_{\gamma\sigma} \omega_2$ and $L(\omega_1)$ and $L(\omega_2)$ exist, then $L(\omega_1) \approx_{\gamma \sigma} L(\omega_2)$ by linearity of $L$, and hence it is enough to apply this principle to $\omega_1 :=  f\circ \varphi \cup \delta \varphi^\natural ( g^1  \d g^2)$ and $\varphi^\natural( f \d g^1 \wedge \d g^2)$ observing that $L(\omega_1)$ and $L(\omega_2)$ exist by Theorem~\ref{thm:comp:ex} applied with $\u = \diam^{\beta \sigma}$, and by Theorem \ref{thm:zust}
\[ f \d g^1 \wedge \d g^2 \approx_{\gamma} f\cup \delta ( g^1  \d g^2) \quad \text{hence} \quad \varphi^\natural( f \d g^1 \wedge \d g^2) \approx_{\gamma \sigma} f\circ \varphi \cup \varphi^\natural (\delta g^1  \d g^2) = \omega_2. \]

To prove~\eqref{eq:aux-1-zust-pull-back}, minding that it becomes an identity if $f \circ \varphi$ is constant, we fix $S = [p_0 p_1 p_2]$ and without loss of generality assume that $D=S$ and $f_{\varphi_{p_0}} = 0$. In this case the left hand side of~\eqref{eq:aux-1-zust-pull-back} evaluated at $S$ is identically zero, and
\[ \begin{split}  \abs{\ang{S, \delta L\bra{ f \circ \varphi \cup \varphi^\natural( \d g^1 \wedge \d g^2)  }}} & = \abs{\ang{\partial S,  L\bra{ f \circ \varphi \cup \varphi^\natural( \d g^1 \wedge \d g^2)  }}} \\
& \le 3 \c(\beta \sigma) \sqa{ f \circ \varphi \cup \varphi^\natural( \d g^1 \wedge \d g^2) }_{\beta \sigma} \diam(S)^{\beta \sigma} \\
& \quad \text{by Theorem~\ref{thm:comp:ex} with $\u := \diam^{\beta \sigma}$} \\
&  \le \c \norm{ f \circ \varphi}_0 \sqa{\varphi^\natural( \d g^1 \wedge \d g^2) }_{\beta \sigma} \diam(S)^{\beta \sigma} \\
&   \le  \c \sqa{ \delta f \circ \varphi}_{\alpha \sigma} \sqa{\varphi^\natural( \d g^1 \wedge \d g^2) }_{\beta \sigma} \diam(S)^{ \gamma \sigma}
\end{split}
\]
using the assumption $f_{\varphi_{p_0}}= 0$ in the latter inequality.
\end{proof}

Combining this result with Theorem~\ref{thm:cancellation}, we deduce the following change of variables formula in top dimension (i.e., $d=2$), which  improves~\cite[Proposition 3.3 (6)]{zust_integration_2011}, since $\varphi$ is not required to fix the boundary of $S$.

\begin{corollary}
Under the assumptions of the above proposition, assume furthermore that $D \subseteq \R^2$. Then, for every $S= [p_0 p_1 p_2] \in \simp^2(I)$,
\begin{equation}\label{eq:formula-pull-back-final0} \int _{\varphi_\natural S} f\d g^1 \wedge \d g^2 = \int_{S} (f\circ \varphi) \d (g^1 \circ \varphi) \wedge \d ( g^2 \circ \varphi) + \ang{\partial S, \eta},\end{equation}
with $\eta =  L(\varphi^\natural ( f \d g^1 \wedge \d g^2))$. In particular, if $\varphi( \conv( [p_i p_j] ) \subseteq \conv( [ \varphi_{p_i} \varphi_{p_j}])$ for every $\cur{i,j} \subseteq \cur{0,1,2}$, then $\ang{[p_i p_j], \eta} = 0$, and hence
\begin{equation}\label{eq:formula-pull-back-final} \int _{\varphi_\natural S} f\d g^1 \wedge \d g^2 = \int_{S} (f\circ \varphi) \d (g^1 \circ \varphi) \wedge \d ( g^2 \circ \varphi).\end{equation}
\end{corollary}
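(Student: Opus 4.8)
The plan is to upgrade the approximate identity of Proposition~\ref{prop:pull-back-zust} to an \emph{exact} one, using crucially the hypothesis $d=2$, and then to pair with $S$. Throughout write
\[ \omega := \varphi^\natural(f\,\d g^1\wedge\d g^2) - \varphi^*(f\,\d g^1\wedge\d g^2), \qquad \eta := L\bra{\varphi^\natural(f\,\d g^1\wedge\d g^2)}, \]
so that Proposition~\ref{prop:pull-back-zust} reads $\omega''\approx_{\gamma\sigma}0$, where $\omega'':=\omega-\delta\eta$. Since $\sigma\le 1$ and $\gamma>2/\sigma$ we have $\gamma\sigma>2$, hence $\diam^{\gamma\sigma}$ is a strong $2$-Dini gauge by Remark~\ref{rem_Dini_gauge1}. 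The heart of the argument is to show $\omega''\equiv 0$; granting this, $\omega=\delta\eta$, and pairing with $S=[p_0p_1p_2]$ gives $\ang{S,\varphi^\natural(f\,\d g^1\wedge\d g^2)}-\ang{S,\varphi^*(f\,\d g^1\wedge\d g^2)}=\ang{S,\delta\eta}=\ang{\partial S,\eta}$, which in integral notation is precisely~\eqref{eq:formula-pull-back-final0}.

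To prove $\omega''\equiv 0$ I would apply the cancellation Theorem~\ref{thm:cancellation} to $\omega''$, after checking its three hypotheses. \emph{(i) $\omega''$ is closed on $2$-planes.} In fact $\delta\omega''=0$: indeed $\delta\delta\eta=0$; moreover $\d(f\,\d g^1\wedge\d g^2)$ is a regular, hence nonatomic, $3$-germ on $D\subseteq\R^2$, where every $3$-simplex has zero $3$-volume, so it vanishes identically, and therefore $\delta\varphi^\natural(f\,\d g^1\wedge\d g^2)=\varphi^\natural\d(f\,\d g^1\wedge\d g^2)=0$ (pull-back commutes with $\delta$); similarly $\delta\varphi^*(f\,\d g^1\wedge\d g^2)=\d\varphi^*(f\,\d g^1\wedge\d g^2)=0$ because $\varphi^*(f\,\d g^1\wedge\d g^2)$ is regular and $I\subseteq\R^2$. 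Since $\delta$ commutes with pull-backs, $\delta\omega''=0$ forces $\omega''$ to be closed on $2$-planes. \emph{(ii) $\omega''$ is alternating.} The germ $f\,\d g^1\wedge\d g^2$ is regular, hence alternating (Proposition~\ref{prop_alternating1}), so $\varphi^\natural(f\,\d g^1\wedge\d g^2)$ is alternating since $\varphi_\natural$ commutes with permutations of vertices; $\varphi^*(f\,\d g^1\wedge\d g^2)$ is regular, hence alternating; and $\delta\eta$ is alternating because $\eta=L(\cdot)$ of an alternating germ is alternating by Remark~\ref{rem:side-alterating} and $\delta$ preserves the alternating property. \emph{(iii) $\omega''\approx_\u0$ with $\u=\diam^{\gamma\sigma}$ strong $2$-Dini}, which is exactly Proposition~\ref{prop:pull-back-zust}. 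Theorem~\ref{thm:cancellation} then yields $\omega''=\delta L(\omega'')$.

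It remains to compute $L(\omega'')$. Each of the germs $\varphi^\natural(f\,\d g^1\wedge\d g^2)$, $\varphi^*(f\,\d g^1\wedge\d g^2)$, $\delta\eta$ and $\omega''$ lies in the subspace on which $L$ is defined, so by linearity of $L$ one has $L(\omega'')=L\bra{\varphi^\natural(f\,\d g^1\wedge\d g^2)}-L\bra{\varphi^*(f\,\d g^1\wedge\d g^2)}-L(\delta\eta)$; the first term equals $\eta$ by definition, the second equals $0$ by~\eqref{eq:ex-sude-corr-1} (as $\varphi^*(f\,\d g^1\wedge\d g^2)$ is nonatomic), and the third equals $\eta$ by~\eqref{eq:ex-sude-corr-2} (as $\eta\approx_\u0$ for a $1$-Dini gauge, being a side compensator). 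Hence $L(\omega'')=0$ and $\omega''=\delta 0=0$, proving $\omega=\delta\eta$ and hence~\eqref{eq:formula-pull-back-final0}. For the final assertion, assume $\varphi\bra{\conv([p_ip_j])}\subseteq\conv([\varphi_{p_i}\varphi_{p_j}])$ for all $\cur{i,j}\subseteq\cur{0,1,2}$, and set $\Delta:=\conv([p_ip_j])$. For any $2$-simplex $[prq]$ with $\conv([prq])\subseteq\Delta$ the points $\varphi(p),\varphi(r),\varphi(q)$ lie on the segment $\conv([\varphi_{p_i}\varphi_{p_j}])$, so $\vol_2([\varphi(p)\varphi(r)\varphi(q)])=0$ and nonatomicity of $f\,\d g^1\wedge\d g^2$ gives $\ang{[prq],\varphi^\natural(f\,\d g^1\wedge\d g^2)}=0$; thus the restriction of $\varphi^\natural(f\,\d g^1\wedge\d g^2)$ to $\simp^2(\Delta)$ is nonatomic, whence by locality of $L$ (Remark~\ref{rem:locality-side}) and~\eqref{eq:ex-sude-corr-1} the restriction of $\eta$ to $\simp^1(\Delta)$ vanishes, so $\ang{[p_ip_j],\eta}=0$. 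Summing the three edge contributions gives $\ang{\partial S,\eta}=0$, and~\eqref{eq:formula-pull-back-final} follows from~\eqref{eq:formula-pull-back-final0}.

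The step I expect to be the main obstacle is (i), i.e.\ that $\omega''$ is closed on $2$-planes: this is exactly where top-dimensionality $d=2$ must be used. For general $d$ the algebraic pull-back $\varphi^\natural(f\,\d g^1\wedge\d g^2)$ is not closed, its coboundary being the pull-back of the nonzero $3$-germ $\d(f\,\d g^1\wedge\d g^2)$ (morally $\d f\wedge\d g^1\wedge\d g^2$), and the volume term $\int_U\d f\wedge\d g^1\wedge\d g^2$ of Example~\ref{ex:pull-back-zust-smooth} survives. A secondary subtlety is that one must feed $\omega''$, and not $\omega$ itself, into Theorem~\ref{thm:cancellation}: only $\omega''$ is controlled by a strong $2$-Dini gauge under the stated hypotheses, whereas the ``leading part'' $\delta\eta$ of $\omega$ is in general only $\sigma\beta$-small, with possibly $\sigma\beta\le 2$.
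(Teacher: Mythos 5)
Your proposal is correct and follows essentially the same route as the paper: you apply Theorem~\ref{thm:cancellation} to $\varphi^\natural(f\,\d g^1\wedge\d g^2)-\varphi^*(f\,\d g^1\wedge\d g^2)-\delta\eta$ with the gauge $\diam^{\gamma\sigma}$, use the top-dimensional assumption to get closedness, compute $L$ of this germ to be zero (your linearity computation is exactly the paper's use of~\eqref{eq:ex-side-corr-3} and~\eqref{eq:ex-sude-corr-1}), and obtain~\eqref{eq:formula-pull-back-final} via nonatomicity of $f\,\d g^1\wedge\d g^2$ on the degenerate push-forwards together with locality of $L$ (Remark~\ref{rem:locality-side}), just as in the paper's proof.
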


\begin{proof}
We apply Theorem~\ref{thm:cancellation} with $\omega:= \varphi^\natural ( f \d g^1 \wedge \d g^2) - \varphi^* ( f \d g^1 \wedge \d g^2) - \delta \eta$, $\eta := L\bra{ \varphi^\natural ( f \d g^1 \wedge \d g^2)}$ and the strong $2$-Dini gauge $\u := \diam^{\gamma \sigma}$. The condition $\omega\approx_\u  0$ is~\eqref{eq:pull-back-zust}, $\omega$ is alternating because so are $f \d g^1 \wedge \d g^2$, $\varphi^* (f \d g^1 \wedge \d g^2)$ and $\varphi^\natural$, $L$ and $\delta$ preserve the alternating property (Remark~\ref{rem:side-alterating}). We have
\[ \delta \omega = \varphi^\natural\bra{\delta ( f \d g^1 \wedge \d g^2)} - \delta \varphi^*(f\d g^1 \wedge \d g^2) = 0,\]
the first term being zero because $\delta ( f \d g^1 \wedge \d g^2) = 0$ by the top-dimensional assumption $D \subseteq \R^2$, the second term because it is a regular form. Finally,
\[\begin{split} L(\omega) & = L ( \varphi^\natural ( f \d g^1 \wedge \d g^2) - \varphi^* ( f \d g^1 \wedge \d g^2) - \delta \eta) \\
& =   - L(\varphi^* ( f \d g^1 \wedge \d g^2))  \quad \text{by~\eqref{eq:ex-side-corr-3} with  $\varphi^\natural ( f \d g^1 \wedge \d g^2)$ instead of $\omega$}\\
& =  0 \quad \quad \text{by~\eqref{eq:ex-sude-corr-1}, by nonatomicity of $\varphi^* ( f \d g^1 \wedge \d g^2)$,}\end{split}\]
from which we deduce from Theorem~\ref{thm:cancellation} that $\omega = 0$, i.e.,~\eqref{eq:formula-pull-back-final0} holds. To prove~\eqref{eq:formula-pull-back-final}, it is sufficient to notice that  the restriction of the $2$-germ $\varphi^\natural ( f \d g^1 \wedge \d g^2)$ to $\conv( [ {p_i} {p_j}])$ is zero, because $f \d g^1 \wedge \d g^2$ is nonatomic and $\vol_2( \varphi_\natural S) = 0$ for every simplex $S$ in $\conv( [ {p_i} {p_j}])$, hence by Remark~\ref{rem:locality-side} we deduce that $\ang{ [p_i p_j], \eta} = 0$.
\end{proof}

Other sufficient conditions implying that~\eqref{eq:formula-pull-back-final} holds, i.e., $\ang{\partial S,  \eta} = 0$, are for example $f\circ \varphi =0$ or $g^\ell \circ \varphi $ constant (for some $\ell=1,2$) on $\conv( [ {p_i} {p_j}])$, for every $\cur{i,j} \subseteq \cur{0,1,2}$, i.e. on the topological border of $S$.

\section{More irregular germs and correctors}\label{sec:rough}

Theorem~\ref{thm:sew-existence} and its Corollary~\ref{cor:sew-stability} provide a linear continuous ``regularizing'' map $\omega \mapsto \sew \omega$ for a large class of $k$-germs, as discussed in Section~\ref{sec:young-zust}. In this section we argue that the same result can be useful also in settings where it is not directly applicable, extending the scope of the sewing lemma~\cite{feyel_curvilinear_2006, gubinelli_controlling_2004, friz_course_2014} in rough paths theory to higher dimensions. To fix the ideas,  consider the following example.

\begin{example}\label{ex:pure-area}
Let $D \subseteq \R^d$ be bounded. For fixed $\xi \in \R^d$, we consider the sequence of $0$-germs
\[  f^n_p := \frac{1}{\sqrt{n}} \cos(n \xi \cdot p) \quad g^n_p := \frac{1}{\sqrt{n}} \sin(n\xi \cdot p) \quad \text{for $p \in D$.}\]

Although each $f^n$, $g^n$ is smooth, pointwise converging to $0$ as $j \to +\infty$, 
it is easy to calculate manually that
\begin{equation}\label{eq:weak-convergence}  \lim_{n \to \infty} \int_{[pq]} f^n \d g^n  \to \frac 1 2  \xi\cdot(q-p).\end{equation}
In our terms, this means that although the germs $f^n \cup \delta g^n \to 0$ pointwise, their regularizations $f^n \d g^n = \sew( f^n \delta g^n) \not \to 0$, showing that we are not in the assumptions of Corollary~\ref{cor:sew-stability}. In particular, this shows that for any strong $r$-Dini gauge $\u \in \germ^2(D)$ (satisfying the assumptions of Corollary~\ref{cor:sew-stability}) with $r \ge 1$,
\begin{equation}\label{eq:finite-gamma-pure-area} \sup_{n \ge 1} \sqa{ \delta f^n  \cup \delta g^n}_{\u} = \infty.\end{equation}
\end{example}

Nevertheless, Example \ref{ex:pure-area} may fit into our framework as follows. Given $f \in \germ^0(D)$, $\eta \in \germ^{k-1}(D)$ regular such that $\delta (f \cup \delta \eta) = (\delta f) \cup (\delta \eta) \not \approx _\u 0$ for every strong $k$-Dini gauge $\u$ (with $\u$ and $\tilde{\u}$ in~\eqref{eq:v-dini} continuous), it may happen that we can find a nonatomic $\omega \in \germ^k(D)$, called further \emph{corrector}, such that
\begin{equation}\label{eq:corrector} \delta (f \cup \delta \eta) \approx_{\u} \delta \omega,\end{equation}
hence Theorem~\ref{thm:sew-existence} (under mild continuity assumptions) applies to $f \cup \delta \eta - \omega$ providing the regular $k$-germ
\[ \sew( f \cup \delta \eta - \omega).\]
This scheme is advantageous if we are able to construct such a single corrector suitable for a reasonable class of $f$ and $\eta$, since it allows for a ``good'' calculus in terms $f$, $\eta$ and $\omega$. Indeed, once a corrector $\omega$ is chosen for a single pair $( f,  \eta)$, one can generate correctors for pairs that are ``approximately'' close to $( f,  \eta)$. For example, if we consider the $k$-germ $\varphi(f) \cup \delta \eta$, where $\varphi:\R \to \R$ is smooth (with bounded derivative $\varphi'$), then we have
\begin{equation}\label{eq:chain-rule-rough} \begin{split} \delta \bra{ \varphi(f) \cup \delta \eta} & = \delta \varphi(f) \cup \delta \eta \\
& = \varphi'(f) \cup  \delta f \cup \delta \eta + (\delta\varphi (f) -  \varphi'(f) \cup \delta f) \cup \delta \eta\\
& \approx_\u  \varphi'(f) \cup \delta \omega +  (\delta\varphi (f) -  \varphi'(f)  \cup  \delta f) \cup \delta \eta \\
& \qquad \text{by boundedness of $\varphi'$ and~\eqref{eq:corrector},}\\
& =  \delta \bra{ \varphi'(f)\cup \omega} - \delta \varphi'(f) \cup \omega +  (\delta\varphi (f) -  \varphi'(f) \cup  \delta f) \cup \delta \eta \\
& \approx_\u  \delta \bra{ \varphi'(f)\cup \omega},
\end{split}\end{equation}
provided that
\begin{equation}\label{eq:remainder-small-abstract-corrector}  - \delta \varphi'(f) \cup \omega+ (\delta\varphi (f) -  \varphi'(f) \cup  \delta f) \cup \delta \eta  \approx_\u 0 .\end{equation}
If the latter condition holds, then Theorem~\ref{thm:sew-existence} provides existence of the regular $k$-germ
\[ \sew ( \varphi(f) \cup \delta \eta - \varphi'(f) \cup \omega).\]
Therefore, if we interpret $\sew( f \cup \delta \eta - \omega) + \omega$ as a ``corrected'' version of $f\d \eta$ (not necessarily regular, because so maybe $\omega$), then the argument above shows how the ``corrected'' version of $ \varphi(f) \d \eta$ can be consistently defined, i.e., $ \sew ( \varphi(f) \cup \delta \eta - \varphi'(f) \cup \omega) +  \varphi'(f) \cup \omega$.

A second advantage of this scheme is a natural stability provided by Theorem~\ref{thm:sew-continuity} if a sequence $(f^j, \eta^j, \omega^j)_{j \ge 1}$ converges pointwise to $(f, \eta, \omega)$ satisfying~\eqref{eq:corrector} uniformly in $j$ (i.e., $\sup_j [ \delta f^j \cup \delta \eta^j - \omega^j]_\u < \infty$) then by Corollary \ref{cor:sew-stability} the corrected versions of $f^j \d \eta^j$ converge pointwise to that of $f \d \eta$.

Of course, a trivial choice for~\eqref{eq:corrector} to hold is simply $\omega := f \cup \delta \eta$ (note that it is nonatomic because $\eta$ is assumed regular). On one side, this provides a trivial ``correction'' of $f \d g$, setting it equal to $f \cup \delta \eta$. On the other side, one faces several problems to extend the calculus, since to ensure that~\eqref{eq:remainder-small-abstract-corrector} hold, the first term therein is equal to  \[ \delta \varphi'(f) \cup \omega = (\delta\varphi'(f))\cup\bra{ f \cup \delta \eta} \approx_\u \delta\varphi'(f)\cup \delta \eta, \quad \text{(assuming that $f$ is bounded)}\]
which is comparable (in terms of gauges) to $(\delta f) \cup (\delta \eta)$, because so is $\delta \varphi'(f)$ with $\delta f$, since $\varphi'$ a Lipschitz function. Therefore, we cannot expect $\delta \varphi'(f)\cup\bra{ f \cup \delta \eta} \approx_\u 0$, while the second term in~\eqref{eq:remainder-small-abstract-corrector} is naturally expect to be $\approx_\u 0$ because of smoothness of $\varphi$ (it contains the remainder of the first order Taylor expansion of $\varphi$).

To avoid such trivialities, in rough paths theory (dealing with the case $k=d=1$) one imposes that $\omega$ must be \emph{more regular} than what $f \cup \delta \eta$ is expected to be, in the sense of H\"older (or $p$-variation) gauges, and the existence of $\omega$ in applications is usually shown by means of stochastic calculus.

\begin{example}[Brownian motion as a rough path]
In our terminology, if $D =[0,T] \subseteq \R$ is an interval and $f_t = \eta_t = B_t$ is a trajectory of Brownian motion, we may define a corrector $\omega$ (using Stratonovich integration) as a trajectory of
\[ \omega_{st} = B_s \delta B_{st} -  \int_{s}^t B_r \circ \d B_r = - \frac{1}{2} ( \delta B_{st})^2  \quad \text{for $s$, $t \in [0,T]$.}\]
Kolmogorov criterion gives that $\sqa{ \delta B}_{1/2 - \varepsilon} < \infty$ and $\sqa{ \omega}_{1-2\varepsilon} < \infty$ for every $\varepsilon>0$ (with full probability). As a consequence, one can prove that~\eqref{eq:remainder-small-abstract-corrector} holds with $\u = \diam^{\gamma}$ with $\gamma = (2+\alpha)(1/2 - \varepsilon)$, hence if $\varphi: \R \to \R$ is differentiable with $[\delta \varphi']_\alpha<\infty$, (with $\alpha>0$), one has that $\sew(\varphi(B) \cup \delta B - \varphi'(B) \cup \omega)_{st} = \int_s^t \varphi(B) \circ \d B$, obtaining the rough path (Stratonovich) integral.
\end{example}

 Below we provide two deterministic examples of this scheme, in the cases $k\in \cur{1,2}$ (the case $k=d =1$ being that of rough paths -- of regularity $\alpha >1/3$), extending the well-known ``pure-area'' rough path construction~\cite[Exercise 2.17]{friz_course_2014}. We point out that both examples could be also considered by elementary means or within the theory of Young measures, hence they should be just considered as proof-of-principles.

First, we show that we may use an appropriate corrector also in Example~\ref{ex:pure-area}.

\begin{example}\label{ex:pure-area-1}
In order to apply Corollary~\ref{cor:sew-stability} in Example~\ref{ex:pure-area}, for any $n \ge 1$, we choose the corrector
\begin{equation}\label{eq:corrector-pure-area-1} \omega^n_{pq} := f^n_p \delta g^n_{pq} - \int_{[pq]} f^n \d g^n.\end{equation}
A straightforward computation gives
\[ \int_{[pq]} f^n \d g^n=   \delta I^n_{pq}, \quad \text{where}  \quad I^n_p=   \frac{1}{2} \xi \cdot p +\frac{1}{2n} \sin(2 n \xi \cdot p), \]
hence~\eqref{eq:corrector} is satisfied for every $\u$ (with $f^n$ instead of $f$ and $g^n$ instead of $\eta$) since
\[ \delta \omega^n = \delta f^n \cup \delta g^n - \delta( \delta I^n) = \delta f^n \cup \delta g^n.\]

In order to argue as in~\eqref{eq:chain-rule-rough} we notice first that
\[ \sup_{n \ge 1} \sqa{ \delta f^n}_{1/2}+ \sqa{ \delta g^n}_{1/2}<\infty \quad \text{and also} \quad  \sup_{n \ge 1} \sqa{ \omega^n}_{1} < \infty,\]
using the inequalities
\[ \abs{ \cos(n \xi \cdot q) - \cos(n \xi \cdot p)  } \le \min \cur{ 2,   n |\xi \cdot q- \xi\cdot p|}  \le \sqrt{2 n |\xi \cdot (q-p)|} ,\]
and similarly for the sine terms. If $\varphi: \R \to \R$ is differentiable with $\sqa{\delta \varphi'}_\alpha<\infty$, with $\alpha \in (0, 1]$, then
\[ \sqa{\delta \varphi'(f^n) }_{\alpha/2} \le \sqa{\delta \varphi'}_\alpha \sqa{\delta f^n}_{1/2}\]
and the remainder of the Taylor expansion (with $\operatorname{Id}(x) = x$) gives
\[ \sqa{\delta \varphi(f^n) - \varphi'(f^n) \cup \delta f^n}_{(1+\alpha)/2} \le \sqa{\delta \varphi - \varphi' \cup \delta \operatorname{Id} }_{1+\alpha}  \sqa{\delta f^n}_{1/2},\]
hence the expression in~\eqref{eq:remainder-small-abstract-corrector} reads
\[ - \delta \varphi'(f^n) \cup \omega^n+ (\delta\varphi (f^n) -  \varphi'(f^n) \cup  \delta f^n) \cup \delta g^n \approx_{1+\frac{\alpha}{2}} 0,\]
uniformly as $n \to +\infty$. As a consequence, since $[ \omega^n]_{\diam^2} < \infty$ for every $n \ge 1$ (though not uniformly in $n \ge 1$), one has
\[ \sew( \varphi(f^n) \cup \delta g^n - \varphi'(f^n) \cup \omega^n) =  \sew( \varphi(f^n) \cup \delta g^n) = \varphi(f^n) \d g^n \]
and we deduce convergence of the corrected versions of $\varphi(f^n) \d g^n$, in particular yielding the limit
\[ \lim_{n \to +\infty} \int_{[pq]} \varphi(f^n) \d g ^n = \frac{\varphi'(0)}{2} \xi \cdot (q-p).\]  given by~\eqref{eq:weak-convergence}.
\end{example}

\begin{remark}
We notice that another reasonable choice for a corrector  in Example~\ref{ex:pure-area-1}  would be $\tilde \omega^n := f^n \cup \delta g^n - \frac 1 2 \xi \cdot \delta \operatorname{Id}$, neglecting  the Lipschitz continuous summand in $I^n$ above. With this choice, the only difference would be in the definition of corrected version of the integral of $\varphi(f^n) \d g^n$, where an additional integral would appear (still, negligible in the limit as $n \to +\infty$). However, when $\varphi = \Id$ this choice would give $\sew( f^n \cup \delta g^n - \tilde{ \omega}^n) = \frac 1 2 \xi \cdot \delta \operatorname{Id}$, this time independent of $n$.
\end{remark}

 To conclude, we give a similar example for $2$-germs.

\begin{example}\label{ex:pure-area-2}
Let $D \subseteq \R^2$ be bounded. Consider the sequence of $0$-germs, for $p = (p_1, p_2) \in D$,
\[  f^n_p := \frac{1}{n^{1-\varepsilon}} \cos(n p_1 ) \cos(n p_2), \quad g^n_p := \frac{1}{n^{\varepsilon+ 1/2}} \sin(n p_1), \quad h^n_p := \frac{1}{\sqrt{n}} \sin(n p_2), \quad \]
for some $\varepsilon \in (0,1)$, so that in particular
\[ \sup_{n \ge 1} \sqa{\delta f^n}_{1-\varepsilon}+ \sqa{\delta g^n}_{\varepsilon+ 1/2} + \sqa{\delta h^n}_{1/2}  < \infty.\]
We let $\eta^n := g^n \d h^n$, so that $\ang{[pq], \eta^n} =\int_{[pq]} f^n \d g^n$, and consider the (smooth) $2$-germ $f^n \cup \delta \eta^n$, for which
\begin{equation}\label{eq:pure-area-2d} \begin{split} \ang{ [pqr], \sew( f^n \cup \delta \eta^n )} & = \int_{[pqr]} f^n \d g^n \wedge \d h^n  \\
& = \int_{[pqr]} f^n \det( \nabla g^n, \nabla h^n) \d x^1 \wedge \d x^2 \quad \text{ by Proposition~\ref{prop:chain-rule-zust}}\\
& = \int_{ [pqr]} \cos^2(n x) \cos^2(ny) \d x^1 \wedge \d x^2  \\
& = \frac{1}{4}\int_{[pqr]} \bra{1 - \cos(2nx^1)}\bra{1- \cos(2nx^2)} \d x^1 \wedge \d x^2\\
& = \frac{1}{4}\int_{[pqr]} \d x^1 \wedge \d x^2 + o(1) \quad \text{as $n \to \infty$.}\end{split}\end{equation}
Since $g^n \cup \delta h^n \to 0$ pointwise and $\sup_{n \ge 1} \| \delta g^n \cup \delta h^n \|_{1+\varepsilon}  < \infty$, by Corollary~\ref{cor:sew-stability} applied to $\omega^n := g^n \cup \delta h^n$, we obtain that $\eta^n = g^n \d h^n \to 0$ pointwise, hence also $f^n \cup \delta  \eta^n \to 0$ pointwise.  This fact, in combination with~\eqref{eq:pure-area-2d}, shows that we are not in a position to apply Corollary~\ref{cor:sew-stability} to the germs $f^n \cup \delta \eta^n$, in particular it must be $\sup_{n \ge 1} \sqa{ \delta f^n \cup \delta \eta^n}_\u = \infty$ for every strong $r$-Dini gauge $\u\in \germ^3(D)$ satisfying the assumptions in Corollary~\ref{cor:sew-stability}, with $r \ge 2$.

In this case we choose as a corrector,
\[  \omega^n_{pqr} := f^n_p \delta \eta^n_{pqr} - \int_{[pqr]} f^n \d g^n \wedge \d h^n \quad \text{for $[pqr]\in \simp^2(D)$,} \]
 so that~\eqref{eq:corrector} becomes an identity $\delta \omega^n = \delta f^n \cup \delta \eta^n$, since the integral term is a regular top-dimensional form, hence closed. Moreover, by~\eqref{eq:pure-area-2d}, we have $\omega^n \to \frac 1 4 \d x \wedge \d y$ pointwise, where  $\ang{[pqr], \d x \wedge \d y} = \det( q-p, r-p)$  is the ``oriented area'' $2$-germ, and
  \[  \sup_{n \ge 1} [ \omega^n]_{\diam^2}  < \infty,\]
using also the inequalities (by Proposition~\ref{prop:bound-delta-young} with $\gamma =2$, $g^n$ instead $f$ and $h^n$ instead of $g$)
\[ [ f^n \cup \delta \eta^n]_{\diam^2} \le \c \norm{f^n}_0 \sqa{ \delta g^n}_1 \sqa{ \delta h^n}_1 \le \c  n^{\varepsilon-1} n^{1/2-\varepsilon} n^{1/2} \le \c.\]

 Finally, if $\varphi: \R \to \R$ is as in in Example~\ref{ex:pure-area-1}, we obtain that the expression in~\eqref{eq:remainder-small-abstract-corrector} reads
\[ - \delta \varphi'(f^n) \cup \omega^n+ (\delta\varphi (f^n) -  \varphi'(f^n) \cup  \delta f^n) \cup \delta g^n \approx_{2 + \alpha(1-\varepsilon)} 0,\]
uniformly as $n \to +\infty$. As a consequence, we have the convergence of the corrected versions of $\varphi(f^n) \d g^n$ given by~\eqref{eq:weak-convergence}, which in particular gives the limit
\[ \lim_{n \to +\infty} \int_{[pqr]} \varphi(f^n) \d g^n \wedge \d h^n = \frac1 4  \varphi'(0) \det( q-p, r-p).\]
\end{example}
%

\begin{remark}
We notice that in Example~\ref{ex:pure-area-1} (respectively, in Example~\ref{ex:pure-area-2}) we are exactly in the limit case of Young (respectively, Z\"ust) non-integrability, that is, the sums of H\"older exponents of the integrands equal $k$, the dimension of the respective germs. One may also make similar examples with regularity below this threshold, for instance in Example~\ref{ex:pure-area-1} one could have, for $\varepsilon \in [0,1/2)$,
\[ g^n_p := \frac{1}{n^{1/2-\varepsilon}} \sin(n \xi \cdot p) \mathds{1}_{A_n}( \xi \cdot p),\]
where $A_n \subseteq \R$ is a finite union of intervals with endpoints in  $\cur{x \colon \sin( n x) = 0 }$ and, for every $s \le t \in \R$, $\lim_{n \to +\infty} n^{\epsilon} \int_{A_n \cap [s,t]} \d x = (t-s)$. With this choice, $\sqa{g^n}_{1/2-\varepsilon}$ is uniformly bounded but the resulting corrector defined as in~\eqref{eq:corrector-pure-area-1} satisfies only $\sup_{n \ge 1} \sqa{ \omega^n}_{1-\varepsilon} < \infty$. Estimating the expression~\eqref{eq:remainder-small-abstract-corrector} gives, for a $\varphi \in C^{1,\alpha}$,
\[ - \delta \varphi'(f^n) \cup \omega^n+ (\delta\varphi (f^n) -  \varphi'(f^n) \cup  \delta f^n) \cup \delta g^n \approx_{1+\frac{\alpha}{2}-\varepsilon} 0,\]
so that convergence of the corrected versions of $\varphi(f^n) \d g^n$ is guaranteed if $1 + \frac{\alpha}{2} - \varepsilon >1$, that is $\alpha > 2 \varepsilon$.
\end{remark}
\appendix

\section{Decomposition of simplices}\label{app:decomposition-simplex}

\subsubsection*{Geometric maps}
We say that a  map $\tau: \simp^{k}(\O)  \to \simp^{h}(\O)$, $S \mapsto \tau S$, is \emph{geometric} if it is continuous and commutes with affine transformations, i.e., for every $\varphi: \O \to \O$ affine, one has $\tau \varphi_\natural  = \varphi_\natural  \tau$. We extend this definition to the linear span of geometric maps, i.e., we say that $\tau: \simp^{k}(\O) \to \chain^{h}(\O)$ is geometric if it is a finite linear combination  of  geometric maps $\tau^i: \simp^k(\O) \to \simp^{h}(\O)$, $\tau = \sum_{i=1}^\ell \lambda_i \tau_i$, with $\lambda_i \in \R$. We then define $\abs{\tau}$ as the map $\abs{\tau} := \sum_{i=1}^\ell |\lambda_i| \tau_i$. We notice the following fact.

\begin{lemma}[geometric maps and Dini gauges]\label{lem:geo-dini}
Let $\tau: \simp^{k}(\O)  \to \chain^{h}(\O)$ be geometric and $\u \in \germ^h(\O)$ be a gauge. Then the following assertions are true.
\begin{enumerate}[(i)]
\item If $\u$ is $r$-Dini, then $|\tau|' \u$ is $r$-Dini.
\item If $\u$ is strong $r$-Dini then $|\tau|' \u$ is strong $r$-Dini with the germ as in~\eqref{eq:v-dini} given by $|\tau|' \tilde{\u}$.
\item  If $\u$ is continuous, then $|\tau|' \u$ is continuous.
\item For every $\omega \in \germ^h(\O)$,  $[ |\tau|' \omega]_{|\tau|' \u} \le [\omega]_\u$.
\end{enumerate}
\end{lemma}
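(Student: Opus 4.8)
The plan is to reduce to the case of a single geometric map and then chase definitions. For a general geometric $\tau=\sum_{i=1}^\ell\lambda_i\tau_i$ with each $\tau_i\colon\simp^k(\O)\to\simp^h(\O)$ geometric, one has $|\tau|=\sum_i|\lambda_i|\tau_i$ and, extending $\u$ linearly to chains, $|\tau|'\u=\sum_i|\lambda_i|\,\tau_i'\u$. Hence, once (i)--(iv) are established for each $\tau_i$, assertions (i) and (iii) for $|\tau|'\u$ follow from Remark~\ref{rem_Dini_gauge1}(iii) (a finite sum of nonnegative multiples of $r$-Dini, resp.\ continuous, gauges is again of that type), assertion (ii) follows the same way together with the linear dependence of the germ in~\eqref{eq:v-dini} on $\u$, which gives $\widetilde{|\tau|'\u}=\sum_i|\lambda_i|\,\tau_i'\tilde{\u}=|\tau|'\tilde{\u}$, and assertion (iv) follows termwise via the triangle inequality for $[\cdot]_{|\tau|'\u}$. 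So it is enough to treat a single geometric $\tau$, for which $|\tau|=\tau$.

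For such a $\tau$ I would first record two elementary consequences of geometricity. Since $\tau$ commutes with every affine map, it commutes in particular with each dilation $x\mapsto 2^{-n}x$, so that $\tau\big((2^{-n})_\natural S\big)=(2^{-n})_\natural(\tau S)$, and it commutes with isometries, so $\tau$ sends isometric $k$-simplices to isometric $h$-simplices; moreover $\langle S,\tau'\u\rangle=\langle\tau S,\u\rangle=\u(\tau S)\ge0$, so $\tau'\u$ is indeed a gauge. Assertion (iv) is then the one-line bound
\[
|\langle S,\tau'\omega\rangle|=|\langle\tau S,\omega\rangle|\le[\omega]_\u\,\langle\tau S,\u\rangle=[\omega]_\u\,\langle S,\tau'\u\rangle ;
\]
assertion (iii) holds because $\tau'\u=\u\circ\tau$ is a composition of continuous maps, using that $\tau\colon\O^{k+1}\to\O^{h+1}$ is continuous by definition of a geometric map; and the uniformity part of (i) holds because $\tau$ is isometry-equivariant and $\u$ is uniform.

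It remains to verify the Dini decay in (i) and the summability in (ii), which is where pushing the dilation through $\tau$ pays off. For (i),
\[
2^{nr}\,\langle(2^{-n})_\natural S,\tau'\u\rangle=2^{nr}\,\langle(2^{-n})_\natural(\tau S),\u\rangle\longrightarrow 0\qquad(n\to\infty),
\]
by the $r$-Dini property of $\u$ applied to $\tau S\in\simp^h(\O)$ (as usual one may replace $(2^{-n})_\natural(\tau S)$ by an isometric translate inside $\O$, which is legitimate since $\u$ is uniform). For (ii), summing this identity over $n$ and interchanging the two sums in the resulting nonnegative double series (Tonelli) yields
\[
\sum_{n=0}^{\infty}2^{nr}\,\langle(2^{-n})_\natural S,\tau'\u\rangle=\sum_{n=0}^{\infty}2^{nr}\,\langle(2^{-n})_\natural(\tau S),\u\rangle=\langle\tau S,\tilde{\u}\rangle=\langle S,\tau'\tilde{\u}\rangle<\infty,
\]
which simultaneously shows that $\tau'\u$ is strong $r$-Dini and that its associated germ from~\eqref{eq:v-dini} is $\tau'\tilde{\u}$. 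I do not expect any genuine obstacle here: the lemma is essentially bookkeeping, and the only points needing a little care are keeping track of signs for general $\tau=\sum_i\lambda_i\tau_i$ (the dual $\tau'$ carries $\lambda_i$, while $|\tau|'\u$ carries $|\lambda_i|$) and the standard caveat that $(2^{-n})_\natural$ may move a simplex out of $\O$, which is harmless thanks to uniformity.
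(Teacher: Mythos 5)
Your proposal is correct and follows essentially the same route as the paper: reduce to a single geometric map (the general case by linearity/positivity of the combination), note that geometricity lets you push dilations and isometries through $\tau$, and then the identity $\ang{(2^{-n})_\natural S,\tau'\u}=\ang{(2^{-n})_\natural \tau S,\u}$ gives (i)--(ii), with (iii) and (iv) being immediate from continuity of $\tau$ and duality. You simply spell out the routine parts that the paper dismisses as non-trivial only for (i).
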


\begin{proof}
It is sufficient to argue in the case $\tau: \simp^k(\O) \to \simp^h(\O)$, the general case following since the properties are stable with respect to linear combinations. The only non-trivial property is then (i). This follows from the fact that if $S$ is isometric to $S' \in \simp^{k}(\O)$ then $\tau S$ is isometric to $\tau S'$, since geometric maps commute with isometries. Then, for every $n \ge 0$, one has
\[ 2^{nr} \ang{(2^{-n})_\natural S, \tau' \u } =  2^{nr}  \ang{(2^{-n})_\natural \tau S,   \u } \to 0.\]
as $n \to +\infty$. 
\end{proof}

\subsubsection*{Decomposable maps}
We say that $\tau: \simp^{k}(\O)  \to \chain^{h}(\O)$ is \emph{decomposable} if there exist geometric maps $\nu: \simp^{k}(\O)  \to \chain^{h}(\O)$, $\varrho:  \simp^{k}(\O)  \to \chain^{h+1}(\O)$ such that
\begin{equation}\label{eq:zero-volume}\nu  = \sum_{i=1}^\ell \lambda_i \nu_i  \quad \text{with $\vol_h \circ \nu_i = 0$ for $i=1, \ldots, \ell$}\end{equation}
 and
\[ \tau = \nu + \partial \varrho.\]
Given $\tau_1: \simp^{k}(\O)  \to \chain^{h}(\O)$, $\tau_2:\simp^{k}(\O)  \to \chain^{h}(\O)$, we write $\tau_1 \equiv \tau_2$ if the difference $\tau_1 - \tau_2$ is decomposable.

\begin{remark}\label{rem:equivalence-closed}
If $\tau_1 \equiv \tau_2$, then for every $\omega \in \germ^h(\O)$ is closed and nonatomic one has $\tau_1' \omega= \tau_2'\omega$, since for $S \in \simp^k(\O)$,
$\ang{ S, \tau_1' \omega -\tau_2'\omega}= \ang{ \nu S, \omega}  + \ang{ \varrho S, \delta \omega} = 0$.
\end{remark}

We notice that, if $\tau_1: \simp^{k_1}(\O) \to \chain^{k_2}(\O)$, $\tau_2: \simp^{k_2}(\O)  \to \chain^{k_3}(\O)$ are both geometric, then the composition $\tau_2 \circ \tau_1$ is geometric, and if moreover $\tau_2$ is decomposable, then $\tau_2 \circ \tau_1$ is decomposable, for $\tau_2 \circ \tau_1 = \nu \circ \tau_1 + \partial (\varrho \circ \tau_1)$.

\subsubsection*{Permutations} A first example of decomposable maps is provided by permutations.

\begin{lemma}\label{lem:perm}
Let $\sigma$ be a permutation on $\cur{0,1, \ldots, k}$. Then the induced map  $\sigma: \simp^k(\O) \to \simp^k(\O)$ is geometric and $\sigma \equiv (-1)^\sigma \Id$.
\end{lemma}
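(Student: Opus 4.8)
The plan is to check the two assertions in turn. Geometricity of $\sigma$ is essentially immediate: the assignment $[p_0\ldots p_k]\mapsto[p_{\sigma^{-1}(0)}\ldots p_{\sigma^{-1}(k)}]$ is continuous, and the identity $\varphi_\natural\sigma=\sigma\varphi_\natural$ for affine $\varphi$ was already recorded when permutations were introduced (it holds for any $\varphi$), so $\sigma$ commutes with affine maps. The substance is the relation $\sigma\equiv(-1)^\sigma\Id$, which I would obtain by first proving it for a single adjacent transposition and then propagating it through products.

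For the propagation step, I would record that decomposable maps form an $\R$-vector space (a sum $(\nu_1+\partial\varrho_1)+(\nu_2+\partial\varrho_2)$ rewrites as $(\nu_1+\nu_2)+\partial(\varrho_1+\varrho_2)$) and that, as noted just before the statement, composing a decomposable map with a geometric map on the right preserves decomposability. From these, if $\sigma_1\equiv(-1)^{\sigma_1}\Id$ and $\sigma_2\equiv(-1)^{\sigma_2}\Id$, then $\sigma_1\sigma_2-(-1)^{\sigma_1}\sigma_2=(\sigma_1-(-1)^{\sigma_1}\Id)\circ\sigma_2$ is decomposable and $(-1)^{\sigma_1}\sigma_2-(-1)^{\sigma_1\sigma_2}\Id=(-1)^{\sigma_1}\bigl(\sigma_2-(-1)^{\sigma_2}\Id\bigr)$ is decomposable, so by transitivity $\sigma_1\sigma_2\equiv(-1)^{\sigma_1\sigma_2}\Id$. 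Since every permutation of $\{0,\ldots,k\}$ is a product of adjacent transpositions and the sign is multiplicative, an induction (with base $\sigma=\Id$) reduces everything to the claim $\tau\equiv-\Id$ for the adjacent transposition $\tau=\tau_{i,i+1}$.

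To settle that claim I would exhibit the decomposition explicitly. Define the geometric map $\varrho\colon\simp^k(\O)\to\chain^{k+1}(\O)$ by $\varrho S:=(-1)^i\,[p_0\ldots p_{i-1}\,p_i\,p_{i+1}\,p_i\,p_{i+2}\ldots p_k]$ for $S=[p_0\ldots p_k]$; this is well-defined because the convex hull of the listed $k+2$ points is $\conv(S)\subseteq\O$. Expanding $\partial$ of this $(k+1)$-simplex, deletion of the first copy of $p_i$ contributes $(-1)^i\tau S$, deletion of the second copy contributes $(-1)^{i+2}S$, and deletion of any other vertex leaves a $k$-simplex in which $p_i$ still appears twice, hence of zero $k$-volume by the determinant formula for $\vol_k$. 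Collecting terms, $\tau S+S=\partial\varrho S+\nu S$, where $\nu$ is a finite $\R$-linear combination of geometric maps each of which sends $S$ to a $k$-simplex with a repeated vertex; thus $\tau+\Id=\nu+\partial\varrho$ is decomposable, i.e.\ $\tau\equiv-\Id=(-1)^\tau\Id$, as required. The only point demanding care — the main obstacle, such as it is — is the sign bookkeeping in this boundary expansion (the signs $(-1)^i$, $(-1)^{i+1}$, $(-1)^{i+2}$ attached to deleting $p_i$, $p_{i+1}$, and the second $p_i$) together with the elementary check that every surviving summand really does have a repeated vertex; all the rest is a formal consequence of the definitions.
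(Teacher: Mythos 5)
Your proposal is correct and follows essentially the same route as the paper: reduce to an adjacent transposition via composition with geometric maps (the step the paper leaves implicit and you spell out), then use exactly the same $(k+1)$-simplex $\varrho S = (-1)^i[p_0\ldots p_i\,p_{i+1}\,p_i\ldots p_k]$, whose boundary yields $S+\sigma S$ plus simplices with a repeated vertex, i.e.\ the paper's $\nu$. The sign bookkeeping and the zero-volume check you flag are precisely what the paper's explicit computation of $\partial\varrho S$ verifies.
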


\begin{proof}
As already noticed in Section~\ref{sec:pol-chains}, $\sigma$ commutes with push-fowards, hence in particular with affine transformations. To prove $\sigma \equiv (-1)^\sigma\Id$ by composition, it is sufficient to assume that $\sigma [p_0 \ldots p_i p_{i+1} \ldots p_k] = [p_0 \ldots  p_{i+1} p_i \ldots p_k]$ is a transposition
with $i\in \cur{0,1, \ldots, k-1}$. 
For any $S \in \simp^k(\O)$, define the geometric maps
\[ \varrho S := (-1)^i [p_0 p_1 \ldots p_i p_{i+1} p_i \ldots p_k] \in \simp^{k+1}(\O)\]
and
\[ \begin{split} \nu S  := &-\sum_{j = 0}^{i-1} (-1)^{i+j} [p_0 p_1 \ldots \hat{p}_j \ldots p_i p_{i+1} p_i \ldots p_k] + [p_0 p_1 \ldots p_{i}  p_i \ldots p_k]\\
& - \sum_{j = i+2}^{k} (-1)^{i+j} [p_0 p_1  \ldots p_i p_{i+1} p_i \ldots \hat{p}_j \ldots p_k] \in \chain^k(\O),\end{split}\]
which satisfies~\eqref{eq:zero-volume} $p_i$ appears at least twice in every simplex. Then,
\[ \begin{split} \partial \varrho S & = \sum_{j = 0}^{i-1} (-1)^{i+j} [p_0 p_1 \ldots \hat{p}_j \ldots p_i p_{i+1} p_i \ldots p_k] \\
& \quad + [p_0  \ldots p_{i-1} p_{i+1} p_i \ldots p_k]  - [p_0  \ldots p_{i}  p_i \ldots p_k] +  [p_0 \ldots p_{i} p_{i+1} p_{i+2} \ldots p_k] \\
& \quad + \sum_{j = i+2}^{k} (-1)^{i+j} [p_0 p_1  \ldots p_i p_{i+1} p_i \ldots \hat{p}_j \ldots p_k] \\
& = -\nu S + S + \sigma S. \end{split}\]
showing the equivalence $\sigma \equiv (-1)^\sigma\Id$.
\end{proof}

\subsubsection*{Edge-flipping} A second example of decomposable maps is \emph{edge-flipping} on $2$-chains. In general, given $(p_i)_{i=0}^3$, a general edge-flipping consists of replacing $[p_0 p_1 p_2]+[p_3 p_2 p_1]$ with $[p_2 p_0 p_3] + [p_1 p_3 p_0]$ (Figure~\ref{fig:flip-1}).

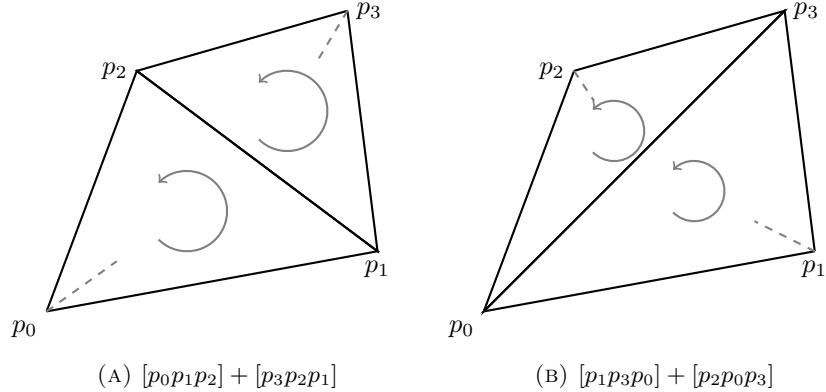
\begin{figure}[h]
\begin{minipage}{.45\textwidth}
\begin{tikzpicture}[scale=4]
 \drawchain{1}{1}{1.1}{.2}{0.3}{0.8}{0.4}{-135}{1}
        \drawchain{0}{0}{0.3}{0.8}{1.1}{0.2}{0.4}{-135}{1}
	\draw   (0,0) node[below left]{$p_0$} (1.1,0.2) node[below]{$p_1$} (0.3,0.8) node[left]{$p_2$}
	(1,1) node[right]{$p_{3}$}  ;
    \end{tikzpicture}\subcaption{$[p_0 p_1 p_2]+[p_3 p_2 p_1]$ }
\end{minipage}
\begin{minipage}{.45\textwidth}
\begin{tikzpicture}[scale=4]
      \drawchain{1.1}{0.2}{0}{0}{1}{1}{0.3}{-135}{1}
        \drawchain{0.3}{0.8}{1}{1}{0}{0}{0.3}{-135}{1}
	\draw   (0,0) node[below left]{$p_0$} (1.1,0.2) node[below]{$p_1$} (0.3,0.8) node[left]{$p_2$}
	(1,1) node[right]{$p_{3}$};
    \end{tikzpicture} \subcaption{$[p_1 p_3 p_0]+[p_2 p_0 p_3]$}
\end{minipage}
\caption{A general edge flipping operation.} \label{fig:flip-1}
\end{figure}

However, to define it as a decomposable map on $2$-simplices, we consider only the case when the points belong to a parallelogram, i.e.\
\begin{equation}\label{eq:parallelogram-point} p_0+ p_3 = p_1 +p_2. \end{equation}
For $S = [p_0 p_1 p_2]$, we define the maps $\pi$, $\tilde \pi:\simp^2(\O) \to \chain^2(\O)$,
\[ \pi S := [p_0 p_1 p_2]+[p_3 p_2 p_1], \quad  \tilde \pi S := [p_1 p_3 p_0]+[p_2 p_0 p_3]\]
with $p_3$ such that~\eqref{eq:parallelogram-point} holds,  and let $\flip S := \pi - \tilde \pi$.

\begin{remark} In fact, it may be that $[p_3 p_2 p_1]$, $[p_1 p_3 p_0]$ or $[p_2 p_0 p_3]$ do not belong to $\simp^2(\O)$, e.g.\ when $p_3 \not \in \O$. Hence, rigorously $\pi$, $\tilde{ \pi}$ and $\flip$ are well-defined on a smaller domain than $\simp^2(\O)$, precisely those $S \in \simp^2(\O)$ such that the three simplices above still belong to  $\simp^2(\O)$. In our applications, this will be always the case hence we avoid to add this specification.
\end{remark}

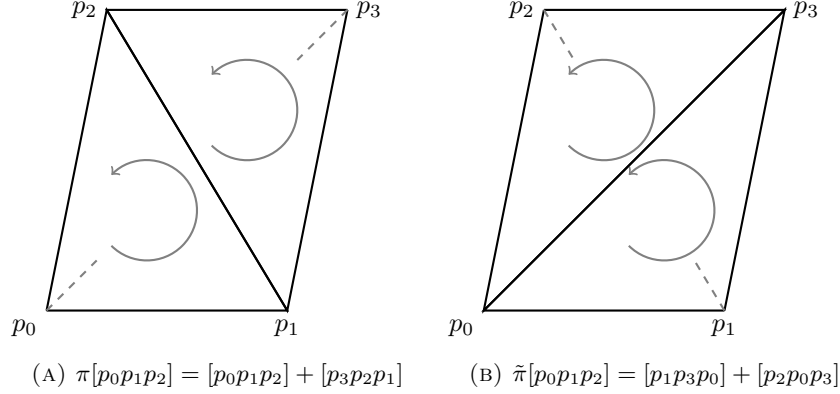
\begin{figure}[ht]
\begin{minipage}{.45\textwidth}
\begin{tikzpicture}[scale=4]
      \drawchain{0}{0}{0.8}{0}{0.2}{1}{0.5}{-135}{1}
        \drawchain{1}{1}{0.2}{1}{0.8}{0}{0.5}{-135}{1}
	\draw   (0,0) node[below left]{$p_0$} (0.8,0) node[below]{$p_1$} (0.2,1) node[left]{$p_2$}
	(1,1) node[right]{$p_{3}$}  ;
    \end{tikzpicture} \subcaption{$\pi [p_0 p_1 p_2] = [p_0 p_1 p_2]+[p_3 p_2 p_1]$ }
\end{minipage}
\begin{minipage}{.45\textwidth}
\begin{tikzpicture}[scale=4]
      \drawchain{0.8}{0}{0}{0}{1}{1}{0.5}{-135}{1}
        \drawchain{0.2}{1}{1}{1}{0}{0}{0.5}{-135}{1}
	\draw   (0,0) node[below left]{$p_0$} (0.8,0) node[below]{$p_1$} (0.2,1) node[left]{$p_2$}
	(1,1) node[right]{$p_{3}$}  ;
    \end{tikzpicture} \subcaption{$\tilde{\pi} [p_0 p_1 p_2] =[ p_1 p_3 p_0]+[p_2 p_0 p_3]$ }
\end{minipage}
\caption{Parallelograms associated to $[p_0 p_1 p_2]$.}\label{fig:flip-2}
\end{figure}

\begin{lemma}\label{lem:flip}
The maps $\pi$, $\tilde \pi$ are geometric and $\pi \equiv \tilde \pi$, i.e.\ $\flip$ is decomposable. 
\end{lemma}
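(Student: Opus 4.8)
The plan is to mimic the proof of Lemma~\ref{lem:perm}: exhibit explicit geometric maps $\nu$ (a $2$-chain-valued map, all of whose summands have zero area) and $\varrho$ (a $3$-chain-valued map) such that $\pi - \tilde\pi = \nu + \partial\varrho$. Geometricity of $\pi$ and $\tilde\pi$ is essentially immediate: both are finite sums of maps of the form $S=[p_0p_1p_2]\mapsto [q_{\tau(0)}q_{\tau(1)}q_{\tau(2)}]$ where $q_0,q_1,q_2$ are $p_0,p_1,p_2$ and $q_3:=p_1+p_2-p_0$, and since the assignment $(p_0,p_1,p_2)\mapsto p_3$ is affine and equivariant, each summand commutes with affine maps and is continuous; this is the routine part.

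The heart of the matter is the construction of $\varrho$. First I would observe that the four points $p_0,p_1,p_2,p_3$ with $p_0+p_3=p_1+p_2$ span (in general) a genuine parallelogram, which can be triangulated by either diagonal: the diagonal $[p_1p_2]$ gives the two triangles making up $\pi S$ (up to orientation), and the diagonal $[p_0p_3]$ gives the two triangles making up $\tilde\pi S$. Hence $\pi S$ and $\tilde\pi S$ represent, as $2$-chains, the \emph{same} planar region with the same orientation, so their difference is a boundary \emph{together with} lower-dimensional (zero-area) correction terms coming from the fact that we are working with ordered simplices rather than unordered ones. Concretely, I would build $\varrho S$ as a single (geometric) $3$-simplex, e.g.\ a suitable ordering of $[p_0 p_1 p_2 p_3]$ (this is the tetrahedron, possibly degenerate if the four points are coplanar — but degeneracy is harmless since all our identities are purely combinatorial/linear in the vertices and $\delta$ kills $3$-chains of zero volume via closedness on $2$-planes only at the level of Remark~\ref{rem:equivalence-closed}, not here). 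Expanding $\partial[p_0p_1p_2p_3] = [p_1p_2p_3]-[p_0p_2p_3]+[p_0p_1p_3]-[p_0p_1p_2]$ and comparing with $\pi S - \tilde\pi S = [p_0p_1p_2]+[p_3p_2p_1]-[p_1p_3p_0]-[p_2p_0p_3]$, I would collect the discrepancy into $\nu S$; each term of $\nu S$ will be a triangle three of whose vertices are among $\{p_0,p_1,p_2,p_3\}$ but lying on one of the four sides of the parallelogram (equivalently, three collinear or coincident points among the chosen ones), hence of zero $2$-volume, so~\eqref{eq:zero-volume} holds. Possibly one needs two tetrahedra rather than one to absorb all terms with correct signs; I would determine the exact combination by a direct sign bookkeeping, exactly as the transposition case in Lemma~\ref{lem:perm} required a specific choice of $\varrho$ and $\nu$.

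The main obstacle I anticipate is precisely this sign- and orientation-bookkeeping: getting the permutations inside $[p_3p_2p_1]$, $[p_1p_3p_0]$, $[p_2p_0p_3]$ to line up with the faces of $\partial\varrho$ so that everything not of the form ``boundary'' or ``zero-area'' cancels. A clean way to organize it is to first reduce modulo $\equiv$ using Lemma~\ref{lem:perm} (so that $[p_3p_2p_1]\equiv -[p_1p_2p_3]$, $[p_1p_3p_0]\equiv [p_0p_1p_3]$, $[p_2p_0p_3]\equiv -[p_0p_2p_3]$, since $\equiv$ is a congruence compatible with linear combinations and composition, as noted after the definition of decomposable maps), whence $\pi S - \tilde\pi S \equiv [p_0p_1p_2] - [p_1p_2p_3] - [p_0p_1p_3] + [p_0p_2p_3] = -\partial[p_0p_1p_2p_3]$, and then it only remains to check that $\partial[p_0p_1p_2p_3]$ itself is $\equiv 0$, i.e.\ decomposable — which is immediate since $\partial[p_0p_1p_2p_3] = \partial\varrho S$ with $\varrho S := -[p_0p_1p_2p_3]$ and $\nu:=0$. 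Thus $\flip = \pi-\tilde\pi \equiv 0$, as claimed.
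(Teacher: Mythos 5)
Your final paragraph is a complete and correct proof, and it is essentially the paper's own argument: the paper likewise takes a single tetrahedron on the parallelogram's vertices (there $\varrho S:=[p_3p_0p_1p_2]$, versus your $-[p_0p_1p_2p_3]$), expands its boundary, and absorbs the remaining terms via the permutation equivalences of Lemma~\ref{lem:perm}. The only difference is bookkeeping order (you normalize the permuted simplices first and then recognize the boundary), so no further comparison is needed.
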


\begin{proof}
Given $S=[p_0 p_1 p_2]$, the maps $\tau_1 S := [p_1p_2 p_3]$, $\tau_2 S := [p_1 p_3 p_0]$ and $\tau_3S := [p_2 p_0 p_3]$ are clearly geometric entailing that $\pi = \Id + \tau_1$ and $\tilde \pi=\tau_2 + \tau_3$ are both geometric. To show that $\flip$ is decomposable, we notice that, introducing the geometric map $\varrho [p_0 p_1p_2] := [p_3p_0 p_1 p_2]$, one has
\begin{equation}\label{eq:flip-proof} \begin{split} \partial \varrho [p_0 p_1 p_2]  & = [p_0 p_1p_2] -[p_3 p_1 p_2]+ [p_3 p_0p_2] - [p_3 p_0p_1] \\
& = \pi[p_0 p_1 p_2] -\bra{[p_3 p_1 p_2] + [p_3p_2 p_1]} \\
& \quad - \tilde \pi[p_0 p_1 p_2] - \bra{[p_2 p_0 p_3] + [p_3 p_0p_2]} + \bra{[p_1 p_3 p_0] -[p_3 p_0p_1]}\end{split}
\end{equation}
an identity entailing that $\partial \varrho \equiv \pi - \tilde{\pi} = \flip$.
\end{proof}

\begin{remark}\label{rem:flip-proof-cancellation}
From identity\eqref{eq:flip-proof} it actually follows  that if $\omega \in \germ^2(\O)$ is closed and alternating (but not necessarily nonatomic), then $\ang{\flip [p_0p_1 p_2], \omega} =  0$.
\end{remark}

\begin{remark}[change of base points]\label{rem:change-flip}
For technical reasons, we may require to perform slightly different edge-flippings: given $[p_0 p_1 p_2] \in \simp^2(\O)$, letting $p_3$ as above, we  replace
\[ \pi^\dagger[p_0 p_1 p_2] := [p_2 p_0 p_1] + [p_1 p_3 p_2] \quad \text{with} \quad \tilde \pi^\dagger [p_0 p_1 p_2] := [p_3 p_0 p_1]+ [p_0p_3p_2],\]
(Figure~\ref{fig:flip-var}) letting $\flip^\dagger := \pi^\dagger - \tilde{ \pi}^\dagger$. However, it is immediate to notice that $\pi^\dagger = \sigma \pi$, with $\sigma[p_0p_1p_2] :=[p_2 p_0p_1]$ and $\tilde{\pi}^\dagger = \tilde{ \sigma} \tilde \pi^\dagger$ with $\tilde{\sigma}[p_0 p_1 p_2] := [p_1 p_2 p_0]$, hence $\flip^\dagger \equiv 0$.
\end{remark}

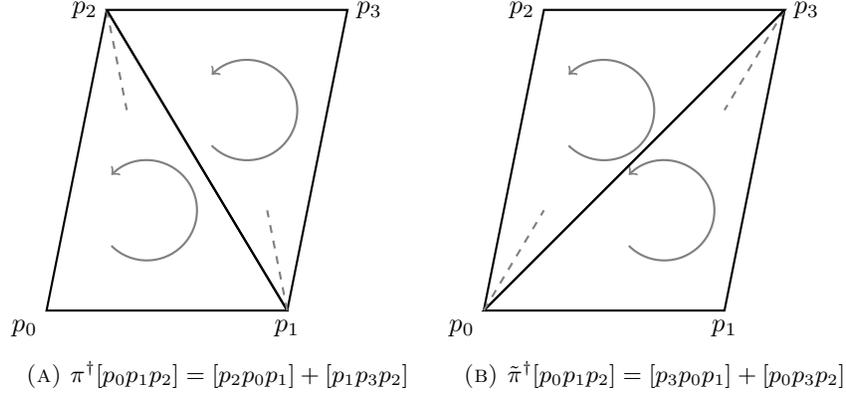
\begin{figure}[ht]
\begin{minipage}{.45\textwidth}
\begin{tikzpicture}[scale=4]
      \drawchain{0.2}{1}{0}{0}{0.8}{0}{0.5}{-135}{1}
        \drawchain{0.8}{0}{0.2}{1}{1}{1}{0.5}{-135}{1}
	\draw   (0,0) node[below left]{$p_0$} (0.8,0) node[below]{$p_1$} (0.2,1) node[left]{$p_2$}
	(1,1) node[right]{$p_{3}$}  ;
    \end{tikzpicture} \subcaption{$\pi^\dagger [p_0 p_1 p_2] = [p_2 p_0 p_1] + [p_1 p_3 p_2]$ }
\end{minipage}
\begin{minipage}{.45\textwidth}
\begin{tikzpicture}[scale=4]
      \drawchain{1}{1}{0.8}{0}{0}{0}{0.5}{-135}{1}
        \drawchain{0}{0}{0.2}{1}{1}{1}{0.5}{-135}{1}
	\draw   (0,0) node[below left]{$p_0$} (0.8,0) node[below]{$p_1$} (0.2,1) node[left]{$p_2$}
	(1,1) node[right]{$p_{3}$}  ;
    \end{tikzpicture} \subcaption{$\tilde \pi^\dagger [p_0 p_1 p_2] =[p_3 p_0 p_1]+ [p_0p_3p_2]$ }
\end{minipage}
\caption{The two parallelograms with changed base points.}\label{fig:flip-var}
\end{figure}

\subsubsection*{Edge-cutting} As a third example, consider the following maps. For fixed $t \in [0,1]$, define
\[ \cut_t : \simp^1(\O) \to \chain^1(\O), \quad \cut_t [p_0 p_1] := [p_0 p_t] + [p_t p_1]\]
where $p_t := (1-t) p_0 + t p_1$ (Figure~\ref{fig:cut-0}), and
\[ \cut_t: \simp^2(\O) \to \chain^2(\O), \quad  \cut_t [p p_0 p_1]:= [p_t p_1 p ] + [p_t p p_1 ],\]
where $p_t: = (1-t) p_0+ t p_1$ (Figure~\ref{fig:cut-1}).

\begin{figure}[b]
\begin{minipage}{.45\textwidth}
\begin{tikzpicture}[scale=4]
	\node (0) at (-0.5, 0) {$p_0$};
	\node (1) at (0.5, 0.5) {$p_1$};
	\draw[thick, ->]   (0) edge (1);
	\end{tikzpicture}  \subcaption{ $[p_0 p_1]$}
	\end{minipage}
	\begin{minipage}{.45\textwidth}

	\begin{tikzpicture}[scale=4]
	\node (0) at (-0.5, 0) {$p_0$};
	\node (1) at (0, 0.25)  {$p_{1/2}$};
		\node (2) at (0.5, 0.5) {$p_1$};
	\draw[thick, ->]   (0) edge (1);
	\draw[thick, ->]  (1) edge (2);
     \end{tikzpicture} \subcaption{$[p_0 p_{1/2}] + [p_{1/2} p_1]$}
\end{minipage}
 \caption{ $\cut_{1/2} [p_0 p_1]$.} \label{fig:cut-0}
\end{figure}

\begin{figure}[b]
\begin{minipage}{.45\textwidth}
\begin{tikzpicture}[scale=4]
    \drawchain{0}{0}{1}{0}{0}{1}{0.5}{-135}{1}
	\draw   (0,0) node[below left]{$p$} (1,0) node[below]{$p_0$} (0,1) node[left]{$p_1$};
	\end{tikzpicture}  \subcaption{ $[p p_0 p_1]$ \hfill}
	\end{minipage}
	\begin{minipage}{.45\textwidth}

	\begin{tikzpicture}[scale=4]
   \drawchain{0.5}{0.5}{0}{0}{0}{1}{0.4}{-135}{1}
   \drawchain{0.5}{0.5}{0}{0}{1}{0}{0.4}{-135}{1}
	\draw   (0,0) node[below left]{$p$} (1,0) node[below]{$p_0$} (0,1) node[left]{$p_1$}
	(0.5,0.5) node[above right]{$p_{1/2}$};
     \end{tikzpicture} \subcaption{$[p_{1/2} p_1 p ] + [p_{1/2} p p_0 ]$}
\end{minipage}
 \caption{ $\cut_{1/2} [p_0 p_1 p_2]$.} \label{fig:cut-1}
\end{figure}

\begin{lemma}\label{lem:cut}
The map $\cut_t$ is geometric and  $\cut_t \equiv \Id$.
\end{lemma}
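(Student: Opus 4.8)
The plan is first to record that $\cut_t$ is geometric essentially by inspection, and then to exhibit explicit ``witnesses'' $\nu$ and $\varrho$ realizing $\cut_t-\Id$ as a decomposable map. Geometricity is clear in both dimensions: $\cut_t$ is by definition a finite sum of maps such as $[p_0 p_1]\mapsto[p_0 p_t]$, $[p_0 p_1]\mapsto[p_t p_1]$, $[p p_0 p_1]\mapsto[p_t\,p_1\,p]$, each continuous in the defining vertices and commuting with affine maps because $p_t=(1-t)p_0+t p_1$ is an affine-equivariant combination; hence $\cut_t$ lies in the linear span of geometric maps. For $1$-simplices the equivalence $\cut_t\equiv\Id$ is then immediate: with the geometric map $\varrho[p_0 p_1]:=[p_0\,p_t\,p_1]$ one computes $\partial\varrho[p_0 p_1]=[p_t p_1]-[p_0 p_1]+[p_0 p_t]=\cut_t[p_0 p_1]-[p_0 p_1]$, so $\cut_t-\Id=\partial\varrho$ is decomposable with $\nu=0$.

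For $2$-simplices, where $\cut_t[p\,p_0\,p_1]=[p_t\,p_1\,p]+[p_t\,p\,p_0]$ (as in Figure~\ref{fig:cut-1}), I would argue in two steps. First I reorient the two summands so that the apex $p$ comes first: $[p_t\,p_1\,p]$ is an \emph{even} (cyclic) permutation of $[p\,p_t\,p_1]$ and $[p_t\,p\,p_0]$ is an even permutation of $[p\,p_0\,p_t]$, so writing each reoriented triangle as $\sigma\circ\mu$ with $\mu$ the evident geometric map, and invoking Lemma~\ref{lem:perm} (which gives $\sigma\equiv(-1)^\sigma\Id=\Id$) together with the fact noted above that a decomposable map precomposed with a geometric one is still decomposable, we get $\cut_t\equiv\bigl(\,S=[p\,p_0\,p_1]\mapsto[p\,p_t\,p_1]+[p\,p_0\,p_t]\,\bigr)$. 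Second, I introduce the geometric map $\varrho[p\,p_0\,p_1]:=-[p\,p_0\,p_t\,p_1]$; a direct boundary computation gives
\[ \partial\varrho[p\,p_0\,p_1]=-[p_0\,p_t\,p_1]+[p\,p_t\,p_1]-[p\,p_0\,p_1]+[p\,p_0\,p_t],\]
so that $[p\,p_t\,p_1]+[p\,p_0\,p_t]-[p\,p_0\,p_1]=\partial\varrho[p\,p_0\,p_1]+[p_0\,p_t\,p_1]$. Since $p_0$, $p_t$, $p_1$ are collinear, the geometric map $\nu[p\,p_0\,p_1]:=[p_0\,p_t\,p_1]$ satisfies $\vol_2\circ\nu=0$, hence fulfils~\eqref{eq:zero-volume}. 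Therefore $S\mapsto[p\,p_t\,p_1]+[p\,p_0\,p_t]-[p\,p_0\,p_1]=\nu+\partial\varrho$ is decomposable, and combined with the first step this yields $\cut_t\equiv\Id$.

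The step I expect to need the most care is the $2$-dimensional bookkeeping in the two reorientations: one must verify that the permutations carrying $[p_t\,p_1\,p]$ and $[p_t\,p\,p_0]$ to the ``$p$-first'' triangles are genuinely even, so that no sign survives the application of Lemma~\ref{lem:perm}, and that the single leftover face of the auxiliary tetrahedron $[p\,p_0\,p_t\,p_1]$ is exactly the degenerate triangle $[p_0\,p_t\,p_1]$. It is precisely the collinearity of $p_0$, $p_t$, $p_1$ (i.e.\ the fact that $p_t$ lies on the edge $[p_0 p_1]$) that makes this leftover term admissible as the ``$\nu$'' part of the decomposition; everything else reduces to the routine identities for $\partial$ recalled above.
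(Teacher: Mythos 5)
Your argument is correct and is essentially the paper's own proof: in both cases $\cut_t-\Id$ is exhibited (up to the permutation equivalences of Lemma~\ref{lem:perm}) as $\nu+\partial\varrho$ with $\varrho$ the auxiliary $3$-simplex on the four points $p,p_0,p_t,p_1$ and $\nu$ the degenerate triangle $[p_0\,p_t\,p_1]$ (resp.\ $[p_t\,p_0\,p_1]$), whose collinearity gives~\eqref{eq:zero-volume}. The only difference is bookkeeping: you reorient both cut pieces to be ``$p$-first'' via even permutations before taking the boundary of $-[p\,p_0\,p_t\,p_1]$, whereas the paper uses $\varrho[p\,p_0\,p_1]=[p_t\,p\,p_0\,p_1]$ and absorbs a single transposition at the end.
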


\begin{proof}
Consider first the case of $1$-simplices.  Since $\tau_1[p_0 p_1]:= [p_0 p_t]$, $\tau_2[p_0 p_1]:= [p_t p_1]$ are both geometric, so it is their sum $\cut_t$. Moreover, letting $\varrho [p_0 p_1]:= [p_0 p_t p_1]$, one has $\cut_t - \Id = \partial \varrho$.

In case of $2$-simplices, the maps $\tau_1[p p_0 p_1] := [p_t pp_0]$ and $\tau_2[p p_0 p_1] := [p_t p_1 p]$ are geometric and, letting $\varrho [pp_0 p_1]:= [p_t pp_0 p_1]$,  we notice that
\begin{equation}\label{eq:cut-t-proof}\begin{split}
\partial \varrho[pp_0 p_1] & = [pp_0 p_1] - [p_t p_0 p_1] +[p_t p p_1] - [p_t p p_0] \\
 & = (\Id - \cut_t)[p p_0 p_1] - [p_t p_0 p_1] +\bra{ [p_t p p_1] + [p_t p_1 p]}\\
 & =  (\Id - \cut_t)[pp_0 p_1] - \nu[p p_0 p_1] + (\Id -(-1)^\sigma) [p_t p p_1] \\
 & \quad \text{with $\nu[pp_0 p_1]:= [p_t p_0 p_1]$ and $\sigma[pqr] := [p rq]$.}
  \end{split}\end{equation}
  Since $\vol_2(\nu[pp_0 p_1]) = 0$, it follows that  $\Id \equiv \cut_t$.
%
%
\end{proof}

\begin{remark}\label{rem:cut-proof-cancellation}
The following observation will be useful in the proof of Theorem~\ref{thm:cancellation}. If $\omega \in \germ^2(\O)$ is closed (not necessarily nonatomic), alternating and \[ \ang{ [p_0 p_t p_1], \omega} = 0 \quad \text{with $p_t :=(1-t)p_1 +tp_2$,}\]
then by~\eqref{eq:cut-t-proof}
\[ \ang{[pp_0 p_1], \omega} =  \ang{ \cut_t [p p_0 p_1], \omega} .\]
\end{remark}

More generally, we need to cut one edge into $n$-equal parts. To this aim, we recursively define, for $n\ge 2$, the map (on $1$-simplices)
\[ \cut^n [p_0 p_1] := \cut^{n-1} [p_0 p_{1/n}] + [p_{1/n} p_1],\]
where $p_{1/n} := \bra{1-\frac 1 n } p_0 + \frac 1 n p_1$, and the map on $2$-simplices
\[ \cut^n [p p_0 p_1] := \cut^{n-1}[p p_0 p_{1/n}] + [p p_{1/n} p_1],\]
where  $p_{1/n} := \bra{1-\frac 1 n } p_0 + \frac 1 n p_1$ (Figures~\ref{fig:cut-n-0} and \ref{fig:cut-n-1}). Notice that $\cut^2$ is different from $\cut_{1/2}$ (but one can prove that $\cut^2 \equiv \cut_{1/2}$).

\begin{figure}[ht]
\begin{minipage}{.45\textwidth}
\begin{tikzpicture}[scale=4]
    \node (0) at (-0.5, 0) {$p_0$};
	\node (1) at (-0.166, 0.1666)  {$p_{1/3}$};
	\node (2) at (0.166, 0.33) {};
	\node (3) at (0.5, 0.5) {$p_1$};

	\draw[thick, ->]   (0) edge (1);
	\draw[thick, ->]  (1) edge (2);
	\draw[thick, ->]  (2) edge (3);	
	    \end{tikzpicture}\caption{ $\cut^3[p_0p_1]$.}\label{fig:cut-n-0}
    \end{minipage}
\begin{minipage}{.45\textwidth}
\begin{tikzpicture}[scale=4]
      \drawchain{0}{0}{1}{0}{0.8}{0.2}{0.2}{-135}{1}
       \drawchain{0}{0}{0.8}{0.2}{0.6}{0.4}{0.2}{-135}{1}
       \drawchain{0}{0}{0.6}{0.4}{0.4}{0.6}{0.2}{-135}{1}
       \drawchain{0}{0}{0.4}{0.6}{0.2}{0.8}{0.2}{-135}{1}
       \drawchain{0}{0}{0.2}{0.8}{0}{1}{0.2}{-135}{1}
	\draw   (0,0) node[below left]{$p$} (1,0) node[above right]{$p_0$} (0,1) node[above right]{$p_1$} (0.8, 0.2) node[right] {$p_{1/5}$};
    \end{tikzpicture}\caption{ $\cut^5[p p_0 p_1]$.}\label{fig:cut-n-1}
    \end{minipage}
\end{figure}

Arguing inductively, one has the following result.

\begin{lemma}\label{lem:cut-n}
For $n\ge1$, the map $\cut^n$ is geometric and  $\cut^n \equiv \Id$.
\end{lemma}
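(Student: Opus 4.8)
The plan is to prove the statement by induction on $n$, mimicking the template already used for Lemmas~\ref{lem:cut} and~\ref{lem:perm}. The base case $n=1$ is immediate, since $\cut^1=\Id$ is geometric and trivially satisfies $\cut^1\equiv\Id$. For the inductive step I would fix $n\ge 2$, assume $\cut^{n-1}$ is geometric with $\cut^{n-1}\equiv\Id$, and work directly from the recursive definition, in which $\cut^n$ is the sum of $\cut^{n-1}$ applied to the simplex with the edge $[p_0p_1]$ shortened to $[p_0p_{1/n}]$, plus the remaining piece $[p_{1/n}p_1]$ (resp.\ $[p\,p_{1/n}p_1]$ on $2$-simplices), where $p_{1/n}=(1-1/n)p_0+(1/n)p_1$.

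First I would check that $\cut^n$ is geometric. Introduce the auxiliary maps $\mu$ and $\nu$ sending $[p_0p_1]$ to $[p_0p_{1/n}]$ and to $[p_{1/n}p_1]$ (and analogously on $2$-simplices). Each commutes with affine transformations because $\varphi(p_{1/n})=(1-1/n)\varphi(p_0)+(1/n)\varphi(p_1)$ for affine $\varphi$, and continuity is clear, so $\mu$ and $\nu$ are geometric. Then $\cut^n=\cut^{n-1}\circ\mu+\nu$, and geometricity follows since compositions and finite sums of geometric maps are geometric.

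The core of the argument is the equivalence $\cut^n\equiv\Id$. Using the fact recorded in the excerpt that a composition $\tau_2\circ\tau_1$ with $\tau_1$ geometric and $\tau_2$ decomposable is again decomposable, I would apply it with $\tau_2:=\cut^{n-1}-\Id$ (decomposable by the inductive hypothesis) and $\tau_1:=\mu$, obtaining $\cut^{n-1}\circ\mu\equiv\mu$, hence $\cut^n=\cut^{n-1}\circ\mu+\nu\equiv\mu+\nu$. On $1$-simplices, $\mu+\nu$ is exactly $\cut_{t}$ with $t=1/n$, so Lemma~\ref{lem:cut} finishes; on $2$-simplices, $\mu+\nu$ differs from $\cut_{t}$ only by a relabelling of base points among the two resulting simplices, which I would absorb via Lemma~\ref{lem:perm} (the relevant permutations being even $3$-cycles, hence $\equiv \Id$), and again conclude by Lemma~\ref{lem:cut}. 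Transitivity of $\equiv$, which holds because the decomposable maps form a linear subspace, then closes the induction.

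I do not expect a genuine obstacle: every ingredient — geometricity of the division maps, stability of decomposability under right composition with geometric maps, and the single-edge cut $\cut_{t}\equiv\Id$ of Lemma~\ref{lem:cut} — is already available. The only mildly delicate point is bookkeeping with base points in the $2$-simplex case, i.e.\ matching the pieces produced by the recursion with those in the definition of $\cut_{t}$ up to an even permutation; this is precisely where Lemma~\ref{lem:perm} is invoked.
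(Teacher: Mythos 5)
Your proposal is correct and follows exactly the route the paper intends: the paper dispatches Lemma~\ref{lem:cut-n} with the single phrase ``arguing inductively'' after the recursive definition of $\cut^n$, and your induction — writing $\cut^n=\cut^{n-1}\circ\mu+\nu$, using stability of decomposability under right composition with the geometric maps $\mu$, $\nu$, and reducing to $\cut_{1/n}\equiv\Id$ (Lemma~\ref{lem:cut}) up to even $3$-cycle relabellings handled by Lemma~\ref{lem:perm}) — is precisely the natural way to carry that out. No gaps; the base-point bookkeeping you flag is indeed the only point requiring care, and you resolve it correctly.
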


\subsubsection*{Dyadic decomposition}\label{sec:dyadic-decomposition}
As a fourth operation, obtained by composition of the others, we introduce the following decomposition of $S \in \simp^2(\O)$ into four parts, each isometric to $(2^{-1})_\natural S$, hence the called \emph{dyadic}. For $S =[p_0 p_1 p_2] \in \simp^2(\O)$, $i\in \cur{0,1, 2}$ let $q_i := \bra{p_j+ p_k}/{2}$, where $\cur{i,j,k} = \cur{0,1,2}$, and  (Figure~\ref{fig:dya-1})
\[ \dya^1 S := [q_0 q_1 q_2], \quad \dya^2 S := [q_1 q_0 p_2], \quad \dya^3 S:= [q_2 p_1 q_0], \quad \dya^4 S :=  [p_0 q_2 q_1],\]
and finally $\dya := \sum_{i=1}^4 \dya^i$. We introduce similarly a dyadic decomposition of $1$-simplices, simply letting $\dya^1[p_0 p_1] = [p_0 q]$, $\dya^2[p_0 p_1] = [qp_1]$, where $q = \bra{p_0+p_1}/2$ and $\dya = \dya^1 + \dya^2$, which actually coincides with $\cut^2$ or $\cut_{1/2}$.

\begin{remark}[variants]\label{rem:variants} Although the definition of $\dya$ on a  $2$-simplex appears quite natural, there are  variants obtained by changing base points or orientation of the $4$ simplices. A useful one is $\dya^\dagger := - \sigma \dya^1 + \dya ^2 + \dya^3 + \dya^4$, where $\sigma[pqr] := [rqp]$ is a transposition (Figure~\ref{fig:dya-alt}), because the identity $\partial \dya^\dagger = \dya \partial$ holds (Figure~\ref{fig:bd-dya-alt}):
\[ \begin{split} \dya \partial [p_0 p_1 p_2] & = \dya\bra{ [p_1 p_2] - [p_0 p_2] + [p_0 p_1]} \\
& = [p_1 q_0]+ [q_0 p_2] - [p_0 q_1] - [q_1 p_2] + [p_0q_2]+ [q_2 p_1]\\
& = - \bra{[q_1 q_0]- [q_2 q_0] + [q_2 q_1]}  + \bra{ [p_1 q_0] - [q_2q_0] + [q_2 p_1]} \\
 &\quad  + \bra{[q_0 p_2]  - [q_1 p_2] + [q_1 q_0] } + \bra{ [q_2 q_1] - [p_0 q_1] + [p_0q_2]  }\\
& =  \partial \bra{ -  [q_2  q_1 q_0]  + [q_2 p_1 q_0] + [q_1 q_0 p_2] + [ p_0 q_2 q_1]} = \partial \dya^\dagger [p_0 p_1 p_2].
\end{split}\]
\end{remark}

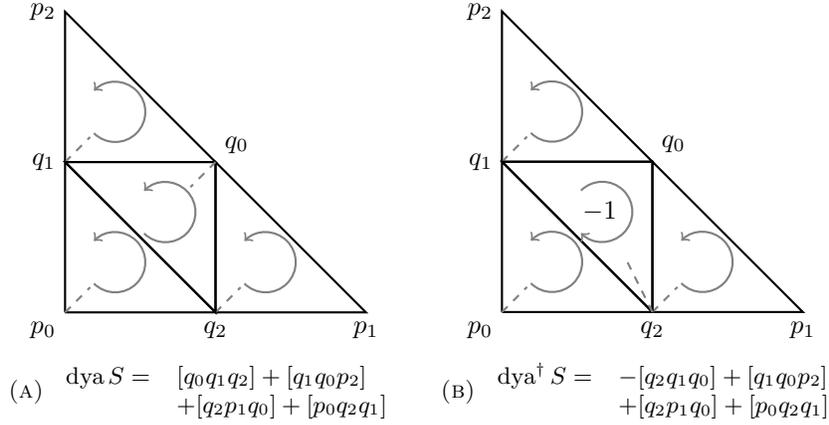
\begin{figure}[ht]
\begin{minipage}{0.45\textwidth}
\begin{center}
	\begin{tikzpicture}[scale=4]
  \drawchain{0}{0.5}{0.5}{0.5}{0}{1}{0.3}{-135}{1}
   \drawchain{0}{0}{0.5}{0}{0}{0.5}{0.3}{-135}{1}
     \drawchain{0.5}{0}{1}{0}{0.5}{0.5}{0.3}{-135}{1}
   \drawchain{0.5}{0.5}{0.5}{0}{0}{0.5}{0.3}{-135}{1}
	\draw   (0,0) node[below left]{$p_0$} (1,0) node[below]{$p_1$}
	(0,1) node[left]{$p_2$}
	(0.5,0.5) node[above right]{$q_0$} (0.5,0) node[below]{$q_2$}
	(0, 0.5) node[left]{$q_1$};
    \end{tikzpicture} \subcaption{$\begin{array}{ll}
        \dya S = & [q_0 q_1 q_2] + [q_1 q_0 p_2]
       \\ &  + [q_2 p_1 q_0]+ [p_0 q_2 q_1] \end{array}$}\label{fig:dya-1}
    \end{center}
\end{minipage}
\begin{minipage}{0.45\textwidth}
\begin{center}
	\begin{tikzpicture}[scale=4]
  \drawchain{0}{0.5}{0.5}{0.5}{0}{1}{0.3}{-135}{1}
   \drawchain{0}{0}{0.5}{0}{0}{0.5}{0.3}{-135}{1}
     \drawchain{0.5}{0}{1}{0}{0.5}{0.5}{0.3}{-135}{1}
   \drawchaininv{0.5}{0}{0.5}{0.5}{0}{0.5}{0.3}{-135}{-1}
	\draw   (0,0) node[below left]{$p_0$} (1,0) node[below]{$p_1$}
	(0,1) node[left]{$p_2$} (0.325, 0.335) node {$-1$}
	(0.5,0.5) node[above right]{$q_0$} (0.5,0) node[below]{$q_2$}
	(0, 0.5) node[left]{$q_1$};
    \end{tikzpicture}\subcaption{$\begin{array}{ll}
        \dya^\dagger S = & -[q_2 q_1 q_0] + [q_1 q_0 p_2]
       \\ &  + [q_2 p_1 q_0]+ [p_0 q_2 q_1] \end{array}$}\label{fig:dya-alt}
    \end{center}
\end{minipage}
    \caption{Dyadic decompositions of $[p_0p_1 p_2]\in \chain^2(\O)$.}
    \end{figure}

    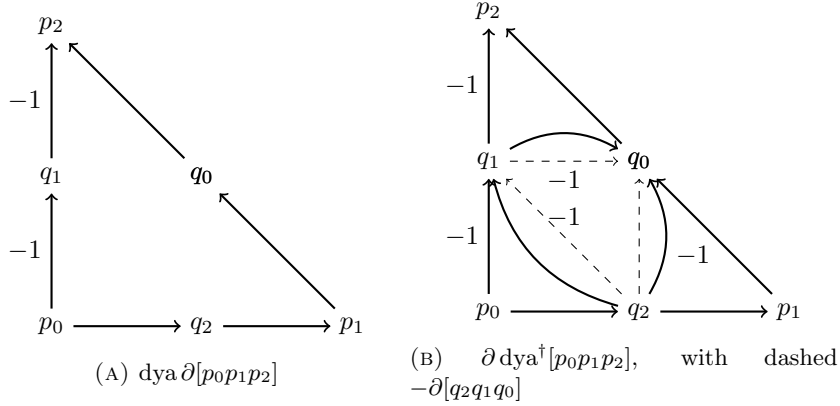
\begin{figure}[ht]
\begin{minipage}{0.45\textwidth}
\begin{center}
	\begin{tikzpicture}[scale=4]
    \node (0) at (0, 0) {$p_0$};
	\node (1) at (1, 0)  {$p_1$};
	\node (2) at (0, 1) {$p_2$};
	\node (3) at (0.5, 0.5) {$q_0$};
		\node (4) at (0.5, 0.5) {$q_0$};
	\node (5) at (0, 0.5) {$q_1$};
	\node (6) at (0.5, 0) {$q_2$};

	\draw[thick, ->]   (0) edge (6);
	\draw[thick, ->]  (6) edge (1);
	\draw[thick, ->]  (1) edge (4);	
		\draw[thick, ->]  (4) edge (2);	
		\draw[thick, ->]  (0) edge node[left]{$-1$} (5);	
		\draw[thick, ->]  (5) edge node[left]{$-1$} (2);	
    \end{tikzpicture} \subcaption{$ \dya \partial [p_0 p_1 p_2]$}
    \end{center}
\end{minipage}
\begin{minipage}{0.45\textwidth}
\begin{center}
	\begin{tikzpicture}[scale=4]
   \node (0) at (0, 0) {$p_0$};
	\node (1) at (1, 0)  {$p_1$};
	\node (2) at (0, 1) {$p_2$};
	\node (3) at (0.5, 0.5) {$q_0$};
		\node (4) at (0.5, 0.5) {$q_0$};
	\node (5) at (0, 0.5) {$q_1$};
	\node (6) at (0.5, 0) {$q_2$};

	\draw[thick, ->]   (0) edge (6);
	\draw[thick, ->]  (6) edge (1);
	\draw[thick, ->]  (1) edge (4);	
		\draw[thick, ->]  (4) edge (2);	
		\draw[thick, ->]  (0) edge node[left]{$-1$} (5);	
		\draw[thick, ->]  (5) edge node[left]{$-1$} (2);
			\draw[thick, ->]   (6) edge[bend left] (5);
			\draw[thick, ->]   (6) edge[bend right] node[below right]{$-1$} (4);
			\draw[dashed, ->]   (6) edge  (4);
			\draw[dashed, ->]   (6) edge node[above]{$-1$}  (5);
			\draw[dashed, ->]   (5) edge node[below]{$-1$}  (4);
			\draw[thick, ->]   (5) edge[bend left]  (4);

    \end{tikzpicture}\subcaption{$
        \partial \dya^\dagger [p_0 p_1 p_2]$, with dashed $-\partial [q_2 q_1 q_0]$}
    \end{center}
\end{minipage}
    \caption{The identity $\dya \partial = \partial \dya^\dagger$.}\label{fig:bd-dya-alt}
    \end{figure}

\begin{lemma}\label{lem:dyadic-decomposition}
For $k\in \cur{1,2}$, $i \in \cur{1, \ldots, 2k}$ the maps $\dya^i$ are geometric and  $\dya \equiv \Id$.
\end{lemma}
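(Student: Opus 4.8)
The plan is to handle $k=1$ and $k=2$ separately, reducing the $k=2$ case to the edge‑cut and edge‑flip lemmas already established. Throughout, geometricity of each $\dya^i$ will be immediate: in both cases the vertices of $\dya^i S$ are among $p_0,\dots,p_k$ and the midpoints $q_j$, all of which are continuous, affine‑equivariant functions of the vertices of $S$; hence each $\dya^i$ is continuous and commutes with affine maps, so $\dya=\sum_i\dya^i$ is a finite linear combination of geometric maps (as will be every quantity manipulated below). For $k=1$ no further work is needed, since by definition $\dya=\cut_{1/2}$ on $1$-simplices, whence $\dya\equiv\Id$ directly by Lemma~\ref{lem:cut}.

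For $k=2$, fix $S=[p_0p_1p_2]$ with $q_0=(p_1+p_2)/2$, $q_1=(p_0+p_2)/2$, $q_2=(p_0+p_1)/2$, and introduce the iterated edge‑cut $C:=\cut_{1/2}\circ\cut_{1/2}\colon\simp^2(\O)\to\chain^2(\O)$, the outer map being extended by linearity to chains. I would first compute $\cut_{1/2}S=[q_0p_2p_0]+[q_0p_0p_1]$, and then, applying $\cut_{1/2}$ once more to each summand (cutting $[p_2p_0]$ at $q_1$ and $[p_0p_1]$ at $q_2$),
\[ CS=[q_1p_0q_0]+[q_1q_0p_2]+[q_2p_1q_0]+[q_2q_0p_0].\]
Since $\cut_{1/2}-\Id$ is decomposable (Lemma~\ref{lem:cut}) and $\cut_{1/2}$ is geometric, the composition $C-\cut_{1/2}=(\cut_{1/2}-\Id)\circ\cut_{1/2}$ is decomposable, so $C\equiv\cut_{1/2}\equiv\Id$.

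The remaining step is to prove $\dya\equiv C$. Writing $\dya S=[q_0q_1q_2]+[q_1q_0p_2]+[q_2p_1q_0]+[p_0q_2q_1]$, the two middle triangles cancel against those of $CS$, leaving
\[ \dya S-CS=[q_0q_1q_2]+[p_0q_2q_1]-[q_1p_0q_0]-[q_2q_0p_0].\]
The key algebraic identity is $q_1+q_2-q_0=p_0$, i.e.\ $p_0$ is precisely the fourth parallelogram vertex attached to $[q_0q_1q_2]$ in the sense of~\eqref{eq:parallelogram-point}; reading off the definitions of $\pi$ and $\tilde\pi$ with input simplex $[q_0q_1q_2]$ shows that the right‑hand side is exactly $\pi[q_0q_1q_2]-\tilde\pi[q_0q_1q_2]=\flip[q_0q_1q_2]$. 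Since $\flip$ is decomposable (Lemma~\ref{lem:flip}) and $S\mapsto[q_0q_1q_2]$ is geometric, the map $S\mapsto\flip[q_0q_1q_2]$ is decomposable, so $\dya-C$ is decomposable; combining with $C\equiv\Id$ and the transitivity of $\equiv$ (a sum of decomposable maps is decomposable) gives $\dya\equiv\Id$.

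I expect the only real obstacle to be the bookkeeping in the two displayed identities: one must choose orientations and base points of the four pieces of $\dya$ and of the two intermediate cuts so that $\dya S-CS$ lands on the nose on $\flip[q_0q_1q_2]$. With the conventions above this works out exactly; should some ordering turn out to differ by a transposition, it can be absorbed using $\sigma\equiv(-1)^\sigma\Id$ (Lemma~\ref{lem:perm}) or by switching to the variant $\dya^\dagger$ of Remark~\ref{rem:variants}. Note finally that all simplices appearing above have vertices in $\conv(S)\subseteq\O$, hence genuinely lie in $\simp^2(\O)$, so the maps $\cut_{1/2}$, $\pi$, $\tilde\pi$ are legitimately defined on them.
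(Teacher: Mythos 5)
Your proposal is correct and follows essentially the same route as the paper's proof: establish $\Id \equiv \cut_{1/2} \equiv \cut_{1/2}\circ\cut_{1/2}$ via Lemma~\ref{lem:cut} and the composition property of decomposable maps, and then observe that $\dya$ differs from the double cut by an edge-flip (your $\flip[q_0q_1q_2]$ is exactly the paper's ``$\flip$ applied to $[p\,q_2q_1]$'', the same chain), handled by Lemma~\ref{lem:flip}. Your explicit bookkeeping, including the identity $q_1+q_2-q_0=p_0$, checks out on the nose, so the hedge about absorbing transpositions is not even needed.
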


\begin{proof}
The case $k=1$ is obvious since $\dya = \cut_{1/2}$. When $k=2$, the maps $\dya^i$ are clearly geometric. By Lemma~\ref{lem:cut} with $t =1/2$, we have  $\Id \equiv \cut_{1/2}$ (Figure~\ref{fig:dya-step-1}) and, iterating, $\Id \equiv \cut_{1/2}\circ \cut_{1/2}$ (Figure~\ref{fig:dya-step-2}). Finally, since $\cut_{1/2}\circ \cut_{1/2}$ differs from $\dya$ by a $\flip$ (applied to $[pq_2q_1]$), using Lemma~\ref{lem:flip}, we conclude that $\cut_{1/2} \circ \cut_{1/2} \equiv \dya$, hence $\Id \equiv \dya$.\end{proof}

\begin{remark}\label{rem:dya-cancellations}
By Remarks~\ref{rem:flip-proof-cancellation} and ~\ref{rem:cut-proof-cancellation}, in view of the above proof, it follows that  if $\omega \in \germ^2(\O)$ is closed (not necessarily nonatomic), alternating and $\ang{ [prq], \omega} = 0$ whenever $[prq] \in \simp^2(\O)$ with $r = \frac{p+q}{2}$, then $\omega = \dya' \omega$, i.e.,
\[ \ang{S, \omega} =  \ang{ \dya S,  \omega} \quad \text{for every $S \in \simp^2(\O)$.}\]
\end{remark}

\begin{figure}[ht]
	\begin{minipage}{.45\textwidth}\begin{center}

	\begin{tikzpicture}[scale=4]
   \drawchain{0.5}{0.5}{0}{0}{0}{1}{0.4}{-135}{1}
   \drawchain{0.5}{0.5}{0}{0}{1}{0}{0.4}{-135}{1}
	\draw   (0,0) node[below left]{$p_0$} (1,0) node[below]{$p_1$} (0,1) node[left]{$p_2$}
	(0.5,0.5) node[above right]{$q$};
     \end{tikzpicture} \subcaption{$[q_0 p_2 p_0 ] + [q_0 p_0 p_1 ]$}\label{fig:dya-step-1}
     \end{center}
\end{minipage}
\begin{minipage}{0.45\textwidth}\begin{center}
	\begin{tikzpicture}[scale=4]
  \drawchain{0}{0.5}{0.5}{0.5}{0}{1}{0.3}{-135}{1}
   \drawchain{0}{0.5}{0.5}{0.5}{0}{0}{0.3}{-135}{1}
     \drawchain{0.5}{0}{0.5}{0.5}{1}{0}{0.3}{-135}{1}
   \drawchain{0.5}{0}{0.5}{0.5}{0}{0}{0.3}{-135}{1}
	\draw   (0,0) node[below left]{$p_0$} (1,0) node[below]{$p_1$}
	(0,1) node[left]{$p_2$}
	(0.5,0.5) node[above right]{$q_0$} (0.5,0) node[below]{$q_2$}
	(0, 0.5) node[left]{$q_1$};
    \end{tikzpicture}
 \subcaption{$[q_0 p_2 q_1]+[q_0 q_1 p_0]+ [q_0 p_0 q_2]+[q_0 q_2 p_1]$}\label{fig:dya-step-2}
  \end{center}   \end{minipage}
    \caption{$\cut_{1/2}[p_0 p_1 p_2]$ and $\cut_{1/2} \circ \cut_{1/2}[p_0 p_1 p_2]$.}
\end{figure}
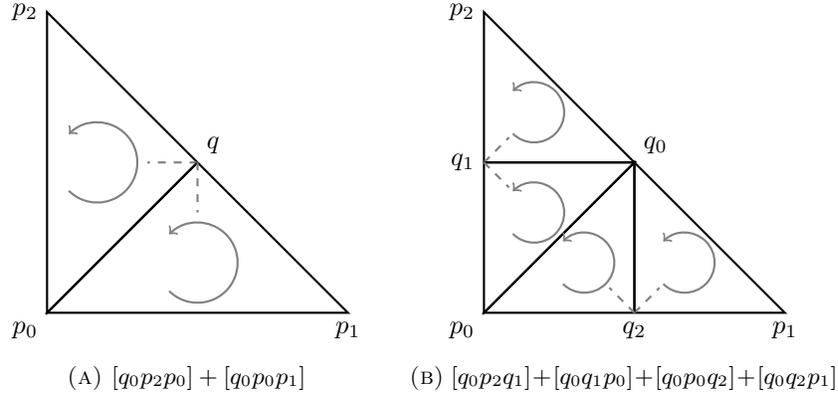

The dyadic decomposition is well-behaved with respect to the other maps.

\begin{lemma}\label{lem:dya-commutator}
The following properties hold.
\begin{enumerate}
\item For every permutation $\sigma$ of $\cur{0,1,2}$, $\dya \sigma = \sigma \dya$.
\item There are geometric maps $(\tau^i)_{i=1}^4$, $\tau^i: \simp^2(\O) \to \simp^2(\O)$ such that $\tau^i S$ is isometric to $(2^{-1})_\natural S$ for every $S \in \simp^2(\O)$ and,  for $\tau = \sum_{i=1}^4$ one has
\[ \dya \pi  = \pi \tau, \quad \dya \tilde \pi = \tilde \pi \tau \quad \text{hence} \quad \dya \flip = \flip \tau.\]
\item For $k=1$, $n \ge 1$, $S \in \simp^1(\O)$,
\[ \dya \cut^n S = \cut^n \dya S.\]
\item For $k=2$, $n \ge 1$, $S \in \simp^2(\O)$, there exists $C \in  \chain^2(\O)$ such that
\[ \dya \cut^n S = \cut^n \dya S + \flip^\dagger C. \]
\end{enumerate}
\end{lemma}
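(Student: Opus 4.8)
The plan is to verify each of the four identities by direct computation, exploiting the fact that $\dya$, $\cut^n$, $\pi$, $\tilde\pi$, $\flip$, $\flip^\dagger$ and permutations are all built out of affine-equivariant ``geometric'' maps, so it suffices to check the identities on a single representative simplex $S=[p_0p_1p_2]$ (resp.\ $S=[p_0p_1]$) and then invoke naturality with respect to affine maps. Throughout I would write $q_i=(p_j+p_k)/2$ for $\{i,j,k\}=\{0,1,2\}$, as in the definition of $\dya$, and keep careful track of base points and orientations, since that is exactly where the subtleties of statements (2) and (4) hide.

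First I would dispatch item (1): a permutation $\sigma$ of $\{0,1,2\}$ permutes the vertices of $S$, hence permutes the midpoints $q_i$ in the corresponding way, and one checks by inspection of the four pieces $\dya^1S,\dots,\dya^4S$ that applying $\sigma$ to $S$ and then decomposing yields (up to relabelling the four subsimplices) the same family as decomposing and then applying the induced vertex map; since it is enough to treat the two generators (a transposition of two adjacent vertices and a $3$-cycle), this is a short finite check. Item (3) is the $k=1$ analogue and is easier still: both $\dya$ and $\cut^n$ on a $1$-simplex are just compositions of midpoint/$n$-section subdivisions of an interval, and subdividing $[p_0p_1]$ first into halves and then each half into $n$ equal parts gives the same ordered chain as subdividing into $n$ parts and then halving each part (a fact about the arithmetic of the points $(a+b\,t)$ with $t$ dyadic-times-$1/n$), so the identity $\dya\cut^n=\cut^n\dya$ holds on the nose. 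For item (2) I would produce the four maps $\tau^i$ explicitly: $\tau$ should subdivide $S$ into four half-scale copies \emph{arranged so as to be compatible with the parallelogram doubling that defines $\pi$}; concretely, if $p_3$ is the fourth parallelogram vertex for $S$ (so $p_0+p_3=p_1+p_2$), then the midpoints of $[p_0p_1p_2]$ and of $[p_3p_2p_1]$ together tile the full parallelogram into four half-scale sub-parallelograms, and choosing $\tau^iS$ to be the ``lower'' triangle of each sub-parallelogram makes both $\dya\pi S=\pi\tau S$ and $\dya\tilde\pi S=\tilde\pi\tau S$ literal identities; subtracting gives $\dya\flip=\flip\tau$. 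Each $\tau^iS$ is isometric — indeed homothetic with ratio $1/2$ — to $(2^{-1})_\natural S$ by construction.

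The main obstacle is item (4), where unlike in the $1$-dimensional case the two orders of operation genuinely disagree, and the discrepancy must be absorbed into a $\flip^\dagger$ term (which is decomposable, hence invisible to closed alternating nonatomic germs by Remark~\ref{rem:change-flip}). The plan here is: expand $\cut^n S$ for $S=[pp_0p_1]$ as the telescoping sum $\sum_{m} [pp_{(m-1)/n}p_{m/n}]$ along the edge $[p_0p_1]$, apply $\dya$ to each slab, and separately compute $\cut^n\dya S$ by applying $\cut^n$ to each of the four pieces $\dya^1S,\dots,\dya^4S$; the two resulting $2$-chains consist of the same collection of small half-scale triangles but attached to \emph{different base points and with edge diagonals drawn the other way} along the ``staircase'' produced by the interaction of the midpoint cut with the $n$-section cut. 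Matching them triangle-by-triangle, each mismatch is precisely the local move that replaces one diagonal of a small parallelogram by the other with the base-point convention of $\pi^\dagger$ versus $\tilde\pi^\dagger$, i.e.\ a copy of $\flip^\dagger$ (possibly after a permutation, which by Remark~\ref{rem:change-flip} still gives $\flip^\dagger\equiv 0$); summing these over the staircase produces the required chain $C\in\chain^2(\O)$ with $\dya\cut^nS=\cut^n\dya S+\flip^\dagger C$. I would carry out the bookkeeping by induction on $n$, using the recursive definitions $\cut^n[pp_0p_1]=\cut^{n-1}[pp_0p_{1/n}]+[pp_{1/n}p_1]$ and the already-established case $n=1$ (trivial) and $n=2$ (where $\cut^2\equiv\cut_{1/2}$ differs from $\dya$ by a single $\flip$, as recorded in the proof of Lemma~\ref{lem:dyadic-decomposition}), together with items (1) and (3) to push $\dya$ past the various vertex relabellings that appear in the inductive step.
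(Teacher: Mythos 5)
Your proposal is correct and follows essentially the paper's own route: direct verification on transpositions for (1), the four explicit sub-parallelogram triangles $\tau^i$ for (2), exact commutation of the two subdivisions for (3), and for (4) an induction on $n$ anchored at the explicit $n=2$ case, with the discrepancy between $\dya\cut^n$ and $\cut^n\dya$ absorbed into $\flip^\dagger C$ for a suitable chain $C$. One caution on (4): for $n\ge 3$ the two triangulations of the strip above the midline are \emph{not} related by a single layer of disjoint diagonal swaps (the fans from the fixed midpoints versus the zigzag of slab pieces), so the conversion must be done by a telescoping sequence of successive flips with transient intermediate triangles --- exactly the paper's interpolating chains $R^k$ --- which your induction-plus-staircase bookkeeping can accommodate, but your phrase ``same collection of triangles with diagonals drawn the other way'' slightly understates.
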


\begin{proof}
To prove \emph{i)}, it is sufficient to consider transpositions. If $\sigma[p_0p_1 p_2] = [p_1 p_0 p_2]$, then (Figure~\ref{fig:dyadic-perm-1})
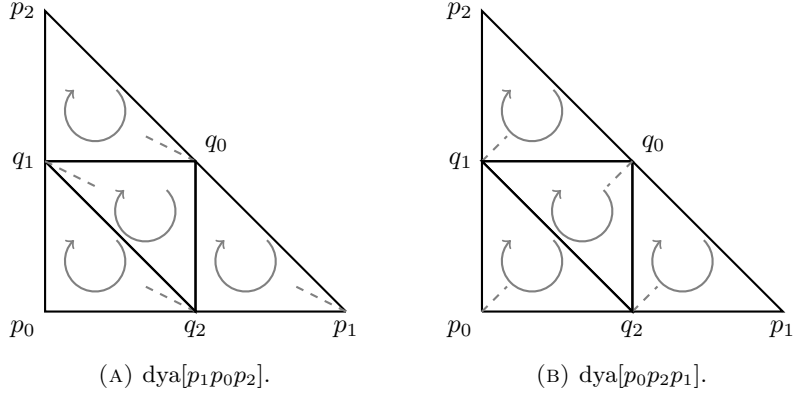
\begin{figure}
\begin{minipage}{0.45\textwidth}
\begin{center}
	\begin{tikzpicture}[scale=4]
  \drawchaininv{0.5}{0.5}{0}{0.5}{0}{1}{0.3}{-45}{-1}
   \drawchaininv{0.5}{0}{0}{0}{0}{0.5}{0.3}{-45}{-1}
     \drawchaininv{1}{0}{0.5}{0}{0.5}{0.5}{0.3}{-45}{-1}
   \drawchaininv{0}{0.5}{0.5}{0}{0.5}{0.5}{0.3}{-45}{-1}
	\draw   (0,0) node[below left]{$p_0$} (1,0) node[below]{$p_1$}
	(0,1) node[left]{$p_2$}
	(0.5,0.5) node[above right]{$q_0$} (0.5,0) node[below]{$q_2$}
	(0, 0.5) node[left]{$q_1$};
    \end{tikzpicture} \subcaption{ $ \dya [p_1 p_0 p_2]$.}\label{fig:dyadic-perm-1}
    \end{center}
\end{minipage}
\begin{minipage}{0.45\textwidth}
\begin{center}
	\begin{tikzpicture}[scale=4]
  \drawchaininv{0}{0.5}{0.5}{0.5}{0}{1}{0.3}{-45}{-1}
   \drawchaininv{0}{0}{0.5}{0}{0}{0.5}{0.3}{-45}{-1}
     \drawchaininv{0.5}{0}{1}{0}{0.5}{0.5}{0.3}{-45}{-1}
   \drawchaininv{0.5}{0.5}{0.5}{0}{0}{0.5}{0.3}{-45}{-1}
	\draw   (0,0) node[below left]{$p_0$} (1,0) node[below]{$p_1$}
	(0,1) node[left]{$p_2$}
	(0.5,0.5) node[above right]{$q_0$} (0.5,0) node[below]{$q_2$}
	(0, 0.5) node[left]{$q_1$};
    \end{tikzpicture} \subcaption{ $ \dya [p_0 p_2 p_1]$.}
    \label{fig:dyadic-perm-2}
    \end{center}
\end{minipage}
\caption{Dyadic decompositions of permutations of $[p_0 p_1 p_2]$.}
\end{figure}
\[ \begin{split} \dya \sigma[p_0 p_1 p_2]   & =   \dya [p_1 p_0 p_2]  = [p_{1} q_{2} q_{0}]  + [q_{2} p_{0} q_{1}] + [q_{0} q_{1} p_{2}] + [q_{1} q_{0} q_{2}]\\
& = \sigma \bra{[p_0 q_2 q_1]  + [q_2 p_1 q_0] + [q_1 q_0 p_2] + [q_0 q_1 q_2] } = \sigma \dya C .\end{split}\]
Similarly, if $\sigma[p_0p_1 p_2] = [p_2 p_1 p_0]$, then
\[ \begin{split} \dya \sigma[p_0 p_1 p_2]   & =  \dya [p_2 p_1 p_0]  = [p_{2} q_{0} q_{1}]  + [q_{1} p_{2} p_0] + [q_{1} q_{2} p_{0}] + [q_{2} q_{1} q_{0}]\\
& = \sigma\bra{[p_0 q_2 q_1]  + [q_2 p_1 q_0] + [q_1 q_0 p_2] + [q_0 q_1 q_2] } = \sigma \dya [p_0 p_1 p_2].\end{split}\]
Finally, if $\sigma[p_0p_1 p_2] = [p_0p_2 p_1]$, then (Figure~\ref{fig:dyadic-perm-2})
\[ \begin{split} \dya \sigma [p_0 p_1 p_2]    & =   \dya [p_0 p_2 p_1]  = [p_0 q_1 q_2] + [q_2 q_0 p_1] + [q_1 p_2 q_0] + [q_0 q_2 q_1]\\
& = \sigma\bra{[p_0 q_2 q_1]  + [q_2 p_1 q_0] + [q_1 q_0 p_2] + [q_0 q_1 q_2] } = \sigma [p_0 p_1 p_2] .\end{split} \]

To show \emph{ii)}, given $S =[p_0 p_1 p_2] \in \germ^2(\O)$, let $p_3 := p_1 + p_2 - p_0$ and $q = (p_1+p_2)/2$, $q_0=(p_2 +p_3)/2$, $q_1=(p_2+p_0)/2$, $q_2=(p_1+p_0)/2$, $q_3 = (p_1+ p_3)/2$. Then (Figure~\ref{fig:dyadic-para-p}),
\[ \begin{split} \dya \pi S & = \dya [p_0 p_1 p_2]+ \dya [p_3 p_2 p_1]\\
					& = [p_0 q_2 q_1]  + [q_2 p_1 q] + [q_1 q p_2] + [q q_1 q_2]\\
					&  \qquad \qquad
					 + [p_3 q_0 q_3]  + [q_0 p_2 q] + [q_3 q p_1] + [q q_3 q_0] \\
					& = \bra{ [p_0 q_2 q_1]  +[q q_1 q_2] } + \bra{[q_2 p_1 q]+ [q_3 q p_1]}\\
					& \qquad \qquad
					 +\bra{[q_1 q p_2] + [q_0 p_2 q]} + \bra{ [q q_3 q_0] + [p_3 q_0 q_3]} \\
					& =  \pi [p_0 q_2 q_1] +  \pi [q_2 p_1 q] +  \pi [q_1 q p_2] +\pi [q q_3 q_0]. 
					\end{split}\]
Similarly (see Figure~\ref{fig:dyadic-para-q}),
\[ \begin{split} \dya \tilde \pi S  & 
= \dya [p_1 p_3 p_0]+ \dya [p_2 p_0 p_3]\\
					& = [p_1 q_3 q_2]  + [q_2  q p_0] + [q_3 p_3 q] + [q q_2 q_3] \\
					& \qquad \qquad
					 + [p_2 q_1 q_2]  + [q_1 p_0 q] + [q_0 q p_3] + [q q_0 q_1] \\
					& = \bra{  [q_2  q p_0] + [q_1 p_0 q]} + \bra{[p_1 q_3 q_2]  + [q q_2 q_3]}\\
					& \qquad \qquad+ \bra{[q q_0 q_1]+ [p_2 q_1 q_2]} + \bra{[q_3 p_3 q] +[q_0 q p_3] } \\
					& =  \tilde \pi [p_0 q_2 q_1] + \tilde \pi [q_2 p_1 q] +\tilde \pi [q_1 q p_2] + \tilde \pi [q q_3 q_0].
					\end{split}\]

\begin{figure}[ht]
\begin{minipage}{0.48\textwidth}
\begin{center}
	\begin{tikzpicture}[scale=4]
  \drawchain{0}{0.5}{0.5}{0.5}{0}{1}{0.3}{-135}{1}
   \drawchain{0}{0}{0.5}{0}{0}{0.5}{0.3}{-135}{1}
     \drawchain{0.5}{0}{1}{0}{0.5}{0.5}{0.3}{-135}{1}
   \drawchain{0.5}{0.5}{0.5}{0}{0}{0.5}{0.3}{-135}{1}
      \drawchain{0.5}{1}{0.5}{0.5}{0}{1}{0.3}{-135}{1}
   \drawchain{1}{0.5}{1}{0}{0.5}{0.5}{0.3}{-135}{1}
   \drawchain{1}{1}{1}{0.5}{0.5}{1}{0.3}{-135}{1}
     \drawchain{0.5}{0.5}{1}{0.5}{0.5}{1}{0.3}{-135}{1}
	\draw   (0,0) node[below left]{$p_0$} (1,0) node[below right]{$p_1$}
	(0,1) node[above left]{$p_2$}
	(0.55,0.4) node{$q$} (0.5,0) node[below]{$q_2$}
	(0, 0.5) node[left]{$q_1$} (1,1) node[above right]{$p_3$} (1,0.5) node[right]{$q_3$} (0.5,1) node[above]{$q_0$};
    \end{tikzpicture}\subcaption{ $ \dya \pi [p_0 p_1 p_2]$}\label{fig:dyadic-para-p}
    \end{center}
\end{minipage}
\begin{minipage}{0.48\textwidth}
\begin{center}
	\begin{tikzpicture}[scale=4]

   \drawchain{0.5}{0}{0.5}{0.5}{0}{0}{0.3}{-135}{1}
     \drawchain{0}{0.5}{0}{0}{0.5}{0.5}{0.3}{-135}{1}

      \drawchain{1}{0}{1}{0.5}{0.5}{0}{0.3}{-135}{1}
     \drawchain{0.5}{0.5}{0.5}{0}{1}{0.5}{0.3}{-135}{1}

       \drawchain{0.5}{0.5}{0.5}{1}{0}{0.5}{0.3}{-135}{1}
     \drawchain{0}{1}{0}{0.5}{0.5}{1}{0.3}{-135}{1}

         \drawchain{1}{0.5}{1}{1}{0.5}{0.5}{0.3}{-135}{1}
     \drawchain{0.5}{1}{0.5}{0.5}{1}{1}{0.3}{-135}{1}

	\draw   (0,0) node[below left]{$p_0$} (1,0) node[below right]{$p_1$}
	(0,1) node[above left]{$p_2$}
	(0.55,0.4) node{$q$} (0.5,0) node[below]{$q_2$}
	(0, 0.5) node[left]{$q_1$} (1,1) node[above right]{$p_3$} (1,0.5) node[right]{$q_3$} (0.5,1) node[above]{$q_0$};
    \end{tikzpicture}\subcaption{$\dya \pi^{\prime} [p_0 p_1 p_2]$}\label{fig:dyadic-para-q}
    \end{center}
\end{minipage}
\caption{Dyadic decompositions of $\pi [p_0 p_1 p_2]$ and $\tilde \pi [p_0p_1 p_2]$.}
\end{figure}
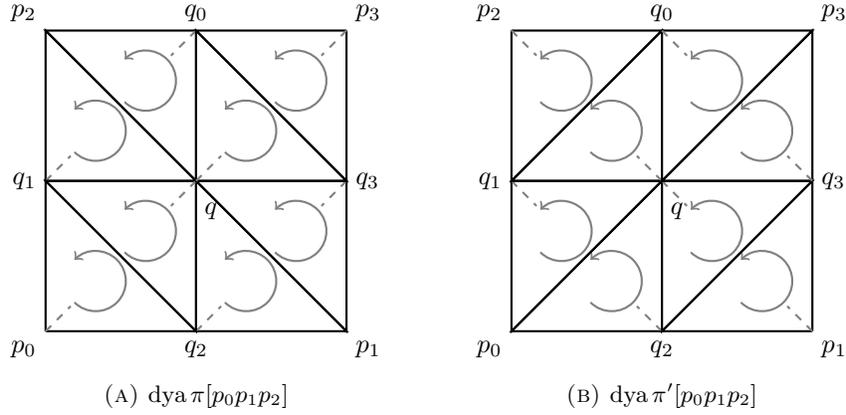

Hence, the thesis follows defining the geometric maps
\[ \tau^1S := [p_0 q_2 q_1], \quad \tau^2S := [q_2 p_1 q], \quad \tau^3S:= [q_1 q p_2], \quad \text{and} \quad \tau^4 S :=[q q_3 q_0].\]

The validity of \emph{iii)} is immediate by induction.

To show $\emph{iv)}$, we also argue by induction, the case $n=1$ being obvious and $n=2$ being proved as follows. For $S=[p_0 p_1 p_2]$, let $q_i = (p_{j} + p_{k})/2$, for $\cur{i,j, k} = \cur{0,1,2}$, and $r_i = (p_i +q_0)/2$ for $i \in \cur{0,1,2}$. Then, one has (Figure~\ref{fig:cutting-edge-2})
\[ \begin{split}  \dya \cut^2 S & = \dya [p_0p_1  q_0  ] + \dya [p_0 q_0 p_2 ] \\
& = \bra{[p_0 q_2 r_0] +[q_2 p_1 r_1 ]  +[r_1 r_0 q_2] +[r_0 r_1 q_0]} \\
& \quad +  \bra{ [p_0 r_0 q_1] +[q_1 r_2 p_2]   + [r_0 q_0 r_2] +[r_2 q_1 r_0]}\\
& = \bra{ [p_0 q_2 r_0] +[p_0 r_0 q_1] + [q_2 p_1 r_1 ] +[q_1 r_2 p_2] } \\
& \quad + \bra{ [q_0 r_0 q_2] + [q_2 r_1 q_0] } +  \bra{ [q_0 q_1 r_0] + [q_1 q_0 r_2]} \\
& \quad + \flip^\dagger ( [r_0 r_1 q_0] + [r_2 q_1 r_0] ) \\
& = \cut_{1/2} \dya S +   \flip^\dagger ( [r_0 r_1 q_0] + [r_2 q_1 r_0] )
\end{split}\]
hence the thesis follows letting $C :=[q_0 r_2 r_0] + [q_0 r_0 r_1]$.

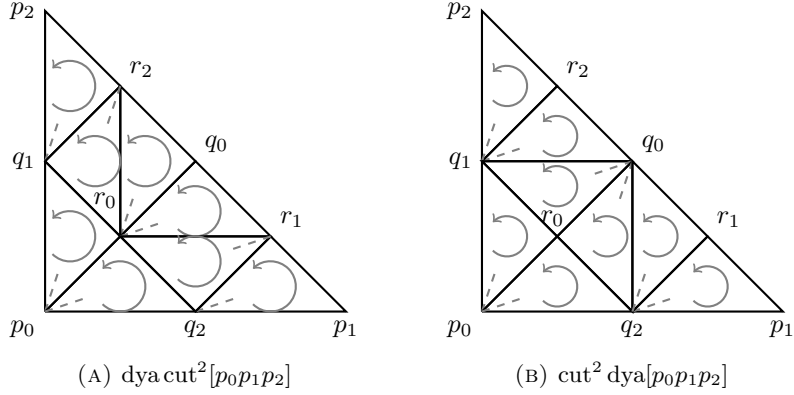
\begin{figure}
\begin{minipage}{.45\textwidth}\begin{center}
\begin{tikzpicture}[scale=4]
      \drawchain{0}{0}{0.25}{0.25}{0}{0.5}{0.25}{-135}{1}
   \drawchain{0.25}{0.25}{0.5}{0.5}{0.25}{0.75}{0.25}{-135}{1}
      \drawchain{0}{0.5}{0.25}{0.75}{0}{1}{0.25}{-135}{1}
      \drawchain{0.25}{0.75}{0}{0.5}{0.25}{0.25}{0.25}{-135}{1}

   \drawchain{0}{0}{0.25}{0.25}{0.5}{0}{0.25}{-135}{1}
      \drawchain{0.5}{0}{0.75}{0.25}{1}{0}{0.25}{-135}{1}
   \drawchain{0.25}{0.25}{0.5}{0.5}{0.75}{0.25}{0.25}{-135}{1}

   \drawchain{0.75}{0.25}{0.5}{0}{0.25}{0.25}{0.25}{-135}{1}

	\draw   (0,0) node[below left]{$p_0$} (1,0) node[below]{$p_1$} (0,1) node[left]{$p_2$}
	(0.5,0.5) node[above right]{$q_0$} (0.5,0) node[below]{$q_2$} (0, 0.5) node[left]{$q_1$}
	(0.25,0.75) node[above right]{$r_2$} (0.75, 0.25) node[above right]{$r_1$} (0.2, 0.37) node{$r_0$} ;
    \end{tikzpicture}\subcaption{ $ \dya \cut^2 [p_0 p_1 p_2]$}
\end{center}
\end{minipage}
\begin{minipage}{.45\textwidth}
\begin{center}
\begin{tikzpicture}[scale=4]
      \drawchain{0}{0}{0.25}{0.25}{0}{0.5}{0.2}{-135}{1}
   \drawchain{0}{0.5}{0.5}{0.5}{0.25}{0.75}{0.2}{-135}{1}
      \drawchain{0}{0.5}{0.25}{0.75}{0}{1}{0.2}{-135}{1}
      \drawchain{0.5}{0.5}{0.25}{0.25}{0}{0.5}{0.2}{-135}{1}

   \drawchain{0}{0}{0.25}{0.25}{0.5}{0}{0.2}{-135}{1}
      \drawchain{0.5}{0}{0.75}{0.25}{1}{0}{0.2}{-135}{1}
   \drawchain{0.5}{0.5}{0.25}{0.25}{0.5}{0}{0.2}{-135}{1}

   \drawchain{0.5}{0}{0.75}{0.25}{0.5}{0.5}{0.2}{-135}{1}

	\draw   (0,0) node[below left]{$p_0$} (1,0) node[below]{$p_1$} (0,1) node[left]{$p_2$}
	(0.5,0.5) node[above right]{$q_0$} (0.5,0) node[below]{$q_2$} (0, 0.5) node[left]{$q_1$}
	(0.25,0.75) node[above right]{$r_2$} (0.75, 0.25) node[above right]{$r_1$} (0.235, 0.31) node{$r_0$} ;
    \end{tikzpicture}\subcaption{ $\cut^2 \dya [p_0 p_1 p_2]$}
    \end{center}
\end{minipage}
\caption{Commutation between $\dya$ and $\cut^2$ using edge flipping.}\label{fig:cutting-edge-2}

\end{figure}

The inductive step then goes from $n-2$ to $n$. Given $S = [p p_0 p_1]$ (notice the slight change from notation of the previous case), define $q_0 = (p+p_0)/2$, $q_1 = (p+p_1)/2$, $p_t = (1-t) p_0 + t p_1$ and $q_t = (1-t) q_0 + t q_1$, for $t \in [0,1]$ (see Figures~\ref{fig:cut-n-2-pre} and~\ref{fig:cut-n-2}, with $n=5$).

\begin{figure}[ht]
\begin{minipage}{.45\textwidth}\begin{center}
\begin{tikzpicture}[scale=4]
      \drawchain{0}{0}{0.5}{0}{0.4}{0.1}{0.1}{-135}{1}
                     \drawchain{0.5}{0}{1}{0}{0.9}{0.1}{0.1}{-135}{1}
      \drawchain{0.4}{0.1}{0.9}{0.1}{0.8}{0.2}{0.1}{-135}{1}
      \drawchain{0.9}{0.1}{0.5}{0}{0.4}{0.1}{0.1}{-135}{1}

      \drawchain{0}{0}{0.4}{0.1}{0.3}{0.2}{0.1}{-135}{1}
       \drawchain{0}{0}{0.3}{0.2}{0.2}{0.3}{0.1}{-135}{1}
       \drawchain{0}{0}{0.2}{0.3}{0.1}{0.4}{0.1}{-135}{1}
       \drawchain{0}{0}{0.1}{0.4}{0}{0.5}{0.1}{-135}{1}
              \drawchain{0}{0.5}{0.1}{0.9}{0}{1}{0.1}{-135}{1}
       \drawchain{0.1}{0.4}{0.2}{0.8}{0.1}{0.9}{0.1}{-135}{1}
       \drawchain{0.1}{0.9}{0.1}{0.4}{0}{0.5}{0.1}{-135}{1}

            \drawchain{0.5}{0}{1}{0}{0.9}{0.1}{0.1}{-135}{1}
       \drawchain{0.7}{0.3}{0.4}{0.1}{0.3}{0.2}{0.1}{-135}{1}
              \drawchain{0.4}{0.1}{0.7}{0.3}{0.8}{0.2}{0.1}{-135}{1}
         \drawchain{0.3}{0.2}{0.7}{0.3}{0.6}{0.4}{0.1}{-135}{1}

               \drawchain{0.5}{0.5}{0.3}{0.2}{0.2}{0.3}{0.1}{-135}{1}
              \drawchain{0.3}{0.2}{0.6}{0.4}{0.5}{0.5}{0.1}{-135}{1}
         \drawchain{0.2}{0.3}{0.4}{0.6}{0.5}{0.5}{0.1}{-135}{1}

               \drawchain{0.3}{0.7}{0.1}{0.4}{0.2}{0.3}{0.1}{-135}{1}
              \drawchain{0.1}{0.4}{0.3}{0.7}{0.2}{0.8}{0.1}{-135}{1}
         \drawchain{0.2}{0.3}{0.3}{0.7}{0.4}{0.6}{0.1}{-135}{1}

%
%
%
	\draw   (0,0) node[below left]{$p$} (1,0) node[below]{$p_0$} (0,1) node[left]{$p_1$}
	(0.5,0) node[below]{$q_0$}	(0,0.5) node[left]{$q_1$};
    \end{tikzpicture}\subcaption{$\dya \cut^5 [p p_0 p_{1}]$}    \label{fig:cut-n-2-pre}
    \end{center}
\end{minipage}
    \begin{minipage}{.45\textwidth}
    \begin{center}
\begin{tikzpicture}[scale=4]
     \drawchain{0}{0}{0.5}{0}{0.4}{0.1}{0.1}{-135}{1}
      \drawchain{0}{0}{0.4}{0.1}{0.3}{0.2}{0.1}{-135}{1}
       \drawchain{0}{0}{0.3}{0.2}{0.2}{0.3}{0.1}{-135}{1}
       \drawchain{0}{0}{0.2}{0.3}{0.1}{0.4}{0.1}{-135}{1}
      \drawchain{0}{0}{0.1}{0.4}{0}{0.5}{0.1}{-135}{1}

       \drawchain{0.5}{0}{1}{0}{0.9}{0.1}{0.1}{-135}{1}
       \drawchain{0.5}{0}{0.9}{0.1}{0.8}{0.2}{0.1}{-135}{1}
       \drawchain{0.5}{0}{0.8}{0.2}{0.7}{0.3}{0.1}{-135}{1}
       \drawchain{0.5}{0}{0.7}{0.3}{0.6}{0.4}{0.1}{-135}{1}
       \drawchain{0.5}{0}{0.6}{0.4}{0.5}{0.5}{0.1}{-135}{1}

             \drawchain{0}{0.5}{0.5}{0.5}{0.4}{0.6}{0.1}{-135}{1}
       \drawchain{0}{0.5}{0.4}{0.6}{0.3}{0.7}{0.1}{-135}{1}
       \drawchain{0}{0.5}{0.3}{0.7}{0.2}{0.8}{0.1}{-135}{1}
       \drawchain{0}{0.5}{0.2}{0.8}{0.1}{0.9}{0.1}{-135}{1}
      \drawchain{0}{0.5}{0.1}{0.9}{0}{1}{0.1}{-135}{1}

       \drawchain{0.5}{0.5}{0.5}{0}{0.4}{0.1}{0.1}{-135}{1}
       \drawchain{0.5}{0.5}{0.4}{0.1}{0.3}{0.2}{0.1}{-135}{1}
       \drawchain{0.5}{0.5}{0.3}{0.2}{0.2}{0.3}{0.1}{-135}{1}
       \drawchain{0.5}{0.5}{0.2}{0.3}{0.1}{0.4}{0.1}{-135}{1}
       \drawchain{0.5}{0.5}{0.1}{0.4}{0}{0.5}{0.1}{-135}{1}

	\draw   (0,0) node[below left]{$p$} (1,0) node[below]{$p_0$} (0,1) node[left]{$p_1$}
	(0.5,0) node[below]{$q_0$}	(0,0.5) node[left]{$q_1$};
    \end{tikzpicture}\subcaption{ $\cut^5 \dya [p p_0 p_1]$} \label{fig:cut-n-2}
    \end{center}

    \end{minipage}

\end{figure}

Since  \[ \cut^n [p p_0 p_1] = [p p_0 p_{\frac 1 n}] +  \cut^{n-2} [p p_{\frac 1n} p_{\frac{n-1}{n}}] + [p p_{\frac {n-1}{n}} p_1],\]
we have (Figures~\ref{fig:cut-n-3} and~\ref{fig:cut-n-4})
\[ \dya \cut^n [p p_0 p_1] = \dya \bra{ [p p_0 p_{\frac 1 n}]  + [p p_{\frac {n-1}{n}} p_1]} + \cut^{n-2} \dya  [p p_{\frac 1n} p_{\frac{n-1}{n}}] + \flip^\dagger C^{n-2},\]
where $C^{n-2} \in \chain^{2}(\O)$.

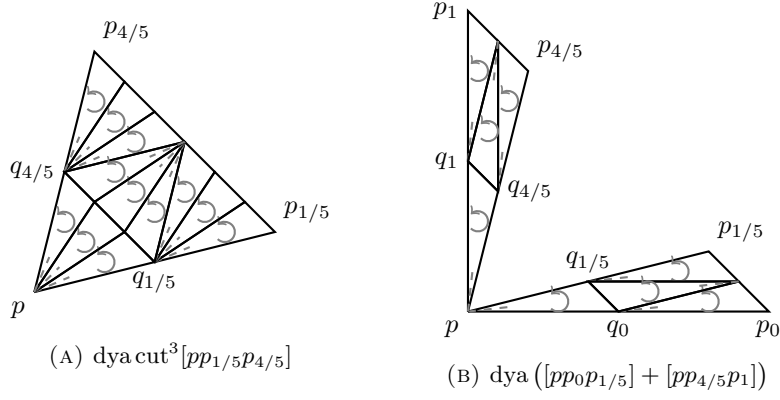
\begin{figure}[ht]
    \begin{minipage}{.45\textwidth}
    \begin{center}
\begin{tikzpicture}[scale=4]
      \drawchain{0}{0}{0.4}{0.1}{0.3}{0.2}{0.1}{-135}{1}
       \drawchain{0}{0}{0.3}{0.2}{0.2}{0.3}{0.1}{-135}{1}
       \drawchain{0}{0}{0.2}{0.3}{0.1}{0.4}{0.1}{-135}{1}

       \drawchain{0.4}{0.1}{0.8}{0.2}{0.7}{0.3}{0.1}{-135}{1}
       \drawchain{0.4}{0.1}{0.7}{0.3}{0.6}{0.4}{0.1}{-135}{1}
       \drawchain{0.4}{0.1}{0.6}{0.4}{0.5}{0.5}{0.1}{-135}{1}

             \drawchain{0.1}{0.4}{0.5}{0.5}{0.4}{0.6}{0.1}{-135}{1}
       \drawchain{0.1}{0.4}{0.4}{0.6}{0.3}{0.7}{0.1}{-135}{1}
       \drawchain{0.1}{0.4}{0.3}{0.7}{0.2}{0.8}{0.1}{-135}{1}

       \drawchain{0.5}{0.5}{0.4}{0.1}{0.3}{0.2}{0.1}{-135}{1}
       \drawchain{0.5}{0.5}{0.3}{0.2}{0.2}{0.3}{0.1}{-135}{1}
       \drawchain{0.5}{0.5}{0.2}{0.3}{0.1}{0.4}{0.1}{-135}{1}

	\draw   (0,0) node[below left]{$p$} 
	(0.8,0.2) node[above right]{$p_{1/5}$}	
	(0.2,0.8) node[above right]{$p_{4/5}$}
	(0.4,0.1) node[below]{$q_{1/5}$}	(0.1,0.4) node[left]{$q_{4/5}$};
    \end{tikzpicture}\subcaption{ $\dya \cut^3 [p p_{1/5} p_{4/5}]$} \label{fig:cut-n-3}
    \end{center}
\end{minipage}
 \begin{minipage}{.45\textwidth}\begin{center}
\begin{tikzpicture}[scale=4]
      \drawchain{0}{0}{0.5}{0}{0.4}{0.1}{0.1}{-135}{1}
                     \drawchain{0.5}{0}{1}{0}{0.9}{0.1}{0.1}{-135}{1}
      \drawchain{0.4}{0.1}{0.9}{0.1}{0.8}{0.2}{0.1}{-135}{1}
      \drawchain{0.9}{0.1}{0.5}{0}{0.4}{0.1}{0.1}{-135}{1}

       \drawchain{0}{0}{0.1}{0.4}{0}{0.5}{0.1}{-135}{1}
              \drawchain{0}{0.5}{0.1}{0.9}{0}{1}{0.1}{-135}{1}
       \drawchain{0.1}{0.4}{0.2}{0.8}{0.1}{0.9}{0.1}{-135}{1}
       \drawchain{0.1}{0.9}{0.1}{0.4}{0}{0.5}{0.1}{-135}{1}

%
%
%
	\draw   (0,0) node[below left]{$p$} (1,0) node[below]{$p_0$} (0,1) node[left]{$p_1$}
	(0.8,0.2) node[above right]{$p_{1/5}$}
		(0.2,0.8) node[above right]{$p_{4/5}$}
	(0.4,0.1) node[above]{$q_{1/{5}}$}	(0.1,0.4) node[right]{$q_{4/{5}}$}
	(0.5,0) node[below]{$q_0$}	(0,0.5) node[left]{$q_1$};
    \end{tikzpicture}\subcaption{$\dya\bra{[p p_0 p_{1/5}] + [p p_{4/5} p_1]}$}\label{fig:cut-n-4}
    \end{center}
\end{minipage}
\caption{Two chains in the inductive step with $n=5$.}
\end{figure}

Collecting the chains $[p q_0 q_{\frac{1}{n}}]+ [p q_{\frac{n-1}{n} }q_1]$ and $\cut^{n-2} [p q_{\frac 1 n} q_{\frac {n-1}{n}}]$ which can be found in the expression above, we obtain $\cut^{n} [p q_0 q_1]$, hence the thesis follows if we prove that the difference between the chain
\[\begin{split} R^1 := & [q_0 p_0 p_{\frac 1 {2n}}] + [p_{\frac 1 {2n}} q_{\frac 1 n} q_0] + [q_{\frac 1 n} p_{\frac 1 {2n}} p_{\frac 1 n}] \\
& \quad + \cut^{n-2} \bra{ [q_{\frac 1 n} p_{\frac 1 {n}} p_{\frac 1 2 }] + [p_{\frac 1 2 } q_{\frac {n-1}{n} }q_{\frac 1 n}] + [q_{\frac {n-1}{n}} p_{\frac 1 2 } p_{\frac {n-1}{n}}]} \\
& \quad + [q_1  p_{\frac {2n-1} {2n}}p_1] + [p_{\frac {2n-1} {2n}} q_{\frac {n-1} n} q_1] + [q_{\frac {n-1} n} p_{\frac {2n-1} {2n}} p_{\frac {n-1} n}]
\end{split}\]
and the chain (Figure~\ref{fig:cut-n-dyadic-5})
\[ R^{n} := \cut^{n} \bra{ [q_0 p_0 p_{\frac 1 2 }] + [p_{\frac 1 2 } q_1 q_0] + [q_1 p_{\frac 1 2 } p_1]}\]
can be written in the form $\flip^\dagger C$, for some $C \in \chain^2(\O)$.

 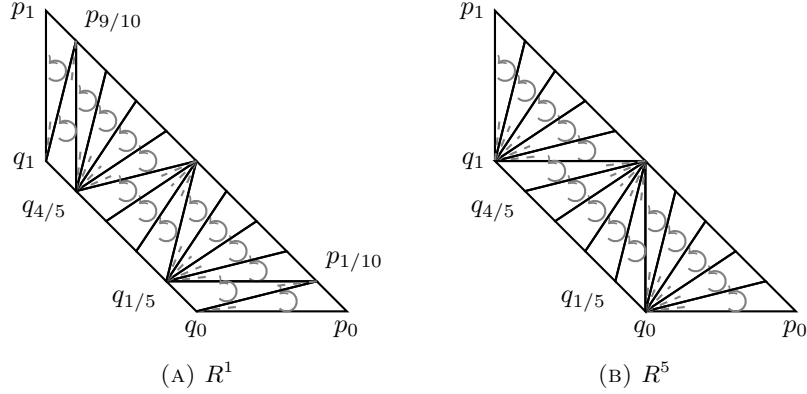
\begin{figure}[ht]
 \begin{minipage}{.45\textwidth}\begin{center}
\begin{tikzpicture}[scale=4]
                     \drawchain{0.5}{0}{1}{0}{0.9}{0.1}{0.1}{-135}{1}
      \drawchain{0.4}{0.1}{0.9}{0.1}{0.8}{0.2}{0.1}{-135}{1}
      \drawchain{0.9}{0.1}{0.5}{0}{0.4}{0.1}{0.1}{-135}{1}

    \drawchain{0.4}{0.1}{0.8}{0.2}{0.7}{0.3}{0.1}{-135}{1}
       \drawchain{0.4}{0.1}{0.7}{0.3}{0.6}{0.4}{0.1}{-135}{1}
       \drawchain{0.4}{0.1}{0.6}{0.4}{0.5}{0.5}{0.1}{-135}{1}

             \drawchain{0.1}{0.4}{0.5}{0.5}{0.4}{0.6}{0.1}{-135}{1}
       \drawchain{0.1}{0.4}{0.4}{0.6}{0.3}{0.7}{0.1}{-135}{1}
       \drawchain{0.1}{0.4}{0.3}{0.7}{0.2}{0.8}{0.1}{-135}{1}

       \drawchain{0.5}{0.5}{0.4}{0.1}{0.3}{0.2}{0.1}{-135}{1}
       \drawchain{0.5}{0.5}{0.3}{0.2}{0.2}{0.3}{0.1}{-135}{1}
       \drawchain{0.5}{0.5}{0.2}{0.3}{0.1}{0.4}{0.1}{-135}{1}

              \drawchain{0}{0.5}{0.1}{0.9}{0}{1}{0.1}{-135}{1}
       \drawchain{0.1}{0.4}{0.2}{0.8}{0.1}{0.9}{0.1}{-135}{1}
       \drawchain{0.1}{0.9}{0.1}{0.4}{0}{0.5}{0.1}{-135}{1}

	\draw  
	(1,0) node[below]{$p_0$} (0,1) node[left]{$p_1$}
	(0.9,0.1) node[above right]{$p_{1/10}$}	
	(0.1,0.9) node[above right]{$p_{9/10}$}	
	(0.4,0.1) node[below left]{$q_{1/5}$}	(0.1,0.4) node[below left]{$q_{4/5}$}
	(0.5,0) node[below]{$q_0$}	(0,0.5) node[left]{$q_1$};
    \end{tikzpicture}\subcaption{$R^1$}
   \end{center}
\end{minipage}
     \begin{minipage}{.45\textwidth}\begin{center}
\begin{tikzpicture}[scale=4]

       \drawchain{0.5}{0}{1}{0}{0.9}{0.1}{0.1}{-135}{1}
       \drawchain{0.5}{0}{0.9}{0.1}{0.8}{0.2}{0.1}{-135}{1}
       \drawchain{0.5}{0}{0.8}{0.2}{0.7}{0.3}{0.1}{-135}{1}
       \drawchain{0.5}{0}{0.7}{0.3}{0.6}{0.4}{0.1}{-135}{1}
       \drawchain{0.5}{0}{0.6}{0.4}{0.5}{0.5}{0.1}{-135}{1}

             \drawchain{0}{0.5}{0.5}{0.5}{0.4}{0.6}{0.1}{-135}{1}
       \drawchain{0}{0.5}{0.4}{0.6}{0.3}{0.7}{0.1}{-135}{1}
       \drawchain{0}{0.5}{0.3}{0.7}{0.2}{0.8}{0.1}{-135}{1}
       \drawchain{0}{0.5}{0.2}{0.8}{0.1}{0.9}{0.1}{-135}{1}
      \drawchain{0}{0.5}{0.1}{0.9}{0}{1}{0.1}{-135}{1}

       \drawchain{0.5}{0.5}{0.5}{0}{0.4}{0.1}{0.1}{-135}{1}
       \drawchain{0.5}{0.5}{0.4}{0.1}{0.3}{0.2}{0.1}{-135}{1}
       \drawchain{0.5}{0.5}{0.3}{0.2}{0.2}{0.3}{0.1}{-135}{1}
       \drawchain{0.5}{0.5}{0.2}{0.3}{0.1}{0.4}{0.1}{-135}{1}
       \drawchain{0.5}{0.5}{0.1}{0.4}{0}{0.5}{0.1}{-135}{1}

	\draw   
	(1,0) node[below]{$p_0$} (0,1) node[left]{$p_1$}
	(0.4,0.1) node[below left]{$q_{1/5}$}	(0.1,0.4) node[below left]{$q_{4/5}$}
	(0.5,0) node[below]{$q_0$}	(0,0.5) node[left]{$q_1$};
    \end{tikzpicture}\subcaption{$R^5$}
    \end{center}
\end{minipage}
\caption{The chains $R^1$ and $R^5$ in the case $n=5$.}\label{fig:cut-n-dyadic-5}
\end{figure}

We achieve this by means of $(n-1)$ consecutive edge-flippings, i.e. building an ``interpolating'' sequence of chains $(R^k)_{k=1}^{n}$ such that $R^{k+1} - R^k =  \flip\dagger C^k$, for some $C^k \in \chain^2(\O)$, for $k = 1, \ldots, n-1$. Precisely, let (Figures~\ref{fig:cut-n-dyadic-5} and \ref{fig:cut-n-dyadic-6})
\[ \begin{split}  R^k := &  \cut^k \bra{ [ q_0 p_0 p_{\frac{k}{2n}}] +  [ q_1 p_{\frac{2n-k}{2n}} p_1 ] } +   \cut^{n-k-1} \bra{ [ q_{\frac 1 n} p_{\frac{k+1}{2n}} p_{\frac 1 2}] +  [ q_{\frac {n-1}{n}} p_{ \frac 1 2}  p_{\frac{2n-k-1}{2n}} ] }\\
& \quad + [p_{\frac{k}{2n}} q_{\frac 1 n} q_0]  + [p_{\frac{2n-k}{2n}} q_1 q_{\frac {n-1}{n}}] +  \cut^{n-2}\bra{ [p_{\frac 1 2 } q_{\frac{n-1}{n}} q_{\frac 1 n}]}.\\
\end{split}\]
and notice that $R^1$ and $R^{n}$ coincide with the one above (just recollecting some terms).

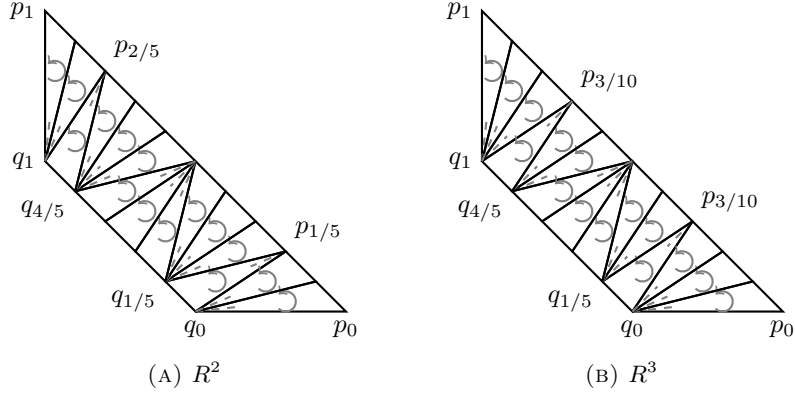
\begin{figure}[ht]
 \begin{minipage}{.45\textwidth}\begin{center}
\begin{tikzpicture}[scale=4]
                     \drawchain{0.5}{0}{1}{0}{0.9}{0.1}{0.1}{-135}{1}
      \drawchain{0.8}{0.2}{0.4}{0.1}{0.5}{0}{0.1}{-135}{1}
      \drawchain{0.5}{0}{0.9}{0.1}{0.8}{0.2}{0.1}{-135}{1}

    \drawchain{0.4}{0.1}{0.8}{0.2}{0.7}{0.3}{0.1}{-135}{1}
       \drawchain{0.4}{0.1}{0.7}{0.3}{0.6}{0.4}{0.1}{-135}{1}
       \drawchain{0.4}{0.1}{0.6}{0.4}{0.5}{0.5}{0.1}{-135}{1}

             \drawchain{0.1}{0.4}{0.5}{0.5}{0.4}{0.6}{0.1}{-135}{1}
       \drawchain{0.1}{0.4}{0.4}{0.6}{0.3}{0.7}{0.1}{-135}{1}
       \drawchain{0.1}{0.4}{0.3}{0.7}{0.2}{0.8}{0.1}{-135}{1}

       \drawchain{0.5}{0.5}{0.4}{0.1}{0.3}{0.2}{0.1}{-135}{1}
       \drawchain{0.5}{0.5}{0.3}{0.2}{0.2}{0.3}{0.1}{-135}{1}
       \drawchain{0.5}{0.5}{0.2}{0.3}{0.1}{0.4}{0.1}{-135}{1}

              \drawchain{0}{0.5}{0.1}{0.9}{0}{1}{0.1}{-135}{1}
    \drawchain{0.2}{0.8}{0}{0.5}{0.1}{0.4}{0.1}{-135}{1}
     \drawchain{0}{0.5}{0.2}{0.8}{0.1}{0.9}{0.1}{-135}{1}

	\draw  
	(1,0) node[below]{$p_0$} (0,1) node[left]{$p_1$}
		(0.8,0.2) node[above right]{$p_{1/5}$}
	(0.2,0.8) node[above right]{$p_{2/5}$}
	(0.4,0.1) node[below left]{$q_{1/5}$}	(0.1,0.4) node[below left]{$q_{4/5}$}
	(0.5,0) node[below]{$q_0$}	(0,0.5) node[left]{$q_1$};
    \end{tikzpicture}\subcaption{$R^2$}
   \end{center}
\end{minipage}
     \begin{minipage}{.45\textwidth}\begin{center}
\begin{tikzpicture}[scale=4]
                     \drawchain{0.5}{0}{1}{0}{0.9}{0.1}{0.1}{-135}{1}
      \drawchain{0.7}{0.3}{0.4}{0.1}{0.5}{0}{0.1}{-135}{1}
      \drawchain{0.5}{0}{0.9}{0.1}{0.8}{0.2}{0.1}{-135}{1}

  \drawchain{0.5}{0}{0.8}{0.2}{0.7}{0.3}{0.1}{-135}{1}
       \drawchain{0.4}{0.1}{0.7}{0.3}{0.6}{0.4}{0.1}{-135}{1}
       \drawchain{0.4}{0.1}{0.6}{0.4}{0.5}{0.5}{0.1}{-135}{1}

             \drawchain{0.1}{0.4}{0.5}{0.5}{0.4}{0.6}{0.1}{-135}{1}
       \drawchain{0.1}{0.4}{0.4}{0.6}{0.3}{0.7}{0.1}{-135}{1}
       \drawchain{0}{0.5}{0.3}{0.7}{0.2}{0.8}{0.1}{-135}{1}

       \drawchain{0.5}{0.5}{0.4}{0.1}{0.3}{0.2}{0.1}{-135}{1}
       \drawchain{0.5}{0.5}{0.3}{0.2}{0.2}{0.3}{0.1}{-135}{1}
       \drawchain{0.5}{0.5}{0.2}{0.3}{0.1}{0.4}{0.1}{-135}{1}

              \drawchain{0}{0.5}{0.1}{0.9}{0}{1}{0.1}{-135}{1}
    \drawchain{0.3}{0.7}{0}{0.5}{0.1}{0.4}{0.1}{-135}{1}
     \drawchain{0}{0.5}{0.2}{0.8}{0.1}{0.9}{0.1}{-135}{1}

	\draw  
	(1,0) node[below]{$p_0$} (0,1) node[left]{$p_1$}
		(0.7,0.3) node[above right]{$p_{3/10}$}
	(0.3,0.7) node[above right]{$p_{3/10}$}
	(0.4,0.1) node[below left]{$q_{1/5}$}	(0.1,0.4) node[below left]{$q_{4/5}$}
	(0.5,0) node[below]{$q_0$}	(0,0.5) node[left]{$q_1$};
    \end{tikzpicture}\subcaption{$R^3$}
    \end{center}
\end{minipage}
\caption{The chains $R^2$ and $R^3$ in the case $n=5$.}\label{fig:cut-n-dyadic-6}
\end{figure}

Then, one has
\[ \begin{split} R^{k+1} - R^k & =  [p_{\frac{k+1}{2n}} q_{\frac 1 n} q_0]  +[q_{\frac{1}{n}}p_{\frac{k}{2n}} p_{\frac {k+1}{2n}}] - [p_{\frac{k}{2n}} q_{\frac 1 n} q_0] - [q_{\frac 1 n} p_{\frac k {2n}} p_{ \frac {k+1}{2n}}] \\
& \quad + [q_{1}p_{\frac {2n-k-1}{2n}}p_{\frac{2n-k}{2n}} ] +  [p_{\frac{2n-k-1}{2n}} q_1 q_{\frac {n-1} n} ] \\
& \quad  - [p_{\frac{2n-k}{2n}} q_{1} q_{\frac {n-1}{n}}] - [q_{\frac {n-1} n} p_{\frac {2n-k-1} {2n}} p_{ \frac {2n-k}{2n}}] \\
&=  \flip ^\dagger \bra{ [q_{\frac 1 n} q_0 p_{\frac{k}{2n}} ] + [q_1 q_{\frac {n-1}{n}} p_{\frac{2n-k}{2n}}]}
\end{split}\]
and the thesis follows.
%
%
\end{proof}

%

\section{Proof of sewing results}\label{appendix-sewing}

In this section we collect the proofs of 
Theorem~\ref{thm:sew-unique},
Theorem~\ref{thm:stokes} 
and Theorem~\ref{thm:sew-existence}, together with auxiliary results. We begin with a useful uniqueness result.

\begin{lemma}[uniqueness]\label{lem:uniqueness-general}
Let $\ell \ge 1$, $\lambda_i \in [-1,1]$, $\tau_i: \simp^k(\O) \to \simp^k(\O)$, $i \in\cur{1, \ldots, \ell}$, be such that  $\tau_i S$ is  isometric to $(2^{-1})_\natural S$, for every $S \in \simp^k(\O)$ and let $\tau := \sum_{i=1}^\ell \lambda_i \tau_i$. Let $\omega \in \germ^k(\O)$ and $\v$ uniform $k$-gauge. Then, for every $n \ge 0$, $S \in \simp^k(\O)$,
\[  \abs{ \ang{S, ((\tau)^\prime)^n \omega} } \le [\omega]_\v  \ell^{n} \ang{(2^{-n})_\natural S, \v}.\]
If moreover $\v$ is $(\log_2 \ell )$-Dini and $\tau^\prime \omega = \omega$, then $\omega =0$.
\end{lemma}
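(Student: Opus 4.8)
The plan is to prove the displayed inequality by induction on $n$ and then to derive $\omega=0$ by letting $n\to\infty$.

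For $n=0$ the asserted bound $\abs{\ang{S,\omega}}\le[\omega]_\v\ang{S,\v}$ is nothing but the definition of $[\omega]_\v$. For the inductive step I would use that $\tau'$ is the dual of $\tau=\sum_{i=1}^\ell\lambda_i\tau_i$, so that for an arbitrary $k$-germ $\eta$ one has $\ang{S,\tau'\eta}=\ang{\tau S,\eta}=\sum_{i=1}^\ell\lambda_i\ang{\tau_iS,\eta}$. Taking $\eta=(\tau')^n\omega$ and using $\abs{\lambda_i}\le1$ yields
\[\abs{\ang{S,(\tau')^{n+1}\omega}}\le\sum_{i=1}^\ell\abs{\ang{\tau_iS,(\tau')^n\omega}}.\]
Each $\tau_iS$ lies in $\simp^k(\O)$, so the inductive hypothesis applies to it and bounds the $i$-th term by $[\omega]_\v\,\ell^n\,\ang{(2^{-n})_\natural\tau_iS,\v}$. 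The geometric heart of the step is the observation that, since $\tau_iS$ is isometric to $(2^{-1})_\natural S$, the simplex $(2^{-n})_\natural\tau_iS$ is isometric to $(2^{-(n+1)})_\natural S$: writing the isometry carrying $(2^{-1})_\natural S$ onto $\tau_iS$ as $x\mapsto Rx+b$ with $R$ orthogonal, a short computation shows that $(2^{-n})_\natural\tau_iS$ is the image of $(2^{-(n+1)})_\natural S$ under the isometry $y\mapsto Ry+2^{-n}b$. Since $\v$ is uniform it takes equal values on isometric simplices (evaluating on a translated copy inside $\simp^k(\O)$ when the rescaled simplex leaves $\O$, exactly as in the convention used for Dini gauges), whence $\ang{(2^{-n})_\natural\tau_iS,\v}=\ang{(2^{-(n+1)})_\natural S,\v}$. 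Summing the $\ell$ terms produces the factor $\ell$ and advances the power of the rescaling, giving $\abs{\ang{S,(\tau')^{n+1}\omega}}\le[\omega]_\v\,\ell^{n+1}\,\ang{(2^{-(n+1)})_\natural S,\v}$ and closing the induction.

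For the final statement I would argue as follows. Assuming $\tau'\omega=\omega$, one has $(\tau')^n\omega=\omega$ for every $n$, so the inequality just established reads $\abs{\ang{S,\omega}}\le[\omega]_\v\,\ell^n\,\ang{(2^{-n})_\natural S,\v}$ for all $n$ and all $S\in\simp^k(\O)$. Setting $r:=\log_2\ell$, one has $\ell^n=2^{nr}$, so the $r$-Dini property of $\v$ gives $\ell^n\ang{(2^{-n})_\natural S,\v}=2^{nr}\ang{(2^{-n})_\natural S,\v}\to0$ as $n\to\infty$; since $[\omega]_\v<\infty$ in the situations where the lemma is invoked (for instance in Theorem~\ref{thm:sew-unique}, where $\omega\approx_\v0$), letting $n\to\infty$ forces $\ang{S,\omega}=0$ for every $S$, that is $\omega=0$.

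I expect the only genuinely delicate point to be the bookkeeping in the inductive step, namely identifying $(2^{-n})_\natural\tau_iS$ with $(2^{-(n+1)})_\natural S$ up to isometry and invoking the uniformity of $\v$ correctly, including the convention for evaluating a uniform gauge on an isometric copy when the rescaled simplex no longer lies in $\O$. The remainder is a routine induction followed by a passage to the limit.
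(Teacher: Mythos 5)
Your proof is correct and follows essentially the same route as the paper: both expand the iterate of $\tau'$ into $\ell^n$ evaluations on simplices isometric to $(2^{-n})_\natural S$ (you by induction on $n$, the paper via multi-indexed compositions $\tau_{i_n}\cdots\tau_{i_1}$), invoke uniformity of $\v$, and then let $n\to\infty$ under the $(\log_2\ell)$-Dini assumption. Your explicit remark that the final conclusion uses the (implicit) finiteness of $[\omega]_\v$ matches how the lemma is applied in the paper and is a fair point of care, not a deviation.
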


\begin{proof}
For $n\ge 1$, $i=(i_1, \ldots, i_n) \in \cur{1, \ldots, \ell}^n$,  define
\[ \lambda_i := \lambda_{i_1} \ldots \lambda_{i_n} , \quad \tau_i S :=   \tau_{i_n} \tau_{i_{n-1}} \ldots \tau_{i_1} S \quad \text{for $S \in \simp^k(\O)$,}\]
so that, by induction, $\tau_i S$ is isometric to $(2^{-n})_\natural S$ and $(\tau^n)^\prime \omega = \sum_{i \in \cur{1, \ldots, \ell}^n} \lambda_i \tau_i^\prime \omega$.
Since $\v$ is a uniform gauge, for any $S \in \simp^k(\O)$, one has
\[ \abs{ \ang{\tau^n S, \omega}}  \le  \sum_{i \in \cur{1, \ldots, \ell }^{n}}\abs{\lambda_i} \abs{ \ang{ \tau_i S, \omega}}  \le  [\omega]_{\v} \sum_{i \in \cur{1, \ldots, \ell }^{n}} \ang{ (2^{-n})_\natural S, \v }  = \ell^{n} \ang{ (2^{-n})_\natural S, \v }.
\]
The last statement follows letting $n \to +\infty$, using $(\tau^n)^\prime\omega = (\tau^\prime)^n \omega = \omega$ and the $(\log_2 \ell)$-Dini assumption on $\v$.
\end{proof}

\begin{remark}[convergence rates]\label{rem:conv-rate}
In the situation of the result above, assume that $\omega \approx_\v \tilde{\omega}$ with $\tau^\prime \tilde{\omega} = \tilde{\omega}$. Then, we obtain the inequality
\[ \abs{ \ang{S, \tilde{\omega} - (\tau^n)^\prime \omega} } \le [\omega - \tilde{\omega}]_\v  \ell^{n} \ang{(2^{-n})_\natural S, \v}.\]
and in particular if $\v$ is $\log_2 \ell$-Dini then $\lim_{n \to +\infty} \ang{S, (\tau^n)^\prime \omega} = \ang{S, \tilde{\omega}}$.
\end{remark}

\subsection{Proof of sewing (uniqueness) Theorem~\ref{thm:sew-unique}}

For $k\in \cur{1,2}$, let $\omega \in \germ^k(\O)$ be regular, $\v \in \germ^k(\O)$ be $k$-Dini and $\omega\approx_\v 0$. To show that $\omega _S = 0$ for every $S \in \simp^k(\O)$, minding that $\omega$ is nonatomic, we may assume that $\vol_k(S) >0$, hence we can find $\varphi: I \subseteq \R^k \to \O$ affine injective such that $\varphi_\natural [0 e_1 e_2 \ldots e_k] = S$. Moreover, being $\varphi$ affine and injective, $\varphi^\natural \v$ is $k$-Dini and $\varphi^\natural \omega \approx_{\varphi^\natural \v} 0$ by Remark~\ref{rem:moduli-pullback}. Thus, without loss of generality, we can reduce ourselves to the case $\O \subseteq \R^k$,  $S  \in \simp^k(\O)$, and $\omega \in \germ^k(\O)$ closed and nonatomic.


From the equivalence $\dya \equiv \Id$ (Lemma~\ref{lem:dyadic-decomposition}), i.e.,
\[ \dya - \Id = \nu + \partial \varrho, \]
we deduce $\omega = \dya^\prime \omega$ by regularity of $\omega$. Moreover, since $\dya^i S \equiv (2^{-1})_\natural S$, for $i = 1, \ldots, 2^k$ and $\v$ is  $k$-Dini, hence $\log_2 (2^k)=k$-Dini we are in a position to apply Lemma~\ref{lem:uniqueness-general} with $\ell := 2^k$, $\lambda_i := 1$,  $\tau_i := \dya^i$, $\tau := \dya$ and deduce $\omega = 0$.

\subsection{Proof of Stokes-Cartan Theorem~\ref{thm:stokes}}

If $k=1$ first, i.e.\ $\eta = f$ is a function, then $\sew f = f$ and $\sew \delta f = \delta f$, because $\delta f $ is regular. 

If $k=2$, differently from the proof of Theorem~\ref{thm:sew-unique}, it is more convenient to use $\dya^\dagger$ (introduced in Remark~\ref{rem:variants})  instead of $\dya$. Notice that, given a closed and nonatomic $\tilde\omega \in \germ^k(\O)$, from $\tilde\omega = \dya^\prime \tilde\omega$ (consequence of $\dya \equiv \Id$) and the fact that $\tilde\omega$ is alternating, given that $\dya$ and $\dya^\dagger$ differ only for composition with $- \sigma$ (and $\sigma [p_0 p_1 p_2] := [p_2 p_1 p_0]$) on one of the four isometric simplices, we also have $\tilde\omega = (\dya^\dagger)^\prime \tilde\omega$. Hence,
\[ \begin{split} \sew \delta \eta  & = \lim_{n \to +\infty} \bra{(\dya^\dagger)^n}^\prime \delta \eta \quad \quad \text{by Remark~\ref{rem:conv-rate} with $\tilde \omega:= \sew \delta \eta$, $\omega :=\delta \eta$}\\
&  = \lim_{n\to +\infty} \delta  \bra{\dya^n}^\prime \eta \quad \quad \text{for  $\partial \dya^\dagger = \dya \partial$}\\
& = \delta \lim_{n\to +\infty}  \bra{\dya^n}^\prime \eta  = \delta \sew \eta. \end{split}\]

\subsection{Proof of sewing (continuity) Theorem~\ref{thm:sew-continuity}}
%
Given $S \in \simp^k(D)$, by Remark~\ref{rem:conv-rate}, with $\omega^j$ instead of $\omega$, $\sew \omega^j$ instead of $\tilde{\omega}$,  $\ell := 2^k$, $\lambda_i = 1$, $\tau_i := \dya^i$, $\tau := \dya$ and $n \ge 1$, we have
\[ \abs{ \ang{S, \sew \omega^j - (\dya^n)^\prime \omega^j} } \le [\omega^j - \sew \omega^j]_\v  2^{kn} \ang{(2^{-n})_\natural S, \v}.\]
For any $\epsilon>0$, we choose $\bar{n} \ge 1$ such that
\[  2^{k \bar n } \ang{ (2^{-\bar{n}})_\natural S, \v} \le \epsilon /  \sup_{j \ge 1} [\omega^j - \sew \omega^j]_\v.\]
By the pointwise convergence of $\omega^j \to \omega$, we have $(\dya^{\bar n})^\prime\omega^j \to (\dya^{\bar n})^\prime \omega$ pointwise as well, hence
\[ \begin{split}
\abs{ \ang{S,  \sew \omega^i - \sew \omega^j } } & \le  \abs{ \ang{S,  \sew \omega^i - (\dya^{\bar{n}})^\prime \omega^i}} + \abs{ \ang{S,  \sew \omega^i - (\dya^{\bar{n}})^\prime \omega^i}} \\ & \quad + \abs{\ang{S,  (\dya^{\bar{n}})^\prime\omega^i -  (\dya^{\bar{n}})^\prime\omega^j}}  \\
& \le 3 \epsilon \quad \text{for for $i$, $j$ sufficiently large,}
\end{split}\]
so that $(\sew \omega^j)_{j \ge 1}$ is pointwise Cauchy. Its limit $\tilde{\omega}$ is a regular germ  by Corollary~\ref{cor:pointwise-limit-regluar}.  Since
\[ \abs{ \ang{ S, \sew \omega^j - \omega^j } } \le \sup_{\ell \ge 1} [\omega^\ell - \sew\omega^\ell]_\v \ang{S, \v},\]
letting $j \to +\infty$ in the above estimate, we obtain
\[ \abs{ \ang{ S, \tilde{\omega} - \omega } } \le  \sup_{\ell \ge 1} [\omega^\ell - \sew\omega^\ell]_\v \ang{S, \v}\]
hence $\tilde{\omega} \approx_\v  \omega$ or in other words $\tilde{\omega} = \sew \omega$.

\subsection{Proof of sewing (existence) Theorem~\ref{thm:sew-existence}}

 \emph{Step 1: definition of $\sew \omega$.} For any $S \in \simp^k(\O)$, we show that the sequence  $\bra {\ang{ \dya^n S, \omega}}_{n \ge 0}$ is Cauchy, hence the limit
\[ \ang{S, \sew \omega} := \lim_{n \to +\infty} \ang{ \dya^n S,  \omega}\]
is well-defined. Indeed, decomposing $\dya - \Id = \nu + \partial \varrho$, using the fact that $\omega$ is nonatomic, we have
\[ \ang{ (\dya - \Id)S, \omega} = \ang{ \varrho S, \delta \omega} \]
hence $[\dya^\prime \omega - \omega]_{\abs{\varrho^\natural}\u} \le [\delta \omega]_\u <\infty$. Lemma~\ref{lem:uniqueness-general} with $\tau^i = \dya^i$, $i = 1, \ldots, 2^k$, $\tau = \dya$ and $\dya^\prime \omega - \omega$ instead of $\omega$ entails, for $n \ge 0$,
\[ \abs{ \ang{ \dya^{n+1} S, \omega} -  \ang{ \dya^{n} S, \omega}  }= \abs{ \ang{ \dya^n S, \dya^\prime\omega - \omega}} \le [\delta \omega]_\u 2^{kn} \ang{ (2^{-n})_\natural S, |\varrho|' \u },\]
which is summable, being $|\varrho|' \u$ strong $k$-Dini by Lemma~\ref{lem:geo-dini}.
Hence the sequence is Cauchy and we also obtain the bound
\begin{equation}\label{eq:error-bound-existence}  \abs{ \ang{S, \omega-  \sew \omega}} \le [\delta \omega]_\u \sum_{n=0}^{\infty} 2^{kn} \ang{ (2^{-n})_\natural, |\varrho|' \u } =  [\delta \omega]_\u \ang{S, \v} \end{equation}
i.e., $\omega \approx_\v \sew \omega$, having defined (with $\tilde{\u}$ as in~\eqref{eq:v-dini})
\[ \ang{S, \v} := \sum_{n=0}^{\infty} 2^{kn} \ang{ (2^{-n})_\natural, |\varrho|' \u } = \ang{S, |\varrho|' \tilde{\u}},\]
Notice that, when $\u = \vol_1^{\gamma_1} \vol_2^{\gamma_2}$, one obtains that $\v \le \c(\gamma_1, \gamma_2) \vol_1^{\gamma_1} \vol_2^{\gamma_2}$  by Example~\ref{ex:gauge-diam}, yielding Remark~\ref{rem:explicit}.

To show that $\sew \omega$ is continuous, let $\cur{S^m}_{m\ge 0} \subseteq \simp^k(\O)$ converge to $S \in \simp^k(\O)$. For any $\varepsilon>0$, one can find $\bar{n} \ge 0$ such that
\[ \ang{ S, \v} - \sum_{n=0}^{\bar{n}-1} 2^{kn}\ang{ (2^{-n})_\natural S, |\varrho|'\u } = \sum_{ n = \bar{n}}^{+\infty} 2^{kn}\ang{ (2^{-n})_\natural S, |\varrho|'\u } < \varepsilon.\]
By continuity $\v$ and $\u$  we have that the same bound holds for any $S^m$ instead of $S$, provided that $m$ is sufficiently large. The $k$-germ $(\dya^{\bar{n}})^\prime\omega$ is also continuous, hence
\[ \abs{ \ang{ S^m - S , (\dya^{\bar n})^\prime \omega} } < \varepsilon\]
provided that $m$ is sufficiently large. Hence, for every $m \ge \bar{m}$, one has
\[
\begin{split} \abs{ \ang{ S^m - S , \sew \omega}} & \le  \abs{ \ang{ S^m - S ,  (\dya^{\bar n})^\prime \omega}}\\
& \quad +  \abs{ \ang{ S^m,  (\dya^{\bar n})^\prime \omega - \sew \omega }} + \abs{ \ang{ S,  (\dya^{\bar n})^\prime \omega - \sew \omega }} \le 3 \varepsilon.\end{split}.
\]

In the following steps, we prove that $\sew \omega$ is regular. Notice first that $\sew \omega$ is nonatomic and the identity $\dya^\prime \sew \omega = \sew \omega$ is a straightforward consequence of our definition:
\[ \dya^\prime \sew \omega = \dya^\prime \lim_{n \to \infty} (\dya^{n})^\prime \omega =  \lim_{n \to \infty}(\dya^{n+1})^\prime \omega = \sew \omega.\]

\emph{Step 2:  $\sew \omega$ is alternating.} Let $\sigma$ be a permutation of $\cur{0,1,k}$ and introduce the $k$-germ $\bra{ \sigma^\prime +(-1)^\sigma} \sew \omega$. Since $\sigma \equiv (-1)^\sigma \Id$, we can decompose (for some geometric $\nu$ and $\varrho$ different than above)
\[  \sigma - (-1)^\sigma\Id = \nu + \partial \varrho, \quad \text{hence} \ang{ \bra{\sigma - (-1)^\sigma \Id}S,  \omega} =  \ang{\varrho S, \delta \omega},\]
hence $\sigma^\prime \omega \approx_{|\varrho|' \u} (-1)^\sigma \omega$. On the other side, Lemma~\ref{lem:dya-commutator} gives that $\dya \sigma = \sigma \dya$, so that
\[  \dya \sigma^\prime \sew \omega = \sigma^\prime \dya \sew \omega =  \sigma^\prime \omega,\]
hence, letting $\tilde{\omega} := \sigma^\prime \omega - (-1)^\sigma \omega$, one has $\dya \tilde{\omega} = \tilde{\omega}$.
Finally, being $\sigma$ geometric, one has $\sigma^\prime \sew \omega \approx_{\abs{\sigma^\prime}\v} \sigma^\prime \omega$ hence
\[  (\sigma^\prime  - (-1)^\sigma ) \sew \omega \approx_{ \max\cur{ {\abs{\sigma^\prime}\v, \v} }} (\sigma^\prime - (-1)^\sigma) \omega  \approx_{|\varrho|' \u} 0,\]
and Lemma~\ref{lem:uniqueness-general} implies  $\sigma^\prime \sew \omega = (-1)^\sigma \sew \omega$.

\emph{Step 3:  $\flip^\prime \sew \omega = 0$.} This step is necessary (actually, defined) only for $k=2$. Since $\flip \equiv 0$, we can decompose (for some geometric $\nu$ and $\varrho$ different than above)
\[  \flip = \nu + \partial \varrho, \quad \text{hence} \quad \ang{ \flip S,  \omega} =  \ang{\varrho S, \delta \omega},\]
hence $\flip^\prime \omega \approx_{\abs{\varrho}' \u} 0$. On the other side,
Lemma~\ref{lem:dya-commutator} provides geometric maps $(\tau_i)_{i=1}^4$ with $\tau_i S$ isometric to $(2^{-1})_\natural S$ such that $\dya \flip  = \flip \tau$, hence
\[ \tau^\prime \flip^\prime \sew \omega  = \flip^\prime \dya^\prime \sew \omega = \flip^\prime \omega.\]
Finally, since $\flip$ is geometric, one also has  $\flip^\prime \omega \approx_{\abs{\flip^\prime} \v}  \sew \omega$. In conclusion we have
 $\flip^\prime \sew \omega \approx_{\abs{\flip}^\prime\v}  \flip^\prime \omega\approx_{\abs{\varrho}'\u} 0$, so that
Lemma~\ref{rem:conv-rate} (applied with $\tau = \sum_{i=1}^4 \tau_i$) gives $\flip^\prime\sew \omega = 0$. In combination with the fact that $\sew \omega$ is alternating, we also obtain that $(\flip^\dagger)' \sew \omega = 0$.

\emph{Step 4:  $\cut_t^\prime \sew \omega = \sew \omega$.} We argue in the case $k=2$ only, the case $k=1$ being completely analogous (using $1$-simplices instead of $2$-simplices) and even simpler because $\flip$ will not appear.  First, we show that $(\cut^n)^\prime \sew \omega = \sew \omega$, for every $n \ge 1$. For fixed $n\ge 1$,  by Lemma~\ref{lem:cut-n}, since $\cut^n \equiv \Id$, arguing as above, we obtain, as in the previous two steps,
\[ (\cut^n)^\prime \omega \approx_{|\varrho|' \u} \omega.\]
 On the other side, by Lemma~\ref{lem:dya-commutator} one has for every $S \in \simp^2(\O)$,
\[ \ang{  \dya \cut^n S, \sew \omega} = \ang{ \cut^n \dya S -   \flip^\dagger  C, \sew \omega }= \ang{ \cut^n S, \sew \omega },\]
since $(\flip^\dagger)'  \omega =  0$. Thus, $\dya (\cut^n)^\prime \sew \omega = (\cut^n)^\prime \sew \omega$. Finally, since $\cut^n$ is geometric, one has \[(\cut^n)^\prime\sew \omega \approx_{(\cut^n)'\v} (\cut^n)' \omega \approx_{|\varrho|' \u} \omega,\] and again by  Lemma~\ref{lem:uniqueness-general} we conclude that $(\cut^n)' \sew \omega = \sew  \omega$.

Next, given $t = \frac{k}{n} \in [0,1]$ rational, $S = [p p_0 p_1]$, $p_t := (1-t)p_0 + tp_1$, one has
\[\begin{split} \ang{ \cut_t [p p_0 p_1], \sew \omega} & = \ang{ [p p_0 p_t], \sew \omega} + \ang{ [p p_t p_1], \sew \omega }\\
& = \ang{ \cut^{n-k} [p p_0 p_t], \sew \omega} + \ang{ \cut^k [p p_t p_1], \sew \omega } \\
& = \ang{ \cut^{n} [p p_0 p_1], \sew \omega} = \ang{ [p p_0 p_1], \sew \omega}.\end{split}\]
The general case $t \in [0,1]$ follows by approximation: choose $t^n \to t$ with $t^n \in [0,1]$ rational, and since $\sew \omega$ is continuous,
\[ \begin{split} \ang{[p p_0 p_1], \sew \omega} & = \lim_{n \to \infty}  \ang{ [p p_0 p_{t^n}], \sew \omega} + \ang{ [p p_{t^n} p_1], \sew \omega} \\
& =  \ang{ [p p_0 p_{t}], \sew \omega} + \ang{ [p p_{t} p_1], \sew \omega}.\end{split}\]

\emph{Step 5: $\sew \omega$ is closed on $k$-planes.} Let $T  \in \germ^{k+1}(\O)$ with $\conv(T)$ contained in an affine $k$-plane. To show that $\ang{\partial T, \sew \omega} = 0$, we argue separately for the case $k=1$ and $k=2$. When $k=1$, up to permutations of $T=[p_0 p_1 p_2]$, we can assume that $p_1 = (1-t) p_0 + t p_2$, with $t \in [0,1]$, thus
\[\begin{split}  \ang{\partial [p_0 p_1 p_2], \sew \omega} &  = \ang{ [p_1 p_2] - [p_0 p_2] + [p_0 p_1], \sew \omega} \\
&  = \ang{ \cut_t [p_0 p_2], \sew \omega} - \ang{ [p_0 p_2], \sew \omega} = 0.
\end{split}\]
In case $k=2$, since $\sew \omega$ is nonatomic, we can assume that $\conv(T)$ is not contained in a line. Up to permutations of $T=[p_0 p_1 p_2 p_3]$, we can assume that the two straight lines, the one  between $p_0$ and $p_3$ and the one between $p_1$ and $p_2$ intersect at a single point $q=(1-s) p_0 + s p_3 = (1-t) p_1 + t p_2$, $s$, $t \in \R$. Again, up to permutations, we are reduced to one of the two cases: $s$, $t \in [0,1]$, or $s \in [0,1]$, $t >1$ (Figure~\ref{fig:closed}).

\begin{figure}[ht]
\begin{minipage}{.45\textwidth}\begin{center}
\begin{tikzpicture}[scale=4]
        \node (0) at (0, 0) {$p_0$};
	\node (1) at (1, 0)  {$p_1$};
	\node (2) at (0, 1) {$p_2$};
	\node (3) at (0.5, 0.5) [below] {$q$};
		\node (4) at (0.7, 0.7) {$p_3$};

	\draw[thick, -]   (0) edge (4);
        	\draw[thick, -]   (1) edge (2);


    \end{tikzpicture}\subcaption{$s$, $t \in [0,1]$}\end{center}
\end{minipage}
\begin{minipage}{.45\textwidth}\begin{center}
\begin{tikzpicture}[scale=4]
           \node (0) at (0, 0) {$p_0$};
	\node (1) at (1, 0)  {$p_1$};
	\node (2) at (0, 1) {$p_2$};
	\node (3) at (0.3, 0.3)  {$p_3$};
     \node (5) at (0.5, 0.5) [above right]{$q$};

	\draw[thick, -]   (0) edge (3);
        	\draw[thick, -]   (1) edge (2);
        	\draw[dashed]   (3) edge (5);
    \end{tikzpicture} \subcaption{$s \in [0,1]$, $t >1$}\end{center}
\end{minipage}\caption{The two cases in the proof of Step 5.}\label{fig:closed}
\end{figure}
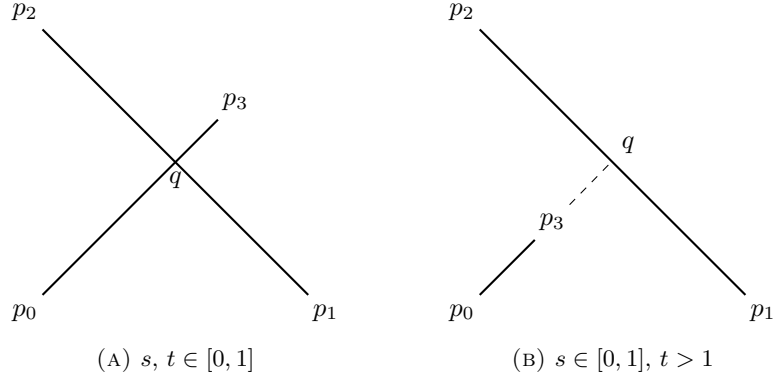

In the first case (Figure~\ref{fig:closed-proof}),
\[ \begin{split} \ang{ \partial T, \sew \omega} & = \ang{ [p_1 p_2 p_3] - [p_0 p_2p_3]+[p_0 p_1 p_3] - [p_0 p_1 p_2], \sew \omega}\\
& = - \ang{   [p_0 p_1 p_2] +  [p_3 p_2 p_1], \sew \omega} \\
& \quad + \ang{ [p_1 p_3 p_0]+[p_2 p_0 p_3], \sew \omega} \quad \text{since $\sew \omega$ is alternating}\\
& = - \ang{[p_0 p_1q ] + [p_0 q p_2] +  [p_3 p_2 q ] + [p_3 q p_1], \sew \omega}  \\
& \quad + \ang{ [p_1 p_3 q] + [p_1 qp_0]+[p_2 p_0 q ] + [p_2 q p_3], \sew \omega} \\
& \hspace{0.5\textwidth} \text{ since  $(\cut_t)^\prime \sew \omega = \sew \omega$}\\
& =  - \ang{[p_0 p_1q ] + [p_0 q p_2] +  [p_3 p_2 q ] + [p_3 q p_1], \sew \omega} \\
 & \quad   + \ang{   [p_3 q p_1 ] + [p_0p_1 q]+[ p_0 q p_2] + [p_3p_2 q ], \sew \omega} \\
 & \hspace{0.5\textwidth} \text{since $\sew \omega$ is alternating}\\
 &  = 0.
\end{split}
\]

Similarly, in the second case,
\[ \begin{split} \ang{ \partial T, \sew \omega} & = \ang{ [p_1 p_2 p_3] - [p_0 p_2p_3]+[p_0 p_1 p_3] - [p_0 p_1 p_2], \sew \omega}\\
& = - \ang{   [p_0 p_1 p_2] +  [p_3 p_2 p_1], \sew \omega} \\
& \quad + \ang{ [p_1 p_3 p_0]+[p_2 p_0 p_3], \sew \omega} \quad \text{since $\sew \omega$ is alternating}\\
& = - \ang{[p_0 p_1q ]+[p_0 q p_2] +  [p_3 p_2 q ] + [p_3 q p_1], \sew \omega}  \\
& \quad + \ang{ [p_1 qp_0] -  [p_1 qp_3 ] +  [p_2 q p_0] - [p_2 q p_3], \sew \omega} \\
 & \hspace{0.5\textwidth}  \text{ since  $(\cut_t)^\prime \sew \omega = \sew \omega$}\\
& =  - \ang{[p_0 p_1q ] + [p_0 q p_2] +  [p_3 p_2 q ] + [p_3 q p_1], \sew \omega} \\
 & \quad   + \ang{   [p_3 q p_1 ] + [p_0p_1 q]+[ p_0 q p_2] + [p_3p_2 q ], \sew \omega}\\
 & \hspace{0.5\textwidth}  \text{since $\sew \omega$ is alternating}\\
 &  = 0.
\end{split}
\]

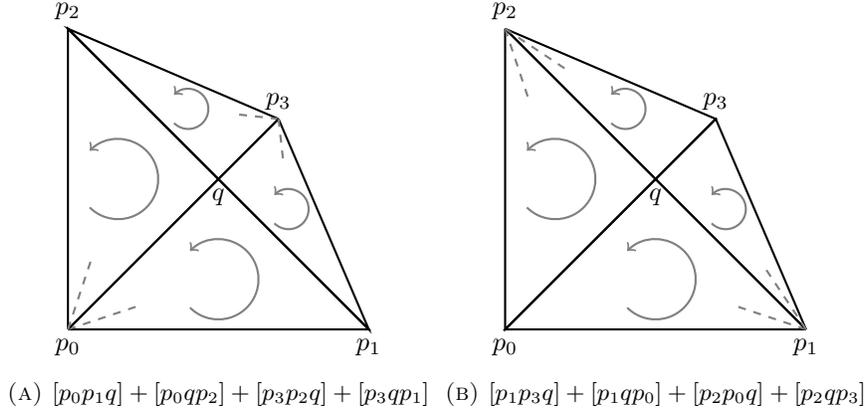
\begin{figure}[ht]
\begin{minipage}{.45\textwidth}\begin{center}
\begin{tikzpicture}[scale=4]
        \node (0) at (0, 0) [below]{$p_0$};
	\node (1) at (1, 0)  [below]{$p_1$};
	\node (2) at (0, 1) [above] {$p_2$};
	\node (3) at (0.5, 0.5) [below] {$q$};
		\node (4) at (0.7, 0.7)[above] {$p_3$};

           \drawchain{0}{0}{1}{0}{0.5}{0.5}{0.4}{-135}{1}
           \drawchain{0}{0}{0.5}{0.5}{0}{1}{0.4}{-135}{1}
               \drawchain{0.7}{0.7}{0}{1}{0.5}{0.5}{0.2}{-135}{1}
           \drawchain{0.7}{0.7}{0.5}{0.5}{1}{0}{0.2}{-135}{1}

    \end{tikzpicture}\subcaption{$[p_0 p_1q ] + [p_0 q p_2] +  [p_3 p_2 q ] + [p_3 q p_1]$ }\end{center}
\end{minipage}
\begin{minipage}{.45\textwidth}\begin{center}
\begin{tikzpicture}[scale=4]
        \node (0) at (0, 0) [below]{$p_0$};
	\node (1) at (1, 0)  [below]{$p_1$};
	\node (2) at (0, 1) [above] {$p_2$};
	\node (3) at (0.5, 0.5) [below] {$q$};
		\node (4) at (0.7, 0.7)[above] {$p_3$};

           \drawchain{1}{0}{0.5}{0.5}{0}{0}{0.4}{-135}{1}
           \drawchain{0}{1}{0}{0}{0.5}{0.5}{0.4}{-135}{1}
               \drawchain{0}{1}{0.7}{0.7}{0.5}{0.5}{0.2}{-135}{1}
           \drawchain{1}{0}{0.7}{0.7}{0.5}{0.5}{0.2}{-135}{1}

    \end{tikzpicture} \subcaption{$[p_1 p_3 q] + [p_1 qp_0]+[p_2 p_0 q ] + [p_2 q p_3]$}\end{center}
\end{minipage}\caption{Decompositions in the proof of Step 5, first case.\label{fig:closed-proof}}
\end{figure}

\section{Bound improvement for regular germs}

Aim of this section is to show the following result valid for regular $2$-germs.

\begin{lemma}\label{lem:bound-improvement}
For $\gamma \in [1,2]$ there exists  $\c = \c(\gamma)$ such that if $\omega \in \germ^2(D)$ is regular then
\[ \c^{-1} \sqa{ \omega}_{\diam^\gamma} \le [\omega]_{\diam^{2-\gamma} \vol_2^{\gamma-1} } \le \c \sqa{ \omega}_{\diam^{\gamma}}.\] 
\end{lemma}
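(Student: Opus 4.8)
## Proof plan

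The statement compares two gauges on regular $2$-germs: $\diam^\gamma$ and $\diam^{2-\gamma}\vol_2^{\gamma-1}$. The easy inequality is $\sqa{\omega}_{\diam^\gamma}\le \c\,[\omega]_{\diam^{2-\gamma}\vol_2^{\gamma-1}}$, which requires no regularity: it follows immediately from the isodiametric inequality $\vol_2\le \c\,\diam^2$ recorded after Example~\ref{ex:gauge-diam}, since $\diam^{2-\gamma}\vol_2^{\gamma-1}\le \c\,\diam^{2-\gamma}\diam^{2(\gamma-1)}=\c\,\diam^\gamma$ (using $\gamma-1\ge 0$). So the content is the reverse inequality $[\omega]_{\diam^{2-\gamma}\vol_2^{\gamma-1}}\le \c\,\sqa{\omega}_{\diam^\gamma}$, i.e.\ an \emph{improvement}: knowing $|\omega_S|\le \c_0\diam(S)^\gamma$ for all $S$, one wants $|\omega_S|\le \c\,\c_0\,\diam(S)^{2-\gamma}\vol_2(S)^{\gamma-1}$, which is a genuinely better bound on "thin" triangles (small $\vol_2$, comparatively large $\diam$).

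The plan is to exploit additivity (Remark~\ref{prop_additive1}) and the alternating property (Proposition~\ref{prop_alternating1}) of regular germs to chop a thin triangle $S=[p_0p_1p_2]$ along its long edge into many thin sub-triangles of small diameter, then sum the $\diam^\gamma$-bounds. Concretely, suppose $\diam(S)=|p_1-p_2|=:\ell$ is realized by the edge $[p_1p_2]$ and let $\d$ denote the distance from $p_0$ to the line through $p_1,p_2$, so $\vol_2(S)=\tfrac12\ell\,\d$ up to constants and thinness means $\d\ll\ell$. Using the maps $\cut^n$ from Lemma~\ref{lem:cut-n} (which satisfy $\cut^n\equiv\Id$, so $\ang{S,\omega}=\ang{\cut^n S,\omega}$ for regular $\omega$ by Remark~\ref{rem:equivalence-closed}), subdivide the edge $[p_1p_2]$ into $n\approx \ell/\d$ equal pieces, obtaining $n$ triangles $S_i$ each with base $\approx \d$ and height $\le \d$, hence $\diam(S_i)\le \c\,\d$. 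Then
\[
|\omega_S|=\Bigl|\sum_{i=1}^n \omega_{S_i}\Bigr|\le \sum_{i=1}^n \sqa{\omega}_{\diam^\gamma}\diam(S_i)^\gamma\le \c\,n\,\sqa{\omega}_{\diam^\gamma}\,\d^\gamma\le \c\,\frac{\ell}{\d}\,\sqa{\omega}_{\diam^\gamma}\,\d^\gamma=\c\,\sqa{\omega}_{\diam^\gamma}\,\ell\,\d^{\gamma-1},
\]
and since $\ell\,\d^{\gamma-1}\approx \diam(S)^{2-\gamma}(\ell\d)^{\gamma-1}\approx \diam(S)^{2-\gamma}\vol_2(S)^{\gamma-1}$ (using $\ell=\diam(S)$ and $\vol_2(S)\approx \ell\d$), this is exactly the desired bound. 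One must handle the degenerate case $\d=0$ separately, but there $\vol_2(S)=0$ and $\omega_S=0$ by nonatomicity, so the inequality is trivial; for $\d>0$ one takes $n=\lceil \ell/\d\rceil$ so that $n\le 2\ell/\d$ and $n\ge 1$.

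The main technical point to get right is the geometric claim that cutting the long edge into $\approx\ell/\d$ equal parts produces sub-triangles of diameter $\le\c\,\d$: each $S_i=[p_0\,p_{(i-1)/n}\,p_{i/n}]$ has one edge $[p_{(i-1)/n}p_{i/n}]$ of length $\ell/n\le\d$, and the two edges joining $p_0$ to these points have length at most $\dist(p_0,\text{segment})+\text{(along-segment spread within one cell)}$; the along-segment part is controlled because the foot of the perpendicular from $p_0$ lies on the segment $[p_1p_2]$ when the triangle is thin, so within the relevant cell the two slant edges have length $\le\c\d$ — but if the foot falls outside $[p_1p_2]$ one instead uses that $\diam(S_i)\le\diam(S)=\ell$ only for the few extreme cells near an endpoint, and those contribute $\le\c\,\sqa{\omega}_{\diam^\gamma}\ell^\gamma$, which is absorbed because $\ell^\gamma\le \ell\,\d^{\gamma-1}\cdot(\ell/\d)^{\gamma-1}$... — so in fact one should choose the subdivision adapted to the orthogonal projection of $p_0$, splitting $[p_1p_2]$ at that foot first and cutting each of the two resulting segments into pieces of length $\approx\d$; this guarantees $\diam(S_i)\le\c\,\d$ uniformly. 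I expect verifying this uniform diameter bound (and the mild bookkeeping of using $\cut^{n_1}$ and $\cut^{n_2}$ on the two halves, invoking $\cut^n\equiv\Id$ and additivity from Remark~\ref{prop_additive1} to split the sum) to be the only real obstacle; everything else is the one-line summation above together with the isodiametric inequality for the reverse direction.
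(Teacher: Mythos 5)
There is a genuine gap, and it sits exactly at the step you flag as the main technical point. In your fan decomposition every piece $S_i=[p_0\,p_{(i-1)/n}\,p_{i/n}]$ keeps $p_0$ as a vertex, so $\diam(S_i)\ge |p_0-p_{i/n}|$, which is comparable to the distance along the long edge from the foot of the height to $p_{i/n}$; this ranges up to $\diam(S)=\ell$, no matter how you place the subdivision points, and in particular splitting the base at the orthogonal projection of $p_0$ does not make the pieces small: only the one or two cells containing the foot have diameter of order $h$ (the height), while the cells at distance $t$ from the foot have diameter of order $t$. Consequently your key claim ``$\diam(S_i)\le \c\,h$ uniformly'' is false, and the summation collapses: with cells of base $\approx h$ one gets $\sum_i \diam(S_i)^\gamma \gtrsim \sum_{i\le \ell/h}(ih)^\gamma \approx \ell^{\gamma+1}/h$, which exceeds the target $\ell\, h^{\gamma-1}\approx \diam(S)^{2-\gamma}\vol_2(S)^{\gamma-1}$ by a factor $(\ell/h)^{\gamma}$. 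No choice of $n$ repairs this; a cone/fan from a single far vertex is simply the wrong decomposition for the improvement.

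What is needed (and what the paper does) is a decomposition into $\approx \ell/h$ pieces each of diameter comparable to the \emph{short} dimension, with vertices marching along the two long sides of the triangle rather than all sitting at the opposite vertex. Concretely, the paper first drops the height from the apex to split $S$ (via $\cut_t$ and regularity) into two right triangles, and then slices each right triangle, with legs $\ell\le L$, by the segments $[p_k q_k]$ parallel to the short leg at equal spacing $L/n$ along the long leg, $n\approx L/\ell$; each resulting strip is a trapezoid of width $\approx\ell$ split into two triangles of diameter $\le 4\ell$ (the ``Christmas tree'' built by iterating $\cut_t$, which also settles the additivity bookkeeping through $\cut_t\equiv\Id$). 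With that decomposition your one-line summation does give $(2n+1)\ell^\gamma\lesssim L\ell^{\gamma-1}\lesssim \diam(S)^{2-\gamma}\vol_2(S)^{\gamma-1}$. Your treatment of the easy inequality via the isodiametric bound, and of the degenerate case via nonatomicity, agrees with the paper; the missing ingredient is this strip decomposition in place of the fan.
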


\begin{proof}
We already noticed in Section~\ref{sec:gauges} that $\sqa{ \omega}_{\diam^\gamma} \le\c  [\omega]_{\diam^{2-\gamma} \vol_2^{\gamma-1} }$ always holds (even if $\omega$ is not regular). To prove the other inequality, we let $S = [pqr] \in \simp^2(\O)$ and notice that it is sufficient to prove that, for some $\c = \c(\gamma)$,
\begin{equation} \label{eq:triangale-rett} \ang{S, \omega} \le \c \diam^{2-\gamma}(S) \vol_2^{\gamma-1}(S) [\omega]_{\diam^\gamma}\end{equation}
in the case that the triangle is rectangle at $q$. Indeed, to obtain the general case, it is sufficient to cut the longest side of $S$ through its height, obtaining two rectangle triangles $S^1$ and $S^2$. Using the fact that $\omega$ is regular (so  $\omega = \cut_t^\prime \omega$ for any $t \in [0,1]$), we have
\[ \begin{split} \ang{S, \omega}  & = \ang{S^1 + S^2, \omega}\\
&  \le 2 \c \max\cur{ \diam^{2-\gamma}(S^1) \vol_2^{\gamma-1}(S^1), \diam^{2-\gamma}(S^1) \vol_2^{\gamma-1}(S^2)}[\omega]_{\diam^\gamma}\\
&  \le 2 \c \diam^{2-\gamma}(S) \vol_2^{\gamma-1}(S)[\omega]_{\diam^\gamma}.
\end{split}\]
hence the thesis with $2 \c$ instead of $\c$.

To prove~\eqref{eq:triangale-rett}, we assume (without loss of generality, up to a permutation) that $\ell := |q-p| \le |q-r|=:L$ and decompose $S$ into $2n+1$ triangles $(S^i)_{i=1}^{2n+1}$ such that $\diam(S^i) \le 4 \ell$,
\[ \ang{S, \omega}  = \sum_{i=1}^{2n+1} \ang{S^i, \omega},\]
and $n  \le 2 L/\ell$. Once this decomposition is performed, we obtain
\[ \begin{split} \abs{ \ang{S, \omega}} & \le \sum_{i=1}^{2n+1} \abs{ \ang{S^i, \omega}}  \le [\omega]_{\diam^{\gamma}} (2n+1) \ell^\gamma \\
& \le 2^5 [\omega]_{\diam^{\gamma}}L \ell^{\gamma-1}  \le 2 [\omega]_{\diam^{\gamma}}L^{1-\gamma} (L\ell)^{\gamma-1}\\
& \le 8 \cdot 2^{\gamma-1} [\omega]_{\diam^{\gamma}} \diam(S)^{1-\gamma} \vol_2(S)^{\gamma-1}\end{split}\]
since $L \ell /2 = \vol_2(S)$.

Finally, to obtain such a decomposition, we argue recursively using the $\cut_t$ operation to produce a ``Christmas tree'' like decomposition of $[pqr]$ as in Figure~\ref{fig:tree}. Precisely, given any $n \ge 1$, for $k = 1, \ldots, n$, let $p_k := (kp + (n-k)r)/n$ and $q_k := (kq+ (n-k)r)/n$. Then,
\[ \begin{split} \ang{ [pqr], \omega} & = \ang{ \cut_{1/n} [pqr], \omega} \\
& =  \ang{  [q_{n-1} pq]+ [q_{n-1}rp], \omega} \\
& =  \ang{   [q_{n-1} pq]+  \cut_{(n-1)/n}[q_{n-1}rp], \omega}\\
& =  \ang{   [q_{n-1} pq] + [p_{n-1}p q_{n-1}]+ [p_{n-1} q_{n-1} r], \omega} \quad \text{(see Figure~\ref{fig:tree})}\\
& = \sum_{k=1}^n \ang{   [q_{k-1} p_{k} q_{k}] + [p_{k-1}p_k q_{k-1}], \omega} + \ang{ [p_1 q_1 r], \omega},
 \end{split}\]
 the last equality following by recursion (apply the decomposition with $n-1$ instead of $n$ to the simplex $[p_{n-1}q_{n-1} r]$). By construction, the diameter of each of the simplices $(S^i)_{i=1}^{2n+1}$ in the last line above is smaller than $\ell+ 3 L/n$. Therefore, letting $n$ be the smallest integer larger than $L/\ell$, so that in particular $L/\ell \le n \le 2 L / \ell$, we conclude that $\diam(S^i) \le 4 \ell$.

 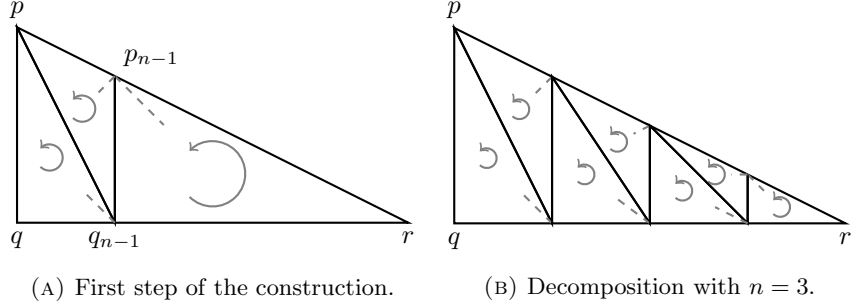
\begin{figure}[ht]
\begin{minipage}{0.45\textwidth}
\begin{center}
\begin{tikzpicture}[scale=1.3]
\drawchain{1}{1.5}{1}{0}{4}{0}{1}{-135}{1};
\drawchain{1}{1.5}{1}{0}{0}{2}{0.4}{-135}{1};
\drawchain{1}{0}{0}{0}{0}{2}{0.4}{-135}{1};
	\draw   (4,0) node[below]{$r$}  (0,0) node[below]{$q$} (0,2) node[above]{$p$} (1,0) node[below]{$q_{n-1}$} (1,1.5) node[above right]{$p_{n-1}$} ;
\end{tikzpicture} \subcaption{First step of the construction.}
\end{center}
\end{minipage}
\begin{minipage}{0.45\textwidth}
\begin{center}
\begin{tikzpicture}[scale=1.3]
\drawchain{3}{0.5}{3}{0}{4}{0}{0.3}{-135}{1};
\drawchain{1}{1.5}{1}{0}{0}{2}{0.3}{-135}{1};
\drawchain{1}{0}{0}{0}{0}{2}{0.3}{-135}{1};
\drawchain{2}{1}{2}{0}{1}{1.5}{0.3}{-135}{1};
\drawchain{2}{0}{1}{0}{1}{1.5}{0.3}{-135}{1};
\drawchain{3}{0.5}{3}{0}{2}{1}{0.3}{-135}{1};
\drawchain{3}{0}{2}{0}{2}{1}{0.3}{-135}{1};
	\draw   (4,0) node[below]{$r$}  (0,0) node[below]{$q$} (0,2) node[above]{$p$} ;
  \end{tikzpicture}\subcaption{Decomposition with $n=3$.}
\end{center}
\end{minipage}\caption{Christmas tree decomposition.}\label{fig:tree}
\end{figure}


\end{proof}

\section{Proof of results on side compensators}\label{app:side-compensator}

\subsection{Proof of uniqueness of a side compensator Theorem~\ref{thm:comp-uni}}

 If $\eta \in \germ^1(\O)$ is such that $\eta\approx_\u 0$ for some $1$-Dini gauge $\u \in \germ^1(\O)$ and $\delta \eta_{prq} = 0$  for every $[prq] \in \simp^1(\O)$ such that $r = \frac{p+q}{2}$, i.e.,
 \[\eta_{pq} = \eta_{pr}+\eta_{rq},\]
 then we are in a position to apply Lemma~\ref{lem:uniqueness-general} with $\ell=2$, $\lambda_1 = \lambda_2 = 1$, $\tau_1[pq] := [pr]$ and $\tau_2[pq] := [rq]$, $\omega := \eta$ and $\v := \u$, so that $\eta = \tau \eta$ hence $\eta =0$.

\subsection{Proof of existence of a  side compensator Theorem~\ref{thm:comp:ex}}

\newcommand{\bc}{\operatorname{bc}}
We define recursively $L^0(\omega)_{pq} := \omega_{prq}$ where $[pq] \in \simp^1(\O)$ and $r := \frac{p+q}{2}$, and for $n \ge 0$,
\[ L^{n+1}(\omega)_{pq} :=  L^{n}(\omega)_{pr}+L^{n}(\omega)_{rq}-\omega_{prq} .\]
Arguing by induction, we have, for every $n \ge 0$,
\begin{equation}\label{eq:recursion-boundary-corrector} \ang{[pq], L^{n+1}(\omega) - L^n(\omega)}\le [\omega]_{\u}  2^{n} \ang{ (2^{-n})_\natural [prq], \u},\end{equation}
hence, being $\u$ strong $1$-Dini, the sequence $\ang{[pq], L^n(\omega)}$ is Cauchy. The pointwise limit $\lim_{n \to + \infty} L^{n}(\omega) =: L(\omega)$ clearly satisfies~\eqref{eq:area-corrector}, i.e.,
\[  L(\omega)_{pq} =L(\omega)_{pr}+L(\omega)_{rq} -\omega_{prq} \]
as well as, by summing over $n \ge 0$ the inequalities~\eqref{eq:recursion-boundary-corrector},
\[ \abs{ \ang{[pq], L(\omega) }} \le [\omega]_{\u} \sum_{n=0}^{+\infty} 2^{n} \ang{ (2^{-n})_\natural [prq], \u} = [\omega]_{\u} \ang{ [pq], \v},\]
where we introduce the $1$-Dini gauge $\ang{  [pq], \v} := \ang{[prq], \tilde{\u}}$, with $\tilde{\u}$ given as in~\eqref{eq:v-dini}. By Example~\ref{ex:gauge-diam}, if $\u = \diam^\alpha$ (with $\alpha>1$) we obtain that $\v = \c(\alpha) \diam^\alpha$, with $\c(\alpha) = (1-2^{1-\alpha})^{-1}$.

\subsection{Proof of Theorem~\ref{thm:cancellation}}

%
First, we notice that Theorem~\ref{thm:comp:ex} implies that $L(\omega) \in \germ^1(\O)$ is well-defined with $L(\omega) \approx_\v 0$ for some $2$-Dini gauge $\v$ (by Remark~\ref{rem_Dini_gauge1}, because $\u$ is assumed to be strong $2$-Dini). We define $\tilde{\omega}:= \omega - \delta L(\omega)$ which is closed on $2$-planes, alternating, and $\tilde{\omega}_{prq} = 0$ whenever $r = \frac{p+q}{2}$ (by definition of $L$) hence by Remark~\ref{rem:dya-cancellations} (applied to the restriction of $\tilde\omega$ to a $2$ plane contaning $S \in \simp^2(\O)$) we have
\[ \ang{ \dya S, \tilde{\omega}} = \ang{S, \tilde{\omega}}.\]
Using that $\tilde{\omega} \approx_{\w} 0$ with the $2$-Dini gauge $\w := \max\cur{\u, \v}$,  by Lemma~\ref{lem:uniqueness-general} $m := 2^k$, $\tau_i := \dya^i$, $\tau := \dya$ we deduce  $\tilde{\omega} = 0$. 

\bibliographystyle{plain}

\begin{thebibliography}{10}

\bibitem{chouk_2014}
K.~{Chouk} and M.~{Gubinelli}.
\newblock {Rough sheets}.
\newblock {\em ArXiv e-prints}, June 2014.

\bibitem{feyel_curvilinear_2006}
D.~Feyel and A.~de~La~Pradelle.
\newblock Curvilinear integrals along enriched paths.
\newblock {\em Electron. J. Probab.}, 11:no. 34, 860--892, 2006.

\bibitem{FlandGubGiaqTort05-currents}
F.~Flandoli, M.~Gubinelli, M.~Giaquinta, and V.~M. Tortorelli.
\newblock Stochastic currents.
\newblock {\em Stochastic Process. Appl.}, 115(9):1583--1601, 2005.

\bibitem{friz_course_2014}
P.~K. Friz and M.~Hairer.
\newblock {\em A course on rough paths}.
\newblock Universitext. Springer, Cham, 2014.

\bibitem{friz_multidimensional_2010}
P.~K. Friz and N.~B. Victoir.
\newblock {\em Multidimensional stochastic processes as rough paths}, volume
  120 of {\em Cambridge {Studies} in {Advanced} {Mathematics}}.
\newblock Cambridge University Press, Cambridge, 2010.

\bibitem{gubinelli_controlling_2004}
M.~Gubinelli.
\newblock Controlling rough paths.
\newblock {\em J. Funct. Anal.}, 216(1):86--140, 2004.

\bibitem{gubinelli_paracontrolled_2015}
M.~Gubinelli, P.~Imkeller, and N.~Perkowski.
\newblock Paracontrolled distributions and singular {PDEs}.
\newblock {\em Forum Math. Pi}, 3:e6, 75, 2015.

\bibitem{hairer_introduction_2015}
M.~Hairer.
\newblock Introduction to regularity structures.
\newblock {\em Braz. J. Probab. Stat.}, 29(2):175--210, 2015.

\bibitem{harrison-1}
J.~Harrison.
\newblock Flux across nonsmooth boundaries and fractal
  {G}auss/{G}reen/{S}tokes' theorems.
\newblock {\em J. Phys. A}, 32(28):5317--5327, 1999.

\bibitem{harrison_differential_2006}
J.~Harrison.
\newblock Differential complexes and exterior calculus.
\newblock {\em ArXiv e-prints}, January 2006.

\bibitem{harrison-2}
J.~Harrison.
\newblock Soap film solutions to {P}lateau's problem.
\newblock {\em J. Geom. Anal.}, 24(1):271--297, 2014.

\bibitem{harrison-3}
J.~Harrison.
\newblock Operator calculus of differential chains and differential forms.
\newblock {\em J. Geom. Anal.}, 25(1):357--420, 2015.

\bibitem{kats_1999}
B.~A. Kats.
\newblock The {S}tieltjes integral along fractal curve.
\newblock {\em Matematiche (Catania)}, 54(1):159--173 (2000), 1999.

\bibitem{kondurar_1937}
V.~Kondurar.
\newblock Sur l'integrale de {Stieltjes}.
\newblock {\em Rec. Math. [Mat. Sbornik] N.S.}, 2 (44):361--366, 1937.

\bibitem{lyons_differential_1998}
T.~J. Lyons.
\newblock Differential equations driven by rough signals.
\newblock {\em Rev. Mat. Iberoamericana}, 14(2):215--310, 1998.

\bibitem{matsaev_1972}
V.~I. Macaev and M.~Z. Solomjak.
\newblock Existence conditions for the {S}tieltjes integral.
\newblock {\em Mat. Sb. (N.S.)}, 88(130):522--535, 1972.

\bibitem{MagSteTrev16}
V.~Magnani, E.~Stepanov, and D.~Trevisan.
\newblock A rough calculus approach to level sets in the {H}eisenberg group.
\newblock {\em J. London Math. Soc.}, 97(3):495--522, 2018.

\bibitem{frobenius-zust}
E.~{Stepanov} and D.~{Trevisan}.
\newblock Exterior differential systems involving differentials of {H}\"{o}lder
  functions.
\newblock In preparation.

\bibitem{towghi_multidimensional_2002}
N.~Towghi.
\newblock Multidimensional extension of {L}. {C}. {Young}'s inequality.
\newblock {\em J. Inequal. Pure Appl. Math.}, 3(2):Article 22, 13, 2002.

\bibitem{young_inequality_1936}
L.~C. Young.
\newblock An inequality of the {H\"older} type, connected with {Stieltjes}
  integration.
\newblock {\em Acta Math.}, 67(1):251--282, 1936.

\bibitem{zust_integration_2011}
R.~{Z\"ust}.
\newblock Integration of {H\"older} forms and currents in snowflake spaces.
\newblock {\em Calc. Var. Partial Differential Equations}, 40(1-2):99--124,
  2011.

\end{thebibliography}

\end{document}